\renewcommand*{\backref}[1]{}
\renewcommand*{\backrefalt}[4]{%
	\ifcase #1 %
	No citations.
	\or
	(Page #2).%
	\else
	(Pages #2).%
	\fi%
}
\newcommand{\lel}{\mathrel{\leqslant_\ell}}
\newcommand{\ler}{\mathrel{\leqslant_{r}}}
\newcommand{\gl}{\mathrel{\mathscr{L}}}
\newcommand{\gr}{\mathrel{\mathscr{R}}}
\newcommand{\gd}{\mathrel{\mathscr{D}}}
\newcommand{\snc}{\widehat{\mathcal{C}}}
\newcommand{\cod}{\mathcal{C}_\mathfrak{D}}
\newcommand{\sncd}{\widehat{\mathcal{C}_\mathfrak{D}}}
\newcommand{\sncdo}{\widehat{\mathcal{C}_\mathfrak{D}^\text{op}}}
\newcommand{\codp}{\mathcal{C'}_\mathfrak{D'}}
\newcommand{\sncdp}{\widehat{\mathcal{C'}_\mathfrak{D'}}}
\newcommand{\lr}{\mathbb{L}(S)_\mathfrak{R}}
\newcommand{\rl}{\mathbb{R}(S)_\mathfrak{L}}
\newcommand{\lrp}{\mathbb{L}(S')_\mathfrak{R'}}
\newcommand{\rlp}{\mathbb{R}(S')_\mathfrak{L}}
\newcommand{\lrb}{\mathbb{L}(B)_\mathfrak{R}}
\newcommand{\sls}{\widehat{\mathbb{L}(S)}}
\newcommand{\slr}{\widehat{\mathbb{L}(S)_\mathfrak{R}}}
\newcommand{\srlo}{\widehat{\mathbb{R}(S)_\mathfrak{L}^\text{op}}}
\newcommand{\srs}{\widehat{\mathbb{R}(S)}}
\newcommand{\srso}{\widehat{\mathbb{R}(S)}^\text{op}}
\newcommand{\los}{\mathbb{L}(S)}
\newcommand{\ros}{\mathbb{R}(S)}
\newcommand{\losp}{\mathbb{L}(S')}
\newcommand{\sts}{\mathscr{T}_n\backslash \mathscr{S}_n}
\newcommand{\pn}{\mathbb{SP}}
\newcommand{\fpn}{\mathbb{P}}
\newcommand{\sfpn}{\widehat{\fpn}}
\newcommand{\ns}{\mathbf{n}}
\newcommand{\tn}{\mathscr{T}_n}
\newcommand{\ix}{\mathscr{I}_X}
\newcommand{\pin}{\text{N}\Pi}
\newcommand{\tv}{\mathscr{L\mspace{-5mu}T}_V}
\newcommand{\glv}{\mathscr{G\mspace{-5mu}L}_V}
\newcommand{\stv}{\mathscr{L\mspace{-5mu}T}_V \backslash\mathscr{G\mspace{-5mu}L}_V}
\newcommand{\sv}{\mathbb{V}}
\newcommand{\psv}{\mathbb{SV}}
\newcommand{\x}{\mathbb{X}}
\newtheorem{thm}{Theorem}[section]
\newtheorem{lem}[thm]{Lemma}
\newtheorem{pro}[thm]{Proposition}
\newtheorem{cor}[thm]{Corollary}
\newtheorem{dfn}[thm]{Definition}
\newtheorem{rmk}[thm]{Remark}
\newtheorem{ex}[thm]{Example}
\DeclareMathOperator{\im}{im}
\DeclareMathOperator{\coim}{coim}
\title[Left reductive regular semigroups]{Left reductive regular semigroups}
\author{P. A. Azeef~Muhammed}
\address{Centre for Research in Mathematics and Data Science, Western Sydney University, Locked Bag 1797, Penrith NSW 2751, Australia.}
\email{azeefp@gmail.com, A.ParayilAjmal@WesternSydney.edu.au}
\author{Gracinda M. S. Gomes}
\address{Faculdade de Ci\^{e}ncias, Departamento de Matem\'{a}tica, Universidade de Lisboa, 1749-016, Lisboa, Portugal}
\email{gracindamsgmcunha@gmail.com, gmcunha@fc.ul.pt}
\keywords{}
\subjclass[2010]{}
\thanks{This work was partially developed within the activities of Departamento de Matemática da Faculdade de Ciências da 	Universidade de Lisboa and 	Centro de Matemática Computacional e Estocástica, CEMAT, within the projects
	UIDB/04621/2020 and UIDP/04621/2020, financed by Fundação para	a Ciência e a Tecnologia, FCT}
\begin{document}
	\maketitle
	\begin{abstract}
		
		In this paper we develop an ideal structure theory for the class of left reductive regular semigroups and apply it to several subclasses of popular interest. In these classes we observe that the right ideal structure of the semigroup is `embedded' inside the left ideal one, and so we can construct these semigroups starting with only one object (unlike in other more general cases). To this end, we introduce an upgraded version of Nambooripad's normal category \cite{cross} as our building block, which we call a \emph{connected category}.
		
		The main theorem of the paper describes a category equivalence between the category of left (and right) reductive regular semigroups and the category of {connected categories}. Then, we specialise our result to describe constructions of $\gl$- (and $\gr$-) unipotent semigroups, right (and left) regular bands, inverse semigroups and arbitrary regular monoids. Finally, we provide concrete (and rather simple) descriptions to the connected categories that arise from finite transformation semigroups, linear transformation semigroups (over a finite dimensional vector space) and symmetric inverse monoids. 
	\end{abstract}
	
	\tableofcontents
	
	\section{Introduction}\label{secintro}
	
	The most important algebraic invariants of any given semigroup are its Green's relations which describe the ideal structure of the semigroup. Introduced in a seminal paper \cite{green1951} in 1951 Green's relations are certain equivalence relations defined on a semigroup which partition the semigroup elements into an `egg-box' diagram (for example, see Figures \ref{figdeco}, \ref{figcomp} and \ref{figinv}). In this partitioning, the elements generating the same principal left ideal fall in the same column of the egg-box and  those generating the same principal right ideal fall in the same row. This captures a lot of information regarding the local and global structure of the semigroup. In fact, it is precisely this partitioning that makes semigroups manageable, in spite of them being rather general objects. Hence, it is no surprise that  Howie would remark that the Green's relations are ``so all-pervading that, on encountering a new semigroup, almost the first question one asks is `What are the Green relations like?'" \cite{howie2002}. Therefore, it is very natural that any structure theorem for semigroups may aim to begin with building blocks that abstract the principal left (and right) ideals. To make any headway in this direction, one needs to understand the inter-relationship between principal left and principal right ideal structures. In general, this is quite complicated and invariably involves two ordered sets, each representing the left and right ideals of the semigroup, interconnected in a non-trivial fashion. Unsurprisingly, this rather difficult question is still open in many general cases. Such a quest leads naturally towards the special class of regular semigroups. 
	
	In regular semigroups, each principal left (and right) ideal is generated by idempotent elements, giving some control over the structure of the semigroup. Indeed there is a very close relationship between the ideal structure and the idempotent structure and, in fact, we can obtain one from the other \cite{bicxn,indcxn1,indcxn2}. Recall that regular semigroups were introduced by Green in \cite{green1951}, wherein he credited Rees for the suggestion to adopt von Neumann's definition \cite{rr} from ring theory.
	\begin{dfn}
		A semigroup $S$ is said to be (von Neumann) \emph{regular} if for every element $a$ in $S$, there exists $x\in S$ such that $axa=a$.
	\end{dfn}

	Historically, one of the first major leaps into regular semigroups was by Hall \cite{hall} who extended Munn's \cite{munn} construction of \emph{fundamental inverse semigroups} to fundamental regular semigroups generated by idempotents. To this end, Hall considered certain transformations on the partially ordered set (poset) of the principal left ideals and on that of the principal right ideals. Later, Grillet \cite{grilcross} gave an abstract characterisation of these posets as \emph{regular posets}, and introduced the notion of \emph{cross-connection} to describe the exact relationship between the left and the right posets of a regular semigroup. Simultaneously, Nambooripad \cite{ksssf,ksssf1,mem} developed the idea of (regular) biordered set, as an abstract model of the set of idempotents of a (regular) semigroup, and using groupoids gave a general structure theorem for regular semigroups. This seminal work also described an equivalence between the category of regular semigroups and the category of certain {groupoids} and, in the process, it puts final touches to the celebrated Ehresmann-Schein-Nambooripad (ESN) Theorem \cite{lawson}. Although extremely clever, Nambooripad's description of the sets of idempotents as biordered sets is still complicated and pretty cumbersome to work with, especially for constructions. It may be worth mentioning that a practical way around this is to work with an associated semigroup since (regular) biordered sets come from (regular) semigroups \cite{mem,eas}.
	
	In 1978, Nambooripad \cite{bicxn} showed that regular biordered sets and cross-connected regular posets are equivalent. Elaborating on this fact, he developed his theory of cross-connections \cite{cross} by replacing regular posets with \emph{normal categories} (which contain regular posets as subcategories). In this way, he proved that the category of regular semigroups is equivalent to the category of cross-connected normal categories.
	
	{A major problem with such a general approach  is that the theory developed is too heavy to be applied to the vast majority of the objects which may have a rather simple structure!} In fact, we  believe this is one of the major reasons why Nambooripad's cross-connection theory has not achieved the popularity and acclaim that such a deep work deserves. Addressing this wide gap in the development of the area is one of the main motivations behind this paper.
	
	As the reader may soon observe, the entire discussion in this paper can be traced back to Cayley's theorem for groups. Just as any group $G$ may be realised as a subgroup of the symmetric group on the set $G$, it is well-known that any semigroup $S$ can be realised as a transformation semigroup on the set $S^1:=S\cup{1}$, \cite{howie}. This may be achieved by considering  the  \emph{regular representation}   of  $S$ \cite[Section 1.3]{clif},  which is the homomorphism $\rho\colon S \mapsto \mathscr{T}_S$, $a\mapsto\rho_a$ , where   $\rho_a\colon x\mapsto xa$, for every $x\in S$. 
	Adjoining the element $1$ to the set $S$ (if $S$ is not a monoid) is sufficient to ensure injectivity of the representation, and so in this case $S$ is isomorphic $S\rho$.
	
	So, a question arises: for which classes of semigroups do we have injectivity without adjoining $1$? This leads us to left reductive semigroups. 
	
	\begin{dfn}
		A semigroup $S$ is said to be \emph{left reductive} if the regular representation $\rho$ is injective.	
	\end{dfn}
	
	In this paper we discuss the ideal structure of the class of left reductive regular semigroups 	and apply our structural result to obtain constructions for several popular subclasses. Left reductive regular semigroups include in particular, all regular monoids, $\gl$-unipotent semigroups, inverse semigroups, right regular bands, full (linear) transformation semigroups, singular transformation  semigroups on a finite set, and the semigroup of singular linear transformations over a finite dimensional vector space. Each of these classes arises naturally across mathematics, statistics, and physics. We will examine each class of semigroups in detail. Observe that left reductive regular semigroups exclude several `simple' classes like completely simple semigroups \cite{css}, bands \cite{preenu22} and regular-$*$ semigroups \cite{eastazeef}. 
	
	In left reductive regular semigroups, the relationship between the left and the right ideals is relatively simpler and more transparent than in arbitrary regular semigroups. Roughly speaking, in the left reductive regular case, the left and the right ideals are very tightly interconnected, and the $\gl$-structure totally restricts the $\gr$-structure. More precisely, given a left reductive regular semigroup $S$, it determines two ordered objects:  
	\begin{enumerate}[(i)]
		\item a category $\los$ of principal left ideals (see (\ref{los}-\ref{los1})), and 
		\item a down-set  $\mathfrak{R}$ of the poset of principal right ideals of the semigroup $\sls$ that arises from the category $\los$ (see (\ref{eqnr})).		
	\end{enumerate}	 
	We shall show that  (see Proposition \ref{prolsr}) the poset of principal right ideals of $S$ is isomorphic to the down-set  $\mathfrak{R}$, which in turn `sits inside' the category $\los$ described in (i) above. As a result of this rather neat situation in left reductive regular semigroups, we can avoid the use of several complicated notions that would otherwise be required in a general discussion of arbitrary regular semigroups (see \cite{cross}).
	
	Just as an element of a group $G$ may be represented as a permutation of the set $G$, in this paper we represent an element of a left reductive regular semigroup $S$ by a \emph{cone} in the category $\los$ (see Definition \ref{dfnnormcone}). In an arbitrary regular semigroup \cite{cross}, Nambooripad used a pair of \emph{cross-connected cones} to represent a typical element. 
	
	Notice that in \cite{cross}, Nambooripad also briefly outlined the construction of left reductive regular semigroups\footnote{Nambooripad had reversed the convention and called these semigroups right reductive in \cite{cross}. We shall follow Clifford and Preston \cite[Section 1.3]{clif} wherein these semigroups are named left reductive. This will coincide with the convention in $\gl$-unipotent semigroups later (see Section \ref{seccxnlus}).} 
	inside his framework of cross-connections.  However, his construction involved two normal categories and their duals, certain $\mathbf{Set}$-valued functors, and the rather sophisticated definition of cross-connection relating all these categories (see \cite[Section 5]{tx} for a concrete construction of a left reductive regular semigroup using cross-connections). In contrast, taking advantage of the less complicated structure on hand, our construction uses just one normal category and bypasses most of the complicated tools of \cite{cross}, including the cross-connections. In this way, our construction drastically reduces the entry threshold of the cross-connection approach to the structure of semigroups.
	
	The paper is divided into seven sections. After this introduction, in Section \ref{secprel}, we briefly recall the essential preliminary notions regarding semigroups and categories. We also discuss the initial layer of our construction including the notion of  normal category  from Nambooripad's treatise \cite{cross}. In Section \ref{seccxnlrs}, we bifurcate ourselves from \cite{cross} and introduce connected categories. We describe the structure of left reductive regular semigroups using connected categories in Section \ref{ssecslrs} and prove a category equivalence in Section \ref{sseccateqlrs}. We specialise the discussion to the category of $\gl$-unipotent semigroups in Section \ref{seccxnlus}. This culminates in a category equivalence of $\gl$-unipotent (and $\gr$-unipotent) semigroups and \emph{supported}  categories. In Section \ref{seccxnrrb}, we use our construction to describe a category adjunction between the category of supported categories and the category of right (and left) regular bands. In Section \ref{secinvs}, we discuss what is arguably the most important class of semigroups: the class of inverse semigroups. Here, we introduce \emph{self-supported} categories which capture the isomorphism between the left and the right ideal structures in inverse semigroups. This leads to a category equivalence just as in \cite{locinverse} but this is much simpler and sheds more light on the symmetry of these semigroups. Finally, in Section \ref{secrm}, we discuss regular monoids and totally left reductive semigroups. One of the significant results in this section is Theorem  \ref{thmnccat} which identifies the category of semigroups corresponding to the category of normal categories (also see Corollary \ref{cortlclr}). We describe regular monoids using bounded above normal categories and provide concrete descriptions of connected categories for semigroups in some important classes such as transformation semigroups, linear transformation semigroups and symmetric inverse monoids. These new descriptions subsume (and improve) the discussions in \cite{tx,tlx} and also illustrate the precision of our construction.

	The following table lists the various categories of left reductive regular semigroups considered in the paper and their corresponding categories of connected categories:
	
	\begin{center}
		\vspace{.5cm}
		\begin{tabular}{lcl}
			\hline
			Semigroups&& Categories\\
			\hline\\
			$\mathbf{LRS}$- left reductive regular semigroups&&  $\mathbf{CC}$- connected categories\\[3pt]
			$\mathbf{LUS}$- $\gl$-unipotent semigroups&&  $\mathbf{SC}$- supported categories\\[3pt]
			$\mathbf{RRB}$- right regular bands&& --- \\[3pt]
			$\mathbf{RRS}$- right reductive regular semigroups&&  $\mathbf{CC}$- connected categories\\[3pt]
			$\mathbf{RUS}$- $\gr$-unipotent semigroups&&  $\mathbf{SC}$- supported categories\\[3pt]
			$\mathbf{LRB}$- left regular bands&&  --- \\[3pt]
			Inverse semigroups&&  Self-supported categories\\[3pt]
			Totally left reductive semigroups&&  Normal categories\\[3pt]
			Regular monoids && Bounded above normal categories\\[3pt]
			$\tn$- full transformation monoid&&  $\fpn_\Pi$ (or simply $\fpn$) - full powerset category \\[3pt]
			$\sts$- singular transformation semigroup&&  $\pn$- singular powerset category \\[3pt]
			$\tv$- linear transformation monoid&&  $\sv$- full subspace category \\[3pt]
			$\stv$- singular linear transformation semigroup&&  $\psv$- singular subspace category \\[3pt]
			$\ix$- symmetric inverse monoid&&$\x$- partial bijection subsets category\\[3pt]
			\hline
		\end{tabular}
		\vspace{1cm}
	\end{center}

	\section{Preliminaries}\label{secprel}
	
	We assume familiarity with some basic ideas from category theory and semigroup theory. For undefined notions, we refer to \cite{mac,higginscat} for category theory and \cite{clif,howie,grillet} for semigroups and biordered sets. In the sequel, all mappings, morphisms and functors shall be written in the order of their composition, i.e., from left to right. 
	
	\subsection{Semigroups and categories}\label{sssemicat}
	
	All the semigroups we discuss in this paper are regular. In a regular semigroup, each element $a$  admits at least one \emph{inverse}  $x$, meaning that $axa=a$ and $xax=x$. The set of all such inverses of $a$ shall be denoted by $V(a)$. Given a regular semigroup $S$, we can define two quasi-orders $\lel$ and $\ler$ on $S$ as follows. For $a,b\in S$:
	$$a\lel b \iff a\in Sb \:\text{ and }\: a\ler b \iff a\in bS.$$
	Then the Green's relations $\gl$ and $\gr$ are the equivalence relations defined on the semigroup $S$ as follows: 
	$$\gl~:=~\lel \cap (\lel)^{-1}\text{ and }\gr~:=~\ler \cap (\ler)^{-1}.$$ 
	Given an element $a\in S$, we shall denote the $\gl$ and $\gr$ classes of $a$ by $L_a$ and $R_a$, respectively. The natural partial order $\leqslant$ on a regular semigroup $S$ is given by $\leqslant~:=~\lel \cap \ler$ \cite{kss1980}. In the sequel, we shall denote the restrictions of the above mentioned relations to the set $E(S)$ of idempotents of $S$ by the same symbols. Recall that a \emph{down-set} of a poset $(P,\le)$ is a subposet $D$ of $P$ such that $p\le d\in D$ implies $p\in D$. Given an idempotent $e\in E(S)$, we shall denote the \emph{down-set generated by} $e$ as $\omega(e):=\{ f\in E(S): f \leqslant e\}$, where $\leqslant$ is the natural partial order on a semigroup $S$. 
	
	In this paper, we shall deal with two types of categories. The first type are locally small categories whose objects are algebraic structures and morphisms are structure preserving mappings. An example of this type is the category $\mathbf{LRS}$ of left reductive regular semigroups whose objects are left reductive semigroups and morphisms are semigroup morphisms. These categories will be dealt with in a standard way and we will be concerned about adjunctions and equivalences between such categories. The second type are small categories which are treated as algebraic structures by themselves; for example, a connected category as in Definition \ref{dfncncat}. Observe that these are specialised small categories and carry additional structure. So, when comparing such small categories, we shall employ stronger notions such as CC-morphisms (see Definition \ref{dfnccmor}). It is worth mentioning that we shall also consider locally small categories (i.e., of the first type) whose objects are small categories of the second type. For example, the category $\mathbf{CC}$ of connected categories has connected categories as objects and CC-morphisms as morphisms.
	
	Given any locally small category $\mathcal C$, the class of objects of $\mathcal C$ is denoted by $v\mathcal C$, and the set of morphisms by $\mathcal C$ itself. If $c,c' \in v\mathcal{C}$ are two objects in the category $\mathcal{C}$, the set of all morphisms from $c$ to $c'$ in $\mathcal{C}$ is denoted by $\mathcal{C}(c,c')$. 
	
	\subsection{Regular semigroups and normal categories}\label{ssecnc}
	
	Now, we proceed to give a quick introduction regarding the notion of normal categories and how these categories characterise the principal left ideals of a regular semigroup.   
	
	Recall that a morphism in a category is called a \emph{monomorphism} if it is right cancellable and an \emph{epimorphism} if it is left cancellable. A morphism $f\colon c\to c'$ in a category $\mathcal{C}$ is said to be an \emph{isomorphism} if there exists a morphism $g\colon c'\to c$ in $\mathcal{C}$ such that $fg=1_c$ and $gf=1_{c'}$. 
	
	A \emph{preorder} category is a category such that there is at most one morphism between any two given (possibly equal) objects. A strict preorder $\mathcal{P}$ is a preorder in which the only isomorphisms are the identity morphisms. In a strict preorder, the relation $\preceq$ on the class $v\mathcal{P}$ defined by:
	$$p\preceq q \iff \mathcal{P}(p,q) \ne \phi \text{ for } p,q \in v\mathcal{P}$$
	is a partial order. Hence, a \emph{small} strict preorder category $\mathcal{P}$ is equivalent to a poset $(v\mathcal{P},\preceq)$.
	
	\begin{dfn}\label{catsub}
		Let $\mathcal{C}$ be a small category and $\mathcal{P}$ a subcategory of $\mathcal{C}$. Then the pair $(\mathcal{C},\mathcal{P})$ (often denoted simply by $\mathcal{C}$) is said to be a \emph{category with subobjects} if
		\begin{enumerate}
			\item $\mathcal{P}$ is a strict preorder with $v\mathcal{P}=v\mathcal{C}$;
			\item every $f\in \mathcal{P}$ is a monomorphism in $\mathcal{C}$;
			\item if $f,g\in \mathcal{P}$ and if $f=hg$ for some $h\in \mathcal{C}$, then $h\in \mathcal{P}$.
		\end{enumerate}
	\end{dfn}
	
	Before proceeding further, we introduce a concrete example of a category with subobjects. We shall return to this example repeatedly in the sequel, most notably in Section \ref{ssectrans}. This running example is designed to prepare the reader for the layered construction of the connected category developed in the next section. We shall recall ideas from \cite{tx,tx1,tx2} and use them to illustrate a concrete example of a connected category in Section \ref{ssfpn}. We also rectify an error in these papers by assuming the underlying set to be finite rather than arbitrary.
	
	\begin{ex}\label{exfpn0}
		(Full power set category, $\fpn$) \: Let $\ns:=\{1,\dots,n\}$. Then the set of all subsets of $\ns$ forms a small category $\fpn$ with mappings as morphisms, i.e.,
		$$v\fpn:=\{A: A\subseteq \ns\}\:\text{ and }\:\fpn(A,B):=\{f: f \text{ is a mapping from }A \text{ to } B \}.   $$
		Also, given $A\subseteq \ns$, the identity map $1_A$ is the identity morphism at the object $A$ in $\fpn$. Observe that $\fpn$ is a small, full subcategory of the large (in fact, locally small) category $\mathbf{Set}$.  
		
		Now, we can realise $\fpn$ as a category with subobjects $(\fpn,\mathcal{P})$ as follows. Let $v\mathcal{P}:=v\fpn$ and for subsets $A\subseteq B \subseteq \ns$, we define $$\mathcal{P}(A,B):= i(A,B)$$ 
		where $ i(A,B)$ is the set inclusion map from $A$ to $B$. Then  $\mathcal{P}$ is a strict preorder category, or equivalently $(v\mathcal{P},\subseteq)$ is a poset. Observe that monomorphisms in $\fpn$ are precisely injective maps. So, it is routine to verify that the pair  $(\fpn,\mathcal{P})$ satisfies Definition \ref{catsub} and hence forms a category with subobjects.
	\end{ex}

	Motivated in part by the previous example, in an arbitrary category $(\mathcal{C} ,\mathcal{P})$ with subobjects, the morphisms in $ \mathcal{P}$ will be called \emph{inclusions}. Since $\mathcal{P}$ is a strict preorder, it follows that there is a unique inclusion between two given objects. If $c \to c'$ is an inclusion, we have $c\preceq c'$, and we denote this inclusion by $j(c,c')$. An inclusion $j(c,c')$ \emph{splits} if there exists $q\colon c'\to c \in \mathcal{C}$ such that $j(c,c')q =1_{c}$, and a morphism $q$ satisfying such an equality is called a \emph{retraction}.
	
	\begin{dfn}\label{normfact}
		Let $\mathcal{C}$ be a category with subobjects. A morphism $f$ in $\mathcal{C}$ is said to have a \emph{normal factorisation} if $f=quj$, where $q$ is a retraction, $u$ is an isomorphism and $j$ is an inclusion, respectively, in $\mathcal{C}$.
	\end{dfn}
	
	In general, a normal factorisation of a morphism in a category with subobjects need not be unique. However as in \cite[Proposition II.5]{cross}, we can see that given two normal factorisations $f=quj=q'u'j'$ of a morphism $f$, we always have $qu=q'u'$ and $j=j'$. So for a normal factorisation $f=quj$, the  epimorphism $qu$ does not depend on the factorisation. Hence any morphism $f$ in a category with subobjects has a \emph{unique} factorisation of the form $f=pj$, where $p=qu$ is an epimorphism and $j$ is an inclusion. Such a factorisation is called as a \emph{canonical factorisation} of the morphism $f$. 
	
	The morphism $p$ from the canonical factorisation is known as the  \emph{epimorphic component} of the morphism $f$ and is denoted in the sequel by $f^\circ$. Similarly, the morphism $j$ is known as the inclusion of $f$ and denoted by $j_f$. The codomain of $f^\circ$ is called the \emph{image} of $f$ and shall be denoted as  $\im f$. Likewise, the codomain of the retraction $q$ is called the \emph{coimage} of $f$ and denoted by $\coim f$. Observe that for a given morphism $f$, although $\im f$ is unique, $\coim f$ need not be uniquely defined. We collect the following results as a lemma which will be quite useful in the sequel for manipulating expressions involving morphisms.
	
	\begin{lem}[{\cite[Corollary II.4, Proposition II.5 and II.7]{cross}}]\label{lemepi}
		Let $\mathcal{C}$ be a category  where all inclusions split and every morphism has a normal factorisation. Then the following are true.
		\begin{enumerate}
			\item Every morphism $f$ in $\mathcal{C}$ has a unique epimorphic component $f^\circ$ i.e., $f^\circ$ is independent of the chosen normal factorisation of $f$.
			\item If $p$ is an epimorphism, then the epimorphic component $p^\circ=p$.
			\item If $f$ and $g$ are composable morphisms such that the inclusion of $f$ is $j_f$, then 
			$$(fg)^\circ=f^\circ (j_f g)^\circ.$$
			\item The inclusion of an epimorphism $p\in\mathcal{C}$ is the identity morphism, and so every normal factorisation of $p$ is of the form $p=qu$, where $q$ is a retraction and  $u$ is an isomorphism.
			\item Dually, the retraction of a monomorphism $m\in\mathcal{C}$ is the identity morphism, and so every normal factorisation of $m$ is of the form $m=uj$, where  $u$ is an isomorphism and $j$ is an inclusion. In particular, the epimorphic component of an inclusion is the identity morphism.
		\end{enumerate}	
	\end{lem}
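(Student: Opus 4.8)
The plan is to prove the five items in an order that lets the later ones bootstrap off the earlier ones, treating items (2) and (4) as a single technical core and deriving (3) formally from (1) and (4). Throughout I write composition left-to-right, as in the paper. Item (1) requires essentially no new work: it is exactly the content of the discussion preceding the statement, where the cited \cite[Proposition II.5]{cross} gives that any two normal factorisations $f=quj=q'u'j'$ satisfy $qu=q'u'$. Hence the epimorphic component $f^\circ:=qu$ is well defined, independent of the chosen normal factorisation, and I will record this and use it freely.

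The heart of the argument is item (4), which immediately yields (2). Let $p\colon a\to b$ be an epimorphism with canonical factorisation $p=p^\circ j_p$, where $j_p=j(\im p,b)$. Since $j_p$ is an inclusion it splits, so there is a retraction $r\colon b\to\im p$ with $j_p r=1_{\im p}$. The key computation is $p\,(r j_p)=p^\circ j_p r j_p=p^\circ(j_p r)j_p=p^\circ j_p=p=p\,1_b$, whence left cancellation of the epimorphism $p$ forces $r j_p=1_b$. Thus $j_p$ and $r$ are mutually inverse. The main obstacle, and the only genuinely delicate point, is to promote this \emph{categorical} isomorphism to an identity: I will apply axiom (3) of Definition \ref{catsub} to the equation $1_b=r j_p$ (with $1_b,j_p\in\mathcal P$) to conclude $r\in\mathcal P$, so that $r$ is an inclusion giving $b\preceq\im p$; together with $\im p\preceq b$ and antisymmetry of $\preceq$ this forces $\im p=b$ and $j_p=1_b$, since $\mathcal P$ is a strict preorder. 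Hence $p=p^\circ$, proving (2), and every normal factorisation $p=quj$ has $j=1$, i.e. $p=qu$, proving (4).

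Item (5) is the exact formal dual. Starting from a monomorphism $m=quj$, the retraction $q$ splits an inclusion $i$ with $iq=1_{\coim m}$; the computation $(qi)m=m$ together with right cancellation of $m$ gives $qi=1_a$, and the same appeal to axiom (3) of Definition \ref{catsub} forces $q=1_a$, so $m=uj$. The final clause follows because an inclusion $j(c,c')$ admits the trivial normal factorisation $1_c\,1_c\,j(c,c')$, so its epimorphic component is $1_c$ by (1).

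Finally, item (3) follows cleanly from (1) and (4). Writing the canonical factorisations $f=f^\circ j_f$ and $j_f g=(j_f g)^\circ j_{j_f g}$, I obtain $fg=f^\circ(j_f g)^\circ\,j_{j_f g}$. The morphism $e:=f^\circ(j_f g)^\circ$ is a composite of two epimorphisms, hence an epimorphism, so by (4) it has a normal factorisation $e=qu$ with $q$ a retraction and $u$ an isomorphism. Then $fg=qu\,j_{j_f g}$ is a bona fide normal factorisation of $fg$, and comparing epimorphic components via (1) yields $(fg)^\circ=qu=e=f^\circ(j_f g)^\circ$, as required. I expect the only real subtlety in the whole lemma to be the strict-preorder/axiom-(3) step in (4)--(5); everything else is bookkeeping with canonical factorisations.
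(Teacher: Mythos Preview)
Your argument is correct. The paper itself does not supply a proof of this lemma: it is quoted verbatim from \cite[Corollary II.4, Proposition II.5 and II.7]{cross}, and item (1) is even spelled out in the discussion preceding the statement. So there is no ``paper's own proof'' to compare against beyond that citation.

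That said, your route is essentially the standard one, and the delicate step you flag is exactly the right one. The key move in (4)---cancelling $p$ to get $rj_p=1_b$, then using axiom (3) of Definition \ref{catsub} on $1_b=r\,j_p$ to force $r\in\mathcal P$ and hence $\im p=b$ by antisymmetry of the strict preorder---is clean and correct; the dual in (5) goes through verbatim with $1_a=q\,i$. Your derivation of (3) from (1) and (4), by rewriting $fg=f^\circ(j_fg)^\circ\,j_{j_fg}$ and then normally factorising the epimorphism $f^\circ(j_fg)^\circ$ via (4), is also the expected argument. Nothing is missing.
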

	
	\begin{ex}\label{exfpn1}
		Let $\fpn$ be the full power set category  of Example \ref{exfpn0}. Given an inclusion map $i(A,B)$ in $\fpn$, we can always find a retraction map $q\colon B\to A$ in the category $\fpn$ such that $i(A,B)\:q = 1_A$. Observe that this retraction $q$ need not be unique, in general. Any mapping $f$ in $\fpn$ from $A$ to $B$ has a uniquely defined image, and so the image of the morphism $f$ in the category $\fpn$ is the image $Af$ of the mapping. 
		
		Further, given a mapping $f$ from $A$ to $B$, let $B':= Af$ be the image of the mapping $f$ so that $j:=i(B',B)$ is an inclusion map. Now, the map $f$ determines a partition of $A$ given by:
		\begin{equation}\label{eqnpi}
			\pi_f:=\{ (x,y)\in A\times A ~:~xf=yf \}.
		\end{equation}
		Let $A'$ be a cross-section of the partition $\pi_f$, and given an arbitrary $a\in A'$, let $[a]$ be the equivalence class of $\pi_f$ in the set $A$ containing $a$. Define $q\colon A\to A'$ as the surjection given by  $q\colon [a]\mapsto a $ and then we have $i(A',A)q=1_{A'}$. Also, $u:=f_{|A'}$ will be a bijection from $A'$ to $B'$. Hence we have $f= quj$ as illustrated in the diagram below where $q$ is a retraction, $u$ is an isomorphism and $j$ is an inclusion in the category $\fpn$. 
		\begin{equation*}\label{normalfact}
			\xymatrixcolsep{4pc}\xymatrixrowsep{4pc}\xymatrix
			{
				A' \ar@{.>}@<-3pt>[r]_{\subseteq} \ar@<-3pt>[d]_{u}   & A \ar@<-3pt>[l]_{q}\ar[d]^{f} \\       
				B' \ar@{.>}@<-3pt>[u]_{u^{-1}} \ar[r]^{\subseteq}_{j} & B 
			}
		\end{equation*}
		Summarising, every inclusion in the category $\fpn$ splits and  any morphism $f$ in $\fpn$ has a normal factorisation of the form $quj$. Observe that although the retraction $q$ is not unique (as it depends on the choice of the cross-section $A'$), the epimorphic component $f^\circ=qu\colon A \to B'$ is always unique.
	\end{ex}
	
    The next definition introduces the basic building blocks of the semigroup constructed from a category with subobjects. 
	\begin{dfn}\label{dfnnormcone}
		Let $\mathcal{C}$ be a category with subobjects and $z\in v\mathcal{C}$. A mapping $\gamma$ from $v\mathcal{C}$ to $\mathcal{C}$ defined by $\gamma\colon c\mapsto\gamma(c)\in\mathcal{C}(c,z)$  for each $c\in v\mathcal{C}$, is said to be a \emph{cone}\footnote{cones were called as \emph{normal cones} in \cite{cross}.} with vertex $z$ if:
		\begin{enumerate}
			\item whenever $a\preceq b$, we have $j(a,b) \gamma(b)=\gamma(a)$;
			\item there exists at least one $c\in v\mathcal{C}$ such that $\gamma(c)\colon c\to z$ is an isomorphism.
		\end{enumerate}
	\end{dfn}
	Given a cone $\gamma$, we denote by $z_\gamma$ the vertex of $\gamma$ and the morphism $\gamma(c)$ is called the \emph{component} of the cone $\gamma$ at the object $c$. The figure \ref{figcone} illustrates a typical cone $\gamma$ with vertex $z$ in a category $\mathcal{C}$.
	
	\begin{figure}[ht]
		\begin{center}
			\begin{tikzpicture}[scale=.5]
				\tikzset{shorten >= 10pt}
				\foreach \x in {-5,-1,1,3,7} {\draw[->] (\x,5) to (1,-2);} 
				
				\node () at (1,-2) {\footnotesize $z$};
				\node () at (-5.3,5.5) {\footnotesize $a$};
				\node () at (-1.1,5.5) {\footnotesize $b$};
				\node () at (1,5.5) {\footnotesize $c$};
				\node () at (3.1,5.5) {\footnotesize $z$};
				\node () at (7.3,5.5) {\footnotesize $x$};
				
				\node () at (-4,3) {\footnotesize $\gamma(a)$};
				\node () at (-1,3) {\footnotesize $\gamma(b)$};
				\node () at (1.2,3) {\footnotesize $\gamma(c)$};
				\node () at (3.1,3) {\footnotesize $\gamma(z)$};
				\node () at (6,3) {\footnotesize $\gamma(x)$};

				\draw[->] [densely dotted] (-4.8,5) to node[below] {\scriptsize $j(a,b)$} (-0.7,5) ;
				\draw [loosely dotted] (3.5,5.5) to   (7.2,5.5) ;
				
			\end{tikzpicture}
		\end{center}
		\caption{A cone $\gamma$ with vertex $z_\gamma=z$}
		\label{figcone}
		
	\end{figure}
	
	\begin{dfn}\cite[Section III.1.3]{cross}\label{dfnnormc}
		A category $\mathcal{C}$ is said to be a \emph{normal category} if:
		\begin{enumerate}
			\item [(NC 1)] $\mathcal{C}$ is a category with subobjects;
			\item [(NC 2)] every inclusion in $\mathcal{C}$ splits;
			\item [(NC 3)] every morphism in $\mathcal{C}$ admits a normal factorisation;
			\item [(NC 4)] for each $c\in v\mathcal{C}$ there exists a cone $\mu$ with vertex $c$ such that $\mu(c)=1_c$.
		\end{enumerate}
	\end{dfn}
	
	As previously alluded in Section \ref{sssemicat}, a normal category is a small category with extra structure, i.e., of the second type, and hence the functors between such categories need to respect the extra structure. So for two normal categories, we consider inclusion preserving functors between them. Given a functor $F$, we shall denote its object map by $vF$ and the  morphism map by $F$ itself. Recall that a functor $F$ is said to be \emph{faithful} and \emph{full} if the morphism map $F$ is injective and surjective, respectively, and a functor with both properties is called \emph{fully-faithful}. Two normal categories are said to be \emph{isomorphic} if we have an {inclusion preserving} functor $F$ which is fully-faithful such that the object map $vF$ is a bijective order isomorphism.
	
		\begin{ex}\label{exfpn2}
		The full power set category $\fpn$ (see Examples \ref{exfpn0} and \ref{exfpn1}) is a normal category. We have already seen that $\fpn$  is a category with subobjects such that every inclusion splits and any morphism in $\fpn$ has a normal factorisation. Now, for any object $A\subseteq\ns$ let $\alpha\colon \ns\to A$  be a mapping such that $\alpha_{|A}=1_A$. Then for each subset $S\subseteq \ns$, letting $\epsilon(S):=\alpha_{|S}$, we see that  $\epsilon$ is a cone in $\fpn$ with vertex $A$ such that $\epsilon(A)=\alpha_{|A}=1_A$.  Hence $\fpn$ is normal.
	\end{ex}
	
	Let $\mathcal{C}$ be a normal category and let $\gamma$ be a cone in $\mathcal{C}$. If $f\in \mathcal{C}(z_\gamma,z_f)$ is an epimorphism with $\im f=z_f$, then as in \cite[Lemma I.1]{cross}, we can see that the map
	\begin{equation}\label{eqnbin0}
		\gamma\ast f\colon c \mapsto \gamma(c)f \text{ for all } c\in v\mathcal{C}	
	\end{equation} 
	is a cone such that the vertex $z_{\gamma\ast f}=z_f$. Recall that composition of two epimorphisms is again an epimorphism. So in the sequel, given a cone $\gamma$ and two composable epimorphisms $f$ and $g$, we shall write $\gamma\ast f\ast g$ to denote $\gamma\ast (fg)=(\gamma\ast f)\ast g$. 
	
	Now, given two cones $\gamma$ and $\delta$,
	\begin{equation}\label{eqnbin}
		\gamma \cdot\delta=\gamma \ast(\delta(z_{\gamma}))^\circ
	\end{equation}
	where $(\delta(z_{\gamma}))^\circ$ is the epimorphic component of the morphism $\delta(z_{\gamma})$, defines a binary composition on the set of all cones in $\mathcal{C}$. This binary composition on the set of cones is illustrated in Figure \ref{figbin} wherein the components of the composed cone $\gamma\cdot\delta$ are drawn in \textcolor{red}{red} colour. So, for instance, the component of the cone $\gamma\cdot\delta$ at the object $a$ is the morphism $\gamma(a)(\delta(z_{\gamma}))^\circ$. Observe that the vertices satisfy $z_{\gamma\cdot\delta} \preceq z_{\delta}$ but the inclusion $j(z_{\gamma\cdot\delta},z_{\delta})$ need not always be an identity morphism, i.e.,  $z_{\gamma \cdot \delta}\neq z_{\delta}$. In the sequel, we shall often denote the binary composition of cones by juxtaposition.
	\begin{figure}[ht]
		\begin{center}
			\begin{tikzpicture}[scale=.6]
				\tikzset{shorten >= 5pt}
				\foreach \x in {-5,-1,1,3,5,7} {\draw[red,thick,->] (\x,5) to (1,-2);} 
				
				\node () at (1,-2) {\footnotesize $z_\gamma$};
				\node () at (-5.3,5.5) {\footnotesize $a$};
				\node () at (-1.1,5.5) {\footnotesize $b$};
				\node () at (1,5.5) {\footnotesize $z_\gamma$};
				\node () at (3.1,5.5) {\footnotesize $z_{\gamma\cdot\delta}$};
				\node () at (5.2,5.5) {\footnotesize $z_{\delta}$};
				\node () at (7.3,5.5) {\footnotesize $x$};
				\node () at (-3,4.7) {\tiny $j(a,b)$};
				\node () at (3.9,4.7) {\tiny $j(z_{\gamma\cdot\delta},z_{\delta})$};
				\draw [loosely dotted] (5.6,5.5) to   (7.2,5.5) ;
				\draw[->] [densely dotted] (-5,5) to  (-0.7,5) ;
				\draw[->] [densely dotted] (3,5) to  (5.35,5) ;
				
				\node () at (-4,3) {\footnotesize $\gamma(a)$};
				\node () at (-1,3) {\footnotesize $\gamma(b)$};
				\node () at (0.5,3) {\footnotesize $\gamma(z_\gamma)$};
				\node () at (2.2,3) {\footnotesize $\gamma(z_{\gamma\cdot\delta})$};
				\node () at (4.1,3) {\footnotesize $\gamma(z_\delta)$};
				\node () at (6,3) {\footnotesize $\gamma(x)$};

				\node () at (1,-9) {\footnotesize $z_\delta$};
				\node () at (1,-7) {\footnotesize $z_{\gamma\cdot\delta}$};
				\node () at (-5.3,-2) {\footnotesize $a$};
				\node () at (-1.1,-2) {\footnotesize $b$};
				\node () at (3.1,-2) {\footnotesize $z_{\gamma \cdot \delta}$};
				\node () at (5.2,-2) {\footnotesize $z_{\delta}$};
				\node () at (7.3,-2) {\footnotesize $x$};
				\draw[red,thick,->] (1,-2.3) to (1,-7);
				\draw[->] [densely dotted] (1,-7.3) to  (1,-9) ;
				\node () at (0.9,-7.6) {\tiny $j(z_{\gamma\cdot\delta},z_{\delta})$};
				\node () at (0.75,-4) {\footnotesize $(\delta(z_{\gamma}))^\circ$};
				\node () at (6.2,-4) {\footnotesize $\delta(x)$};
				\draw [loosely dotted] (5.6,-2) to   (7.2,-2) ;
				\foreach \x in {-5,-1,1,3,5,7} {\draw[->][dotted] (\x,-2.3) to (1,-9);} 
				\draw[->] [densely dotted] (-5,-2.3) to  (-0.7,-2.3) ;
				\draw[->] [densely dotted] (3,-2.3) to  (5.35,-2.3) ;

			\end{tikzpicture}
		\end{center}
		\caption{Binary composition of cones $\gamma$ (on top) and $\delta$ (below) in a normal category $\mathcal{C}$.}
		\label{figbin}
		
	\end{figure}

	\begin{lem}[{\cite[Theorem I.2]{cross}}]\label{lemrs}
		Let $\mathcal{C}$ be a normal category. A cone $\mu$ in $\mathcal{C}$ is an idempotent if and only if $\mu(z_\mu)=1_{z_\mu}$. The set $ \snc $ of all cones forms a regular semigroup  under the binary composition defined in (\ref{eqnbin}). Given a cone $\gamma \in \snc$ with vertex $z_\gamma$, an inverse cone $\chi$ in $\snc$ is given by $\chi:=\mu(z_\gamma)\ast (\gamma(d))^{-1}$, where $\mu(z_\gamma)$ is an idempotent cone in $\snc$  with vertex $z_\gamma$ and $d\in v\mathcal{C}$ is an object such that $\gamma(d)$ is an isomorphism. 
	\end{lem}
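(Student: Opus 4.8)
The plan is to handle the three assertions in order, since regularity (the substance of the second claim) will follow from the explicit inverse of the third claim, and both rest on the idempotent characterisation and the calculus of Lemma \ref{lemepi}.

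\emph{Idempotents.} First I would compute $\mu\cdot\mu=\mu\ast(\mu(z_\mu))^\circ$ directly from (\ref{eqnbin}). If $\mu(z_\mu)=1_{z_\mu}$, then $(\mu(z_\mu))^\circ=1_{z_\mu}$ by Lemma \ref{lemepi}(2), so $(\mu\cdot\mu)(c)=\mu(c)\,1_{z_\mu}=\mu(c)$ for every $c\in v\mathcal{C}$ and $\mu$ is idempotent. Conversely, if $\mu\cdot\mu=\mu$, comparing vertices gives $z_\mu=z_{\mu\cdot\mu}=\im\mu(z_\mu)$, so the canonical factorisation of $\mu(z_\mu)$ has inclusion part $j(z_\mu,z_\mu)=1_{z_\mu}$; hence $\mu(z_\mu)=(\mu(z_\mu))^\circ$ is an epimorphism $z_\mu\to z_\mu$. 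Evaluating $\mu\cdot\mu=\mu$ at an object $d$ for which $\mu(d)$ is an isomorphism (one exists by Definition \ref{dfnnormcone}(2)) yields $\mu(d)(\mu(z_\mu))^\circ=\mu(d)$, and left-cancelling the epimorphism $\mu(d)$ gives $(\mu(z_\mu))^\circ=1_{z_\mu}$, whence $\mu(z_\mu)=1_{z_\mu}$.

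\emph{Associativity.} Closure is immediate from (\ref{eqnbin0}), since $(\delta(z_\gamma))^\circ$ is an epimorphism whose image coincides with its codomain. The heart of the proof is associativity, which I would verify componentwise. Writing $e:=(\delta(z_\gamma))^\circ$ and noting $z_{\gamma\cdot\delta}=\im\delta(z_\gamma)\preceq z_\delta$, the left-hand side evaluates to $((\gamma\cdot\delta)\cdot\epsilon)(c)=\gamma(c)\,e\,(\epsilon(z_{\gamma\cdot\delta}))^\circ$, while expanding the right-hand side and applying Lemma \ref{lemepi}(3) (with $j_{\delta(z_\gamma)}=j(z_{\gamma\cdot\delta},z_\delta)$) gives $(\gamma\cdot(\delta\cdot\epsilon))(c)=\gamma(c)\,e\,\bigl(j(z_{\gamma\cdot\delta},z_\delta)\,(\epsilon(z_\delta))^\circ\bigr)^\circ$. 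Thus it suffices to establish
\begin{equation*}
(\epsilon(z_{\gamma\cdot\delta}))^\circ=\bigl(j(z_{\gamma\cdot\delta},z_\delta)\,(\epsilon(z_\delta))^\circ\bigr)^\circ .
\end{equation*}
This is the only point where the defining property of a cone is genuinely invoked: by Definition \ref{dfnnormcone}(1) we have $\epsilon(z_{\gamma\cdot\delta})=j(z_{\gamma\cdot\delta},z_\delta)\,\epsilon(z_\delta)$. Substituting the canonical factorisation $\epsilon(z_\delta)=(\epsilon(z_\delta))^\circ j_{\epsilon(z_\delta)}$ and then the canonical factorisation of $h:=j(z_{\gamma\cdot\delta},z_\delta)\,(\epsilon(z_\delta))^\circ=h^\circ j_h$ exhibits $\epsilon(z_{\gamma\cdot\delta})=h^\circ\,(j_h\,j_{\epsilon(z_\delta)})$ as a product of an epimorphism and an inclusion; uniqueness of the canonical factorisation then forces $(\epsilon(z_{\gamma\cdot\delta}))^\circ=h^\circ$, which is precisely the identity needed. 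I expect this matching of epimorphic components through uniqueness of the canonical factorisation to be the main obstacle, everything else being bookkeeping with the $\ast$ and $\cdot$ notation.

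\emph{Regularity and inverses.} Finally I would prove regularity by checking that the stated $\chi$ is an inverse. Fix an idempotent cone $\mu$ with vertex $z_\gamma$ (which exists by the normality axiom (NC 4) and is idempotent by the first part, since $\mu(z_\gamma)=1_{z_\gamma}$), an object $d$ with $g:=\gamma(d)$ an isomorphism, and set $\chi:=\mu\ast g^{-1}$, a cone with vertex $d$. Using $\mu(z_\gamma)=1_{z_\gamma}$ together with Lemma \ref{lemepi}(2), one computes $\gamma\cdot\chi=\gamma\ast g^{-1}$ and $\chi\cdot\gamma=\mu$; feeding these back through (\ref{eqnbin}) gives $(\gamma\cdot\chi)\cdot\gamma=\gamma\ast(g^{-1}g)=\gamma\ast 1_{z_\gamma}=\gamma$ and $(\chi\cdot\gamma)\cdot\chi=\mu\cdot\chi=\mu\ast g^{-1}=\chi$. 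Hence $\gamma\chi\gamma=\gamma$ and $\chi\gamma\chi=\chi$, so $\chi\in V(\gamma)$ and $\snc$ is regular. These final computations are routine once associativity is available, so I anticipate no further difficulty.
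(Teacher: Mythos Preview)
The paper does not prove this lemma; it is quoted verbatim from Nambooripad's treatise \cite{cross} (Theorem I.2) and no argument is supplied in the present paper, so there is nothing in-text to compare your proposal against. That said, your proof is correct and complete: the idempotent characterisation via left-cancelling the isomorphic component $\mu(d)$, the associativity reduction to $(\epsilon(z_{\gamma\cdot\delta}))^\circ=\bigl(j(z_{\gamma\cdot\delta},z_\delta)(\epsilon(z_\delta))^\circ\bigr)^\circ$ and its resolution through uniqueness of canonical factorisations, and the explicit verification that $\chi=\mu\ast(\gamma(d))^{-1}$ is an inverse of $\gamma$ all go through exactly as you describe.
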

	
	The next two lemmas follow from the discussion in \cite[Section III.2]{cross}. 
	\begin{lem}\label{lemgc}
		Let $\gamma,\delta$ be cones in the regular semigroup $ \snc $. Then the quasi-orders in $\snc$ are characterised as follows. 
		\begin{enumerate}
			\item $\gamma \lel\delta \text{ if and only if }z_{\gamma}\preceq z_{\delta}$, and so $\gamma \gl\delta \text{ if and only if }z_{\gamma}= z_{\delta}$. 
			\item $\gamma \ler \delta \text{ if and only if the component }\gamma(z_{\delta})\text{ is an epimorphism such that }\gamma=\delta \ast\gamma(z_{\delta })$. We have $\gamma\: \gr \:\delta$  if and only if   $\gamma=\delta\ast h$ for a unique isomorphism $h$; in that case $h:=\gamma(z_{\delta})$.
		\end{enumerate} 
	\end{lem}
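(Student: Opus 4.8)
The plan is to read both quasi-orders straight off the composition formula (\ref{eqnbin}) together with the fact, recalled before (\ref{eqnbin0}), that the vertex of $\gamma\ast f$ is $\im f$, and then to use regularity of $\snc$ (Lemma \ref{lemrs}) to supply the idempotent cones needed for the converse implications.

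For part (1) I would first treat the forward implication. If $\gamma\lel\delta$ then, since $\snc$ is regular, $\gamma=\sigma\cdot\delta=\sigma\ast(\delta(z_\sigma))^\circ$ for some cone $\sigma$, so by (\ref{eqnbin0}) the vertex of the right-hand side is $\im(\delta(z_\sigma))$; as $\delta(z_\sigma)\in\mathcal{C}(z_\sigma,z_\delta)$, its image is a subobject of its codomain, whence $z_\gamma=\im(\delta(z_\sigma))\preceq z_\delta$. For the converse I would manufacture from $\delta$ an idempotent cone with the same vertex: choosing an inverse $\chi\in V(\delta)$ and setting $\epsilon:=\chi\cdot\delta$, Lemma \ref{lemrs} gives that $\epsilon$ is idempotent, while $\epsilon=\chi\delta$ and $\delta=\delta\chi\delta=\delta\cdot\epsilon$ show $\epsilon\gl\delta$, so $z_\epsilon=z_\delta$ by the forward implication just proved. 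Assuming now $z_\gamma\preceq z_\delta=z_\epsilon$, the cone property and the idempotency $\epsilon(z_\epsilon)=1_{z_\epsilon}$ give $\epsilon(z_\gamma)=j(z_\gamma,z_\epsilon)\,\epsilon(z_\epsilon)=j(z_\gamma,z_\epsilon)$, an inclusion, so $(\epsilon(z_\gamma))^\circ=1_{z_\gamma}$ by Lemma \ref{lemepi}(5) and hence $\gamma\cdot\epsilon=\gamma\ast 1_{z_\gamma}=\gamma$. Thus $\gamma\lel\epsilon\lel\delta$, and $\gamma\lel\delta$ follows by transitivity. The statement for $\gl$ is then immediate from $\gl=\lel\cap(\lel)^{-1}$ and the antisymmetry of $\preceq$.

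For part (2) I would argue in the same spirit. If $\gamma\ler\delta$ then $\gamma=\delta\cdot\sigma=\delta\ast g$ with $g:=(\sigma(z_\delta))^\circ$ an epimorphism out of $z_\delta$, and reading off the vertex gives $\im g=z_\gamma$, so $g\colon z_\delta\to z_\gamma$. Conversely, given any epimorphism $g\colon z_\delta\to z_\gamma$ with $\gamma=\delta\ast g$, I recover a witness by taking an idempotent cone $\epsilon$ with vertex $z_\delta$ and setting $\sigma:=\epsilon\ast g$; then $\sigma(z_\delta)=\epsilon(z_\delta)\,g=g$ is already epic, so $(\sigma(z_\delta))^\circ=g$ by Lemma \ref{lemepi}(2) and $\delta\cdot\sigma=\delta\ast g=\gamma$, giving $\gamma\ler\delta$. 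To get the $\gr$ statement I intersect: $\gamma\gr\delta$ yields epimorphisms $g\colon z_\delta\to z_\gamma$ and $g'\colon z_\gamma\to z_\delta$ with $\gamma=\delta\ast g$ and $\delta=\gamma\ast g'$, whence $\gamma=\gamma\ast(g'g)$ and $\delta=\delta\ast(gg')$; evaluating at an object $c$ where $\gamma(c)$ (respectively $\delta(c)$) is an isomorphism, which exists by the second cone axiom, and cancelling that isomorphism forces $g'g=1_{z_\gamma}$ and $gg'=1_{z_\delta}$, so $g$ is an isomorphism. The same cancellation shows that the isomorphism $h$ with $\gamma=\delta\ast h$ is unique.

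The step I expect to be most delicate is the identification of this uniquely determined multiplier with the intrinsic component $\gamma(z_\delta)$. Evaluating $\gamma=\delta\ast g$ at the object $z_\delta$ only yields $\gamma(z_\delta)=\delta(z_\delta)\,g$, so matching $g$ with $\gamma(z_\delta)$ is not formal and is precisely where the canonical-factorisation bookkeeping of Lemma \ref{lemepi} (and, in the $\gr$ case, a careful use of the cone axiom at the vertex) must be brought to bear. I would therefore organise the argument so that the robust content---existence of an epimorphic (respectively isomorphic) multiplier $g$ together with $\gamma=\delta\ast g$---is established first, and treat the passage to the intrinsic form $g=\gamma(z_\delta)$ as the final normal-form computation, since it is the one place where the freedom in the coimage and the non-idempotency of $\delta$ interact most subtly.
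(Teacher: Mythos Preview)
The paper does not supply its own proof of this lemma; it is imported from \cite[Section III.2]{cross}. Your argument for part~(1) is correct and is the natural one. For part~(2), your ``robust content''---that $\gamma\ler\delta$ is equivalent to the existence of \emph{some} epimorphism $g\colon z_\delta\to z_\gamma$ with $\gamma=\delta\ast g$, and that $\gamma\gr\delta$ forces $g$ to be a (unique) isomorphism---is also correctly established; your witness $\sigma:=\epsilon\ast g$ for the converse is exactly right.

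Your instinct about the ``delicate step'' is well founded, and in fact the identification $g=\gamma(z_\delta)$ cannot be salvaged by bookkeeping for arbitrary $\delta$. Evaluating $\gamma=\delta\ast g$ at $z_\delta$ gives $\gamma(z_\delta)=\delta(z_\delta)\,g$, so one would need $\delta(z_\delta)$ to be an epimorphism (indeed the identity) to conclude. But $\delta(z_\delta)$ need not be epi: in the powerset model $\fpn$ of Example~\ref{exfpn2}, take $\mathbf{n}=\{1,2,3\}$ and let $\delta$ be the cone corresponding to $b\colon 1\mapsto 2,\ 2\mapsto 3,\ 3\mapsto 3$; then $z_\delta=\{2,3\}$ and $\delta(z_\delta)=b_{|\{2,3\}}$ has image $\{3\}$, which is not surjective. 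With $\gamma=\delta$ we have $\gamma\ler\delta$ trivially, yet $\gamma(z_\delta)$ is not an epimorphism, so the forward implication in the precise form stated fails. The statement is, however, correct whenever $\delta$ is an idempotent cone (since then $\delta(z_\delta)=1_{z_\delta}$ and your $g$ equals $\gamma(z_\delta)$ on the nose), and a glance at the paper shows that every invocation of Lemma~\ref{lemgc}(2) and its descendant Lemma~\ref{lemgrcd}(2) is made with $\delta$ idempotent. So rather than trying to close the gap, you should record the characterisation in the robust form you already proved (existence of an epimorphic, respectively isomorphic, multiplier) and note the simplification $g=\gamma(z_\delta)$ under the additional hypothesis that $\delta$ is idempotent.
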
 
	\begin{lem}\label{lemorder}
		Let $\nu,\mu$ be idempotent cones in the semigroup $ \snc $ and let $\leqslant$ be the natural partial order on the set of idempotents of $ \snc $. Then $\nu \leqslant \mu$ if and only if  $\nu(z_{\mu })$ is a retraction such that $\nu=\mu \ast\nu(z_{\mu }).$
	\end{lem}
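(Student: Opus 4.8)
The plan is to unwind the natural partial order $\leqslant = {\lel}\cap{\ler}$ on the idempotent cones $\nu,\mu$ using the descriptions of the two quasi-orders already obtained in Lemma \ref{lemgc}, and then to identify exactly when the epimorphism supplied by the $\ler$-relation is in fact a retraction. By definition $\nu\leqslant\mu$ means $\nu\lel\mu$ and $\nu\ler\mu$. By Lemma \ref{lemgc}(1) the first conjunct says $z_\nu\preceq z_\mu$, and by Lemma \ref{lemgc}(2) the second conjunct says that $\nu(z_\mu)$ is an epimorphism with $\nu=\mu\ast\nu(z_\mu)$. So the whole task reduces to showing that, granted $\nu=\mu\ast\nu(z_\mu)$ with $\nu(z_\mu)$ an epimorphism, the condition $z_\nu\preceq z_\mu$ is equivalent to $\nu(z_\mu)$ being a retraction.

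First I would treat the forward implication. Assuming $\nu\leqslant\mu$, I already have $z_\nu\preceq z_\mu$, so the inclusion $j(z_\nu,z_\mu)$ exists. Applying the cone condition (Definition \ref{dfnnormcone}(1)) to $\nu$ at the pair $z_\nu\preceq z_\mu$ gives $j(z_\nu,z_\mu)\,\nu(z_\mu)=\nu(z_\nu)$, and since $\nu$ is idempotent Lemma \ref{lemrs} yields $\nu(z_\nu)=1_{z_\nu}$. Hence $j(z_\nu,z_\mu)\,\nu(z_\mu)=1_{z_\nu}$, which is precisely the statement that the inclusion $j(z_\nu,z_\mu)$ splits through $\nu(z_\mu)$, i.e.\ that $\nu(z_\mu)$ is a retraction. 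Together with $\nu=\mu\ast\nu(z_\mu)$ coming from $\nu\ler\mu$, this is the asserted description.

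For the converse I would assume $\nu(z_\mu)$ is a retraction with $\nu=\mu\ast\nu(z_\mu)$ and recover $\nu\leqslant\mu$. Since a retraction is a split, hence genuine, epimorphism, the equality $\nu=\mu\ast\nu(z_\mu)$ immediately gives $\nu\ler\mu$ by Lemma \ref{lemgc}(2). Moreover the codomain of $\nu(z_\mu)$ is the vertex $z_\nu$, so by the very definition of a retraction there is an inclusion from $z_\nu$ to its domain $z_\mu$; the existence of this inclusion means $z_\nu\preceq z_\mu$, whence $\nu\lel\mu$ by Lemma \ref{lemgc}(1). Combining the two quasi-orders gives $\nu\leqslant\mu$.

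The argument is essentially bookkeeping once the right pieces are assembled, so I do not expect a serious obstacle; the single point needing care is the bridge between \emph{epimorphism} and \emph{retraction}, which is exactly where the vertex ordering $z_\nu\preceq z_\mu$ enters. The forward direction exploits the idempotency of $\nu$ (through $\nu(z_\nu)=1_{z_\nu}$) together with the cone law to manufacture the splitting, while the converse simply reads the ordering off the inclusion built into the definition of a retraction. Keeping straight that $\nu(z_\mu)$ runs from $z_\mu$ to $z_\nu$ is the only place where an error could easily creep in.
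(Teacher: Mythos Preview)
Your argument is correct. The paper does not actually prove this lemma; it merely attributes both Lemma~\ref{lemgc} and Lemma~\ref{lemorder} to the discussion in \cite[Section~III.2]{cross}. Your proof is the natural unfolding of that citation: you use the characterisation of $\lel$ and $\ler$ from Lemma~\ref{lemgc} together with the cone condition and the idempotency criterion $\nu(z_\nu)=1_{z_\nu}$ from Lemma~\ref{lemrs} to pin down exactly when the epimorphism $\nu(z_\mu)$ is a retraction. This is presumably what Nambooripad does as well, so there is nothing to contrast; your write-up simply supplies the details the paper chose to outsource.
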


	Now, we briefly describe how normal categories come from regular semigroups. Given a regular semigroup $S$, we define the category $\los$ of the principal left ideals, called the \emph{left category}, by
\begin{equation}\label{los}
	v\los = \{ Se : e \in E(S) \},
\end{equation}
	and the set of all morphisms from the object $Se$ to the object $Sf$ is the set
	\begin{equation}\label{los1}
	\los(Se,Sf) = \{ r(e,u,f) : u\in eSf \},
	\end{equation}
	where $r(e,u,f)\colon x \mapsto xu $, for each $x\in Se$.

	Given any  morphisms $r(e,u,f)$ and $r(g,v,h)$, they are \emph{equal} if and only if $e\gl g$, $f\gl h$ and $v=gu$ (or $u=ev$ ) ; and they are \emph{composable} if $Sf=Sg$ (i.e., if $f\gl g$) in which case
	$$ r(e,u,f)\: r(g,v,h) :=  r(e,uv,h).$$
	
	Observe that $\los$ has a particular subcategory $\mathcal{P}_\mathbb{L}$ given by $v\mathcal{P}_\mathbb{L}=v\los$ and there exists in $\mathcal{P}_\mathbb{L}$  a morphism from $Se$ to $Sf$ if and only if $Se\subseteq Sf$, this morphism being exactly  $ r(e,e,f)$. The morphisms of  $\mathcal{P}_\mathbb{L}$ correspond to the inclusions of principal ideals. Consequently $\mathcal{P}_\mathbb{L}$ is a strict preorder and $(\los,\mathcal{P}_\mathbb{L})$ is a category with subobjects. Given an inclusion $ r(e,e,f)\in \mathcal{P}_\mathbb{L}$, it has a right inverse $ r(f,fe,e)$ in $\los$; thus every inclusion in the category $\los$ splits. 
	
	Let $ r(e,u,f)$ be an arbitrary morphism in $\los$. Then as shown in \cite[Corollary III.14]{cross}, we can see that there exist $h\in E(L_u)$ and $g\in E(R_u)\cap \omega(e)$ such that
	$$ r(e,u,f)=  r(e,g,g) r(g,u,h) r(h,h,f),$$
	where $ r(e,g,g)$ is a retraction, $ r(g,u,h)$ is an isomorphism and $ r(h,h,f)$ is an inclusion. This is a normal factorisation of the morphism $ r(e,u,f)$. Observe that the image of the morphism $r(e,u,f)$ is uniquely determined and it is the principal left ideal $Sh=Su$, but there is a choice for the coimage $Sg$. This shows that in an arbitrary regular semigroup, although the image of a morphism is unique, the coimage need not be unique. 
	
	Further, if $a$ is an arbitrary element of a regular semigroup $S$, then for each $Se\in v\los$,  the mapping $ r^a\colon v\los\to \los$ defined by
	\begin{equation}\label{eqnprinc}
		r^a(Se) :=  r(e,ea,f),\text{ where } f\in E(L_a)
	\end{equation}
	is a cone with vertex $Sf$, usually referred to as a {\em principal cone} in the category $\los$. Observe that, for an idempotent $e\in E(S)$, we have a principal cone $ r^e$ with vertex $Se$ such that $ r^e(Se)=  r(e,e,e)=1_{Se}$. Summarising the above discussion, it can be easily verified that $\los$ is a normal category \cite[Theorem III.16]{cross}. 
	
	Conversely, given an abstractly defined normal category $\mathcal{C}$, we obtain a regular semigroup $ \snc $ of cones in $\mathcal{C}$. Then the left category $\mathbb{L}( \snc )$ of the semigroup $\snc$ is isomorphic to $\mathcal{C}$ \cite[Theorem III.19]{cross}. We shall later give an independent proof of this fact as a consequence of our results (see Proposition \ref{procatiso} and Corollary \ref{cortlclr}). 
	
	\begin{thm}[{\cite[Corollary III.20]{cross}}] \label{thmnls}
	A small category $\mathcal{C}$ is normal if and only if $\mathcal{C}$ is isomorphic to a category $\los$, for some regular semigroup $S$.
	\end{thm}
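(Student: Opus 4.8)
The plan is to prove the two implications separately, leaning on the machinery already assembled. For the ``if'' direction, suppose $\mathcal{C}$ is isomorphic to $\los$ for some regular semigroup $S$. The summary following \eqref{eqnprinc} shows that $\los$ is a normal category, so it suffices to observe that normality transports along the isomorphism: being inclusion-preserving, fully-faithful and an order isomorphism on objects, the functor carries inclusions to inclusions, splittings to splittings, normal factorisations to normal factorisations, and cones to cones. Hence each of (NC 1)--(NC 4) holds in $\mathcal{C}$ because it holds in $\los$. This direction is essentially formal.

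For the ``only if'' direction, suppose $\mathcal{C}$ is normal. By Lemma \ref{lemrs} the set $\snc$ of all cones is a regular semigroup, so I would take $S:=\snc$ and exhibit an isomorphism of normal categories $\mathcal{C}\cong\mathbb{L}(\snc)$. First I would fix, using (NC 4), an idempotent cone $\mu^c$ with vertex $c$ for each $c\in v\mathcal{C}$, and set $vF(c):=\snc\mu^c$. By Lemma \ref{lemgc}(1) the $\gl$-class of $\mu^c$, hence the principal left ideal $\snc\mu^c$, depends only on $c$, and $\snc\mu^c\subseteq\snc\mu^d$ iff $\mu^c\lel\mu^d$ iff $c\preceq d$; so $vF$ is a bijective order isomorphism onto $v\mathbb{L}(\snc)$.

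Next I would define the morphism map by $F(g):=r(\mu^c,\mu^c\ast g^\circ,\mu^d)$ for $g\colon c\to d$ with canonical factorisation $g=g^\circ j_g$. Checking that this lands in the right hom-set reduces to verifying $\mu^c\ast g^\circ\in\mu^c\,\snc\,\mu^d$: evaluating the cone at $c$ gives $(\mu^c\ast g^\circ)(c)=g^\circ$, so left multiplication by $\mu^c$ fixes it; and since $\im g\preceq d$ the cone axiom forces $\mu^d(\im g)=j_g$, an inclusion, whose epimorphic component is $1_{\im g}$, so right multiplication by $\mu^d$ also fixes it. Faithfulness is then immediate by evaluating at $c$ (recovering $g^\circ$, hence $g$). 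For fullness I would use Lemma \ref{lemgc}(2): a morphism $r(\mu^c,\theta,\mu^d)$ satisfies $\theta=\mu^c\ast\theta(c)$ with $\theta(c)$ an epimorphism and $z_\theta\preceq d$, so $g:=\theta(c)\,j(z_\theta,d)$ is a preimage. Inclusion-preservation follows from $(j(c,d))^\circ=1_c$ (Lemma \ref{lemepi}(5)), which gives $F(j(c,d))=r(\mu^c,\mu^c,\mu^d)$, precisely the inclusion of $\mathbb{L}(\snc)$.

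The hard part will be functoriality on composites: for $g\colon c\to d$ and $h\colon d\to e$ I must show $\mu^c\ast(gh)^\circ=(\mu^c\ast g^\circ)\cdot(\mu^d\ast h^\circ)$. Unwinding the cone product \eqref{eqnbin} and \eqref{eqnbin0}, and again invoking the cone axiom to identify $(\mu^d\ast h^\circ)(\im g)=j_g\,h^\circ$, reduces the right-hand side to $\mu^c\ast\bigl(g^\circ(j_g h^\circ)^\circ\bigr)$; the required identity $(gh)^\circ=g^\circ(j_g h^\circ)^\circ$ then follows from uniqueness of canonical factorisations together with Lemma \ref{lemepi}(3). I expect keeping the epimorphic components, inclusions and vertices aligned through these formulas to be the delicate step, with everything else routine bookkeeping. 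Assembling these facts, $F$ is an inclusion-preserving fully-faithful functor whose object map is an order isomorphism, so $\mathcal{C}\cong\mathbb{L}(\snc)$ and the theorem follows.
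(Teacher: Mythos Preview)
Your proof is correct, but it takes a different route from the paper's own argument. You construct the isomorphism as a functor $F\colon\mathcal{C}\to\mathbb{L}(\snc)$ by first \emph{choosing} an idempotent cone $\mu^c$ at each object and then sending $g\colon c\to d$ to $r(\mu^c,\mu^c\ast g^\circ,\mu^d)$; this is essentially Nambooripad's original argument in \cite[Section III.3.3]{cross}. The paper instead defines the functor in the opposite direction, $F\colon\mathbb{L}(\snc)\to\mathcal{C}$, by $vF(\snc\epsilon)=z_\epsilon$ and $F(r(\epsilon_1,\gamma,\epsilon_2))=\gamma(z_{\epsilon_1})\,j(z_\gamma,z_{\epsilon_2})$ (see equation \eqref{eqnF} and Lemmas \ref{lemf1}--\ref{lemf2}), and then obtains the theorem as the special case $\mathfrak{D}=\snc/\gr$ of Proposition \ref{procatiso} (recorded as Corollary \ref{cortlclr}). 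The paper's direction has the advantage that no auxiliary choices are needed (the vertex $z_\epsilon$ is canonical, whereas your $\mu^c$ is not), and it fits into the connected-category framework so that the same computation simultaneously handles the more general isomorphism $\mathbb{L}(\sncd)\cong\mathcal{C}$. Your direction, on the other hand, is more concrete and makes the inverse functor explicit; the delicate step you flag---functoriality on composites---is handled cleanly by your reduction to $(gh)^\circ=g^\circ(j_g h^\circ)^\circ$, which indeed follows from Lemma \ref{lemepi}(3) together with the observation that $(j_g h)^\circ=(j_g h^\circ)^\circ$ (since composing with the inclusion $j_h$ does not change the epimorphic component, by Lemma \ref{lemepi}(5)).
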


	It is worth mentioning here that although $\mathbb{L}( \snc ) \cong \mathcal{C}$ for a normal category $\mathcal{C}$, we \textit{do not} have $\sls$ isomorphic to $S$ for an arbitrary regular semigroup $S$. This relationship in general is more subtle, as described in Theorem \ref{thmlslr} below. So, every normal category comes from a regular semigroup although not every regular semigroup can be constructed from a normal category.
	
	Now, we shift our focus to the subclass of regular semigroups which can indeed be constructed using just one normal category. Recall that a regular semigroup $S$ is said to be \emph{left reductive} if the regular representation $\rho$ is injective. In the category $\los$, the cones are direct abstractions of the regular representation of a semigroup.  In fact, it can be shown that the semigroup $\{ r^a: a\in S\}$ of all principal cones in $\los$ is isomorphic to the image $S\rho$  of the regular representation of the semigroup $S$. Roughly speaking, the left `part' of the regular semigroup $S$ is captured by the normal category $\los$.
	
	\begin{thm}[{\cite[Theorem III.16]{cross}}]\label{thmlslr}
		Let $S$ be a regular semigroup. There is a homomorphism $\bar{\rho}\colon S \to \sls$ given by $a\mapsto r^a$. Further, $S$ is isomorphic to a subsemigroup of $\sls$ (via the map $\bar{\rho}$) if and only if $S$ is left reductive.
	\end{thm}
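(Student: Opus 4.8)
The plan is to treat the two assertions in Theorem~\ref{thmlslr} in turn: first that $\bar\rho\colon a\mapsto r^a$ is a homomorphism into $\sls$, and then that $\ker\bar\rho$ coincides with $\ker\rho$, which yields the stated equivalence at once. That each $r^a$ is a cone, so that $\bar\rho$ is well defined, is already recorded after~(\ref{eqnprinc}).

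To prove $\bar\rho$ is a homomorphism I would verify $r^a\cdot r^b=r^{ab}$ by comparing components at every object $Se\in v\los$. Fix $f\in E(L_a)$, so that $z_{r^a}=Sf$; by the definition~(\ref{eqnbin}) of the composition, $r^a\cdot r^b=r^a\ast(r^b(Sf))^\circ$. Since the image of any morphism $r(f,fb,-)$ is the principal left ideal $S(fb)$, the canonical-factorisation discussion following Theorem~\ref{thmnls} (together with Lemma~\ref{lemepi}) gives $(r^b(Sf))^\circ=r(f,fb,h)$ for some $h\in E(L_{fb})$. Composing inside $\los$, the component at $Se$ becomes
\[
r(e,ea,f)\,r(f,fb,h)=r(e,\,ea\cdot fb,\,h).
\]
The key identity is $af=a$ (because $f\gl a$), whence $ea\cdot fb=e(af)b=eab$ and the component equals $r(e,eab,h)$. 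On the other hand $r^{ab}(Se)=r(e,e(ab),k)$ with $k\in E(L_{ab})$, so by the equality criterion for morphisms of $\los$ it remains only to check $h\gl k$, i.e.\ $L_{fb}=L_{ab}$. This $\gl$-fact I would establish directly: from $ab=a(fb)\in S(fb)$ we get $ab\lel fb$, and writing $f=ya$ (possible since $f\gl a$ gives $f\in Sa$) yields $fb=y(ab)\in S(ab)$, so $fb\lel ab$; hence $fb\gl ab$, the vertices and codomains match, and $r^a\cdot r^b=r^{ab}$.

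For the second assertion I would determine exactly when $r^a=r^b$. Equating the components $r^a(Se)=r(e,ea,f)$ and $r^b(Se)=r(e,eb,g)$ at every idempotent $e$ amounts, by the same morphism-equality criterion, to $L_a=L_b$ together with $ea=eb$ for all $e\in E(S)$. I would then show that $ea=eb$ for all idempotents $e$ is equivalent to $xa=xb$ for all $x\in S$, i.e.\ to $\rho_a=\rho_b$: one direction is specialisation, while for the other, given $x$ one picks $x'\in V(x)$ and sets $e=x'x\in E(S)$, so that $xe=x$ and $xa=(xe)a=x(ea)=x(eb)=(xe)b=xb$. Moreover $\rho_a=\rho_b$ already forces $L_a=L_b$: choosing $a'\in V(a)$ and $e=aa'\in E(S)$ gives $a=ea=eb\in Sb$, so $a\lel b$, and symmetrically $b\lel a$. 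Thus the condition $L_a=L_b$ is redundant and $r^a=r^b\iff\rho_a=\rho_b$, that is, $\ker\bar\rho=\ker\rho$. Consequently $\bar\rho$ is injective (equivalently, $\bar\rho$ embeds $S$ into $\sls$) if and only if $\rho$ is, i.e.\ if and only if $S$ is left reductive.

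I expect the homomorphism computation to be the main obstacle, and within it the care needed because images in $\los$ are canonically determined whereas coimages — and hence the idempotent representatives $h,k$ — are only pinned down up to $\gl$. This is precisely why one must argue via the equality criterion for morphisms of $\los$, supported by the identities $af=a$ and $fb\gl ab$, rather than through literal equality of the chosen representatives.
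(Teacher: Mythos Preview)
The paper does not supply its own proof of Theorem~\ref{thmlslr}; it is quoted from \cite[Theorem III.16]{cross}, with only the remark preceding the statement that the semigroup $\{r^a:a\in S\}$ of principal cones is isomorphic to the image $S\rho$ of the regular representation. Your proof is correct and fleshes out exactly this remark: you verify $r^a\cdot r^b=r^{ab}$ componentwise (the identity $af=a$ and the check $fb\gl ab$ are handled cleanly), and then establish $\ker\bar\rho=\ker\rho$ by the standard trick of replacing an arbitrary $x\in S$ by the idempotent $x'x$, which is precisely what the paper's one-line remark is pointing at.
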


	Dually, we define the normal category $\ros$ of principal right ideals of a regular semigroup $S$ by:
	\begin{equation}\label{eqnrs}
		v\ros = \{ eS : e \in E(S) \} \: \text{ and }  \: \ros(eS,fS) = \{  l(e,u,f) : u\in fSe \}
	\end{equation}
	where a morphism from $eS$ to $fS$ is the mapping $ l(e,u,f)\colon x \mapsto ux$ for each $x\in eS$.

	\section{Left reductive regular semigroups} \label{seccxnlrs}
	
	We proceed to give a construction for a left reductive regular semigroup  as a subsemigroup of the semigroup $ \snc $ of cones of a normal category $\mathcal{C}$. \emph{This is where we bifurcate from Nambooripad's construction. }
	
	\subsection{Connected categories}\label{sseccc}
	
	First, recall that given any regular semigroup $S$, the set $S/\gr$ forms a  poset  as follows: 
	\begin{equation}\label{eqnsr}
		R_e \sqsubseteq R_f \iff eS \subseteq fS \iff e\ler f.
	\end{equation}
	In fact, the poset $ (S /\gr,\sqsubseteq) $ has been characterised by Grillet as a \emph{regular poset}\footnote{Since the definition involves several new notions and as we do not explicitly use any of the properties of regular posets, we omit the formal definition.} in \cite{grilcross}. Now given a normal category $\mathcal{C}$, since $ \snc $ is a regular semigroup (Lemma \ref{lemrs}),  the poset $ (\snc /\gr,\sqsubseteq) $ is a regular poset.    We are now ready to give the most important definition of this paper.
	\begin{dfn}\label{dfncncat}  
		Let $\mathcal{C}$ be a normal category and let $\mathfrak{D}$ be a down-set of the poset $ \snc /\gr $. Then $\mathcal{C}$ is said to be \emph{connected} by $\mathfrak{D}$ if for every $c \in v\mathcal{C}$, there is some $\mathfrak{d} \in \mathfrak{D}$ such that $\mathfrak{d}$ contains some idempotent cone with vertex $c$. We denote such a category by $\cod$ and say that the regular poset $\mathfrak{D}$ \emph{connects} the normal category $\mathcal{C}$. 
	\end{dfn}
	
	Given such a normal category $\cod$, we define 
	
	$$ \sncd  := \{ \: \gamma\in  \snc : R_{\gamma} \in \mathfrak{D} \: \}.$$
	
	Observe that each idempotent cone $\epsilon$ in the set $ \sncd $ may be uniquely represented as $\epsilon(c,\mathfrak{d})$ such that the vertex $z_{\epsilon}=c$ and $R_{\epsilon}=\mathfrak{d}$ where $c\in v\mathcal{C}$ and $\mathfrak{d}\in \mathfrak{D}$. In this case, we shall say that the object $c$ \emph{ is connected by } $\mathfrak{d}$. Hence, we have:
	
	\begin{equation}\label{eqnesncd}
		E(\sncd) =\{\: \epsilon(c,\mathfrak{d}) : c \text{ is connected by } \mathfrak{d} \: \}.
	\end{equation}
	
	\begin{rmk}\label{rmkcc}
		The definition of the set $\sncd$ involves picking certain $\gr$-classes from the semigroup $\snc$, using the down-set $\mathfrak{D}$. We could have equivalently defined connected categories by letting $\mathfrak{D}$ be a down-set \emph{isomorphic} to a down-set of the poset $\snc/\gr$. Admittedly, this would complicate the discussion substantially and so we avoid it at this stage. However, we shall indeed use this identification in Section \ref{ssfpn} below and later in Section \ref{secrm}, where we discuss concrete cases, as such an identification will lead to simpler descriptions of the connecting posets $\mathfrak{D}$.
		
	\end{rmk}
	\begin{rmk}\label{rmkid}
		By Definition \ref{dfncncat}, every $c\in v\mathcal{C}$ is connected by at least one $\mathfrak{d}\in \mathfrak{D}$ and conversely each $\mathfrak{d}$ connects at least one $c$. As we shall see later, this is a reflection of the fact that every $\gl$ and $\gr$ class of a regular semigroup contains at least one idempotent. Notice that, in general, one object $c\in\mathcal{C}$ may be connected by multiple $\mathfrak{d}\in\mathfrak{D}$, and also different objects in $\mathcal{C}$ may be connected to the same $\mathfrak{d}\in\mathfrak{D}$.  
	\end{rmk}
	
	\subsection{Full powerset category as a connected category}\label{ssfpn}
	Before proceeding further, we return to our running example of full powerset category $\fpn$ and illustrate a concrete example of a connected category. The discussion in Example \ref{exfpn2} shows  that $\fpn$ is a normal category and a cone in the category $\fpn$ is determined by a mapping from  $\ns$ to itself. We shall see below that in fact, this relationship is much stronger. The Lemma \ref{lemfpn2} below may be deduced from \cite[Theorem 3.1]{tx}. Nevertheless, we include a fresh proof to familiarize the reader with the example, thereby facilitating a clearer understanding of our construction.
	
	Recall that the semigroup of all mappings from a finite set $\ns$ to itself, under composition of  mappings is known as the \emph{full transformation monoid} $\tn$.
		\begin{lem}\label{lemfpn2}
			The semigroup $\sfpn$ of cones in the category $\fpn$ is isomorphic to the full transformation monoid $\tn$. 
		\end{lem}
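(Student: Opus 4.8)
The plan is to define a bijection $\Phi\colon \sfpn \to \tn$ and show it respects the products. First I would exploit the fact that $\ns$ is the greatest object of the poset $(v\fpn,\subseteq)$: for any cone $\gamma$ with vertex $z_\gamma$ and any $S\subseteq\ns$, the cone condition (1) applied to the inclusion $S\preceq\ns$ gives $\gamma(S)=j(S,\ns)\gamma(\ns)$, i.e. $\gamma(S)$ is the restriction $\bigl(\gamma(\ns)\bigr)_{|S}$ of the mapping $\gamma(\ns)$ to $S$. Hence $\gamma$ is completely determined by its top component $\gamma(\ns)\colon\ns\to z_\gamma$, and I would set $\Phi(\gamma)$ to be this component regarded as a transformation of $\ns$ (post-composed with the inclusion $z_\gamma\hookrightarrow\ns$). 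Injectivity of $\Phi$ is then immediate.

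Next I would record that $z_\gamma=\im\gamma(\ns)$. Cone condition (2) supplies an object $c$ with $\gamma(c)=\bigl(\gamma(\ns)\bigr)_{|c}$ a bijection onto $z_\gamma$; since $c\subseteq\ns$, surjectivity of this restriction forces $\ns\cdot\gamma(\ns)=z_\gamma$. This identity is exactly what makes the composition computation go through. For surjectivity of $\Phi$, given $\alpha\in\tn$ I would define a cone $\gamma_\alpha$ with vertex $z:=\ns\alpha$ by $\gamma_\alpha(S):=\alpha_{|S}$; condition (1) holds because restriction is transitive, and condition (2) holds by taking $S$ to be a cross-section of the kernel partition $\pi_\alpha$ of (\ref{eqnpi}), on which $\alpha$ restricts to a bijection onto $\ns\alpha$. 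Since $\gamma_\alpha(\ns)=\alpha$, we obtain $\Phi(\gamma_\alpha)=\alpha$.

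The main work, and the step I expect to be the obstacle, is verifying that $\Phi$ is a homomorphism, which requires unwinding the cone product (\ref{eqnbin}). Writing $a=\gamma(\ns)$ and $b=\delta(\ns)$ (with images $z_\gamma$ and $z_\delta$), I would compute the top component of $\gamma\cdot\delta=\gamma\ast(\delta(z_\gamma))^\circ$ as $(\gamma\cdot\delta)(\ns)=\gamma(\ns)\,(\delta(z_\gamma))^\circ=a\,(\delta(z_\gamma))^\circ$. Here $\delta(z_\gamma)=b_{|z_\gamma}$ by condition (1) for $\delta$, and its epimorphic component $(\delta(z_\gamma))^\circ$ acts on each element exactly as $b$ does, the canonical factorisation merely shrinking the codomain to the image $z_\gamma\cdot b$ (Example \ref{exfpn1}). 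Therefore, for every $x\in\ns$ we have $xa\in z_\gamma$, so $x\bigl[(\gamma\cdot\delta)(\ns)\bigr]=(xa)(\delta(z_\gamma))^\circ=(xa)b=x(ab)$, whence $\Phi(\gamma\cdot\delta)=ab=\Phi(\gamma)\Phi(\delta)$. The delicate points are that $z_\gamma$ genuinely equals the image of $a$ (so that $xa$ lies in the domain $z_\gamma$ on which $(\delta(z_\gamma))^\circ$ agrees with $b$), and the bookkeeping of codomains between $\fpn$-morphisms and honest self-maps of $\ns$; once these are settled, $\Phi$ is a bijective homomorphism and the isomorphism $\sfpn\cong\tn$ follows.
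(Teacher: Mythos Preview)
Your proposal is correct and follows essentially the same approach as the paper: you define the same map $\gamma\mapsto\gamma(\ns)$ composed with the inclusion of the vertex into $\ns$, use that $\ns$ is the largest object to see a cone is determined by its top component (giving injectivity), construct the inverse via restrictions $\gamma_\alpha(S):=\alpha_{|S}$ for surjectivity, and verify the homomorphism property by unwinding the cone product at the top component. The only cosmetic difference is that the paper checks the homomorphism identity by manipulating canonical factorisations $\gamma_2(Z_1)=(\gamma_2(Z_1))^\circ\, i(Z,Z_2)$ at the level of morphisms, whereas you verify it elementwise by noting that $(\delta(z_\gamma))^\circ$ agrees with $b$ on $z_\gamma$; both arguments encode the same computation.
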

		\begin{proof}
			First, observe that the normal category $\fpn$ has a largest object, namely $\ns$. So given a cone $\gamma$ in the category $\fpn$ with vertex $Z$, we may define
			\begin{equation}\label{eqphi}
				\phi\colon \sfpn \to \tn\text{ given by }\gamma\mapsto\gamma(\ns)i(Z,\ns),
			\end{equation}  
			where the mapping $\gamma(\ns)i(Z,\ns)\colon \ns \to \ns $ is an element of the semigroup $\tn$, and so $\phi$ is well-defined. We proceed to prove that
			$\phi$ is an isomorphism.
			For a cone $\gamma$, by Definition \ref{dfnnormcone} (2) there is some $C\subseteq \ns$ such that $\gamma(C)\colon C\to Z$ is a bijection. However, since $C\subseteq \ns$, the component $\gamma(\ns)$ is always a surjection and so by Lemma \ref{lemepi} (2), we have $(\gamma(\ns))^\circ= \gamma(\ns)$. Hence the expression $\gamma(\ns)i(Z,\ns)$ is in fact the unique canonical factorisation of that mapping. Also notice that by Definition \ref{dfnnormcone} (1), for each $A\subseteq \mathbf{n}$, we have $\gamma(A)=i(A,\ns)\gamma(\ns)$. 
			
			Now, to verify that $\phi$ is a homomorphism, let $\gamma_1,\gamma_2$ be  cones in the category $\fpn$ with vertices $Z_1$ and $Z_2$, respectively. If we denote the vertex $z_{\gamma_1\gamma_2}$ of the cone $\gamma_1\gamma_2$ by $Z$, since $z_{\gamma_1\gamma_2} \subseteq z_{\gamma_2}$ we see that $Z\subseteq Z_2$. Then using equations (\ref{eqnbin0}) and  (\ref{eqnbin}), and Lemma \ref{lemepi} (3), we have
			$$(\gamma_1\:\gamma_2)\phi=(\gamma_1\ast (\gamma_2( Z_1))^\circ)\phi=\gamma_1(\ns)\: (\gamma_2( Z_1))^\circ i(Z,\ns)= \gamma_1(\ns)\: (\gamma_2( Z_1))^\circ i(Z, Z_2)i( Z_2,\ns)=\gamma_1(\ns)\: \gamma_2( Z_1) i( Z_2,\ns).$$
			The last equality of the deduction above is a consequence of the canonical factorisation of the morphism $\gamma_2( Z_1)$ as  $(\gamma_2( Z_1))^\circ i(Z, Z_2)$.
			Also by the definition of $\phi$, since 
			$$\gamma_1\phi\:\gamma_2\phi=\gamma_1(\ns)i( Z_1,\ns)\:\gamma_2(\ns)i( Z_2,\ns)=\gamma_1(\ns)\:\gamma_2( Z_1)i( Z_2,\ns),$$ we see that $\phi$ is a homomorphism.
			
			To show that $\phi$ is injective, let $\gamma_1(\ns)i(Z_1,\ns)=\gamma_2(\ns)i(Z_2,\ns)$. Since these expressions are exactly the unique canonical factorisations of these mappings, we have $\gamma_1(\ns)=\gamma_2(\ns)$. Now $\fpn$ has a largest object $\ns$ and so every cone $\gamma$ is uniquely determined by its component $\gamma(\ns)$, whence $\gamma_1=\gamma_2$.  
			
			Finally, to verify that $\phi$ is a surjection, given an arbitrary mapping $\alpha$ in the monoid $\tn$, for each $S\subseteq \ns$, the map $\gamma(S):=\alpha_{|S}$ is a cone with vertex $\ns \alpha$ such that $\gamma\phi=\gamma(\ns)i(\ns \alpha, \ns)=\alpha$. We conclude that $\phi$ is a semigroup isomorphism.
		\end{proof}
		To realise the category $\fpn$ as a connected category, we need the characterisation of the Green's $\gr$-relation on the regular semigroup $\sfpn$. By the above lemma, the poset $\sfpn/\gr$ is order isomorphic to $\tn/\gr$. So, we recall the following well known results regarding the Green's relations in the monoid $\tn$.
		\begin{lem}[{\cite[Section 2.2]{clif}}]\label{lemgrtn}
			Let $\alpha,\beta$ be arbitrary mappings in $\tn$. 
			\begin{enumerate}
				\item  $\tn\alpha\subseteq \tn\beta$ if and only if $\ns\alpha \subseteq \ns \beta$. Hence $\alpha\gl \beta$ if and only if $\ns \alpha=\ns \beta$. 
				\item  $\alpha\tn \subseteq\beta\tn$ if and only if $\pi_\alpha \supseteq \pi_\beta$. Hence $\alpha\gr \beta$ if and only if $\pi_\alpha=\pi_\beta$. 
			\end{enumerate}	
		\end{lem}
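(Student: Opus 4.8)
The plan is to prove the two principal-ideal characterisations first and then read off the two \emph{Hence} clauses by symmetry. The key is that the two basic invariants of a map $\alpha\in\tn$ are its image $\ns\alpha$ and its kernel partition $\pi_\alpha$, and that the \emph{left} ideal sees only the image while the \emph{right} ideal sees only the kernel. Since maps compose left to right, $\gamma\alpha$ sends $x\mapsto(x\gamma)\alpha$; hence every element $\gamma\alpha$ of $\tn\alpha$ has image $(\ns\gamma)\alpha\subseteq\ns\alpha$, while every element $\alpha\gamma$ of $\alpha\tn$ has kernel containing $\pi_\alpha$. These two observations are the backbone of the argument.

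For part (1), the forward implication is immediate: taking $\gamma$ to be the identity gives $\alpha\in\tn\alpha\subseteq\tn\beta$, so $\alpha=\gamma\beta$ for some $\gamma\in\tn$ and therefore $\ns\alpha=(\ns\gamma)\beta\subseteq\ns\beta$. For the converse I would assume $\ns\alpha\subseteq\ns\beta$ and build a single map $\delta$ with $\alpha=\delta\beta$: for each $x\in\ns$ the value $x\alpha$ lies in $\ns\beta$, so I may choose some $y\in\ns$ with $y\beta=x\alpha$ and set $x\delta:=y$; then $\alpha=\delta\beta$, whence $\tn\alpha=\tn\delta\beta\subseteq\tn\beta$. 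The first \emph{Hence} then follows because $\gl=\lel\cap(\lel)^{-1}$ and $\alpha\lel\beta\iff\tn\alpha\subseteq\tn\beta$, so $\alpha\gl\beta$ amounts to $\ns\alpha=\ns\beta$.

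Part (2) is the exact dual, with kernels replacing images. The forward implication again uses the identity: $\alpha\in\alpha\tn\subseteq\beta\tn$ gives $\alpha=\beta\gamma$, so $\pi_\beta\subseteq\pi_{\beta\gamma}=\pi_\alpha$. For the converse, assuming $\pi_\beta\subseteq\pi_\alpha$, I would define $\gamma$ on the image $\ns\beta$ by the rule $x\beta\mapsto x\alpha$ (and extend it arbitrarily off $\ns\beta$); this yields $\alpha=\beta\gamma$ and hence $\alpha\tn\subseteq\beta\tn$, and the second \emph{Hence} follows by the dual symmetrisation via $\gr=\ler\cap(\ler)^{-1}$.

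The only step that is not purely formal is the well-definedness of $\gamma$ in the converse of part (2): the rule $x\beta\mapsto x\alpha$ must not depend on the chosen preimage, and this is guaranteed precisely by the hypothesis $\pi_\beta\subseteq\pi_\alpha$ (if $x\beta=x'\beta$ then $(x,x')\in\pi_\beta\subseteq\pi_\alpha$, so $x\alpha=x'\alpha$). This is the one point where the kernel inclusion is genuinely used, and it is the dual counterpart of the free \emph{choice} of preimages that made the construction of $\delta$ in part (1) effortless. Everything else reduces to the left/right composition bookkeeping noted at the outset.
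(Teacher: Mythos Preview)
Your proof is correct and is the standard argument for this classical fact. The paper does not supply its own proof of this lemma; it is stated with a citation to Clifford and Preston \cite[Section 2.2]{clif}, so there is nothing to compare against beyond noting that your argument is exactly the textbook one.
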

		
		Let $\pi_\alpha$ be the partition induced on $\ns$ by a map $\alpha$  (see (\ref{eqnpi})) and $(\Pi,\supseteq)$ be the poset of all partitions of the set $\ns$. Given an idempotent cone $\epsilon$ in $\sfpn$, using  Lemma \ref{lemfpn2} the mapping $\epsilon(\ns)i(Z,\ns)$ is an idempotent in $\tn$. By a mild abuse of notation, we use  $\pi_\epsilon$ to denote the partition induced by the mapping $\epsilon\phi$  (see (\ref{eqphi})). Define a map $G\colon\sfpn/\gr\to\Pi$ by $R_\epsilon\mapsto \pi_{\epsilon}$. Using Lemmas \ref{lemfpn2} and \ref{lemgrtn} (2), we can routinely verify that $G$ is an order isomorphism. This leads to the following characterisation of the poset $(\sfpn/\gr,\sqsubseteq)$.
		\begin{lem}\label{lemG}
			Let $\gamma_1,\gamma_2$ be cones in the semigroup $\sfpn$. Then $R_{\gamma_1} \sqsubseteq R_{\gamma_2}$ if and only if $\pi_{\gamma_1} \supseteq \pi_{\gamma_2}$. Hence the regular poset $(\sfpn/\gr,\sqsubseteq)$ is order isomorphic to the poset $(\Pi,\supseteq)$ of all partitions of the set $\ns$. 
		\end{lem}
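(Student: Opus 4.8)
The plan is to push the whole statement through the isomorphism $\phi$ of Lemma~\ref{lemfpn2} and then read off the answer from the classical description of $\gr$ in $\tn$ supplied by Lemma~\ref{lemgrtn}(2).

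First I would record that a semigroup isomorphism preserves and reflects principal right ideals, and hence the quasi-order $\ler$, the relation $\gr$, and the induced partial order $\sqsubseteq$ on $\gr$-classes (recall from (\ref{eqnsr}) that $R_{\gamma_1}\sqsubseteq R_{\gamma_2}$ means $\gamma_1\ler\gamma_2$). Writing $\alpha_i:=\gamma_i\phi\in\tn$, this gives
$$R_{\gamma_1}\sqsubseteq R_{\gamma_2}\iff \gamma_1\ler\gamma_2\iff \alpha_1\ler\alpha_2\iff \alpha_1\tn\subseteq\alpha_2\tn.$$
Lemma~\ref{lemgrtn}(2) turns the last inclusion into $\pi_{\alpha_1}\supseteq\pi_{\alpha_2}$, and since by definition $\pi_{\gamma_i}=\pi_{\alpha_i}$ is exactly the partition of $\ns$ induced by the mapping $\gamma_i\phi$, we obtain $R_{\gamma_1}\sqsubseteq R_{\gamma_2}\iff \pi_{\gamma_1}\supseteq\pi_{\gamma_2}$, which is the first assertion.

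For the ``Hence'' part I would observe that this equivalence immediately makes $G\colon R_\gamma\mapsto\pi_\gamma$ well defined on the quotient: if $\gamma_1\gr\gamma_2$ then $R_{\gamma_1}\sqsubseteq R_{\gamma_2}$ and $R_{\gamma_2}\sqsubseteq R_{\gamma_1}$, so antisymmetry of $\supseteq$ forces $\pi_{\gamma_1}=\pi_{\gamma_2}$; the same equivalence shows that $G$ is both order-preserving and order-reflecting, hence injective. Surjectivity is the only remaining point: given any $\pi\in\Pi$, one chooses a cross-section of $\pi$ and forms the associated retraction $\alpha\in\tn$, which is idempotent with $\pi_\alpha=\pi$; since $\phi$ is onto there is a cone $\gamma$ with $\gamma\phi=\alpha$, whence $G(R_\gamma)=\pi_\gamma=\pi_\alpha=\pi$. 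Thus $G$ is an order isomorphism of $(\sfpn/\gr,\sqsubseteq)$ onto $(\Pi,\supseteq)$.

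There is no serious obstacle here; the only thing requiring a little care is the transfer of the order across $\phi$—one must use that an isomorphism carries the principal right ideal $\gamma\sfpn$ onto $(\gamma\phi)\tn$—together with the verification that $\pi_\gamma$ genuinely depends only on the $\gr$-class of $\gamma$, which is precisely what lets $G$ descend to $\sfpn/\gr$. Both points are guaranteed by the equivalence $\alpha\gr\beta\iff\pi_\alpha=\pi_\beta$ of Lemma~\ref{lemgrtn}(2), so the argument is essentially bookkeeping once $\phi$ is in hand.
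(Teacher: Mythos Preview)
Your proposal is correct and follows exactly the route the paper takes: the paper's argument (given in the paragraph immediately preceding the lemma) is simply ``Using Lemmas~\ref{lemfpn2} and~\ref{lemgrtn}(2), we can routinely verify that $G$ is an order isomorphism,'' and you have faithfully spelled out that routine verification. The only cosmetic difference is that the paper phrases the definition of $G$ on idempotent representatives $R_\epsilon\mapsto\pi_\epsilon$ while you work with arbitrary cones, but this is immaterial since every $\gr$-class contains an idempotent and $\pi_\gamma$ depends only on $R_\gamma$, as you note.
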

		By the above lemma, we may identify the $\gr$-classes of the semigroup $\sfpn$ by the partitions $\pi\in\Pi$. Summarising, given a finite set $\ns$, the set of subsets of $\ns$ forms a normal category $\fpn$ such that the poset of $\gr$-classes of the semigroup $\sfpn$ is isomorphic to the set $\Pi$ of partitions of $\ns$. This leads us to our first example of a connected category.
		\begin{pro}\label{profpn}
			Given a finite set $\ns$ with full powerset category $\fpn$ and partition poset $\Pi$, the category $\fpn$ is connected by $\Pi$, and so $\fpn_\Pi$ is a connected category.
		\end{pro}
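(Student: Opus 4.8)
The plan is to verify directly the two ingredients of Definition \ref{dfncncat}: that $\Pi$ is (identifiable with) a down-set of the poset $\sfpn/\gr$, and that the resulting data connects every object of $\fpn$.

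For the first ingredient, I would recall that Lemma \ref{lemG} supplies an order isomorphism $G\colon(\sfpn/\gr,\sqsubseteq)\to(\Pi,\supseteq)$, $R_\epsilon\mapsto\pi_\epsilon$. Thus $\Pi$ is order isomorphic to the \emph{entire} poset $\sfpn/\gr$, and since any poset is trivially a down-set of itself, $\Pi$ qualifies as a connecting down-set once one invokes the identification sanctioned in Remark \ref{rmkcc}. Concretely, I would take $\mathfrak{D}=\Pi$, where a partition $\pi\in\Pi$ is regarded as the $\gr$-class $G^{-1}(\pi)$ of cones in $\sfpn$.

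For the second ingredient, I must show that each object $A\in v\fpn$ is connected by some $\pi\in\Pi$; that is, that some $\gr$-class indexed by a partition of $\ns$ contains an idempotent cone with vertex $A$. This is immediate from the construction already recorded in Example \ref{exfpn2}: fix any map $\alpha\colon\ns\to A$ with $\alpha_{|A}=1_A$ and set $\epsilon(S):=\alpha_{|S}$ for each $S\subseteq\ns$. Then $\epsilon$ is a cone with vertex $A$ satisfying $\epsilon(A)=\alpha_{|A}=1_A$, so by Lemma \ref{lemrs} the cone $\epsilon$ is idempotent. Its $\gr$-class $R_\epsilon$ corresponds under $G$ to the partition $\pi_\epsilon\in\Pi$, which therefore lies in $\mathfrak{D}=\Pi$ and contains the idempotent cone $\epsilon$ of vertex $A$. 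This establishes the connectedness condition for every object, whence $\fpn$ is connected by $\Pi$ and $\fpn_\Pi$ is a connected category.

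I do not anticipate a genuine obstacle here; the content is essentially bookkeeping built on facts already proved. The only point demanding care is the identification of Remark \ref{rmkcc}: one must phrase matters so that $\Pi$, rather than $\sfpn/\gr$ itself, plays the role of the connecting down-set, and confirm that taking the whole poset (as opposed to a proper down-set) is legitimate. Everything else reduces to recalling the idempotent-cone construction of Example \ref{exfpn2} together with the idempotency criterion of Lemma \ref{lemrs}.
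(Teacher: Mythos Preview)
Your proposal is correct and follows essentially the same approach as the paper: both use the order isomorphism of Lemma \ref{lemG} to identify $\Pi$ with the whole poset $\sfpn/\gr$ (invoking Remark \ref{rmkcc}), and then connect an arbitrary object $A$ via the idempotent cone $\epsilon(S):=\alpha_{|S}$ built from a retraction $\alpha\colon\ns\to A$ with $\alpha_{|A}=1_A$. Your version is slightly more explicit about the down-set verification and cites Lemma \ref{lemrs} for idempotency, but the substance is identical.
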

		\begin{proof}
			Given any subset $A\subseteq\ns$, let $\alpha\in \tn$ be such that $\alpha_{|A}=1_A$. Then  $\alpha_{|A}$ is an idempotent in $\tn$. Then for each subset $S\subseteq \ns$, define $\epsilon(S):=\alpha_{|S}$. Now, $\epsilon$ is an idempotent cone in $\sfpn$ such that $\epsilon(\ns) =\alpha$. Further using the isomorphism of Lemma \ref{lemG} and observing that $\epsilon\phi=\alpha$, we have $R_\epsilon\cong\pi_{\epsilon}=\pi_\alpha\in \Pi$. Hence, the subset $A$ is connected by $\pi_\alpha$ and so, the normal category $\fpn$ is connected by the poset $\Pi$.  
		\end{proof}
	
	\begin{rmk}
		In the above example of a connected category  $\cod$, we have $\mathfrak{D}\cong\snc/\gr$. As discussed in Remark \ref{rmkcc}, the relaxation that the ideal $\mathfrak{D}$ is an isomorphic copy of $\snc/\gr$ (rather than $\mathfrak{D}=\snc/\gr$) leads to a concrete characterisation of $\mathfrak{D}$ as the poset $\Pi$. Strictly speaking, with the terminology of Definition \ref{dfncncat}, Proposition \ref{profpn} says that the category $\fpn$ is connected by the poset $\sfpn/\gr$ such that $\sfpn/\gr$ is isomorphic to the poset $\Pi$. 
	\end{rmk}

	\subsection{The connection semigroup $\sncd$}
	
	Having digressed a bit, we now return back to the abstract construction of a left reductive regular semigroup from a connected category $\mathcal{C}$. We shall see that the required semigroup is in fact $\sncd$, which is realised as the subsemigroup of the semigroup $\snc$ of cones in the category $\mathcal{C}$. The following lemma is crucial for the sequel.
	\begin{lem}\label{lemud1}
		Let $\cod$ be a connected category. Then every cone $\gamma$ in the set $ \sncd $ can be expressed as  $\epsilon(c,\mathfrak{d})\ast u$, for some idempotent cone $\epsilon(c,\mathfrak{d})$ and an isomorphism $u$. Conversely, every cone in $\snc$ which can be expressed in this form belongs to  $\sncd$.
	\end{lem}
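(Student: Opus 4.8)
The plan is to recognise that the asserted factorisation is nothing but a restatement of Green's $\gr$-relation. Indeed, by Lemma~\ref{lemgc}(2), writing a cone $\gamma$ as $\delta \ast h$ for an isomorphism $h$ is \emph{equivalent} to $\gamma \gr \delta$, with $h$ forced to be $\gamma(z_\delta)$. Thus both halves of the lemma amount to the single claim that a cone $\gamma \in \snc$ lies in $\sncd$ precisely when its $\gr$-class contains an idempotent cone of the form $\epsilon(c,\mathfrak{d})$ with $\mathfrak{d}\in\mathfrak{D}$. This reformulation is the conceptual core; everything else is bookkeeping with the definition of $\sncd$ and with the description (\ref{eqnesncd}) of its idempotents.

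For the forward direction, I would start with an arbitrary $\gamma \in \sncd$, so that $R_\gamma = \mathfrak{d}$ for some $\mathfrak{d}\in\mathfrak{D}$ by the very definition of $\sncd$. Since $\snc$ is a regular semigroup (Lemma~\ref{lemrs}), every $\gr$-class contains an idempotent; I would take such an idempotent cone $\epsilon$ in $R_\gamma$ (concretely, one obtained from a chosen inverse of $\gamma$). As $R_\epsilon = R_\gamma = \mathfrak{d}\in\mathfrak{D}$, the cone $\epsilon$ belongs to $\sncd$, and by (\ref{eqnesncd}) it is of the form $\epsilon(c,\mathfrak{d})$ with $c = z_\epsilon$. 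Finally $\gamma \gr \epsilon$ together with Lemma~\ref{lemgc}(2) gives $\gamma = \epsilon(c,\mathfrak{d}) \ast u$ for the isomorphism $u = \gamma(c)$, which is exactly the desired expression.

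The converse simply runs the same chain backwards. Suppose $\gamma = \epsilon(c,\mathfrak{d}) \ast u$ with $u$ an isomorphism and $\mathfrak{d}\in\mathfrak{D}$. Since $u$ is in particular an epimorphism whose image is its codomain, the operation $\ast$ of (\ref{eqnbin0}) is defined on $u$, and Lemma~\ref{lemgc}(2) immediately yields $\gamma \gr \epsilon(c,\mathfrak{d})$. Hence $R_\gamma = R_{\epsilon(c,\mathfrak{d})} = \mathfrak{d} \in \mathfrak{D}$, and so $\gamma \in \sncd$ by the defining condition of the latter set.

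The only point I would check with care is that the idempotent produced in the forward direction genuinely sits in $\sncd$, and is therefore representable as $\epsilon(c,\mathfrak{d})$ with $\mathfrak{d}\in\mathfrak{D}$; this is where the hypothesis $R_\gamma\in\mathfrak{D}$ (that is, $\gamma\in\sncd$) enters, through the fact that $\gr$-related cones share a common $\gr$-class. I do not expect any genuine obstacle: no appeal to the \emph{connectedness} of $\mathcal{C}$ by $\mathfrak{D}$ is required here, since the existence of idempotents in each $\gr$-class is a consequence of the regularity of $\snc$ alone, and connectedness governs instead the attachment of idempotents to objects, which is not what this lemma concerns.
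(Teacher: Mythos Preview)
Your proof is correct and follows essentially the same route as the paper: both directions reduce to Lemma~\ref{lemgc}(2), identifying the factorisation $\gamma=\epsilon\ast u$ with the relation $\gamma\gr\epsilon$. The only cosmetic difference is how the idempotent in the forward direction is located: the paper appeals to Remark~\ref{rmkid} (each $\mathfrak{d}\in\mathfrak{D}$ connects some object), whereas you invoke regularity of $\snc$ directly to find an idempotent in $R_\gamma$ and then read off its $\epsilon(c,\mathfrak{d})$ form from~(\ref{eqnesncd}). Your closing observation is well taken: the converse half of Remark~\ref{rmkid} is itself nothing more than regularity of $\snc$, so the ``connectedness'' condition in Definition~\ref{dfncncat} (that every object be connected by some $\mathfrak{d}$) genuinely plays no role in this lemma.
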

	\begin{proof}
		First, observe that given a cone $\gamma$ in $ \sncd $, we have $R_\gamma\in \mathfrak{D}$ and let $\mathfrak{d}:=R_\gamma$. Now, since $\mathfrak{D}$ connects the category $\mathcal{C}$, by Remark \ref{rmkid}, there is some $c \in v\mathcal{C}$ such that $\mathfrak{d}$ connects $c$. Let the associated idempotent cone be $\epsilon(c,\mathfrak{d})$.  So we have $\gamma \gr  \epsilon(c,\mathfrak{d})$ in the semigroup $ \snc $. Then by Lemma \ref{lemgc} (2), we get $\gamma=\epsilon(c,\mathfrak{d}) \ast \gamma(c)$ such that $\gamma(c)$ is an isomorphism. 
		
		Conversely, if $\gamma=\epsilon(c,\mathfrak{d})\ast u$ where $u$ is an isomorphism, then by Lemma \ref{lemgc} (2), we obtain $\gamma\gr \epsilon(c,\mathfrak{d})$ and so $R_\gamma=R_{\epsilon(c,\mathfrak{d})}=\mathfrak{d}\in \mathfrak{D}$. Thus $\gamma\in  \sncd $.
	\end{proof}
	
	\begin{rmk}\label{rmkeu}
		Given a cone $\gamma$ with vertex $c$,  the decomposition of $\gamma$ as above is not unique,  in general. Let $\epsilon_1:=\epsilon(c_1,\mathfrak{d})$ and $\epsilon_2:=\epsilon(c_2,\mathfrak{d})$ in be idempotents in $\sncd$ such that $\mathfrak{d}:=R_\gamma$. Then as in the proof of the lemma above, we can see that $\gamma=\epsilon_1\ast \gamma(c_1)=\epsilon_2\ast \gamma(c_2)$ for isomorphisms $\gamma(c_1)\colon	c_1 \to c$ and $\gamma(c_2)\colon c_2 \to c$. Figure \ref{figdeco} illustrates this situation, wherein we consider the `egg-box' diagram of a typical $\gd$-class of the regular semigroup $\snc$. Observe that by Lemma \ref{lemgc}, the $\gl$-classes of $\snc$ are determined by the vertices of the cones.
		\begin{figure}[ht]
			\begin{center}
				\begin{tikzpicture}[scale=.5]
					\foreach \x in {-1,1,3} \foreach \y in {-1,2,5,8,11,14} {\draw (-1,\x)--(14,\x); \draw(\y,-1)--(\y,3);}
					\node () at (0.5,2) {\footnotesize $\epsilon_1$};
					\node () at (12.5,2) {\footnotesize $\gamma$};			
					\node () at (6.5,2) {\footnotesize $\epsilon_2$};
					\node () at (14.5,2) {\footnotesize $\mathfrak{d}$};	
					\node () at (0.5,-1.5) {\footnotesize $c_1$};
					\node () at (6.5,-1.5) {\footnotesize $c_2$};
					\node () at (12.5,-1.5) {\footnotesize $c$};
					
					\draw[->][in=165, out=15]  (6.5,3.5) to node[below] {\scriptsize $\gamma(c_2)$} (12.5,3.5) ;
					\draw[->][in=160, out=20]  (0.5,3.5) to node[above] {\scriptsize $\gamma(c_1)$} (12.5,3.5) ;
					\fill[fill=gray,fill opacity=.25] (-1,3) rectangle (2,1);
					\fill[fill=gray,fill opacity=.25] (2,-1) rectangle (5,1);
					\fill[fill=gray,fill opacity=.25] (5,1) rectangle (8,3);
					\fill[fill=gray,fill opacity=.25] (11,1) rectangle (14,-1);
					\fill[fill=gray,fill opacity=.25] (8,-1) rectangle (11,1);
					
				\end{tikzpicture}
			\end{center}
			\caption{Decomposition of a cone $\gamma$ as an idempotent cone and an isomorphism}
			\label{figdeco}
			
		\end{figure}	 
	\end{rmk}
	
	The following more general variant of Lemma \ref{lemud1} will be useful in the sequel.
	\begin{lem}\label{lemud2}	
		Given a connected category $\cod$, any cone in the set $\sncd$ has a representation of the form $\epsilon\ast p$ for an idempotent cone $\epsilon\in \sncd$ and an epimorphism $p$. Conversely, any cone of this form belongs to $\sncd$. 
	\end{lem}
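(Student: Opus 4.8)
The plan is to read off both inclusions from the results already in place, so that the lemma becomes a mild generalisation of Lemma \ref{lemud1}. For the forward inclusion I would simply invoke Lemma \ref{lemud1}: any $\gamma \in \sncd$ is already of the form $\epsilon(c,\mathfrak{d}) \ast u$ with $\epsilon(c,\mathfrak{d})$ an idempotent cone in $\sncd$ and $u$ an isomorphism, and since every isomorphism is in particular an epimorphism, this is precisely a representation of the desired form $\epsilon \ast p$. No extra work is needed on this side.

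For the converse I would start from a cone $\gamma = \epsilon \ast p$ with $\epsilon \in \sncd$ idempotent and $p$ an epimorphism out of $c := z_\epsilon$, and aim to show $R_\gamma \in \mathfrak{D}$. The first thing to check is that $\epsilon \ast p$ is a genuine cone, since the operation (\ref{eqnbin0}) is stated for epimorphisms $f$ with $\im f = z_f$; this holds automatically because, by Lemma \ref{lemepi}(4), the inclusion of any epimorphism is an identity, so $\im p$ is its codomain. Next I would compute the component of $\gamma$ at $c$. Because $\epsilon$ is idempotent, Lemma \ref{lemrs} gives $\epsilon(c) = 1_c$, so that
\[
\gamma(c) = (\epsilon \ast p)(c) = \epsilon(c)\, p = p,
\]
which is an epimorphism, and moreover $\gamma = \epsilon \ast \gamma(c)$.

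These are exactly the two conditions appearing in Lemma \ref{lemgc}(2), so I would conclude $\gamma \ler \epsilon$, equivalently $R_\gamma \sqsubseteq R_\epsilon$ by (\ref{eqnsr}). Finally, since $\epsilon \in \sncd$ means $R_\epsilon \in \mathfrak{D}$ and $\mathfrak{D}$ is a down-set, the relation $R_\gamma \sqsubseteq R_\epsilon$ forces $R_\gamma \in \mathfrak{D}$, that is, $\gamma \in \sncd$. The only genuinely new point beyond Lemma \ref{lemud1} is the well-definedness of $\epsilon \ast p$ for an arbitrary epimorphism $p$ (rather than merely an isomorphism), which is where Lemma \ref{lemepi}(4) is needed; everything else is the same down-set argument, now routed through the $\ler$-characterisation of Lemma \ref{lemgc}(2) instead of a $\gr$-class equality. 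I do not expect a serious obstacle, the subtlest detail being simply to confirm that the $\ast$-operation still makes sense in this slightly more general setting.
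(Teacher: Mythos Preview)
Your proof is correct, and the converse direction takes a genuinely different route from the paper's. The paper normally factorises $p=qu$, sets $\mu:=\epsilon\ast q$, checks that $\mu$ is an idempotent cone with $\mu\leqslant\epsilon$ via Lemma~\ref{lemorder}, and then applies Lemma~\ref{lemud1} to the decomposition $\gamma=\mu\ast u$. You instead observe that $\gamma(z_\epsilon)=\epsilon(z_\epsilon)\,p=p$ is an epimorphism with $\gamma=\epsilon\ast\gamma(z_\epsilon)$, so Lemma~\ref{lemgc}(2) gives $\gamma\ler\epsilon$ directly, and the down-set property finishes. Your argument is shorter and avoids both the normal factorisation of $p$ and Lemma~\ref{lemorder}; the paper's argument, on the other hand, is constructive in that it explicitly exhibits the idempotent--isomorphism decomposition of $\gamma$ promised by Lemma~\ref{lemud1}, which is the content of Figure~\ref{figepi} and is reused pictorially in Figure~\ref{figcomp}. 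For the bare statement of the lemma, however, your approach suffices.
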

	\begin{proof}	
		Since every isomorphism is an epimorphism, the first part of the lemma is obvious from Lemma \ref{lemud1}. Conversely, let $\gamma$ be a cone in $\snc$ of the form $\epsilon  \ast p$, where $\epsilon  :=\epsilon(c'  ,\mathfrak{d}'  )$ is an idempotent cone with vertex $c'  $ and $p$ is an epimorphism.  Using Lemma \ref{lemepi} (4), let $p=qu$ be the normal factorisation of the epimorphism $p$ such that $q\colon c' \to c$ and $u\colon c \to z_\gamma$. Then $\gamma=\epsilon  \ast p=\epsilon  \ast q\ast u$.  
		
		Now, since a retraction is, in particular,  an epimorphism, by equation (\ref{eqnbin}) we see that $\epsilon   \ast q$ is a cone in $ \snc $ with vertex $c$. Let $\mu:=\epsilon   \ast q$ and $\mathfrak{d}:=R_{\mu}$ in the semigroup $ \snc $. As $c\preceq c'   $ and there is a unique morphism between $c$ and $c'   $, we have $\epsilon(c)=j(c,c'   )$.  Next observe that 
		$$\mu(c)= \epsilon   (c)\:q=j(c,c'   )q=1_{c} \quad \text{ and }  \quad\mu(c'   )=\epsilon   \ast q(c'   )=\epsilon   (c'   )\: q= 1_{c'   }\: q=q.$$
		Thus $\mu$ is an idempotent, and $\mu=\epsilon   \ast \mu(c'   )$ with $\mu(c'   )$  a retraction. Therefore, by Lemma \ref{lemorder}, we get that $\mu:=\epsilon  \ast q$ is an idempotent cone such that $\mu\leqslant\epsilon  $ in the semigroup $ \snc $. 	In particular,  $\mathfrak{d}=R_\mu \sqsubseteq R_{\epsilon}=\mathfrak{d}'$.  Since $\mathfrak{D}$ is a down-set and $\mathfrak{d}'\in \mathfrak{D}$, we obtain $\mathfrak{d}\in \mathfrak{D}$. So $\mu=\epsilon(c,\mathfrak{d}) \in  \sncd $ and as shown in Figure \ref{figepi}, we have $\gamma=\mu\ast u$, where $\mu$ is an idempotent cone in  $\sncd$ and $u$ is an isomorphism. Hence, using Lemma \ref{lemud1} the cone $\gamma$ belongs to the set $\sncd$.  	
	\end{proof}
	
	\begin{figure}[h]
		\begin{center}
			\begin{tikzpicture}[scale=.3]
				\foreach \y in {9,11,13,15} \foreach \x in {-6,-3,0,3} {\draw (-6,\y)--(3,\y); \draw(\x,9)--(\x,15);}
				\fill[fill=gray,fill opacity=.25] (-6,13) rectangle (-3,15);
				\fill[fill=gray,fill opacity=.25] (-3,13) rectangle (0,11);
				\fill[fill=gray,fill opacity=.25] (0,11) rectangle (3,9);
				
				\node () at (-1.5,12) { $\epsilon$};
				\node () at (-1.4,15.5) {\scriptsize $c'$};
				\draw[->]   (-1.5,8.9) to   (3.3,5.3);
				\node () at (1.6,7.3) {\scriptsize $q$};	
				\node () at (-6.5,12) {\scriptsize $\mathfrak{d}'$};

				\foreach \x in {-1,1,3,5} \foreach \y in {-1,2,5,8,11} {\draw (-1,\x)--(11,\x); \draw(\y,-1)--(\y,5);}
				\node () at (9.5,2) { $\gamma$};
				\node () at (3.5,-1.5) {\scriptsize $c$};
				\node () at (9.5,-1.5) {\scriptsize $z_\gamma$};
				\node () at (3.5,2) { $\mu$};
				\node () at (-1.5,2) {\scriptsize $\mathfrak{d}$};	
				
				\draw[->][in=160, out=20]  (3.4,5.5) to node[above] {\scriptsize $u$} (9.5,5.5) ;
				\fill[fill=gray,fill opacity=.25] (-1,-1) rectangle (2,1);
				\fill[fill=gray,fill opacity=.25] (2,1) rectangle (5,3);
				\fill[fill=gray,fill opacity=.25] (5,3) rectangle (8,5);
				\fill[fill=gray,fill opacity=.25] (8,3) rectangle (11,5);

			\end{tikzpicture}
		\end{center}
		\caption{Decomposition of a cone $\gamma$ as an idempotent cone and an epimorphism.}
		\label{figepi}	
	\end{figure}

	\begin{pro}\label{proud2}
		Let $\cod$ be a connected category. Then $ \sncd $ is a regular semigroup.
	\end{pro}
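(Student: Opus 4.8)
The plan is to establish two things: that $\sncd$ is closed under the composition of cones (so that it is a subsemigroup of the regular semigroup $\snc$ of Lemma \ref{lemrs}), and that every cone in $\sncd$ admits an inverse which again lies in $\sncd$. Both steps are designed to run on the two decomposition lemmas (Lemmas \ref{lemud1} and \ref{lemud2}) together with the connectedness hypothesis.

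For closure, I would take $\gamma,\delta \in \sncd$ and use Lemma \ref{lemud1} to write $\gamma = \epsilon(c,\mathfrak{d}) \ast u$ with $\epsilon(c,\mathfrak{d})$ an idempotent cone in $\sncd$ and $u$ an isomorphism. By the definition (\ref{eqnbin}) of the composition and the associativity of $\ast$, one gets $\gamma\cdot\delta = \gamma \ast (\delta(z_\gamma))^\circ = \epsilon(c,\mathfrak{d}) \ast \big(u\,(\delta(z_\gamma))^\circ\big)$, where the bracketed morphism is a composite of two epimorphisms, hence an epimorphism. Lemma \ref{lemud2} then places $\gamma\cdot\delta$ back in $\sncd$. (In fact this same argument, run with arbitrary $\delta\in\snc$, shows that $\sncd$ is a left ideal of $\snc$, but closure is all that is needed.)

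For regularity, the key is to produce an inverse that does not escape $\sncd$, and this is exactly where connectedness enters. Given $\gamma\in\sncd$, write $\gamma = \epsilon(c,\mathfrak{d}) \ast u$ as above, so that $u = \gamma(c)\colon c\to z_\gamma$ is an isomorphism. Since $\mathcal{C}$ is connected by $\mathfrak{D}$, Definition \ref{dfncncat} guarantees that the object $z_\gamma$ is connected by some $\mathfrak{d}'\in\mathfrak{D}$; let $\nu := \epsilon(z_\gamma,\mathfrak{d}')$ be the corresponding idempotent cone, with vertex $z_\gamma$ and $\gr$-class $\mathfrak{d}'\in\mathfrak{D}$. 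I then propose the candidate inverse $\chi := \nu \ast u^{-1}$. Because $\nu$ is an idempotent cone in $\sncd$ and $u^{-1}$ is an isomorphism, Lemma \ref{lemud1} immediately yields $\chi\in\sncd$, so the only remaining task is to confirm that $\chi$ is genuinely an inverse of $\gamma$.

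This verification reduces to short manipulations with (\ref{eqnbin}), using that $\nu(z_\gamma)=1_{z_\gamma}$ and $\epsilon(c,\mathfrak{d})(c)=1_c$ since these cones are idempotent (Lemma \ref{lemrs}), and that the epimorphic component of an isomorphism is the isomorphism itself (Lemma \ref{lemepi}). I expect to obtain $\gamma\cdot\chi = \epsilon(c,\mathfrak{d})$ and $\chi\cdot\gamma = \nu$, whence $\gamma\chi\gamma = \epsilon(c,\mathfrak{d})\cdot\gamma = \gamma$ and $\chi\gamma\chi = \nu\cdot\chi = \chi$. The main obstacle here is conceptual rather than computational: an arbitrary inverse of $\gamma$ in $\snc$ (for instance the one furnished by Lemma \ref{lemrs}) need not lie in $\sncd$, and the only thing forcing the existence of a \emph{good} inverse is the availability of an idempotent cone with vertex $z_\gamma$ whose $\gr$-class belongs to $\mathfrak{D}$ — precisely the content of Definition \ref{dfncncat}. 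Once such a $\nu$ is in hand, realising the inverse explicitly as $\nu\ast u^{-1}$ arranges $\chi\gamma=\nu$, and the relation $\chi\gr\chi\gamma$ then forces $R_\chi = R_\nu\in\mathfrak{D}$, keeping $\chi$ inside $\sncd$.
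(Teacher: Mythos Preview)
Your proposal is correct and follows essentially the same route as the paper's proof: closure via Lemma~\ref{lemud1} followed by Lemma~\ref{lemud2}, and regularity by constructing the inverse $\chi=\epsilon(z_\gamma,\mathfrak{d}')\ast u^{-1}$ using connectedness at the vertex $z_\gamma$. Your additional remarks that $\sncd$ is in fact a left ideal of $\snc$ and that $\gamma\chi=\epsilon(c,\mathfrak{d})$, $\chi\gamma=\nu$ explicitly are nice refinements, but the core argument is the same.
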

	\begin{proof}
		First we need to show that $ \sncd $ is a closed subset of $ \snc $. Let $\gamma_1$ and  $\gamma_2$ be two cones in the set $ \sncd $. Then applying Lemma \ref{lemud1} to the cone $\gamma_1$, we have an idempotent cone $\epsilon_1:=\epsilon(c_1,\mathfrak{d}_1)$  and an isomorphism $u_1$ such that $\gamma_1:=\epsilon_1\ast u_1$. Using equation (\ref{eqnbin}), and Lemma \ref{lemepi} (2) and (3), we see that 
		$$\gamma_1 \gamma_2=\gamma_1\ast (\gamma_2(z_{\gamma_1}))^\circ = \epsilon_1\ast u_1\ast (\gamma_2(z_{\gamma_1}))^\circ= \epsilon_1\ast (u_1 (\gamma_2(z_{\gamma_1}))^\circ).$$ 
		As $u_1$ is an isomorphism and $(\gamma_2(z_{\gamma_1}))^\circ$ is an epimorphism, their composition is an epimorphism. Therefore, the cone $\gamma_1\gamma_2$ is of the form $\epsilon\ast p$ such that $\epsilon$ is an idempotent cone in $\sncd$ and $p$ is an epimorphism. Now, applying Lemma \ref{lemud2} we have $\gamma_1\gamma_2\in \sncd$ as shown in Figure \ref{figcomp} and hence $ \sncd $ is a subsemigroup of $ \snc $. 
		\begin{figure}[h]
			\begin{center}
				\begin{tikzpicture}[scale=.4]
					\foreach \y in {9,11,13,15} \foreach \x in {-6,-3,0,3} {\draw (-6,\y)--(3,\y); \draw(\x,9)--(\x,15);}
					\fill[fill=gray,fill opacity=.25] (-6,13) rectangle (-3,15);
					\fill[fill=gray,fill opacity=.25] (-3,13) rectangle (0,11);
					\fill[fill=gray,fill opacity=.25] (0,11) rectangle (3,9);
					
					\node () at (-1.5,12) { $\epsilon_1$};
					\node () at (-1.4,15.5) {\scriptsize $c_1$};
					\node () at (1.6,15.5) {\scriptsize $z_{\gamma_1}$};
					\node () at (-6.5,12) {\scriptsize $\mathfrak{d}_1$};	
					\draw[->]   (-1.5,8.9) to   (3.3,5.3);
					\node () at (1.6,7.3) {\scriptsize $q$};	
					\node () at (1.5,12) { $\gamma_1$};
					\draw[->][in=160, out=20]  (-1.5,16) to node[above] {\scriptsize $u_1$} (1.5,16) ;
					
					\draw[->][in=100, out=340]  (2.5,15.5) to  (9.5,5.7) ;
					\node () at (6,11) {\scriptsize $(\gamma_2(z_{\gamma_1}))^\circ$};
					\draw[->,dashed][in=160, out=20]  (2.0,16) to  (14.5,15) ;
					\node () at (8,16) {\scriptsize $\gamma_2(z_{\gamma_1})$};

					\foreach \x in {-1,1,3,5} \foreach \y in {-1,2,5,8,11} {\draw (-1,\x)--(11,\x); \draw(\y,-1)--(\y,5);}
					\node () at (9.5,2) { $\gamma_1\gamma_2$};
					\node () at (3.5,-1.5) {\scriptsize $c$};
					\node () at (9.5,-1.5) {\scriptsize $z_{\gamma_1\gamma_2}$};
					\node () at (3.5,2) { $\mu$};
					\node () at (-1.5,2) {\scriptsize $\mathfrak{d}$};	
					
					\draw[->][in=160, out=20]  (3.4,5.5) to node[above] {\scriptsize $u$} (9.5,5.5) ;
					\fill[fill=gray,fill opacity=.25] (-1,-1) rectangle (2,1);
					\fill[fill=gray,fill opacity=.25] (2,1) rectangle (5,3);
					\fill[fill=gray,fill opacity=.25] (5,3) rectangle (8,5);
					\fill[fill=gray,fill opacity=.25] (8,3) rectangle (11,5);
					
					\foreach \y in {10,12,14} \foreach \x in {10,13,16} {\draw (10,\y)--(16,\y); \draw(\x,10)--(\x,14);}
					\fill[fill=gray,fill opacity=.25] (10,10) rectangle (13,12);
					\fill[fill=gray,fill opacity=.25] (13,12) rectangle (16,14);
					\node () at (14.5,11) { $\gamma_2$};
					\draw[->,dotted]   (9.6,5.5) to   (14.5,9.7);
					\node () at (14,8) {\scriptsize $j(z_{\gamma},z_{\gamma_2})$};	
					\node () at (14.5,14.5) {\scriptsize $z_{\gamma_2}$};
					
				\end{tikzpicture}
			\end{center}
			\caption{Composition of cones in the semigroup $\sncd$.}
			\label{figcomp}	
		\end{figure}	
		
		Finally, to see that $ \sncd $ is regular, let $\gamma\in  \sncd $ be a cone with vertex $z_\gamma$. By definition of a connected category, there is an idempotent cone $\epsilon(z_\gamma,\mathfrak{d}')$ in the semigroup $ \sncd $ with vertex $z_\gamma$. Using Lemma \ref{lemud1}, we can write $\gamma=\epsilon(c',\mathfrak{d}_\gamma)\ast u_\gamma$ where $c'\in v\mathcal{C}$, $\mathfrak{d}_\gamma:=R_{\gamma}$ and $u_\gamma$ an isomorphism from $c'$ to $z_\gamma$. Then let $\chi:= \epsilon(z_\gamma,\mathfrak{d}')\ast u_\gamma^{-1}$ so that $z_{\chi}=c'$ and $R_{\chi}=\mathfrak{d}'$. Since $\mathfrak{d}',\mathfrak{d}_\gamma\in \mathfrak{D}$, by Lemma \ref{lemud1} the cone $\chi \in  \sncd $. Also, observe that using (\ref{eqnbin0}), we have $$\gamma(c')=(\epsilon(c',\mathfrak{d}_\gamma)\ast u_\gamma) (c')= \epsilon(c',\mathfrak{d}_\gamma) (c') u_\gamma =1_{c'} u_\gamma=u_\gamma.$$ 
		Similarly we have $\chi(z_\gamma)=u_\gamma^{-1}$, and both $\gamma(c')$ and $\chi(z_\gamma)$ isomorphisms. Then
		$$\gamma\chi\gamma=(\gamma\ast (\chi(z_\gamma))^\circ) \ast (\gamma(c'))^\circ=(\gamma\ast\:u_\gamma^{-1})\ast\:u_\gamma=\gamma\ast\:(u_\gamma^{-1}\:u_\gamma)=\gamma.$$
		Similarly,  $\chi\gamma\chi=\chi$, and so $\chi$ is an inverse of $\gamma$ (see Figure \ref{figinv}). Hence $ \sncd $ is a regular subsemigroup of $ \snc $.	
		
		\begin{figure}[ht]
			\begin{center}
				\begin{tikzpicture}[scale=.5]
					\foreach \x in {-1,1,3,5} \foreach \y in {-1,2,5,8,11} {\draw (-1,\x)--(11,\x); \draw(\y,-1)--(\y,5);}
					\node () at (0.5,4) {\footnotesize $\epsilon(c',\mathfrak{d}_\gamma)$};
					\node () at (0.5,-1.5) {\footnotesize $c'$};
					\node () at (9.5,4) {\footnotesize $\gamma$};			
					\node () at (0.5,0) {\footnotesize $\chi$};
					\node () at (9.5,0) {\footnotesize $\epsilon(z_\gamma,\mathfrak{d}')$};			
					\node () at (9.5,-1.5) {\footnotesize $z_\gamma$};
					\node () at (11.5,0) {\footnotesize $\mathfrak{d}'$};	
					\node () at (11.5,4) {\footnotesize $\mathfrak{d}_\gamma$};	
					
					\draw[->][in=160, out=20]  (0.5,5.5) to node[above] {\footnotesize $u_\gamma$} (9.5,5.5) ;
					\draw[->][in=-20, out=200]   (9.5,-2) to node[below] {\footnotesize $u_\gamma^{-1}$} (0.5,-2);
					\fill[fill=gray,fill opacity=.25] (-1,3) rectangle (2,5);
					\fill[fill=gray,fill opacity=.25] (2,1) rectangle (5,3);
					\fill[fill=gray,fill opacity=.25] (5,1) rectangle (8,3);
					\fill[fill=gray,fill opacity=.25] (8,-1) rectangle (11,1);
					
				\end{tikzpicture}
			\end{center}
			\caption{Locating an inverse $\chi$ of a cone $\gamma$ in the semigroup $\sncd$.}
			\label{figinv}
			
		\end{figure}	
	\end{proof}

	\begin{pro}\label{proud3}
		Let  $\cod$ be  a connected category. The semigroup	$ \sncd $ is left reductive.
	\end{pro}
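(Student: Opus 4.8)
The goal is to show that the regular representation $\rho$ of $\sncd$ is injective; equivalently, that if $\xi\gamma_1 = \xi\gamma_2$ holds for every cone $\xi \in \sncd$, then $\gamma_1 = \gamma_2$. The plan is to use the idempotent cones supplied by the connecting down-set $\mathfrak{D}$ as ``probes'': for each object I will left-multiply an arbitrary cone $\gamma$ by a suitable idempotent and read off, from the resulting product, enough data to reconstruct $\gamma$ component by component.

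The key computation is the first step. Since $\cod$ is connected, every object $c \in v\mathcal{C}$ carries an idempotent cone $\epsilon(c,\mathfrak{d}) \in E(\sncd)$ with vertex $c$ (see \eqref{eqnesncd}), and in particular this probe lies in $\sncd$, so it is a legitimate test element for the left reductivity of $\sncd$. Feeding it into \eqref{eqnbin} gives $\epsilon(c,\mathfrak{d}) \cdot \gamma = \epsilon(c,\mathfrak{d}) \ast (\gamma(c))^\circ$, and evaluating this cone at the object $c$ via \eqref{eqnbin0}, using $\epsilon(c,\mathfrak{d})(c) = 1_c$ from Lemma \ref{lemrs}, yields
\[
(\epsilon(c,\mathfrak{d}) \cdot \gamma)(c) = \epsilon(c,\mathfrak{d})(c)\,(\gamma(c))^\circ = (\gamma(c))^\circ .
\]
Hence the hypothesis $\epsilon(c,\mathfrak{d})\gamma_1 = \epsilon(c,\mathfrak{d})\gamma_2$ forces $(\gamma_1(c))^\circ = (\gamma_2(c))^\circ$ for every object $c \in v\mathcal{C}$.

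The remaining work is to reconstruct the full cones from their epimorphic components. First, equality of $(\gamma_1(c))^\circ$ and $(\gamma_2(c))^\circ$ as morphisms forces equality of their codomains, that is $\im\gamma_1(c) = \im\gamma_2(c)$ for all $c$. To recover the vertices I invoke clause (2) of Definition \ref{dfnnormcone}: choosing $d$ with $\gamma_1(d)$ an isomorphism, Lemma \ref{lemepi}(2) gives $(\gamma_1(d))^\circ = \gamma_1(d)$, whose codomain is $z_{\gamma_1}$; but this equals $(\gamma_2(d))^\circ$, whose codomain $\im\gamma_2(d)$ is a subobject of $z_{\gamma_2}$, so $z_{\gamma_1} \preceq z_{\gamma_2}$. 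The symmetric choice gives the reverse inequality, and since $\preceq$ is a partial order we conclude $z_{\gamma_1} = z_{\gamma_2} =: z$. Finally, the canonical factorisations $\gamma_i(c) = (\gamma_i(c))^\circ\, j(\im\gamma_i(c), z)$, combined with the established equalities of epimorphic components, images, and vertices, give $\gamma_1(c) = \gamma_2(c)$ for every $c$, whence $\gamma_1 = \gamma_2$.

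The main obstacle is that the probes only ever return \emph{epimorphic components}: the composition \eqref{eqnbin} always inserts $(\gamma(z_\xi))^\circ$, so left multiplication can never deliver a raw component $\gamma(c)$ directly. Consequently the crux of the argument is the reconstruction step, and in particular the recovery of the common vertex $z$ from the epimorphic components alone; this is exactly where the isomorphism-component clause of the cone definition becomes indispensable.
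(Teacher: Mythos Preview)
Your proof is correct and follows essentially the same strategy as the paper: use the idempotent cones $\epsilon(c,\mathfrak{d})$ supplied by the connection as probes to extract the epimorphic components $(\gamma_i(c))^\circ$, and then reconstruct the cones from these. The only substantive difference is in how the equality of vertices is obtained. The paper establishes $z_{\gamma_1}=z_{\gamma_2}$ \emph{first}, by a purely algebraic argument: pick an idempotent $\epsilon_1\gr\gamma_1$ in $\sncd$ (regularity), so $\gamma_1=\epsilon_1\gamma_1=\epsilon_1\gamma_2$ gives $\gamma_1\lel\gamma_2$, and Lemma~\ref{lemgc}(1) yields $z_{\gamma_1}\preceq z_{\gamma_2}$; symmetry finishes. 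You instead derive the vertex equality \emph{after} the epimorphic components are matched, by invoking clause~(2) of Definition~\ref{dfnnormcone} to find an isomorphism component whose codomain pins down the vertex. Both routes are short and valid; yours is perhaps more self-contained in that it avoids the Green-relation lemma, while the paper's is slightly cleaner in that it separates the vertex question from the component question.
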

	\begin{proof}
		Recall that, to prove that $ \sncd $ is left reductive, given $\gamma_1,\gamma_2 \in  \sncd $ such that 
		\begin{equation}\label{eqnass}
			\gamma\gamma_1=\gamma\gamma_2\text{ , for every }\gamma\in  \sncd,
		\end{equation}
		we need to show $\gamma_1=\gamma_2$. Observe that by Definition \ref{dfnnormcone}, to prove that two cones $\gamma_1$ and $\gamma_2$  are equal in a category $\mathcal{C}$, it suffices to show that: 
		\begin{enumerate}[(i)]
			\item the vertices of the cones are same, i.e., $z_{\gamma_1}=z_{\gamma_2}$, and,
			\item each component of the cones coincide, i.e., $\gamma_1(c)=\gamma_2(c)\text{, for every }c\in v\mathcal{C}.$ 
		\end{enumerate}
		
		But since the inclusions are unique between two given objects and using Lemma \ref{lemepi} (1), to verify the condition (ii) above, it suffices to show that	the epimorphic components of the respective morphisms are the same, i.e.,
	$$(\gamma_1(c))^\circ=(\gamma_2(c))^\circ\text{ , for every }c\in v\mathcal{C}.$$

		To begin with, observe that since the semigroup $ \sncd $ is regular, there exists an idempotent cone $\epsilon_1\in \sncd $ such that $\gamma_1 \gr \epsilon_1$, and so  $\gamma_1=\epsilon_1 \gamma_1$. Then using the assumption (\ref{eqnass}) and letting $\gamma:=\epsilon_1$, we have $\epsilon_1 \gamma_1 =\epsilon_1 \gamma_2$. So, $\gamma_1=\epsilon_1 \gamma_2$, i.e., $\gamma_1 \lel \gamma_2$. Hence, using Lemma \ref{lemgc} (1), we see that the vertices of the cones satisfy $z_{\gamma_1} \preceq z_{\gamma_2}$. Similarly, using an idempotent $\epsilon_2\in  \sncd $ such that $\gamma_2 \gr \epsilon_2$, we can show that $z_{\gamma_2} \preceq z_{\gamma_1}$. Thus  $z_{\gamma_1} = z_{\gamma_2}$. 
		
		Next, given an arbitrary $c\in v\mathcal{C}$, by definition of a connected category, there is an idempotent cone $\epsilon=\epsilon(c,\mathfrak{d})$ in the semigroup $ \sncd $ with vertex $c$ such that $\epsilon(c)=1_c$. Letting $\gamma:=\epsilon$ in the assumption (\ref{eqnass}), we get $\epsilon\gamma_1=\epsilon\gamma_2$, and so using equation (\ref{eqnbin}) we have  $\epsilon\ast(\gamma_1(c))^\circ=\epsilon\ast(\gamma_2(c))^\circ$. Now, comparing the component of these cones at the object $c$, we obtain $\epsilon(c)(\gamma_1(c))^\circ=\epsilon(c)(\gamma_2(c))^\circ$. However, since $\epsilon(c)=1_c$, we see that $(\gamma_1(c))^\circ=(\gamma_2(c))^\circ$,  for each object $c\in v\mathcal{C}$.  Thereby we conclude that the cones $\gamma_1$ and $\gamma_2$ coincide, and so the regular semigroup $ \sncd $ is left reductive.
	\end{proof}

	Summarising the above discussion, given a connected category $\cod$, we constructed a left reductive regular semigroup $ \sncd $. We shall refer to this semigroup as the \emph{connection semigroup} of the category $\cod$. Now, to take the discussion forward, we need to explore the left and right ideal structure of  $\sncd$. Since $\sncd$ is a regular subsemigroup of the semigroup $\snc$, Green's relations in $\sncd$ are inherited from $\snc$ (see \cite[Proposition 2.4.2]{howie}). So, using Lemma \ref{lemgc}, we have the following.
	
	\begin{lem}\label{lemgrcd}
		Let $\gamma,\delta$ be cones in the connection semigroup $ \sncd $. Then 
		\begin{enumerate}
			\item $\gamma \lel \delta \text{ if and only if }z_{\gamma}\preceq z_{\delta}$, and $\gamma \gl \delta \text{ if and only if }z_{\gamma}= z_{\delta}$;
			\item $\gamma \ler \delta \text{ if and only if }\gamma(z_{\delta})\text{ is an epimorphism such that }\gamma=\delta \ast\gamma(z_{\delta })$. We have $\gamma\: \gr \:\delta$  if and only if   $\gamma=\delta\ast h$ for a unique isomorphism $h$.
		\end{enumerate} 
	\end{lem}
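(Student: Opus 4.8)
The plan is to deduce everything from the fact, established in Proposition \ref{proud2}, that $\sncd$ is a \emph{regular} subsemigroup of $\snc$, together with the characterisations of the quasi-orders in the ambient semigroup $\snc$ already recorded in Lemma \ref{lemgc}. The key observation is that both the Green's relations and the underlying quasi-orders $\lel$ and $\ler$ are inherited from $\snc$ by the regular subsemigroup $\sncd$, while the descriptions in Lemma \ref{lemgc} are phrased purely in terms of vertices and components of cones, which are intrinsic to the category $\mathcal{C}$ and do not depend on the ambient semigroup. Hence, once the inheritance is in place, the statement is merely a transcription of Lemma \ref{lemgc}.

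For the equivalence relations $\gl$ and $\gr$, their restrictions to $\sncd$ agree with the corresponding relations computed in $\snc$; this is precisely \cite[Proposition 2.4.2]{howie}, using that $\sncd$ is regular. For the quasi-orders I would argue directly. Suppose $\gamma,\delta\in\sncd$ with $\gamma\lel\delta$ computed in $\snc$, i.e.\ $\gamma=\sigma\delta$ for some $\sigma\in\snc$. Since $\sncd$ is regular, $\delta$ admits an inverse $\delta'\in\sncd$, so $\delta=\delta\delta'\delta$ and therefore $\gamma=\sigma\delta=\sigma\delta\delta'\delta=(\gamma\delta')\delta$. As $\gamma\delta'\in\sncd$, this exhibits $\gamma\in\sncd\,\delta$, so $\gamma\lel\delta$ already holds in $\sncd$; the reverse implication is trivial since $\sncd\subseteq\snc$. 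The dual computation $\gamma=\delta(\delta'\gamma)$ handles $\ler$. Thus $\lel$ and $\ler$ also restrict from $\snc$ to $\sncd$.

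With the inheritance secured, both items follow immediately. For (1), $\gamma\lel\delta$ in $\sncd$ is equivalent to $\gamma\lel\delta$ in $\snc$, which by Lemma \ref{lemgc}(1) holds if and only if $z_\gamma\preceq z_\delta$; the $\gl$ statement then follows by antisymmetry of the partial order $\preceq$. For (2), the relations $\ler$ and $\gr$ in $\sncd$ coincide with the same relations in $\snc$, whose characterisations in terms of the component $\gamma(z_\delta)$ being an epimorphism with $\gamma=\delta\ast\gamma(z_\delta)$ (respectively $\gamma=\delta\ast h$ for a unique isomorphism $h$) are exactly Lemma \ref{lemgc}(2). The only point requiring care, and the mild obstacle here, is the quasi-order inheritance, since \cite[Proposition 2.4.2]{howie} is customarily stated for the equivalence relations rather than for the quasi-orders $\lel$ and $\ler$; the short regularity computation above closes that gap.
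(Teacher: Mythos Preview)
Your proposal is correct and follows essentially the same approach as the paper: invoke \cite[Proposition 2.4.2]{howie} to inherit Green's relations from $\snc$ to the regular subsemigroup $\sncd$, then transcribe Lemma \ref{lemgc}. In fact you are slightly more careful than the paper, which only cites the inheritance of the equivalence relations $\gl,\gr$ and leaves the quasi-orders $\lel,\ler$ implicit; your short regularity computation $\gamma=(\gamma\delta')\delta$ explicitly closes that gap.
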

	
	But in our framework, we have a `neater' description of the Green's $\gr$ relation. Observe that Green's relations in $\sncd$ are restrictions of Green's relations in $\snc$ (see Remark \ref{rmkcc}). In what follows, we shall, by a slight abuse of notation, use the same symbol to denote the partial orders on $\sncd$ and $\snc$. Recall that $ (\mathfrak{D},\sqsubseteq) $ is a down-set of the poset $ (\snc /\gr,\sqsubseteq)$. So, for cones $\gamma,\delta \in \sncd $ such that $\gamma\in \mathfrak{d}_1$ and $\delta\in \mathfrak{d}_2$ where $\mathfrak{d}_1$ and $\mathfrak{d}_2$ are elements of the poset $\mathfrak{D}$, we have 
	\begin{equation}
		\gamma \ler \delta \iff R_\gamma \sqsubseteq R_\delta \iff \mathfrak{d}_1 \sqsubseteq \mathfrak{d}_2.
	\end{equation}  
	Thus we readily conclude that the poset $\sncd/\gr$ is order isomorphic to $\mathfrak{D}$. For later use, we denote this order isomorphism 
	by $G$ and record the above observation as a lemma.
	\begin{lem}\label{lemgrcd1}
		Let $\gamma,\delta$ be cones in the connection semigroup $ \sncd $  such that $\gamma\in \mathfrak{d}_1$ and $\delta\in \mathfrak{d}_2$. Then $\gamma \ler \delta$ if and only if $\mathfrak{d}_1 \sqsubseteq \mathfrak{d}_2$. Further $\gamma\: \gr \:\delta$  if and only if   $\mathfrak{d}_1 = \mathfrak{d}_2$.
	\end{lem}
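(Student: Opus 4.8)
The plan is to deduce both assertions directly from two facts already in hand: that Green's relations on $\sncd$ are simply the restrictions of those on $\snc$, and that the order $\sqsubseteq$ on the regular poset $\snc/\gr$ is by definition governed by the quasi-order $\ler$. Since $\sncd$ is a regular subsemigroup of $\snc$ (Proposition \ref{proud2}), the quasi-order $\ler$ computed inside $\sncd$ coincides with the one computed in $\snc$ by \cite[Proposition 2.4.2]{howie}, so throughout I may work with the ambient relation on $\snc$ and forget about the distinction.

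First I would recall that the ordering $\sqsubseteq$ on $\snc/\gr$ is, by its very construction (compare (\ref{eqnsr})), driven by $\ler$: for any cones $\gamma,\delta\in\snc$ one has $R_\gamma\sqsubseteq R_\delta$ precisely when $\gamma\ler\delta$. The hypotheses $\gamma\in\mathfrak{d}_1$ and $\delta\in\mathfrak{d}_2$ simply record that $R_\gamma=\mathfrak{d}_1$ and $R_\delta=\mathfrak{d}_2$, and because $\mathfrak{D}$ carries exactly the order inherited from $\snc/\gr$ (being a down-set of it), I obtain the chain
$$\gamma\ler\delta \iff R_\gamma\sqsubseteq R_\delta \iff \mathfrak{d}_1\sqsubseteq\mathfrak{d}_2,$$
which is the first assertion.

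For the second assertion I would use $\gr={\ler}\cap({\ler})^{-1}$ together with the antisymmetry of the partial order $\sqsubseteq$ on $\mathfrak{D}$. Applying the first assertion twice gives $\gamma\gr\delta$ if and only if both $\gamma\ler\delta$ and $\delta\ler\gamma$ hold, equivalently $\mathfrak{d}_1\sqsubseteq\mathfrak{d}_2$ and $\mathfrak{d}_2\sqsubseteq\mathfrak{d}_1$; antisymmetry then collapses this to $\mathfrak{d}_1=\mathfrak{d}_2$, as required.

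I do not expect any genuine obstacle: the content of the lemma is essentially the bookkeeping already carried out in the paragraph immediately preceding it, and the displayed chain above is exactly the equation recorded there. The only step warranting care is the very first one, namely that $\ler$ may be transferred unchanged between $\sncd$ and $\snc$; but this is precisely what the regularity of the subsemigroup $\sncd$, established in Proposition \ref{proud2}, guarantees.
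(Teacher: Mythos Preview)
Your proposal is correct and follows exactly the same reasoning as the paper: the lemma is stated as a record of the displayed chain $\gamma\ler\delta \iff R_\gamma\sqsubseteq R_\delta \iff \mathfrak{d}_1\sqsubseteq\mathfrak{d}_2$ derived in the paragraph immediately preceding it, using that Green's relations on the regular subsemigroup $\sncd$ are inherited from $\snc$. Your added observation that the second assertion follows by antisymmetry of $\sqsubseteq$ is implicit in the paper's remark that $\sncd/\gr$ is order isomorphic to $\mathfrak{D}$.
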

	 
	To simplify notation, we fix $T:=\sncd$, the semigroup of cones in a connected category $\cod$ for the remainder of the section. 
	\begin{rmk}
	As $T$ is regular, the principal right ideals of $T$ form a normal category $\mathbb{R}(T)$ as defined in equation (\ref{eqnrs}). In particular, we have a poset $(v\mathbb{R}(T),\subseteq)$ of objects of the right normal category. Observe that for cones $\gamma,\delta\in T$, we have $\gamma T\subseteq \delta T$ if and only if $R_\gamma \sqsubseteq R_\delta$. So using Lemma \ref{lemgrcd1}, it follows that the poset $(v\mathbb{R}(\sncd),\subseteq)$ of objects of the right normal category is, in fact, order isomorphic to the poset $(\mathfrak{D},\sqsubseteq)$. Hence
	$$(\sncd/\gr,\sqsubseteq) \cong (\mathfrak{D},\sqsubseteq) \cong (v\mathbb{R}(T),\subseteq).$$
	\end{rmk}
	
	Having characterised the right ideal structure of the semigroup $\sncd$ as a poset, we proceed to the left ideals where further structure emerges. By Lemma \ref{lemgrcd} (1), it is clear that the poset $\sncd/\gl$ is order isomorphic to the poset $(v\mathcal{C},\preceq)$. But to completely describe the left ideal structure of the semigroup $\sncd$, we employ normal categories and dive one additional layer deeper. Recall that since $\sncd$ is a regular semigroup, the principal left ideals of $\sncd$ form a normal category $\mathbb{L}(\sncd)$. 
	
	Given $T:=\sncd$, we define a functor $F\colon \mathbb{L}(T)\to \mathcal{C}$ as follows. For $\epsilon\in E(T)$  and a morphism $r(\epsilon_1,\gamma,\epsilon_2)$ in $\mathbb{L}(T)$ such that $\gamma\in \epsilon_1 T \epsilon_2$, let
	\begin{equation}\label{eqnF}
		vF(T \epsilon):= z_\epsilon\:\text{ and }\: F(r(\epsilon_1,\gamma,\epsilon_2)):=\gamma(z_{\epsilon_1})j(z_\gamma,z_{\epsilon_2}),
	\end{equation}
	where $j(z_\gamma,z_{\epsilon_2})$ is the inclusion morphism in $\mathbb{L}(T)$. 
	
	Observe that given a morphism $r(\epsilon_1,\gamma,\epsilon_2)$ in the category $\mathbb{L}(T)$ from $T \epsilon_1$ to $T \epsilon_2$, since $\gamma\in \epsilon_1T$, we have $\gamma\ler\epsilon_1$. So, using Lemma \ref{lemgrcd} (2), we see that $\gamma(z_{\epsilon_1})$ is an epimorphism in $\mathcal{C}$ such that $\gamma=\epsilon_1\ast \gamma(z_{\epsilon_1})$. Also notice that the expression $\gamma(z_{\epsilon_1})j(z_\gamma,z_{\epsilon_2})$ is the {unique canonical factorisation} of the corresponding morphism belonging to $\mathcal{C}(z_{\epsilon_1},z_{\epsilon_2})$. 
	
	\begin{lem}\label{lemf1}
		$F$ is a well-defined functor from the normal category $\mathbb{L}(T)$ to $\mathcal{C}$.
	\end{lem}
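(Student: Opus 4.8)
The plan is to check, in order, the four conditions packaged into the assertion that $F$ is a well-defined functor: that the object assignment $T\epsilon\mapsto z_\epsilon$ is independent of the chosen idempotent generator, that the morphism assignment $r(\epsilon_1,\gamma,\epsilon_2)\mapsto\gamma(z_{\epsilon_1})j(z_\gamma,z_{\epsilon_2})$ is independent of the chosen representatives (and lands in $\mathcal{C}(z_{\epsilon_1},z_{\epsilon_2})$), that identities are preserved, and that composition is preserved. The working tools throughout are the description of Green's relations on $T=\sncd$ from Lemma \ref{lemgrcd}, the cone-multiplication formula (\ref{eqnbin}) together with (\ref{eqnbin0}), the idempotent criterion $\epsilon(z_\epsilon)=1_{z_\epsilon}$ from Lemma \ref{lemrs}, and the uniqueness properties of canonical factorisations recorded in Lemma \ref{lemepi}.

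The object assignment is settled immediately: $T\epsilon=T\epsilon'$ means $\epsilon\gl\epsilon'$, which by Lemma \ref{lemgrcd}(1) is equivalent to $z_\epsilon=z_{\epsilon'}$, so $vF$ is well-defined. The identity $1_{T\epsilon}$ is $r(\epsilon,\epsilon,\epsilon)$, and $F$ sends it to $\epsilon(z_\epsilon)\,j(z_\epsilon,z_\epsilon)=1_{z_\epsilon}\,1_{z_\epsilon}=1_{z_\epsilon}$ using Lemma \ref{lemrs}, so identities are preserved. For well-definedness on morphisms, suppose $r(\epsilon_1,\gamma,\epsilon_2)=r(\epsilon_1',\gamma',\epsilon_2')$, which by the equality rule in $\mathbb{L}(T)$ means $\epsilon_i\gl\epsilon_i'$ and $\gamma'=\epsilon_1'\gamma$. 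Lemma \ref{lemgrcd}(1) gives $z_{\epsilon_i}=z_{\epsilon_i'}$; and since $\gamma\in\epsilon_1T$ gives $\gamma\ler\epsilon_1$, the component $\gamma(z_{\epsilon_1})$ is an epimorphism, so by Lemma \ref{lemepi}(2) it equals its own epimorphic component. Hence $\gamma'=\epsilon_1'\ast(\gamma(z_{\epsilon_1'}))^\circ=\epsilon_1'\ast\gamma(z_{\epsilon_1})$; evaluating this cone at $z_{\epsilon_1'}$ and using $\epsilon_1'(z_{\epsilon_1'})=1_{z_{\epsilon_1'}}$ yields $\gamma'(z_{\epsilon_1'})=\gamma(z_{\epsilon_1})$, while the vertex of $\epsilon_1'\ast\gamma(z_{\epsilon_1})$ is the codomain $z_\gamma$ of the epimorphism $\gamma(z_{\epsilon_1})$, so $z_{\gamma'}=z_\gamma$. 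The two values of $F$ therefore coincide.

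The step I expect to be the main obstacle is preservation of composition. For composable morphisms $r(\epsilon_1,\gamma,\epsilon_2)$ and $r(\epsilon_2,\delta,\epsilon_3)$ the composite in $\mathbb{L}(T)$ is $r(\epsilon_1,\gamma\delta,\epsilon_3)$, so I must match $F(r(\epsilon_1,\gamma,\epsilon_2))\,F(r(\epsilon_2,\delta,\epsilon_3))=\gamma(z_{\epsilon_1})\,j(z_\gamma,z_{\epsilon_2})\,\delta(z_{\epsilon_2})\,j(z_\delta,z_{\epsilon_3})$ against $F(r(\epsilon_1,\gamma\delta,\epsilon_3))=(\gamma\delta)(z_{\epsilon_1})\,j(z_{\gamma\delta},z_{\epsilon_3})$. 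The crux is a sequence of three reductions: first the cone compatibility of Definition \ref{dfnnormcone}(1), using $z_\gamma\preceq z_{\epsilon_2}$, collapses $j(z_\gamma,z_{\epsilon_2})\,\delta(z_{\epsilon_2})=\delta(z_\gamma)$; next I factor $\delta(z_\gamma)=(\delta(z_\gamma))^\circ\,j(z_{\gamma\delta},z_\delta)$ canonically, identifying $z_{\gamma\delta}=\im\delta(z_\gamma)$ from the definition (\ref{eqnbin}) of $\gamma\delta=\gamma\ast(\delta(z_\gamma))^\circ$; finally the composite of inclusions gives $j(z_{\gamma\delta},z_\delta)\,j(z_\delta,z_{\epsilon_3})=j(z_{\gamma\delta},z_{\epsilon_3})$. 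The product becomes $\gamma(z_{\epsilon_1})\,(\delta(z_\gamma))^\circ\,j(z_{\gamma\delta},z_{\epsilon_3})$, and since $(\gamma\delta)(z_{\epsilon_1})=\gamma(z_{\epsilon_1})\,(\delta(z_\gamma))^\circ$ by (\ref{eqnbin}), this is exactly $F(r(\epsilon_1,\gamma\delta,\epsilon_3))$. The only care needed is to track that $z_\delta\preceq z_{\epsilon_3}$ (from $\delta\in T\epsilon_3$) so that the inclusions involved genuinely exist, and that passing between $\gl$-related middle idempotents in the composability condition is harmless by the morphism well-definedness already established.
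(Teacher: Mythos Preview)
Your proof is correct and follows essentially the same route as the paper's own argument: both verify the object map via Lemma~\ref{lemgrcd}(1), handle well-definedness on morphisms by reducing to the epimorphic component $\gamma(z_{\epsilon_1})$ using the equality rule in $\mathbb{L}(T)$ together with $\epsilon(z_\epsilon)=1_{z_\epsilon}$, and prove composition preservation by the three-step collapse $j(z_\gamma,z_{\epsilon_2})\delta(z_{\epsilon_2})=\delta(z_\gamma)$, canonical factorisation of $\delta(z_\gamma)$, and concatenation of inclusions. The only cosmetic differences are that the paper uses $\gamma=\epsilon_1\gamma'$ rather than $\gamma'=\epsilon_1'\gamma$ in the well-definedness step, and keeps the middle idempotents $\gl$-related rather than equal in the composition check.
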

	\begin{proof}
		Suppose that $T \epsilon=T \epsilon'$, then by Lemma \ref{lemgrcd} (1) we have $z_\epsilon=z_{\epsilon'}$, and $vF$ is well-defined on objects. To verify that $F$ is well-defined on morphisms, suppose that $r(\epsilon_1,\gamma,\epsilon_2)=r(\epsilon'_1,\gamma',\epsilon'_2)$ in the left category $\mathbb{L}(T)$. Then, from Section \ref{ssecnc}, this equality of morphisms implies $\epsilon_1\gl \epsilon'_1$, $\epsilon_2\gl \epsilon'_2$ and $\gamma=\epsilon_1\gamma'$. By Lemma \ref{lemgrcd} (1), we have $z_{\epsilon_1}=z_{\epsilon'_1}$ and $z_{\epsilon_2}=z_{\epsilon'_2}$. Further, since $\gamma'(z_{\epsilon'_1})$ is an epimorphism, by Lemma \ref{lemepi} (2), we get  $(\gamma'(z_{\epsilon'_1}))^\circ= \gamma'(z_{\epsilon'_1}).$
		Hence, using equations (\ref{eqnbin0}) and (\ref{eqnbin}), we see that
		$$\gamma(z_{\epsilon_1})= (\epsilon_1\gamma')(z_{\epsilon_1})= \epsilon_1(z_{\epsilon_1}) \: (\gamma'(z_{\epsilon_1}))^\circ=1_{z_{\epsilon_1}}(\gamma'(z_{\epsilon'_1}))^\circ=(\gamma'(z_{\epsilon'_1}))^\circ=\gamma'(z_{\epsilon'_1}).$$ 
		Now since every morphism in the category $\mathcal{C}$ has a unique canonical factorisation, we see that the morphism $\gamma(z_{\epsilon_1})j(z_\gamma,z_{\epsilon_2})=\gamma'(z_{\epsilon'_1})j(z_{\gamma'},z_{\epsilon'_2})$ and so $F$ is well-defined. 
		
		To see that $F$ is a functor from $\mathbb{L}(T)$ to $\mathcal{C}$, first observe that given the identity morphism $1_{T\epsilon}=r(\epsilon,\epsilon,\epsilon)$ in the category $\mathbb{L}(T)$, we have $F(r(\epsilon,\epsilon,\epsilon))=\epsilon(z_{\epsilon})j(z_\epsilon,z_\epsilon)=1_{z_\epsilon}$. Hence $F(1_{T\epsilon})= 1_{vF(T\epsilon)}$ and the identities are preserved by $F$. 
		
		Next, let  $r_1:=r(\epsilon_1,\gamma,\epsilon_2)$ and $r_2:=r(\epsilon'_1,\gamma',\epsilon'_2)$ be morphisms in $\mathbb{L}(T)$ such that $\epsilon_2\gl \epsilon'_1$. Then $r_1r_2=r(\epsilon_1,\gamma\gamma',\epsilon'_2)$ is a morphism such that $F(r_1r_2)=\gamma\gamma'(z_{\epsilon_1})j(z_{\gamma\gamma'},z_{\epsilon'_2})$. 
		Also, $F(r_1)=\gamma(z_{\epsilon_1})j(z_{\gamma},z_{\epsilon_2})$ and $F(r_2)=\gamma'(z_{\epsilon'_1})j(z_{\gamma'},z_{\epsilon'_2})$. Thus $z_\gamma\preceq z_{\epsilon'_1}$  since $z_{\epsilon_2}=z_{\epsilon'_1}$,  and so by Definition \ref{dfnnormcone} (1), we have $j(z_\gamma,z_{\epsilon'_1})\gamma'(z_{\epsilon'_1})=\gamma'(z_\gamma)$. Moreover, as $z_{\gamma\gamma'}= \im \gamma'(z_\gamma)$, using canonical factorisation, we can write $\gamma'(z_\gamma)=(\gamma'(z_\gamma))^\circ j(z_{\gamma \gamma'},z_{\gamma'})$. Therefore, the morphisms $F(r_1)$ and $F(r_2)$ are composable and 
		$$F(r_1)F(r_2)=\gamma(z_{\epsilon_1})j(z_{\gamma},z_{\epsilon_2})\gamma'(z_{\epsilon'_1})j(z_{\gamma'},z_{\epsilon'_2})=\gamma(z_{\epsilon_1})\gamma'(z_\gamma)j(z_{\gamma'},z_{\epsilon'_2})=\gamma(z_{\epsilon_1})(\gamma'(z_\gamma))^\circ j(z_{\gamma \gamma'},z_{\gamma'}) j(z_{\gamma'},z_{\epsilon'_2}).$$
		Finally from  $\gamma(z_{\epsilon_1}) (\gamma'(z_{\gamma}))^\circ=\gamma\gamma'(z_{\epsilon_1})$ and $j(z_{\gamma \gamma'},z_{\gamma'}) j(z_{\gamma'},z_{\epsilon'_2})=j(z_{\gamma \gamma'},z_{\epsilon'_2})$, we obtain
		$$F(r_1)F(r_2)=\gamma\gamma'(z_{\epsilon_1})j(z_{\gamma \gamma'},z_{\epsilon'_2})=F(r_1r_2).$$
		Thus the assignment $F$ preserves the composition also, whence $F$ is a functor.
	\end{proof}
	
	\begin{lem}\label{lemf2}
		The functor $F$ is a normal category isomorphism.
	\end{lem}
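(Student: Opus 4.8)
The plan is to verify the three conditions that, by the definition of isomorphism of normal categories recalled in Section \ref{ssecnc}, make $F$ a normal category isomorphism: that the object map $vF$ is a bijective order isomorphism, that $F$ is fully faithful, and that $F$ preserves inclusions. Throughout I would write $T := \sncd$ and use freely that $F$ is a well-defined functor (Lemma \ref{lemf1}) together with the descriptions of Green's relations on $T$ in Lemma \ref{lemgrcd}.

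First I would dispose of the object map. Surjectivity of $vF$ is immediate from the definition of a connected category: every $c\in v\mathcal{C}$ is the vertex of some idempotent cone $\epsilon(c,\mathfrak{d})\in E(T)$, and $vF(T\epsilon(c,\mathfrak{d}))=c$. For injectivity and the order-isomorphism property simultaneously, I would note that $T\epsilon\subseteq T\epsilon'$ iff $\epsilon\lel\epsilon'$ iff (Lemma \ref{lemgrcd} (1)) $z_\epsilon\preceq z_{\epsilon'}$; reading this together with its converse shows that $vF$ both preserves and reflects the order, and in particular $z_\epsilon=z_{\epsilon'}$ forces $T\epsilon=T\epsilon'$. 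Inclusion preservation is then a one-line check: the inclusion $T\epsilon_1\subseteq T\epsilon_2$ is the morphism $r(\epsilon_1,\epsilon_1,\epsilon_2)$, and since $\epsilon_1(z_{\epsilon_1})=1_{z_{\epsilon_1}}$ by Lemma \ref{lemrs}, the defining equation (\ref{eqnF}) yields $F(r(\epsilon_1,\epsilon_1,\epsilon_2))=j(z_{\epsilon_1},z_{\epsilon_2})$, an inclusion in $\mathcal{C}$.

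Next I would establish faithfulness. Given two parallel morphisms with $F(r(\epsilon_1,\gamma,\epsilon_2))=F(r(\epsilon'_1,\gamma',\epsilon'_2))$, having a common domain and codomain gives $\epsilon_1\gl\epsilon'_1$ and $\epsilon_2\gl\epsilon'_2$, hence $z_{\epsilon_1}=z_{\epsilon'_1}$ and $z_{\epsilon_2}=z_{\epsilon'_2}$. Each side of (\ref{eqnF}) is by construction the unique canonical factorisation of the common morphism, so comparing epimorphic components gives $\gamma(z_{\epsilon_1})=\gamma'(z_{\epsilon'_1})$. Since $\gamma\ler\epsilon_1$ and $\gamma'\ler\epsilon'_1$, Lemma \ref{lemgrcd} (2) recovers $\gamma=\epsilon_1\ast\gamma(z_{\epsilon_1})$, while $z_{\epsilon_1}=z_{\epsilon'_1}$ and (\ref{eqnbin}) give $\epsilon_1\gamma'=\epsilon_1\ast\gamma'(z_{\epsilon'_1})$; thus $\gamma=\epsilon_1\gamma'$. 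Combined with $\epsilon_1\gl\epsilon'_1$ and $\epsilon_2\gl\epsilon'_2$, this is precisely the condition for $r(\epsilon_1,\gamma,\epsilon_2)=r(\epsilon'_1,\gamma',\epsilon'_2)$ recorded in Section \ref{ssecnc}.

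The main work, and the step I expect to be the principal obstacle, is fullness, where I must exhibit a preimage of an arbitrary $g\in\mathcal{C}(c,c')$. Using surjectivity of $vF$, I would fix idempotent cones $\epsilon_1,\epsilon_2\in E(T)$ with vertices $c$ and $c'$, and write the canonical factorisation $g=g^\circ\,j_g$ with $j_g=j(\im g,c')$. The candidate preimage is the cone $\gamma:=\epsilon_1\ast g^\circ$, which by (\ref{eqnbin0}) has vertex $z_\gamma=\im g$. The delicate points are that $\gamma$ lies in $T$ and in the sandwich $\epsilon_1 T\epsilon_2$: membership in $T$ follows from Lemma \ref{lemud2}, since $\gamma$ is an idempotent cone composed with an epimorphism; then $\gamma\ler\epsilon_1$ gives $\gamma\in\epsilon_1 T$, and $z_\gamma=\im g\preceq c'$ gives $\gamma\lel\epsilon_2$, so that $\gamma=\epsilon_1\gamma\epsilon_2\in\epsilon_1 T\epsilon_2$. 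Finally, since $\epsilon_1(c)=1_c$ one computes $\gamma(c)=g^\circ$, whence $F(r(\epsilon_1,\gamma,\epsilon_2))=\gamma(z_{\epsilon_1})j(z_\gamma,z_{\epsilon_2})=g^\circ j(\im g,c')=g^\circ j_g=g$. Having shown $vF$ is a bijective order isomorphism and $F$ a fully faithful, inclusion-preserving functor, I would conclude that $F$ is a normal category isomorphism.
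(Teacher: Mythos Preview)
Your proof is correct and follows essentially the same approach as the paper's: bijectivity of $vF$ via Lemma \ref{lemgrcd}(1), inclusion preservation by direct computation, faithfulness via uniqueness of canonical factorisations to recover $\gamma=\epsilon_1\gamma'$, and fullness by setting $\gamma:=\epsilon_1\ast g^\circ$ and invoking Lemma \ref{lemud2}. The only difference is cosmetic---you spell out the order-isomorphism property of $vF$ explicitly, whereas the paper dispatches this in one line.
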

	\begin{proof}
		By Lemma \ref{lemgrcd} (1), the map $vF$ is clearly a bijection. Given an inclusion $j(T\epsilon_1,T\epsilon_2)$ in the category $\mathbb{L}(T)$, we can easily see that $j(z_{\epsilon_1},z_{\epsilon_2})$ is an inclusion  in the category $\mathcal{C}$. Hence $F$ is inclusion preserving. 
		
		To see that $F$ is faithful, suppose that $F(r(\epsilon_1,\gamma,\epsilon_2))=F(r(\epsilon'_1,\gamma',\epsilon'_2))$, i.e.,  $\gamma(z_{\epsilon_1})j(z_\gamma,z_{\epsilon_2})=\gamma'(z_{\epsilon'_1})j(z_{\gamma'},z_{\epsilon'_2})$ in the category $\mathcal{C}$. Then  $z_{\epsilon_1}=z_{\epsilon'_1}$ and $z_{\epsilon_2}=z_{\epsilon'_2}$ and so by Lemma \ref{lemgrcd} (1), we have $\epsilon_1\gl \epsilon'_1$ and $\epsilon_2\gl \epsilon'_2$. On another hand, using the canonical factorisation property of morphisms in $\mathcal{C}$, we get  $\gamma(z_{\epsilon_1})=\gamma'(z_{\epsilon'_1})$. Then applying  (\ref{eqnbin}), we obtain 
		$$\epsilon_1\gamma' =\epsilon_1\ast (\gamma'(z_{\epsilon_1}))^\circ=\epsilon_1\ast  (\gamma'(z_{\epsilon'_1}))^\circ=\epsilon_1\ast  (\gamma(z_{\epsilon_1}))^\circ=\epsilon_1\gamma=\gamma.$$
		Hence, $r(\epsilon_1,\gamma,\epsilon_2)=r(\epsilon'_1,\gamma',\epsilon'_2)$ in the category $\mathbb{L}(T)$, and so $F$ is faithful.
		
		To show that $F$ is full, given a morphism $f\in\mathcal{C}(c_1,c_2)$, let $f=qj$ be its canonical factorisation. Since $\sncd$ is a connected category, there exist idempotent cones $\epsilon_1\text{ and }\epsilon_2\in E(\sncd)$ with vertices $c_1$ and $c_2$, respectively. Let $\gamma:=\epsilon_1\ast q$. Then since inclusions are unique and observing that $z_\gamma$ is the domain of $j$ and $z_{\epsilon_2}=c_2$, we see that $j(z_\gamma,z_{\epsilon_2})=j$. Now using (\ref{eqnbin0}), we also observe that $\gamma(z_{\epsilon_1}) = \epsilon_1\ast q (c_1)= \epsilon_1(c_1) q=1_{c_1}q=q$. By Lemma \ref{lemud2}, the cone $\gamma$ is in $\sncd$ and by Lemma \ref{lemgrcd}, we have $\gamma\in \epsilon_1T\epsilon_2$. So $r:=r(\epsilon_1,\gamma,\epsilon_2)$ is a morphism in the normal category $\mathbb{L}(T)$ such that $F(r)=\gamma(z_{\epsilon_1})j(z_\gamma,z_{\epsilon_2}) =  qj=f$. We conclude that $F$ is a normal category isomorphism, as required.
	\end{proof}
	We now summarise the conclusions of Lemma \ref{lemgrcd1} and Lemma \ref{lemf2} as follows.
	\begin{pro}\label{procatiso}
		Given a connected category $\cod$, the regular poset $\sncd/\gr$ is order isomorphic to $\mathfrak{D}$ and the left ideal normal category $\mathbb{L}(\sncd)$ is isomorphic to $\mathcal{C}$. 	
	\end{pro}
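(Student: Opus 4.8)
The plan is to assemble Proposition \ref{procatiso} directly from the two preceding lemmas, since the substantive work has already been carried out there; I would treat the two assertions separately and verify only what is genuinely new.

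For the first assertion, I would exhibit the order isomorphism explicitly. Define $G\colon \sncd/\gr \to \mathfrak{D}$ by sending the $\gr$-class $R_\gamma$ to the element $\mathfrak{d}\in\mathfrak{D}$ with $\gamma\in\mathfrak{d}$ (that is, $R_\gamma=\mathfrak{d}$). By Lemma \ref{lemgrcd1}, $\gamma\gr\delta$ holds precisely when $\mathfrak{d}_1=\mathfrak{d}_2$, so $G$ is well-defined and injective; and $\gamma\ler\delta$ holds precisely when $\mathfrak{d}_1\sqsubseteq\mathfrak{d}_2$, so $G$ both preserves and reflects the order. For surjectivity I would appeal to the definition of a connected category together with Remark \ref{rmkid}: every $\mathfrak{d}\in\mathfrak{D}$ connects at least one object $c\in v\mathcal{C}$, which supplies an idempotent cone $\epsilon(c,\mathfrak{d})\in\sncd$ whose $\gr$-class is sent by $G$ to $\mathfrak{d}$. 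Hence $G$ is an order isomorphism $\sncd/\gr\cong\mathfrak{D}$.

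For the second assertion, I would simply invoke Lemma \ref{lemf2}. Writing $T:=\sncd$, that lemma establishes that the functor $F\colon\mathbb{L}(T)\to\mathcal{C}$ of equation (\ref{eqnF}) is inclusion preserving, fully-faithful, and has a bijective, order-isomorphic object map $vF$. By the notion of isomorphism of normal categories recalled in Section \ref{ssecnc}, these are exactly the conditions for $F$ to be a normal category isomorphism, giving $\mathbb{L}(\sncd)\cong\mathcal{C}$.

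I do not expect a genuine obstacle here, as the proposition is a summary: the order-theoretic content sits in Lemma \ref{lemgrcd1} and the categorical content in Lemma \ref{lemf2}. The only point demanding a line of care is the bijectivity of $G$ — injectivity and order-reflection are immediate from the characterisation of $\gr$, whereas surjectivity is the one place where the connectedness hypothesis (that every $\mathfrak{d}$ connects some object) is actually used. The hard work, namely proving fullness of $F$ by constructing a preimage cone via Lemma \ref{lemud2}, has already been completed in the earlier lemma, so nothing further is required beyond citing it.
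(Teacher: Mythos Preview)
Your proposal is correct and matches the paper's approach exactly: the paper explicitly introduces the proposition with the sentence ``We now summarise the conclusions of Lemma \ref{lemgrcd1} and Lemma \ref{lemf2} as follows,'' treating it purely as a summary statement. Your added remark that surjectivity of $G$ is the one place where connectedness is genuinely invoked is accurate and slightly more explicit than the paper, which leaves this implicit in the discussion preceding Lemma \ref{lemgrcd1}.
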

	
	Now, given a connected category $\cod$, in particular,  taking $\mathfrak{D}= \snc /\gr$, we obtain $ \sncd = \snc $. Observe that here we are treating an arbitrary normal category as a connected category. So by applying Proposition \ref{proud3}, we deduce that the semigroup $ \snc $ of all cones in a normal category $\mathcal{C}$, is indeed left reductive. Further applying Proposition \ref{procatiso}, we have the following corollary, which we believe is of independent interest.
	
	\begin{cor}\label{cortlclr}
		The semigroup $ \snc $ of all cones in a normal category $\mathcal{C}$ is  left reductive. Moreover,  the left category $\mathbb{L}(\snc)$ is normal and isomorphic to $\mathcal{C}$.
	\end{cor}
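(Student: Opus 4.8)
The plan is to recognise this corollary as the special case of the preceding theory obtained by choosing the connecting down-set $\mathfrak{D}$ to be the \emph{entire} poset $\snc/\gr$. First I would verify that an arbitrary normal category $\mathcal{C}$, equipped with $\mathfrak{D}:=\snc/\gr$, genuinely satisfies Definition \ref{dfncncat}. The down-set requirement is immediate, since the whole poset is trivially a down-set of itself. For the connectedness condition I must show that every $c\in v\mathcal{C}$ is connected by some $\mathfrak{d}\in\mathfrak{D}$, that is, that some $\gr$-class contains an idempotent cone with vertex $c$. This is exactly where axiom (NC 4) enters: for each $c$ it supplies a cone $\mu$ with vertex $c$ satisfying $\mu(c)=1_c$, and by Lemma \ref{lemrs} such a cone is an idempotent of $\snc$. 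Its $\gr$-class $R_\mu$ then lies in $\mathfrak{D}=\snc/\gr$ and connects $c$, so $\cod$ is indeed a connected category in this maximal sense.

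Once this is established, I would observe that with this choice of $\mathfrak{D}$ the defining condition $R_\gamma\in\mathfrak{D}$ is satisfied by \emph{every} cone, whence $\sncd=\snc$ as sets (and hence as semigroups). The left reductivity of $\snc$ then follows directly by applying Proposition \ref{proud3} to the connection semigroup $\sncd=\snc$, with no further argument needed.

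For the second assertion I would invoke Proposition \ref{procatiso}, which identifies the left ideal normal category $\mathbb{L}(\sncd)$ with $\mathcal{C}$; specialising again to $\sncd=\snc$ yields $\mathbb{L}(\snc)\cong\mathcal{C}$. The normality of $\mathbb{L}(\snc)$ is then automatic: it is the left category of the regular semigroup $\snc$ and so is normal by the general construction recalled just before Theorem \ref{thmnls}, and in any case it is isomorphic to the normal category $\mathcal{C}$.

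I expect no serious obstacle here, since the corollary is essentially a re-reading of Propositions \ref{proud3} and \ref{procatiso} at the top down-set. The only point demanding care is the verification in the first step that (NC 4), together with the idempotency criterion of Lemma \ref{lemrs}, indeed furnishes an idempotent cone at every vertex. This is the hinge that allows an \emph{arbitrary} normal category to be viewed as a connected one, and it is precisely what brings $\snc$ within the scope of the left-reductive construction.
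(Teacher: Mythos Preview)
Your proposal is correct and follows essentially the same approach as the paper: the paper also specialises to $\mathfrak{D}=\snc/\gr$, notes that then $\sncd=\snc$, and invokes Propositions \ref{proud3} and \ref{procatiso}. Your explicit verification that (NC 4) together with Lemma \ref{lemrs} supplies the required idempotent cone at each vertex is a detail the paper leaves implicit, but otherwise the arguments coincide.
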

	
	The latter half of the above corollary is already known, and an alternative proof can be found in \cite[Section III.3.3]{cross}, where the functor is defined from $\mathcal{C}$ to $\mathbb{L}(\snc)$. We shall return to this discussion on normal categories in Section \ref{sstlrs}.
	
	We now continue our journey towards the main theorem of the paper. To this end, Proposition \ref{procatiso} gives us the following useful characterisation of the quasi-orders on the set of idempotents of $\sncd$ as follows. 
	\begin{lem}\label{lemidgrcd}
		Let $\epsilon_1=\epsilon(c_1,\mathfrak{d}_1)$ and $\epsilon_2=\epsilon(c_2,\mathfrak{d}_2)$ be idempotents in the semigroup $\sncd$, then
		\begin{enumerate}
			\item $\epsilon_1\ler\epsilon_2\text{ if and only if }\mathfrak{d}_1\sqsubseteq \mathfrak{d}_2$,
			\item $\epsilon_1\lel\epsilon_2 \text{ if and only if } c_1\preceq c_2$.
		\end{enumerate} 
	\end{lem}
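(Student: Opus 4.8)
The plan is to derive both equivalences directly from the cone characterisations already established, specialising them to idempotent cones and unwinding the notation $\epsilon(c,\mathfrak{d})$. Recall from (\ref{eqnesncd}) and the surrounding discussion that an idempotent cone written as $\epsilon(c,\mathfrak{d})$ has vertex $z_{\epsilon(c,\mathfrak{d})}=c$ and satisfies $R_{\epsilon(c,\mathfrak{d})}=\mathfrak{d}$; in particular, $z_{\epsilon_i}=c_i$ and $\epsilon_i$ lies in the element $\mathfrak{d}_i$ of the down-set $\mathfrak{D}$, for $i=1,2$. Since the quasi-orders $\lel$ and $\ler$ restrict to the set of idempotents of $\sncd$ as the same relations, it suffices to apply the general characterisations to the pair $\epsilon_1,\epsilon_2$ viewed as cones.

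For part (2), I would apply Lemma \ref{lemgrcd} (1) with $\gamma:=\epsilon_1$ and $\delta:=\epsilon_2$. That lemma asserts $\gamma\lel\delta$ if and only if $z_\gamma\preceq z_\delta$; substituting $z_{\epsilon_1}=c_1$ and $z_{\epsilon_2}=c_2$ immediately gives $\epsilon_1\lel\epsilon_2$ if and only if $c_1\preceq c_2$. For part (1), I would invoke Lemma \ref{lemgrcd1}, which treats precisely the situation of two cones lying in prescribed elements of $\mathfrak{D}$: since $\epsilon_1\in\mathfrak{d}_1$ and $\epsilon_2\in\mathfrak{d}_2$, that lemma yields $\epsilon_1\ler\epsilon_2$ if and only if $\mathfrak{d}_1\sqsubseteq\mathfrak{d}_2$. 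This settles both equivalences.

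I do not anticipate a genuine obstacle here, as the statement is a clean specialisation of Lemmas \ref{lemgrcd} and \ref{lemgrcd1} (themselves consequences of Proposition \ref{procatiso}) to idempotent cones. The only point demanding care is the bookkeeping that ties the notation $\epsilon(c_i,\mathfrak{d}_i)$ to its vertex $c_i$ and its $\gr$-class $\mathfrak{d}_i$, so that the hypotheses of the two preceding lemmas are matched correctly; once this identification is made explicit, both parts follow without further computation.
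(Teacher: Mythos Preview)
Your proof is correct and matches the paper's approach: the paper does not give a separate proof of this lemma but simply presents it as the specialisation of the preceding characterisations (summarised in Proposition~\ref{procatiso}, itself built from Lemmas~\ref{lemgrcd} and~\ref{lemgrcd1}) to idempotent cones, exactly as you do. One minor remark: Lemmas~\ref{lemgrcd} and~\ref{lemgrcd1} precede and feed into Proposition~\ref{procatiso} rather than being consequences of it, so your parenthetical has the dependency reversed, but this does not affect the argument.
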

	
	
	\subsection{Structure of left reductive regular semigroups}\label{ssecslrs}
	
	In the previous section, we saw a left reductive regular semigroup constructed using a connected category. Now, we proceed to show how a left reductive regular semigroup gives rise to a connected category. To this end, recall from Theorem \ref{thmlslr} that given a left reductive regular semigroup $S$, we have $S\xhookrightarrow{} \sls$ via the map $\bar{\rho}\colon a\mapsto  r^a$. In fact, more is true. 
	\begin{figure}[h]
		\begin{center}
			\begin{tikzpicture}[scale=.4]
				\foreach \y in {9,11,13,15} \foreach \x in {-6,-3,0,3,6} {\draw (-6,\y)--(6,\y); \draw(\x,9)--(\x,15);}
				\fill[fill=gray,fill opacity=.25] (-6,13) rectangle (-3,15);
				\fill[fill=gray,fill opacity=.25] (-3,13) rectangle (0,11);
				\fill[fill=gray,fill opacity=.25] (0,11) rectangle (3,9);
				\fill[fill=gray,fill opacity=.25] (3,9) rectangle (6,11);
				
				\node () at (-4.5,14) { $e$};
				\node () at (1.5,14) { $x$};
				\node () at (1.5,10) { $f$};
				\node () at (4.5,14) { $a$};
				
				\draw[->,dotted][in=160, out=20]  (1.5,16) to node[above] {\scriptsize $\bar{\rho}$} (20.5,16) ;
				\draw[dashed]  (9.5,16) to  (9.5,5) ;
				\node () at (0,8) { $S$};
				\node () at (19,5.5) { $\sls$};

				\foreach \y in {7,9,11,13,15} \foreach \x in {13,16,19,22,25} {\draw (13,\y)--(25,\y); \draw(\x,7)--(\x,15);}
				\fill[fill=gray,fill opacity=.25] (13,13) rectangle (16,15);
				\fill[fill=gray,fill opacity=.25] (16,13) rectangle (19,11);
				\fill[fill=gray,fill opacity=.25] (19,11) rectangle (25,7);
				\node () at (14.5,14) { $r^e$};
				\node () at (20.5,14) { $\gamma$};
				\node () at (20.5,10) { $r^f$};
				\node () at (23.5,14) { $r^a$};
				\draw[->][in=165, out=15]  (14.5,14.7) to node[above] {\scriptsize $r(e,x,f)$} (20.5,14.7) ;
				
			\end{tikzpicture}
		\end{center}
		\caption{The injective homomorphism $\bar{\rho}$ for a left reductive regular semigroup $S$.}
		\label{figrho}	
	\end{figure}	
	
	\begin{lem}
		If $S$ is a left reductive regular  semigroup, for an element $a\in S$, the map $\bar{\rho}_{|R_a}$ is a bijection onto $R_{r^a}$, where $R_{r^a}$ denotes the $\gr$-class of the cone $r^a$ in the semigroup $\sls$.
	\end{lem}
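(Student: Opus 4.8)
The plan is to prove three things: that $\bar{\rho}|_{R_a}$ is injective, that it maps $R_a$ into $R_{r^a}$, and that it is onto $R_{r^a}$. The first two are quick. Injectivity is immediate, since $S$ is left reductive and hence $\bar{\rho}$ is injective by Theorem \ref{thmlslr}, so its restriction to $R_a$ is injective. For well-definedness into $R_{r^a}$, if $b\gr a$ in $S$ then $a=bx$ and $b=ay$ for some $x,y\in S$ (as $S$ is regular); applying the homomorphism $\bar{\rho}$ gives $r^a=r^b r^x$ and $r^b=r^a r^y$, so $r^a\ler r^b$ and $r^b\ler r^a$ in $\sls$, whence $r^b\in R_{r^a}$. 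Thus $\bar{\rho}|_{R_a}$ is a well-defined injection into $R_{r^a}$, and the whole substance of the lemma is surjectivity.

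For surjectivity I would start from an arbitrary cone $\gamma\gr r^a$ in $\sls$. By Lemma \ref{lemgc} (2) we may write $\gamma=r^a\ast h$, where $h=\gamma(z_{r^a})$ is an isomorphism of $\los$. Writing $f\in E(L_a)$ for the idempotent with $z_{r^a}=Sf$ and $Sk=z_\gamma$, the isomorphism $h\colon Sf\to Sk$ must, by the normal-factorisation description of morphisms of $\los$ recalled in Section \ref{ssecnc} (where the isomorphism part has the form $r(g,u,h)$ with $g\in E(R_u)$, $h\in E(L_u)$), take the form $h=r(f,u,k)$ with $u\gr f$ and $u\gl k$. The natural candidate preimage is then $b:=au\in S$, and the task is to show $b\in R_a$ and $r^b=\gamma$.

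To verify $b\gr a$, note $b=au\ler a$, and since $u\gr f$ we have $f\in uS$, say $f=us$, so that $a=af=a(us)=bs$ gives $a\ler b$. To pin down the vertex of $r^b$, I would use $a\gl f$ to write $f=ta$ for some $t$, whence $u=fu=t(au)=tb\in Sb$; combined with $b=au\in Su$ this yields $Sb=Su=Sk$, so $b\gl k$, i.e. $k\in E(L_b)$ and $z_{r^b}=Sk=z_\gamma$. The components then match in a single computation: for each object $Se$, using (\ref{eqnprinc}), (\ref{eqnbin0}) and the composition rule in $\los$,
$$\gamma(Se)=r^a(Se)\,h=r(e,ea,f)\,r(f,u,k)=r(e,(ea)u,k)=r(e,eb,k)=r^b(Se),$$
so $\gamma=r^b$, giving surjectivity.

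The quasi-order computations and the identities $af=a$, $fu=u$ are routine. The genuinely delicate step, which I expect to be the main obstacle, is extracting the precise form $h=r(f,u,k)$ of the connecting isomorphism with $u\gr f$ and $u\gl k$ from the normal-factorisation data of $\los$, and then confirming that the candidate $b=au$ lands in the correct $\gl$- and $\gr$-classes ($b\gr a$ and $b\gl k$); once these are secured, the displayed one-line comparison of components closes the argument.
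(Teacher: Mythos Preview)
Your proof is correct and follows essentially the same strategy as the paper: both use Lemma~\ref{lemgc}(2) to write $\gamma$ as a principal cone composed with an isomorphism of $\los$, then read off the preimage in $S$. The only difference is that the paper first passes to an idempotent $e\in E(R_a)$ and writes $\gamma=r^e\ast r(e,x,f)$, so the preimage is simply $x$ (with $ex=x$ making $r^e\ast r(e,x,f)=r^x$ immediate), whereas you start from $r^a$ itself and must set $b=au$ and verify $b\gr a$ and $b\gl k$ by hand---a minor bookkeeping trade-off, not a genuinely different route.
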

	\begin{proof}
		We have already seen that the map $\bar{\rho}$ is injective, when $S$ is a left reductive regular  semigroup. To see that the map $\bar{\rho}_{|R_a}$ has image $ R_{r^a}$, let $\gamma\in R_{r^a}\subseteq\sls$. Thus $\gamma \gr r^a$ and $z_\gamma=Sf$ for some $f\in E(S)$.	Then as $S$ is regular, there exists $e\in E(S)$ such that $e\gr a$ in $S$. But since $\bar{\rho}$ is an injective homomorphism, we have  $r^e\gr r^a \gr \gamma$ in the semigroup $\sls$. Now, by Lemma \ref{lemgc} (2), there is an isomorphism in $\los$, say $r(e,x,f)$ such that $\gamma=r^e\ast r(e,x,f)=r^x$. Then since $r(e,x,f)$ is an isomorphism in $\los$,  we have $e\gr x$ in the semigroup $S$ (see Figure \ref{figrho} and \cite[Proposition III.13(c)]{cross}). So, we obtain $x\gr a$ and $\bar{\rho}(x)=r^x=\gamma$, whence $\bar{\rho}_{|R_a}$ maps onto $ R_{r^a}$ .
	\end{proof}
	
	In the sequel, we shall also use $\bar{\rho}$ to denote the induced map (i.e. $\bar{\rho}_{|R_a}$) on the $\gr$-classes of $S$. From the discussion above, we see that $\bar{\rho}$: $S/\gr \:\xhookrightarrow{}\: \sls/\gr$, where $ R_{a}$ maps to $ R_{r^a}$, is  such that, for any element $a\in S$, the sets $R_a$ and $R_{r^a}$ are in bijection. For ease of notation in the sequel,  we shall denote the $\gr$-class in $\sls$ containing the cone $r^a$ by just $\mathfrak{r}_a$. Observe that there might be $\gr$-classes in $\sls$ which are not of the form $\mathfrak{r}_a$ for some $a\in S$. For instance, in the Figure \ref{figrho}, the bottom $\gr$-class of the semigroup $\sls$ does  not have a preimage under $\bar{\rho}$. As the reader may have already realised (also see Remark \ref{rmkcc}), the definition of a connection semigroup involves excluding from the semigroup $\sls$ the $\gr$-classes that are not of the form $\mathfrak{r}_a$. So, let 
	\begin{equation}\label{eqnr}
		\mathfrak{R}:=\bar{\rho}(S/\gr)=\{ R_{ r^e}: e\in E(S) \}=\{\mathfrak{r}_e : e\in E(S)\}.
	\end{equation}
	
	Then, $\mathfrak{R}\subseteq \sls/\gr$ and  for each $Sf\in v\los$, there is $\mathfrak{r}_f=\bar{\rho}(R_f)\in  \mathfrak{R}$ such that $\mathfrak{r}_f$ contains an idempotent cone with vertex $Sf$, namely $ r^f$. Observe that this idempotent cone may be denoted by $\epsilon(Sf,\mathfrak{r}_f)$, i.e., the object $Sf$ is connected by $\mathfrak{r}_f$. So, we can get a connected category  from a left reductive regular semigroup. Hence we have proved the following proposition:
	\begin{pro}\label{prolsr}
		Let $S$ be a left reductive regular  semigroup. The normal category $\los$ is connected by the regular poset $\mathfrak{R}$, that is, $\lr$ is a connected category.
	\end{pro}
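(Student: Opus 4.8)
Per Definition \ref{dfncncat}, two things must be checked: that $\mathfrak{R}$ is a down-set of the regular poset $(\sls/\gr,\sqsubseteq)$, and that every object of $\los$ is connected by some member of $\mathfrak{R}$. The connection condition I would dispatch immediately, essentially as in the discussion preceding the statement: every object of $\los$ has the form $Sf$ with $f\in E(S)$, and by (\ref{eqnprinc}) the principal cone $r^f$ has vertex $Sf$ with $r^f(Sf)=r(f,f,f)=1_{Sf}$, so by Lemma \ref{lemrs} it is an idempotent cone. Since $r^f\in\mathfrak{r}_f\in\mathfrak{R}$, the object $Sf$ is connected by $\mathfrak{r}_f$. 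The substance therefore lies in verifying that $\mathfrak{R}$ is down-closed, and this is the step I expect to be the main obstacle.

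For the down-set property, the plan is to take an arbitrary $\gr$-class $\mathfrak{d}\sqsubseteq\mathfrak{r}_e$ (with $e\in E(S)$), write $\mathfrak{d}=R_\gamma$ for some cone $\gamma\in\sls$, and observe via (\ref{eqnsr}) that this means $\gamma\ler r^e$. I would then show that $\gamma$ is itself a \emph{principal} cone, for then $\mathfrak{d}=R_\gamma=\mathfrak{r}_w\in\mathfrak{R}$ for the appropriate $w$ and we are finished. By Lemma \ref{lemgc}(2), $\gamma=r^e\ast p$ where $p:=\gamma(Se)$ is an epimorphism; I would write $p=r(e,w,\ell)$ with $w\in eS\ell$ and $z_\gamma=S\ell$. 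Since the epimorphic component of an epimorphism is itself (Lemma \ref{lemepi}(2)), the image of $p$ equals its codomain, so combining this with the fact that $\im r(e,w,\ell)=Sw$ (from the normal factorisation discussion in Section \ref{ssecnc}) gives $Sw=\im p=S\ell$; hence $w\gl\ell$ and $\ell\in E(L_w)$. A direct computation of the components then yields, for every object $Sc$,
\[
\gamma(Sc)=r^e(Sc)\,p=r(c,ce,e)\,r(e,w,\ell)=r(c,cew,\ell)=r(c,cw,\ell),
\]
using $ew=w$. These are exactly the components of the principal cone $r^w$ (by (\ref{eqnprinc}), taking $\ell\in E(L_w)$), and the vertices agree since $z_{r^w}=Sw=S\ell=z_\gamma$. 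Therefore $\gamma=r^w$, whence $\mathfrak{d}=R_{r^w}=\mathfrak{r}_w\in\mathfrak{R}$, and $\mathfrak{R}$ is a down-set.

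Together the two checks show that $\los$ is connected by $\mathfrak{R}$, that is, $\lr$ is a connected category. The only delicate point is the identification $\gamma=r^w$ of the cone $r^e\ast p$ as a principal cone; the rest is routine bookkeeping with the morphism notation of $\los$. I would also note, as a sanity check, that this argument never invokes left reductivity: the down-set and connection conditions hold for $\mathfrak{R}$ over an arbitrary regular semigroup $S$. Left reductivity enters elsewhere — through Theorem \ref{thmlslr}, which guarantees that $\bar\rho$ \emph{embeds} $S$ into $\sls$ and hence that $S$ is faithfully recoverable from $\lr$ — rather than in establishing that $\lr$ is connected.
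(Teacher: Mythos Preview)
Your proof is correct and in fact more complete than the paper's own argument. The paper's justification (contained entirely in the paragraph preceding the proposition) verifies only the connection condition --- that each object $Sf$ is connected by $\mathfrak{r}_f$ via the idempotent cone $r^f$ --- and does not explicitly check that $\mathfrak{R}$ is a down-set of $\sls/\gr$, which Definition~\ref{dfncncat} requires. Your verification of the down-set property, by showing that any cone $\gamma$ with $\gamma\ler r^e$ satisfies $\gamma=r^e\ast r(e,w,\ell)=r^w$ and hence $R_\gamma=\mathfrak{r}_w\in\mathfrak{R}$, is valid and fills this gap; your closing remark that left reductivity is not invoked in this argument is also correct.
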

	
	Given the connected category $\lr$, by Propositions \ref{proud2} and \ref{proud3}, we know that the connection semigroup $\slr$ is a left reductive regular  semigroup. Moreover, by equation (\ref{eqnesncd}),  the idempotents of $\slr$ are given by:
	$$E(\slr)=\{ \epsilon(Se,\mathfrak{r}_e) : e\in E(S) \} = \{ r^e  : e\in E(S) \}.$$

	\begin{pro}\label{prolsriso}
		Given a left reductive regular  semigroup $S$, the connection semigroup $$\slr =\{ r^a:a\in S \}$$ is isomorphic to $S$.
	\end{pro}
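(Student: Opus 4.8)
The plan is to identify the connection semigroup $\slr$ with the image $S\bar\rho$ of the injective homomorphism $\bar\rho$, and then read off the isomorphism from left reductivity. Since $S$ is left reductive, Theorem \ref{thmlslr} guarantees that $\bar\rho\colon S\to\sls$, $a\mapsto r^a$, is an injective homomorphism, so $S$ is isomorphic to its image $S\bar\rho=\{r^a:a\in S\}$, a subsemigroup of $\sls$. Hence the proposition follows once we establish the set equality $\slr=\{r^a:a\in S\}$ inside $\sls$.

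First I would prove the inclusion $\{r^a:a\in S\}\subseteq\slr$. Given $a\in S$, regularity of $S$ furnishes an idempotent $e\in E(R_a)$, so that $a\gr e$ in $S$. As $\bar\rho$ is a homomorphism, it preserves the relation $\gr$, whence $r^a\gr r^e$ in $\sls$ and therefore $R_{r^a}=R_{r^e}=\mathfrak{r}_e\in\mathfrak{R}$. By the very definition of $\slr$ as the set of those cones whose $\gr$-class lies in $\mathfrak{R}$, this gives $r^a\in\slr$.

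For the reverse inclusion, take a cone $\gamma\in\slr$, so that $R_\gamma\in\mathfrak{R}$. By the description of $\mathfrak{R}$ in (\ref{eqnr}), there is an idempotent $e\in E(S)$ with $R_\gamma=\mathfrak{r}_e=R_{r^e}$; that is, $\gamma\gr r^e$ in $\sls$. The lemma preceding this proposition shows that $\bar\rho_{|R_e}$ is a bijection of $R_e$ onto $R_{r^e}$, so the cone $\gamma$, lying in $R_{r^e}$, must equal $r^x$ for some $x\in R_e\subseteq S$. Hence $\gamma\in\{r^a:a\in S\}$, which completes the set equality $\slr=\{r^a:a\in S\}=S\bar\rho$. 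Combining this with the isomorphism $S\cong S\bar\rho$ noted above yields $S\cong\slr$, as required.

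The step I expect to carry the real content is the reverse inclusion: \emph{a priori} a cone in $\slr$ is merely some cone of $\sls$ whose $\gr$-class happens to lie in $\mathfrak{R}$, and there is no obvious reason for it to be a \emph{principal} cone $r^x$. It is precisely the preceding lemma---identifying the $\gr$-class $R_{r^e}$ with the bijective image of the $\gr$-class $R_e$ under $\bar\rho$---that forces every such cone to be principal, and this is exactly where left reductivity, guaranteeing the injectivity of $\bar\rho$, is genuinely used.
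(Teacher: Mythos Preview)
Your proof is correct and follows essentially the same line as the paper's. Both establish that $\bar\rho$ is an isomorphism onto $\slr$; the paper shows $r^x\in\slr$ by writing $r^x=r^e\ast r(e,x,f)$ and invoking Lemma~\ref{lemud1}, whereas you argue more directly from the definition of $\slr$ via $R_{r^a}=\mathfrak{r}_e\in\mathfrak{R}$, and for surjectivity both arguments rest on the preceding lemma (the paper gestures at Figure~\ref{figrho}, you cite it explicitly).
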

	\begin{proof}
		First, since $S$ is regular and left reductive, using Theorem \ref{thmlslr} there is an injective homomorphism $\bar{\rho}:S \to \sls$ given by $a\mapsto r^a$.  Now given any $x\in S$, for some $e\in E(R_x)$ and $f\in E(L_x)$, we have $r^x = r^e\ast r(e,x,f)$. Here $r^e=\epsilon(Se,\mathfrak{r}_e)$ is an idempotent cone in $\slr$ and $r(e,x,f)$ is an isomorphism in $\los$. Now, using Lemma \ref{lemud1}, we see that $\bar{\rho}:S \to \slr$ is an injective homomorphism  which is also surjective (see Figure \ref{figrho}). Hence the connection semigroup $\slr$ is isomorphic to $S$.
	\end{proof}
	
	\subsection{Category equivalence}\label{sseccateqlrs}
	
	We have seen that given a connected category $\cod$, we get a left reductive regular  semigroup $\sncd$ such that the category $\mathbb{L}(\sncd)$ is isomorphic to $\mathcal{C}$ and the down-set $\sncd/\gr$ is isomorphic to the regular poset $\mathfrak{D}$ (Proposition \ref{procatiso}). Conversely, given a left reductive regular  semigroup $S$, we have obtained a connected category $\lr$ such that its connection semigroup $\slr$ is isomorphic to $S$ (Proposition \ref{prolsriso}). Next, we proceed to extend this correspondence to a category equivalence.
	
	First observe that left reductive regular semigroups  form a full subcategory, say $\mathbf{LRS}$ of the category $\mathbf{RS}$ of regular semigroups, with semigroup homomorphisms as morphisms. 
	
	\begin{dfn}\label{dfnccmor}
		Given connected categories $\cod$ and $\codp$, we define a \emph{CC-morphism} as an ordered pair $m:=(F,G)$  such that $F\colon \mathcal{C}\to\mathcal{C'}$ is an inclusion preserving functor and $G\colon \mathfrak{D}\to\mathfrak{D'} $ is an order preserving map satisfying:
		\begin{equation}\label{eqnccmor}
			c\text{ is connected to }\mathfrak{d} \implies F(c) \text{ is connected to }G(\mathfrak{d}) \quad\text{ and }\quad F(\epsilon(c,\mathfrak{d})(c'))=\epsilon(F(c), G(\mathfrak{d})) (F(c'))
		\end{equation}
		for every $c'\in v\mathcal{C}$.
	\end{dfn}  
	
	\begin{rmk}\label{rmkccmor1}
		Given a CC-morphism $m:=(F,G)$ from  $\cod$ to $\codp$, by definition the functor $F$ maps an idempotent cone $\epsilon:=\epsilon(c,\mathfrak{d})$ in the category $\mathcal{C}$ to the idempotent cone $\epsilon':=\epsilon(F(c),G(\mathfrak{d}))$ in the category $\mathcal{C'}$. This makes the relation between the categories $\mathcal{C}$ and $\mathcal{C'}$ via the functor $F$ rather strong. Roughly speaking, the semigroup homomorphism associated with the morphism $m$ will be an `extension' of this mapping $\epsilon \mapsto \epsilon'$.
	\end{rmk}
	
	\begin{rmk}\label{rmkccmor2}
		It is clear that any notion of morphism between connected categories must respect the connection between $\mathcal{C}$ and $\mathfrak{D}$. In fact, given an arbitrary pair $m:=(F,G)$ from  $\cod$ to $\codp$ such that $F$ is a normal category isomorphism from $\mathcal{C}$ to $\mathcal{C'}$ and $G$ is an order isomorphism from   $\mathfrak{D}$ to $\mathfrak{D'} $, we can construct examples such that the semigroups $\widehat{\cod}$ and $\widehat{\codp}$ are not isomorphic. Hence condition (\ref{eqnccmor}) is crucial in the definition of a CC-morphism, to obtain isomorphic semigroups.
	\end{rmk}
	
	It is routine to verify that the class of all connected categories with CC-morphisms form a category. This category will be denoted by $\mathbf{CC}$ in the sequel.
	
	Recall from Proposition \ref{prolsr} that given an object $S$ in $\mathbf{LRS}$, the connected category $\lr$ is an object in the category $\mathbf{CC}$. Now we proceed to make this correspondence functorial. 
	\begin{lem}\label{lemmorC}
		Given a semigroup homomorphism $\phi \colon S \to S'$ in the category $\mathbf{LRS}$, define a functor $F_\phi\colon\los\to\losp$ and a map $G_\phi\colon\mathfrak{R}\to\mathfrak{R'}$ as follows: for idempotents $e,f\in S$ and $u\in eSf$,
		$$ vF_\phi: Se \mapsto S'(e\phi) \:,\quad F_\phi\colon r(e,u,f) \mapsto r(e\phi,u\phi, f\phi) \:\text{ and }\: G_\phi\colon \mathfrak{r}_e \mapsto \mathfrak{r}_{e\phi}.$$
		Then $m_\phi:=(F_\phi,G_\phi)$ is a CC-morphism from $\lr$ to $\lrp$.
	\end{lem}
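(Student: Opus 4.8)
The plan is to check, one by one, the conditions that Definition~\ref{dfnccmor} places on the pair $(F_\phi,G_\phi)$: that $F_\phi$ is a well-defined, inclusion-preserving functor $\los\to\losp$, that $G_\phi$ is a well-defined order-preserving map $\mathfrak{R}\to\mathfrak{R'}$, and that the two clauses of (\ref{eqnccmor}) hold. The engine of every step is the elementary fact that a homomorphism $\phi$ preserves Green's quasi-orders and relations in the forward direction. On idempotents this is especially clean, via the identities $e\gl f\iff ef=e,\ fe=f$, $e\lel f\iff ef=e$ and $e\ler f\iff fe=e$: since $\phi$ is a homomorphism sending idempotents to idempotents, each of these identities is transported verbatim to $e\phi,f\phi$. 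I would state this once and reuse it throughout.

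For $F_\phi$ I would begin with well-definedness. On objects, $Se=Se'$ gives $e\gl e'$, hence $e\phi\gl e'\phi$ and $S'(e\phi)=S'(e'\phi)$. On morphisms, recall from the description of $\los$ in Section~\ref{ssecnc} that $r(e,u,f)=r(g,v,h)$ precisely when $e\gl g$, $f\gl h$ and $v=gu$; applying $\phi$ yields the same three relations for the images, and $u\phi\in(e\phi)S'(f\phi)$ because $u=eu=uf$ forces $u\phi=(e\phi)(u\phi)=(u\phi)(f\phi)$, so $r(e\phi,u\phi,f\phi)=r(g\phi,v\phi,h\phi)$. Functoriality is then mechanical: $F_\phi(r(e,e,e))=r(e\phi,e\phi,e\phi)=1_{S'(e\phi)}$, and the composition rule $r(e,u,f)r(g,v,h)=r(e,uv,h)$ is preserved because $(uv)\phi=(u\phi)(v\phi)$. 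Finally an inclusion $r(e,e,f)$ (so $ef=e$) is sent to $r(e\phi,e\phi,f\phi)$, which is the inclusion from $S'(e\phi)$ to $S'(f\phi)$ since $(e\phi)(f\phi)=e\phi$; thus $F_\phi$ preserves inclusions.

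For $G_\phi$, both well-definedness and monotonicity rest on the order isomorphism $\mathfrak{R}\cong S/\gr$ together with the preservation of $\gr$ and $\ler$. If $\mathfrak{r}_e=\mathfrak{r}_{e'}$ then $e\gr e'$ by injectivity of the induced map $\bar\rho\colon S/\gr\hookrightarrow\sls/\gr$ (see the discussion around (\ref{eqnr})), whence $e\phi\gr e'\phi$ and $\mathfrak{r}_{e\phi}=\mathfrak{r}_{e'\phi}$; and $\mathfrak{r}_e\sqsubseteq\mathfrak{r}_f\iff e\ler f$ gives $e\phi\ler f\phi$, so $G_\phi$ is order-preserving. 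For the compatibility clauses I would reduce everything to principal cones, using that every idempotent cone of $\slr$ has the form $r^h=\epsilon(Sh,\mathfrak{r}_h)$ with $h\in E(S)$. The first clause then amounts to observing that $r^{h\phi}$ is an idempotent cone with vertex $S'(h\phi)=vF_\phi(Sh)$ lying in the class $\mathfrak{r}_{h\phi}=G_\phi(\mathfrak{r}_h)$, which is exactly the statement that $vF_\phi(Sh)$ is connected to $G_\phi(\mathfrak{r}_h)$. For the displayed equation in (\ref{eqnccmor}), the principal cone formula (\ref{eqnprinc}) gives $r^h(Sk)=r(k,kh,h)$, and I would simply compute
$$F_\phi\bigl(r^h(Sk)\bigr)=r(k\phi,(kh)\phi,h\phi)=r\bigl(k\phi,(k\phi)(h\phi),h\phi\bigr)=r^{h\phi}\bigl(S'(k\phi)\bigr)=\epsilon\bigl(vF_\phi(Sh),G_\phi(\mathfrak{r}_h)\bigr)\bigl(vF_\phi(Sk)\bigr).$$

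I expect the only genuine obstacle to be conceptual rather than computational: recognising that the abstract idempotent cone $\epsilon(c,\mathfrak{d})$ of $\slr$ is concretely a principal cone $r^h$, and that its image under $F_\phi$ is again a principal cone $r^{h\phi}$. Once this identification is made, clause (\ref{eqnccmor}) collapses to the single identity displayed above, and the rest is a matter of keeping careful track of which Green's relation is being preserved at each stage.
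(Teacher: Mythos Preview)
Your proposal is correct and follows essentially the same approach as the paper's own proof: you verify the functor and order-map axioms (which the paper declares ``straightforward'') and then reduce condition~(\ref{eqnccmor}) to the identification $\epsilon(Se,\mathfrak{r}_e)=r^e$ and the principal-cone formula~(\ref{eqnprinc}), exactly as the paper does. Your treatment is simply more explicit about well-definedness (using $\bar\rho\colon S/\gr\hookrightarrow\sls/\gr$ to recover $e\gr e'$ from $\mathfrak{r}_e=\mathfrak{r}_{e'}$) and, incidentally, your computation $r^h(Sk)=r(k,kh,h)$ matches~(\ref{eqnprinc}) correctly, whereas the paper's displayed $r^e(Sf)=r(e,ef,f)$ has the arguments transposed.
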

	\begin{proof}
		It is straightforward to verify that $F_\phi$ is an inclusion preserving functor from $\mathbb{L}(S)$ to $\mathbb{L}(S')$ and $G_\phi$ is an order preserving map from $\mathfrak{R}$ to $\mathfrak{R'}$ (see equation (\ref{eqnr})). To verify (\ref{eqnccmor}), let the object $Se$ be connected to $\mathfrak{r}_e$ in $\lr$, we are taking $e$ to be an idempotent in $S$  such that $r^e$ is an idempotent in $\slr$. Let $e':=e\phi$. As $\phi$ is a homomorphism,  $e'$ is an idempotent in $S'$. Hence in the category $\lrp$, we have $S'e'$  connected to $\mathfrak{r}_{e'}$, where $\mathfrak{r}_{e'}$ denotes the $\gr$-class $R_{r^{e'}}$ of the cone $r^{e'}$ in the semigroup $\widehat{\losp}$. Further, for every $Sf \in v\los$, observe that $\epsilon(Se,\mathfrak{r}_e)(Sf)=r^e(Sf)=r(e,ef,f)$. Since $\phi$ is a homomorphism we have $(ef)'=e'f'$ so that $$F_\phi(\epsilon(Se,\mathfrak{r}_e)(Sf))=  F_\phi(r(e,ef,f))=r(e',(ef)',f')=r(e',e'f',f').$$ 
		Also,  $\epsilon (F_\phi(Se),G_\phi(\mathfrak{r}_e)) =\epsilon (S'e',\mathfrak{r}_{e'})=r^{e'}$ in $\losp$. Hence
		$$\epsilon(F_\phi(Se),G_\phi(\mathfrak{r}_e))(F_\phi(Sf))=r^{e'}(S'f')=r(e',e'f',f').$$
		Therefore $m_\phi:=(F_\phi,G_\phi)$ satisfies equation (\ref{eqnccmor}) and so $m_\phi$ is a CC-morphism.
	\end{proof} 
	Further, it is routine to verify that this assignment preserves identities and compositions. Hence, we have the following proposition.
	\begin{pro} \label{profunctorC}
		The assignment
		$$S\mapsto\lr\: \text{ and } \: \phi \mapsto m_\phi=(F_\phi,G_\phi) $$
		constitutes a functor $\mathtt{C}$ from the category $\mathbf{LRS}$ of left reductive regular semigroups to the category $\mathbf{CC}$ of connected categories.
	\end{pro}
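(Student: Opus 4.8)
The plan is to observe that both halves of the assignment have already been shown to land in $\mathbf{CC}$, and then to reduce functoriality to a short verification of the identity and composition laws. Proposition \ref{prolsr} ensures that each object $S$ of $\mathbf{LRS}$ is sent to a genuine connected category $\lr$, and Lemma \ref{lemmorC} ensures that each homomorphism $\phi\colon S\to S'$ is sent to a genuine CC-morphism $m_\phi=(F_\phi,G_\phi)$ from $\lr$ to $\lrp$. Since composition in $\mathbf{CC}$ is componentwise—inherited from composition of (inclusion preserving) functors and of order preserving maps, as used when checking that $\mathbf{CC}$ is a category—it remains only to establish $m_{1_S}=1_{\lr}$ and $m_{\phi\psi}=m_\phi m_\psi$, and both follow by unwinding the defining formulas of Lemma \ref{lemmorC}.

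First I would treat identities. For $\phi=1_S$ we have $e\,1_S=e$ for every idempotent $e$ and $u\,1_S=u$ for every $u\in eSf$, so the formulas give $vF_{1_S}(Se)=Se$, $F_{1_S}(r(e,u,f))=r(e,u,f)$ and $G_{1_S}(\mathfrak{r}_e)=\mathfrak{r}_e$. Hence $F_{1_S}$ is the identity functor on $\los$ and $G_{1_S}$ is the identity on $\mathfrak{R}$, so that $m_{1_S}$ is the identity CC-morphism on $\lr$.

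Next I would treat composition. Let $\phi\colon S\to S'$ and $\psi\colon S'\to S''$, so that, under the left-to-right convention of the paper, the composite is $\phi\psi\colon S\to S''$ with $e(\phi\psi)=(e\phi)\psi$ for each idempotent $e$ and $u(\phi\psi)=(u\phi)\psi$ for each element $u$. Substituting these equalities into the three defining formulas shows that $vF_{\phi\psi}$ and $vF_\phi\,vF_\psi$ both send $Se$ to the principal left ideal generated by $(e\phi)\psi$; that $F_{\phi\psi}$ and $F_\phi\,F_\psi$ both send $r(e,u,f)$ to $r((e\phi)\psi,(u\phi)\psi,(f\phi)\psi)$; and that $G_{\phi\psi}$ and $G_\phi\,G_\psi$ both send $\mathfrak{r}_e$ to $\mathfrak{r}_{(e\phi)\psi}$. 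Thus $F_{\phi\psi}=F_\phi\,F_\psi$ and $G_{\phi\psi}=G_\phi\,G_\psi$, and hence $m_{\phi\psi}=m_\phi m_\psi$ by componentwise composition in $\mathbf{CC}$.

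I do not anticipate any genuine obstacle: each of the three component maps is defined simply by applying $\phi$ to the idempotent and element labels, and homomorphisms compose transparently. The only points requiring care are bookkeeping ones—consistently using the left-to-right composition convention so that the orders of $\phi$ and $\psi$ agree on both sides of each identity, and noting that the well-definedness of $G_{\phi\psi}$ on $\gr$-classes is inherited from the same fact used in Lemma \ref{lemmorC}, namely that a homomorphism preserves the $\ler$ quasi-order and therefore maps the class $\mathfrak{r}_e$ consistently.
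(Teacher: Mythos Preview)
Your proposal is correct and matches the paper's approach exactly: the paper simply remarks that, having established Proposition \ref{prolsr} and Lemma \ref{lemmorC}, ``it is routine to verify that this assignment preserves identities and compositions,'' and your argument is precisely that routine verification written out in full.
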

	
	To build a functor in the opposite direction, we have seen that given a connected category $\cod$ in $\mathbf{CC}$, by Propositions \ref{proud2} and \ref{proud3}, the semigroup $\sncd\in \mathbf{LRS}$. Now, given a CC-morphism in $\mathbf{CC}$, we need to construct a semigroup homomorphism. To this end, recall that given a connected category $\cod$, for each $c\in v\mathcal{C}$, there is an associated idempotent cone $\epsilon=\epsilon(c,\mathfrak{d})$ in $\sncd$ such that $R_{\epsilon(c,\mathfrak{d})}=\mathfrak{d}$. By Lemma \ref{lemud1}, every cone in $\sncd$ may be written as $\epsilon\ast u$, for an idempotent cone $\epsilon$ and an isomorphism $u$ in $\mathcal{C}$. Let $m:=(F,G)$ be a morphism in $\mathbf{CC}$ from $\cod$ to $\codp$. Define $\phi_m\colon \sncd \to \sncdp$ by
	\begin{equation}\label{eqnhomo}
		\phi_m\colon \epsilon(c,\mathfrak{d})\ast u\mapsto \epsilon(F(c),G(\mathfrak{d})) \ast F(u).
	\end{equation}
	
	\begin{lem}\label{lemphi1}
		$\phi_m$ is a well-defined map from semigroup $\sncd$ to $\sncdp$. 
	\end{lem}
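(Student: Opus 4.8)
The plan is to check the two things that well-definedness of (\ref{eqnhomo}) requires: first, that the proposed value $\epsilon(F(c),G(\mathfrak{d}))\ast F(u)$ genuinely lies in $\sncdp$; and second---the substantive point---that this value is independent of the chosen decomposition $\gamma=\epsilon(c,\mathfrak{d})\ast u$, which by Remark \ref{rmkeu} need not be unique.

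For the first point I would argue as follows. Take $\gamma\in\sncd$ written as $\epsilon(c,\mathfrak{d})\ast u$ via Lemma \ref{lemud1}. Since $c$ is connected by $\mathfrak{d}$, the CC-morphism condition (\ref{eqnccmor}) guarantees that $F(c)$ is connected by $G(\mathfrak{d})$, so $\epsilon(F(c),G(\mathfrak{d}))$ is a bona fide idempotent cone in $\sncdp$. As $u$ is an isomorphism in $\mathcal{C}$ and $F$ is a functor, $F(u)$ is an isomorphism in $\mathcal{C'}$ whose domain $F(c)$ is precisely the vertex of $\epsilon(F(c),G(\mathfrak{d}))$, so the star product is defined; the converse half of Lemma \ref{lemud1} then places $\epsilon(F(c),G(\mathfrak{d}))\ast F(u)$ in $\sncdp$.

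For the second, and main, point I would start from two decompositions $\gamma=\epsilon_1\ast u_1=\epsilon_2\ast u_2$ with $\epsilon_i=\epsilon(c_i,\mathfrak{d}_i)$ and $u_i$ an isomorphism. Because each $u_i$ is an isomorphism we have $\gamma\gr\epsilon_i$, whence $\mathfrak{d}_1=R_\gamma=\mathfrak{d}_2$; write $\mathfrak{d}$ for this common value and $u_i=\gamma(c_i)$ as in Remark \ref{rmkeu}. First I would extract, inside $\mathcal{C}$, the relation linking the two coordinates: evaluating $\gamma=\epsilon_1\ast u_1$ at $c_2$ through (\ref{eqnbin0}) gives $u_2=\gamma(c_2)=\epsilon_1(c_2)u_1$, where $\epsilon_1(c_2)\colon c_2\to c_1$ is an isomorphism. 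Applying $F$ and then the compatibility clause of (\ref{eqnccmor}), namely $F(\epsilon(c_1,\mathfrak{d})(c_2))=\epsilon(F(c_1),G(\mathfrak{d}))(F(c_2))$, yields $F(u_2)=\epsilon_1'(F(c_2))\,F(u_1)$, where I abbreviate $\epsilon_i':=\epsilon(F(c_i),G(\mathfrak{d}))$.

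It then remains to transport this relation across the star product. Since $R_{\epsilon_1'}=G(\mathfrak{d})=R_{\epsilon_2'}$, Lemma \ref{lemgrcd1} gives $\epsilon_1'\gr\epsilon_2'$ in $\sncdp$, and Lemma \ref{lemgc}(2) identifies the connecting isomorphism as $\epsilon_1'(F(c_2))$, so that $\epsilon_2'\ast\epsilon_1'(F(c_2))=\epsilon_1'$. Combining this with the previous identity and the associativity of $\ast$ noted after (\ref{eqnbin0}),
$$\epsilon_2'\ast F(u_2)=\epsilon_2'\ast\epsilon_1'(F(c_2))\ast F(u_1)=\epsilon_1'\ast F(u_1),$$
which is exactly the equality of the two candidate images, proving $\phi_m$ well defined. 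I expect the only delicate step to be the passage from $u_2=\epsilon_1(c_2)u_1$ to $F(u_2)=\epsilon_1'(F(c_2))F(u_1)$: it is precisely here that the second clause of the CC-morphism condition (\ref{eqnccmor}) is indispensable, since a bare inclusion-preserving functor $F$ need not send the component $\epsilon_1(c_2)$ to the component $\epsilon_1'(F(c_2))$.
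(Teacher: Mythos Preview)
Your proof is correct and follows essentially the same route as the paper's: both verify that the image lies in $\sncdp$ via Lemma \ref{lemud1}, then handle non-uniqueness by observing $\mathfrak{d}_1=R_\gamma=\mathfrak{d}_2$, extracting $u_2=\epsilon_1(c_2)u_1$ from (\ref{eqnbin0}), pushing this through $F$ using the second clause of (\ref{eqnccmor}), and closing with $\epsilon_1'\gr\epsilon_2'$ to rewrite $\epsilon_2'\ast\epsilon_1'(F(c_2))=\epsilon_1'$. Your citations (Lemma \ref{lemgrcd1} for the $\gr$-relation, Lemma \ref{lemgc}(2) for the connecting isomorphism) are arguably cleaner than the paper's reference to Lemma \ref{lemidgrcd}(2) at that step, and your concluding remark about where (\ref{eqnccmor}) is indispensable is apt.
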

	\begin{proof}
		First, observe that by Lemma \ref{lemud1}, any cone in $\sncd$ admits a representation, not necessarily unique, of  the form $ \epsilon(c,\mathfrak{d})\ast u$. Then $\epsilon(F(c),G(\mathfrak{d}))$ will  be an idempotent cone in $\sncdp$. Note that an inclusion preserving functor $F$ preserves normal factorisations (see \cite[Proof of Lemma V.4]{cross} for a routine verification). So $F(u)$ will be an isomorphism and using Lemma \ref{lemud1}, we see that $\epsilon(F(c),G(\mathfrak{d}))\ast F(u)$ will be a cone in $\sncdp$. Now let $\gamma$ be a cone in $\sncd$ with vertex $c$ and $R_\gamma=\mathfrak{d}$. As in Remark \ref{rmkeu}, suppose $\gamma=\epsilon_1\ast u_1=\epsilon_2\ast u_2$ where $\epsilon_1:=\epsilon(c_1,\mathfrak{d})$ and $\epsilon_2:=\epsilon(c_2,\mathfrak{d})$ are idempotent cones, and $u_1:=\gamma(c_1)$ and $u_2:=\gamma(c_2)$ are isomorphisms. Let $c_1':=F(c_1)$, $c_2':=F(c_2)$, $\mathfrak{d}':=G(\mathfrak{d})$, $\epsilon_1':=\epsilon(c_1',\mathfrak{d}')$, $\epsilon_2':=\epsilon(c_2',\mathfrak{d}')$, $u_1':=F(u_1)$ and $u_2':=F(u_2)$. We need to show that $\epsilon_1'\ast u_1' = \epsilon_2'\ast u_2'$, whence $(\epsilon_1\ast u_1)\phi_m =(\epsilon_2\ast u_2)\phi_m$.
		
		Since $ \gamma  \gr \epsilon_1$, by Lemma \ref{lemgrcd} (2), we have $ \gamma=\epsilon_1\ast \gamma(c_1)$. So using equation (\ref{eqnbin0}), we get $ \gamma(c_2)=\epsilon_1(c_2) \gamma(c_1)$. As  $m$ is a CC-morphism, using equation (\ref{eqnccmor}), 
		$$u_2'=F(\gamma(c_2))=F(\epsilon_1(c_2) \gamma(c_1))= F(\epsilon(c_1,\mathfrak{d})(c_2)) F( \gamma(c_1)) =\epsilon(F(c_1),G(\mathfrak{d}))(F(c_2))=\epsilon_1'(c_2') u_1'.$$
		
		Finally,  by Lemma \ref{lemidgrcd} (2) we have $\epsilon_1' \gr \epsilon_2'$, and so by Lemma \ref{lemgrcd} we obtain $\epsilon_1'=\epsilon_2'\ast \epsilon_1'(c_2')$, whence $$\epsilon_1'\ast u_1'= \epsilon_2'\ast \epsilon_1'(c_2')\ast u_1'=\epsilon_2'\ast \epsilon_1'(c_2')  u_1'=\epsilon_2'\ast u_2'.$$ 
		Thus $\phi_m$ is a well-defined map from the  semigroup $\sncd$ to the semigroup $\sncdp$. 
	\end{proof}

	\begin{lem}\label{lemphi2}
		$\phi_m$ is a semigroup homomorphism. 
	\end{lem}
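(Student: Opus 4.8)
The plan is to take arbitrary cones $\gamma_1,\gamma_2\in\sncd$ and reduce both $(\gamma_1\gamma_2)\phi_m$ and $(\gamma_1\phi_m)(\gamma_2\phi_m)$ to one and the same cone. Using Lemma \ref{lemud1} I would fix a representation $\gamma_1=\epsilon_1\ast u_1$ with $\epsilon_1=\epsilon(c_1,\mathfrak{d}_1)$ idempotent and $u_1=\gamma_1(c_1)$ an isomorphism, and write $\epsilon_1':=\epsilon(F(c_1),G(\mathfrak{d}_1))$, so that $\gamma_1\phi_m=\epsilon_1'\ast F(u_1)$ has vertex $F(z_{\gamma_1})$ (recall that an inclusion preserving functor sends isomorphisms to isomorphisms and, by \cite[Proof of Lemma V.4]{cross}, preserves normal factorisations, hence also epimorphic components).

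The first ingredient I would prove is the pointwise identity $(\delta\phi_m)(F(c))=F(\delta(c))$ for every $\delta\in\sncd$ and every $c\in v\mathcal{C}$. Writing $\delta=\epsilon(c_2,\mathfrak{d}_2)\ast u_2$ and using (\ref{eqnbin0}) together with the second half of the CC-morphism condition (\ref{eqnccmor}), one obtains $(\delta\phi_m)(F(c))=\epsilon(F(c_2),G(\mathfrak{d}_2))(F(c))\,F(u_2)=F(\epsilon(c_2,\mathfrak{d}_2)(c))\,F(u_2)=F(\delta(c))$. With this identity and the multiplication rule (\ref{eqnbin}), the right-hand product becomes $(\gamma_1\phi_m)(\gamma_2\phi_m)=\gamma_1\phi_m\ast\big((\gamma_2\phi_m)(F(z_{\gamma_1}))\big)^\circ=\gamma_1\phi_m\ast\big(F(\gamma_2(z_{\gamma_1}))\big)^\circ$. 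Since $F$ preserves epimorphic components, $\big(F(\gamma_2(z_{\gamma_1}))\big)^\circ=F\big((\gamma_2(z_{\gamma_1}))^\circ\big)$; substituting $\gamma_1\phi_m=\epsilon_1'\ast F(u_1)$ and using functoriality gives $(\gamma_1\phi_m)(\gamma_2\phi_m)=\epsilon_1'\ast F(p)$, where $p:=u_1(\gamma_2(z_{\gamma_1}))^\circ$.

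For the left-hand side I would recall, exactly as in the proof of Proposition \ref{proud2}, that $\gamma_1\gamma_2=\epsilon_1\ast p$ with the same epimorphism $p$. Following the proof of Lemma \ref{lemud2}, take the normal factorisation $p=qu$ with $q\colon c_1\to c_0$ a retraction and $u$ an isomorphism, so that $\gamma_1\gamma_2=\mu\ast u$ where $\mu:=\epsilon_1\ast q=\epsilon(c_0,\mathfrak{d}_0)$ is idempotent. The definition (\ref{eqnhomo}) then gives $(\gamma_1\gamma_2)\phi_m=\epsilon(F(c_0),G(\mathfrak{d}_0))\ast F(u)=\mu\phi_m\ast F(u)$. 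As $F(p)=F(q)F(u)$, matching the two sides reduces to the single identity
\[
\epsilon_1'\ast F(q)=\mu\phi_m=\epsilon(F(c_0),G(\mathfrak{d}_0)),
\]
since from it both products equal $\epsilon_1'\ast F(q)\ast F(u)$.

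The main obstacle is precisely this last identity: the retraction $q$ lies outside the isomorphism form used to define $\phi_m$, so it cannot be read off directly, and one cannot argue by comparing components alone because $F$ need not be surjective on objects. I would instead settle it through the natural partial order. From the decomposition one has $c_0\preceq c_1$ and $\mathfrak{d}_0=R_\mu\sqsubseteq R_{\epsilon_1}=\mathfrak{d}_1$; applying the order preservation of $F$ and $G$ together with Lemmas \ref{lemgrcd}(1) and \ref{lemgrcd1} yields $\mu\phi_m\lel\epsilon_1'$ and $\mu\phi_m\ler\epsilon_1'$, whence $\mu\phi_m\leqslant\epsilon_1'$ in the natural partial order of the idempotents of $\sncdp$. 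Lemma \ref{lemorder} then gives $\mu\phi_m=\epsilon_1'\ast(\mu\phi_m)(F(c_1))$, and by the pointwise identity of the second paragraph (with $\delta=\mu$) we have $(\mu\phi_m)(F(c_1))=F(\mu(c_1))=F(q)$, using $\mu(c_1)=\epsilon_1(c_1)q=q$. This is exactly the required identity, and so $\phi_m$ is a semigroup homomorphism.
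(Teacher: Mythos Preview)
Your proof is correct and follows the same overall skeleton as the paper's (decompose $\gamma_1=\epsilon_1\ast u_1$, compute $\gamma_1\gamma_2=\epsilon_1\ast p$ with $p=u_1(\gamma_2(z_{\gamma_1}))^\circ$, factor $p=qu$, and match). The two genuine differences are worth noting. First, you isolate the componentwise identity $(\delta\phi_m)(F(c))=F(\delta(c))$ as a lemma and use it to compute $(\gamma_1\phi_m)(\gamma_2\phi_m)$ cleanly, whereas the paper instead fixes a decomposition $\gamma_2=\epsilon_2\ast u_2$ and pushes everything through the morphism $\gamma_1(c_1)\,\epsilon_2(z_{\gamma_1})\,\gamma_2(c_2)$ directly. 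Second, and more substantively, for the key identity $\epsilon(F(c_0),G(\mathfrak{d}_0))=\epsilon_1'\ast F(q)$ the paper simply asserts this ``as $F$ is a functor, using equation~(\ref{eqnccmor})'', which on its face only compares components at objects of the form $F(c)$; you instead observe $\mu\phi_m\lel\epsilon_1'$ and $\mu\phi_m\ler\epsilon_1'$, deduce $\mu\phi_m\leqslant\epsilon_1'$ on idempotents, and invoke Lemma~\ref{lemorder} together with your pointwise identity to get the retraction $F(q)$. Your route is slightly longer but is explicitly robust against $F$ failing to be surjective on objects, which is precisely the worry you flag.
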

	\begin{proof}
		Using Lemma \ref{lemud1}, let $\gamma_1:=\epsilon(c_1,\mathfrak{d}_1)\ast \gamma_1(c_1)=\epsilon_1\ast u_1$ and $\gamma_2:=\epsilon(c_2,\mathfrak{d}_2)\ast \gamma_2(c_2)=\epsilon_2\ast u_2$ be two arbitrary cones in $\sncd$ such that  $u_1:=\gamma_1(c_1)\colon c_1\to z_{\gamma_1}$ and $u_2:=\gamma_2(c_2)\colon c_2\to z_{\gamma_2}$ are isomorphisms. We need to show that $(\gamma_1  \gamma_2)\phi_m=(\gamma_1\phi_m)  (\gamma_2\phi_m)$. 
		
		To this end, given a morphism $u\colon c_1\to c_2$ in the category $\mathcal{C}$, we shall denote its image in $\mathcal{C'}$ under $F$ by dashed versions without further comment, i.e., we shall denote  $F(u)\colon F(c_1) \to F(c_2)$ by $u'\colon c_1' \to c_2'$. 
		Then since $\gamma_1(c_1)=(\gamma_1(c_1))^\circ$,  we have 
		$$\gamma_1  \gamma_2 
		= (\epsilon_1\ast \gamma_1(c_1)) \: (\epsilon_2\ast \gamma_2(c_2)) 
		= (\epsilon_1\ast \gamma_1(c_1))\ast (\epsilon_2(z_{\gamma_1})\ast \gamma_2(c_2))^\circ= \epsilon_1\ast (\gamma_1(c_1))^\circ\ast (\epsilon_2(z_{\gamma_1})\ast \gamma_2(c_2))^\circ.$$
		Now, using Lemma \ref{lemepi} (3), we see that $$(\gamma_1(c_1))^\circ\ast (\epsilon_2(z_{\gamma_1})\ast \gamma_2(c_2))^\circ= (\gamma_1(c_1)\: \epsilon_2(z_{\gamma_1}) \: \gamma_2(c_2))^\circ.$$ 
		So if we let $qu:=(\gamma_1(c_1)\: \epsilon_2(z_{\gamma_1}) \: \gamma_2(c_2))^\circ$ and $\epsilon(c,\mathfrak{d}):=\epsilon_1\ast q$, we obtain
		$$\gamma_1  \gamma_2= \epsilon_1\ast (\gamma_1(c_1)\: \epsilon_2(z_{\gamma_1}) \: \gamma_2(c_2))^\circ
		= \epsilon_1\ast qu
		= \epsilon(c,\mathfrak{d})\ast u.	$$
		Hence by definition of $\phi_m$, we get  
		$$(\gamma_1  \gamma_2)\phi_m=(\epsilon(c,\mathfrak{d})\ast u)\phi_m= \epsilon(F(c),G(\mathfrak{d}))\ast F(u) =\epsilon(c',\mathfrak{d}')\ast u'.$$
		Then as $F$ is a functor, using equation (\ref{eqnccmor}), we reach
		$$	\epsilon(c',\mathfrak{d}')\ast u'=\epsilon_1'\ast q'\ast u'=\epsilon_1'\ast F(qu).$$
		Also, as $F$ preserves normal factorisations, using Lemma \ref{lemepi} (3) and equation (\ref{eqnccmor}), we conclude that
		$$F(qu)= F((\gamma_1(c_1)\: \epsilon_2(z_{\gamma_1}) \: \gamma_2(c_2))^\circ) = (F(\gamma_1(c_1)\: \epsilon_2(z_{\gamma_1}) \: \gamma_2(c_2)))^\circ =(u_1'\:\epsilon_2'(z_{\gamma_1}')\: u_2')^\circ= u_1'\ast(\epsilon_2'(z_{\gamma_1}')\ast u_2')^\circ.$$
		Therefore putting everything together and using equation (\ref{eqnbin}), 
		$$		(\gamma_1  \gamma_2)\phi_{m} =\epsilon_1'\ast F(qu)=\epsilon_1'\ast u_1'\ast(\epsilon_2'(z_{\gamma_1}')\ast u_2')^\circ=(\epsilon_1'\ast u_1')\: (\epsilon_2'\ast u_2')=(\gamma_1\phi_m)  (\gamma_2\phi_m),$$
		as required.
	\end{proof}
	
	After having constructed a semigroup homomorphism from a CC-morphism, we may now  routinely verify the following assertion.
	
	\begin{pro}\label{profunctorS}
		The assignment
		$$\cod\mapsto\sncd\: \text{ and } \: m \mapsto \phi_m $$
		constitutes a functor $\mathtt{S}$ from the category $\mathbf{CC}$ of connected categories to the category $\mathbf{LRS}$ of left reductive regular semigroups.
	\end{pro}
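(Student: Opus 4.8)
The plan is to check the two functor axioms for $\mathtt{S}$, since everything else is already in place. On objects, Propositions \ref{proud2} and \ref{proud3} ensure that each connected category $\cod$ is carried to an object $\sncd$ of $\mathbf{LRS}$, and on morphisms Lemmas \ref{lemphi1} and \ref{lemphi2} ensure that each CC-morphism $m$ is carried to a morphism $\phi_m$ of $\mathbf{LRS}$, i.e.\ a semigroup homomorphism. Thus only the preservation of identities and of composition remains to be verified, and I would dispatch both by direct computation on the standard representatives $\gamma = \epsilon(c,\mathfrak{d})\ast u$ furnished by Lemma \ref{lemud1}.

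For identities, the identity CC-morphism on $\cod$ is $1_{\cod} = (1_\mathcal{C}, 1_\mathfrak{D})$, whose two components are the identity functor on $\mathcal{C}$ and the identity map on $\mathfrak{D}$. Feeding $\gamma = \epsilon(c,\mathfrak{d})\ast u$ into the defining formula (\ref{eqnhomo}) gives $\gamma\phi_{1_{\cod}} = \epsilon(1_\mathcal{C}(c), 1_\mathfrak{D}(\mathfrak{d}))\ast 1_\mathcal{C}(u) = \epsilon(c,\mathfrak{d})\ast u = \gamma$, so $\phi_{1_{\cod}}$ is the identity homomorphism on $\sncd$.

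For composition, let $m = (F,G)\colon\cod\to\codp$ and $m' = (F',G')\colon\codp\to\codpp$ be CC-morphisms, so that their composite in $\mathbf{CC}$ is the componentwise composite $mm' = (FF', GG')$. Starting from $\gamma = \epsilon(c,\mathfrak{d})\ast u$ in $\sncd$, I would first apply $\phi_m$ to obtain $\gamma\phi_m = \epsilon(F(c), G(\mathfrak{d}))\ast F(u)$; since an inclusion-preserving functor carries isomorphisms to isomorphisms, $F(u)$ is again an isomorphism, so by Lemma \ref{lemud1} this is a bona fide representative of the required shape in $\sncdp$ and $\phi_{m'}$ may legitimately be applied to it. Unwinding the definitions then yields
$$(\gamma\phi_m)\phi_{m'} = \epsilon\bigl(F'(F(c)),\, G'(G(\mathfrak{d}))\bigr)\ast F'(F(u)) = \epsilon(FF'(c), GG'(\mathfrak{d}))\ast FF'(u) = \gamma\phi_{mm'},$$
which is exactly $\gamma(\phi_m\phi_{m'}) = \gamma\phi_{mm'}$, and so $\mathtt{S}$ preserves composition.

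The verification is essentially book-keeping, so I do not expect a genuine obstacle. The only point requiring care is the non-uniqueness of the representative $\gamma = \epsilon(c,\mathfrak{d})\ast u$: each of $\phi_{1_{\cod}}$, $\phi_m$, and $\phi_{m'}$ must return a value independent of the chosen representative, which is precisely the content of Lemma \ref{lemphi1} and which I would invoke rather than reprove at each step. Beyond this, the mild subtleties are correctly recording that composition in $\mathbf{CC}$ is componentwise, $(FF', GG')$, and confirming that $\gamma\phi_m$ indeed lands in the normal form $\epsilon(c',\mathfrak{d}')\ast u'$ needed before $\phi_{m'}$ can act.
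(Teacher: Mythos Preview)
Your proposal is correct and is precisely the routine verification the paper alludes to but omits; the paper simply states that after Lemmas \ref{lemphi1} and \ref{lemphi2} one may ``routinely verify'' the assertion, and your check of identity and composition preservation via the representative $\gamma=\epsilon(c,\mathfrak{d})\ast u$ from Lemma \ref{lemud1} is exactly what that routine verification amounts to.
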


Now, we have all the ingredients to prove our main theorem: the category equivalence of $\mathbf{LRS}$ and $\mathbf{CC}$. To establish this equivalence, we need functors $\mathtt{C}\colon\mathbf{LRS}\to\mathbf{CC}$ and $\mathtt{S}\colon\mathbf{CC}\to\mathbf{LRS}$ with natural isomorphisms $1_{\mathbf{LRS}}\cong \mathtt{C}\mathtt{S}$ and $1_{\mathbf{CC}} \cong \mathtt{S}\mathtt{C}$. Having already constructed the functors $\mathtt{C}$ and $\mathtt{S}$ in Propositions \ref{profunctorC} and \ref{profunctorS}, we proceed to prove the natural isomorphisms.
\begin{lem}\label{lemcatiso1}
	The identity functor  $1_{\mathbf{LRS}}$ is naturally isomorphic to the functor  $\mathtt{C}\mathtt{S}\colon\mathbf{LRS}\to \mathbf{LRS}$.	
\end{lem}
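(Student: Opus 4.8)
The plan is to take as the natural isomorphism the family of maps $\bar\rho$ already produced in Proposition~\ref{prolsriso}. Unwinding the definitions, $\mathtt{C}\mathtt{S}(S)=\mathtt{S}(\lr)=\slr$, so for each object $S$ of $\mathbf{LRS}$ I set $\eta_S:=\bar\rho\colon S\to\slr$, $a\mapsto r^a$. Proposition~\ref{prolsriso} already tells us that each $\eta_S$ is a semigroup isomorphism, so the only thing left to establish is that the family $(\eta_S)_S$ is natural. This reduces the lemma to a single commuting-square verification.

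Concretely, given a homomorphism $\phi\colon S\to S'$ in $\mathbf{LRS}$, we have $\mathtt{C}\mathtt{S}(\phi)=\phi_{m_\phi}\colon\slr\to\slrp$, where $m_\phi=(F_\phi,G_\phi)$ is the CC-morphism of Lemma~\ref{lemmorC}. With the left-to-right composition convention, naturality amounts to the identity $\phi\,\eta_{S'}=\eta_S\,\phi_{m_\phi}$, that is, to showing
$$ r^{a\phi}=(r^a)\phi_{m_\phi}\quad\text{for every } a\in S. $$
I would prove this by direct computation. Choosing $e\in E(R_a)$ and $f\in E(L_a)$, the proof of Proposition~\ref{prolsriso} gives $r^a=r^e\ast r(e,a,f)=\epsilon(Se,\mathfrak{r}_e)\ast r(e,a,f)$, which is exactly a representation of the form $\epsilon(c,\mathfrak{d})\ast u$ on which $\phi_{m_\phi}$ is defined by~(\ref{eqnhomo}) (and by Lemma~\ref{lemphi1} the value is independent of this choice). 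Substituting the formulas $F_\phi(Se)=S'(e\phi)$, $G_\phi(\mathfrak{r}_e)=\mathfrak{r}_{e\phi}$ and $F_\phi(r(e,a,f))=r(e\phi,a\phi,f\phi)$ from Lemma~\ref{lemmorC} yields
$$ (r^a)\phi_{m_\phi}=\epsilon(S'(e\phi),\mathfrak{r}_{e\phi})\ast r(e\phi,a\phi,f\phi)=r^{e\phi}\ast r(e\phi,a\phi,f\phi). $$
Since $\phi$ is a homomorphism, $e\phi$ and $f\phi$ are idempotents, and preservation of the quasi-orders $\ler,\lel$ gives $e\phi\in E(R_{a\phi})$ and $f\phi\in E(L_{a\phi})$. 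Applying the decomposition $r^x=r^e\ast r(e,x,f)$ of Proposition~\ref{prolsriso} to $x=a\phi$ in $S'$ with these idempotents then identifies the right-hand side with $r^{a\phi}$, establishing the identity and hence naturality; combined with the isomorphisms $\eta_S$ this gives $1_{\mathbf{LRS}}\cong\mathtt{C}\mathtt{S}$.

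The genuinely delicate point is not the algebra but the bookkeeping around the representation $\epsilon(c,\mathfrak{d})\ast u$. Because this factorisation is not unique (Remark~\ref{rmkeu}), I must present $r^a$ in precisely the idempotent-times-isomorphism form that $\phi_{m_\phi}$ consumes, and confirm that $r^e$ really is the distinguished idempotent cone $\epsilon(Se,\mathfrak{r}_e)$ connecting the object $Se$. Equally important is to \emph{derive} $e\phi\gr a\phi$ and $f\phi\gl a\phi$ from the homomorphism property rather than assume them, since the whole argument hinges on re-running the same canonical decomposition inside the target semigroup $S'$; this is where I expect the main care to be needed.
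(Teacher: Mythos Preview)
Your proposal is correct and follows exactly the approach of the paper: both take $\eta_S:=\bar\rho$, invoke Proposition~\ref{prolsriso} for the componentwise isomorphism, and reduce naturality to the commuting square that the paper declares a routine verification. You have in fact carried out that verification in full (including the delicate point that $e\phi\in E(R_{a\phi})$ and $f\phi\in E(L_{a\phi})$, which is what makes $r(e\phi,a\phi,f\phi)$ an isomorphism so that the decomposition of Proposition~\ref{prolsriso} applies again in $S'$), whereas the paper simply asserts the diagram commutes.
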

\begin{proof}
	To prove the lemma, we need to illustrate a natural transformation $\eta$ between the functors $1_{\mathbf{LRS}} $ and $\mathtt{C}\mathtt{S} $ such that each of its components is a semigroup isomorphism. That is, for each object $S$ in the category $\mathbf{LRS}$, we need to assign an isomorphism  $\eta(S)\colon 1_{\mathbf{LRS}}(S)\to\mathtt{C}\mathtt{S}(S)$ in $\mathbf{LRS}$ such that given an arbitrary morphism $\phi\colon S \to S'$ in $\mathbf{LRS}$, the following diagram commutes.
	\begin{equation*}\label{}
		\xymatrixcolsep{3pc}\xymatrixrowsep{4pc}\xymatrix
		{
			S \ar[rr]^{\eta(S)} \ar[d]_{\phi}
			&& \lr \ar[d]^{\mathtt{C}\mathtt{S}(\phi)} \\
			S' \ar[rr]^{\eta(S')} && \mathbb{L}(S')_\mathfrak{R}
		}
	\end{equation*}
	Observe that
	$$1_{\mathbf{LRS}}(S)=S \quad\text{ and }\quad \mathtt{C}\mathtt{S}(S):= \mathtt{S}(\mathtt{C}(S))= \mathtt{S}(\lr)= \slr.$$
	    
	By Proposition \ref{prolsriso} we know that the map $\bar{\rho}\colon a \mapsto r^a$ is an isomorphism from $S$ to $\slr$. So, for each object $S\in \mathbf{LRS}$, we define this morphism $\eta(S):=\bar{\rho}(S)$.  Then we see that $\bar{\rho}(S)$ is a semigroup isomorphism from $1_{\mathbf{LRS}} (S)$ to  $\mathtt{C}\mathtt{S}(S)$. Further, given a semigroup homomorphism $\phi\colon S \to S'$ in $\mathbf{LRS}$, we can routinely verify that the following diagram commutes.
	\begin{equation*}\label{}
		\xymatrixcolsep{3pc}\xymatrixrowsep{4pc}\xymatrix
		{
			S \ar[rr]^{\bar{\rho}(S)} \ar[d]_{\phi}
			&& \lr \ar[d]^{\mathtt{C}\mathtt{S}(\phi)} \\
			S' \ar[rr]^{\bar{\rho}(S')} && \mathbb{L}(S')_\mathfrak{R}
		}
	\end{equation*}
	Hence the assignment $S\mapsto \bar{\rho}(S)$ constitutes a natural isomorphism between the functors $1_{\mathbf{LRS}}$ and $\mathtt{C}\mathtt{S}$.
\end{proof}

\begin{lem}\label{lemcatiso2}
	The identity functor  $1_{\mathbf{CC}}$ is naturally isomorphic to the functor  $\mathtt{S}\mathtt{C}\colon\mathbf{CC}\to \mathbf{CC}$.	
\end{lem}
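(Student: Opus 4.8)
The plan is to mirror the proof of Lemma~\ref{lemcatiso1}, this time packaging the two isomorphisms of Proposition~\ref{procatiso} into a single CC-isomorphism. Fix a connected category $\cod$ and write $T:=\sncd$, so that
$$\mathtt{S}\mathtt{C}(\cod)=\mathtt{C}(\mathtt{S}(\cod))=\mathtt{C}(T)=\mathbb{L}(T)_\mathfrak{R},$$
where $\mathfrak{R}=\{\mathfrak{r}_\epsilon:\epsilon\in E(T)\}$ is the down-set of (\ref{eqnr}) for the semigroup $T$. Let $F\colon\mathbb{L}(T)\to\mathcal{C}$ be the normal-category isomorphism of Lemma~\ref{lemf2} and $G\colon T/\gr\to\mathfrak{D}$ the order isomorphism of Lemma~\ref{lemgrcd1}. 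I would define the component $\eta(\cod):=(F^{-1},\bar{G})$, where $F^{-1}\colon\mathcal{C}\to\mathbb{L}(T)$ is the inverse functor and $\bar{G}\colon\mathfrak{D}\to\mathfrak{R}$ is the order isomorphism $\mathfrak{d}\mapsto\mathfrak{r}_{\epsilon(c,\mathfrak{d})}$ (equivalently, $G^{-1}$ followed by the bijection $R_\gamma\mapsto\mathfrak{r}_\gamma$ of the lemma preceding Proposition~\ref{prolsriso}). One first records that $\bar{G}$ is independent of the choice of object $c$ connected by $\mathfrak{d}$, since any two idempotent cones with $\gr$-class $\mathfrak{d}$ are $\gr$-related in $T$ and hence have the same image under $\bar{\rho}$.

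The main step is to verify that $\eta(\cod)$ satisfies the defining condition (\ref{eqnccmor}) of a CC-morphism. The first clause is immediate: if $c$ is connected by $\mathfrak{d}$ via $\epsilon(c,\mathfrak{d})\in E(T)$, then $F^{-1}(c)=T\epsilon(c,\mathfrak{d})$ and $\bar{G}(\mathfrak{d})=\mathfrak{r}_{\epsilon(c,\mathfrak{d})}$ are connected in $\mathbb{L}(T)_\mathfrak{R}$ by the principal cone $r^{\epsilon(c,\mathfrak{d})}$. For the component identity I would show that $F$ carries $r^{\epsilon(c,\mathfrak{d})}$ back to $\epsilon(c,\mathfrak{d})$ componentwise. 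Writing $\epsilon:=\epsilon(c,\mathfrak{d})$ and choosing an idempotent cone $\epsilon'$ with vertex $c'$ (so $F^{-1}(c')=T\epsilon'$), equation (\ref{eqnprinc}) gives $r^{\epsilon}(T\epsilon')=r(\epsilon',\epsilon'\epsilon,\epsilon)$; applying the defining formula (\ref{eqnF}) for $F$, together with $\epsilon'(c')=1_{c'}$ and (\ref{eqnbin}), yields
$$F\bigl(r^{\epsilon}(T\epsilon')\bigr)=(\epsilon(c'))^\circ\, j_{\epsilon(c')}=\epsilon(c'),$$
the last equality being the canonical factorisation of $\epsilon(c')$. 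This is precisely the second clause of (\ref{eqnccmor}). Since $F$ and $G$ are isomorphisms, $\eta(\cod)$ is then a CC-isomorphism with inverse $(F,\bar{G}^{-1})$.

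It remains to prove naturality: for a CC-morphism $m=(F_m,G_m)\colon\cod\to\codp$ I must check that $\eta(\cod)\cdot\mathtt{S}\mathtt{C}(m)=m\cdot\eta(\codp)$ as CC-morphisms from $\cod$ to $\mathbb{L}(\sncdp)_{\mathfrak{R'}}$. Here $\mathtt{S}\mathtt{C}(m)=\mathtt{C}(\phi_m)=m_{\phi_m}=(F_{\phi_m},G_{\phi_m})$ is given explicitly by Lemma~\ref{lemmorC}, with $\phi_m$ as in (\ref{eqnhomo}). The square reduces to one equality of functors $\mathcal{C}\to\mathbb{L}(\sncdp)$ and one of order maps $\mathfrak{D}\to\mathfrak{R'}$. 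On objects the functor equality follows because $\phi_m$ sends $\epsilon(c,\mathfrak{d})$ to $\epsilon(F_m(c),G_m(\mathfrak{d}))$, so both paths send $c$ to $\sncdp\,\epsilon(F_m(c),G_m(\mathfrak{d}))$, and this object depends only on the vertex $F_m(c)$; on morphisms one unwinds (\ref{eqnhomo}) and (\ref{eqnF}) using the same image formula for $\phi_m$, and the order-map equality is a direct check on $\gr$-classes via $\bar{\rho}$. The main obstacle here is not any single hard estimate but the bookkeeping: every identity is simultaneously an assertion about cones of $\mathcal{C}$, elements of $\sncd$, and morphisms of $\mathbb{L}(\sncd)$, so the principal difficulty lies in organising the translation between these three layers and keeping the (non-unique) decompositions $\epsilon(c,\mathfrak{d})\ast u$ compatible across $F$, $G$ and $\phi_m$.
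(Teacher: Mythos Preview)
Your proposal is correct and follows essentially the same approach as the paper: the paper likewise packages the isomorphisms $F$ and $G$ from Proposition~\ref{procatiso} into a CC-isomorphism $\iota(\cod)$, notes (with the same ``routine but cumbersome'' caveat) that condition (\ref{eqnccmor}) holds, and then checks the naturality square. If anything, you are more careful than the paper about the direction of the component maps (the paper writes $\iota=(F,G)$ even though $F,G$ point from $\mathbb{L}(\sncd)$ and $\sncd/\gr$ toward $\mathcal{C}$ and $\mathfrak{D}$), and you spell out the key computation $F(r^{\epsilon}(T\epsilon'))=\epsilon(c')$ that the paper leaves implicit.
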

\begin{proof}
	From Proposition \ref{procatiso}, given a connected category $\cod$, the left ideal category $\mathbb{L}(\sncd)$  is a normal category isomorphic to the category $\mathcal{C}$ and the regular poset  $\sncd/\gr$ is order isomorphic to   $\mathfrak{D}$, via the functor $F$ and the map $G$ respectively. Further, we can routinely (but admittedly a bit cumbersome in terms of notation) verify  that $\iota:=(F,G)$ satisfies condition (\ref{eqnccmor}), and both $F$ and $G$ are bijections; hence $\iota$ is a CC-isomorphism. So, for each object $\cod\in \mathbf{CC}$, if we denote this morphism by $\iota(\cod)$, then since 
	$$1_{\mathbf{CC}}(\cod)=\cod \quad\text{ and }\quad \mathtt{S}\mathtt{C}(\cod):=\mathtt{C}(\mathtt{S}(\cod))= \mathtt{C}(\sncd)= \mathbb{L}(\sncd)_\mathfrak{R}$$ 
	we see that $\iota(\cod)$ is a CC-isomorphism from $1_{\mathbf{CC}} (S)$ to  $\mathtt{S}\mathtt{C}(\cod)$. Further, given a CC-morphism $m\colon \cod \to \codp$, we can routinely verify that the following diagram in the category $\mathbf{CC}$ commutes:
	\begin{equation*}\label{}
		\xymatrixcolsep{3pc}\xymatrixrowsep{4pc}\xymatrix
		{
			\cod \ar[rr]^{\iota(\cod)} \ar[d]_{m}
			&& \mathbb{L}(\sncd)_\mathfrak{R} \ar[d]^{\mathtt{S}\mathtt{C}(m)} \\
			\codp \ar[rr]^{\iota(\codp)} && \mathbb{L}(\sncdp)_\mathfrak{R}
		}
	\end{equation*}
	As a meticulous reader may see, the condition (\ref{eqnccmor}) is again crucial in this verification.	Hence the assignment $\cod\mapsto \iota(\cod)$ constitutes a natural isomorphism between the functors $1_{\mathbf{CC}}$ and $\mathtt{S}\mathtt{C}$.
\end{proof}

Combining the Lemmas \ref{lemcatiso1} and \ref{lemcatiso2}, we have the main theorem of this paper.
\begin{thm}\label{thmlrscc}
	The category $\mathbf{LRS}$ of left reductive regular semigroups is equivalent to the category $\mathbf{CC}$ of connected categories. 	
\end{thm}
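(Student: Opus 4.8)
The plan is to invoke the standard characterisation of an equivalence of categories: a pair of functors $\mathtt{C}\colon\mathbf{LRS}\to\mathbf{CC}$ and $\mathtt{S}\colon\mathbf{CC}\to\mathbf{LRS}$ constitutes an equivalence precisely when the two composites are naturally isomorphic to the respective identity functors, that is, when $1_{\mathbf{LRS}}\cong\mathtt{C}\mathtt{S}$ and $1_{\mathbf{CC}}\cong\mathtt{S}\mathtt{C}$ (see \cite{mac}). Since the functors $\mathtt{C}$ and $\mathtt{S}$ have already been constructed in Propositions \ref{profunctorC} and \ref{profunctorS}, the only remaining task is to exhibit these two natural isomorphisms, after which the theorem follows as an immediate corollary.

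These natural isomorphisms are exactly the content of the two preceding lemmas. Lemma \ref{lemcatiso1} supplies a natural isomorphism whose component at an object $S$ is the semigroup isomorphism $\bar{\rho}(S)\colon S\to\slr$ of Proposition \ref{prolsriso}, yielding $1_{\mathbf{LRS}}\cong\mathtt{C}\mathtt{S}$; dually, Lemma \ref{lemcatiso2} supplies a natural isomorphism whose component at a connected category $\cod$ is the CC-isomorphism $(F,G)$ of Proposition \ref{procatiso}, yielding $1_{\mathbf{CC}}\cong\mathtt{S}\mathtt{C}$. Assembling these two statements produces the asserted equivalence, so the proof amounts to citing Lemmas \ref{lemcatiso1} and \ref{lemcatiso2} and appealing to the criterion above.

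The substantive work of the theorem has therefore already been carried out in the earlier results, and no genuine obstacle remains at this final step. Had one instead attempted the equivalence directly, the main difficulty would have been establishing the naturality of the two transformations --- that is, verifying that the commuting squares appearing in Lemmas \ref{lemcatiso1} and \ref{lemcatiso2} indeed commute --- which in turn rests crucially on the compatibility condition (\ref{eqnccmor}) built into the definition of a CC-morphism, as flagged in Remark \ref{rmkccmor2}. Since those verifications are complete, the present statement follows at once.
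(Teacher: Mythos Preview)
Your proposal is correct and follows exactly the paper's own approach: the theorem is stated as an immediate consequence of Lemmas \ref{lemcatiso1} and \ref{lemcatiso2}, which supply the natural isomorphisms $1_{\mathbf{LRS}}\cong\mathtt{C}\mathtt{S}$ and $1_{\mathbf{CC}}\cong\mathtt{S}\mathtt{C}$ for the functors of Propositions \ref{profunctorC} and \ref{profunctorS}. The paper's proof is in fact a single sentence citing these two lemmas, so your more expansive write-up is, if anything, more explicit than the original.
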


\subsection{Right reductive regular semigroups}

We end this section with a brief discussion of the dual class of right reductive regular semigroups $\mathbf{RRS}$. In fact, much of the recent literature (see \cite{Billhardt2021,Ji2018,margolis2021} for instance) has focused on subclasses of $\mathbf{RRS}$ such as $\gr$-unipotent semigroups and left regular bands. Moreover, as our construction of $\mathbf{LRS}$ appears slightly non-symmetrical, one may wonder if the category $\mathbf{RRS}$ is \emph{dually equivalent} to $\mathbf{CC}$, this is not the case; we do obtain an equivalence of categories. To address all these, we provide a slightly more explicit discussion rather than merely stating the dual results.

Recall from Section \ref{secprel} (see equation (\ref{eqnrs})) that the category $\ros$ of principal right ideals of a regular semigroup $S$ is normal. Observe that given an arbitrary element $a\in S$, the principal cone $l^a$ is the cone in $\ros$ with vertex $fS$ given by, for each $eS\in v\ros$ 
$$l^a(eS):= l(e,ae,f) \text{ where } f\in E(R_a).$$

Also recall from \cite[Section 1.3]{clif} that the \emph{anti-regular representation} of $S$ is the anti-homomorphism $\lambda\colon S \to S_\lambda$ given by $a\mapsto \lambda_a$, and that $S$ is said to be \emph{right reductive} if $\lambda$ is injective. We may then show  the following dual statement of Theorem \ref{thmlslr}.

\begin{thm}\label{thmdualrs}
	Let $S$ be a regular semigroup. There is an anti-homomorphism $\bar{\lambda}\colon	S \to \srs$ given by $a\mapsto l^a$ and the semigroup $S$ is isomorphic to a subsemigroup of $\srso$ if and only if $S$ is right reductive.
\end{thm}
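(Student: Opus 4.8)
The plan is to deduce the statement from Theorem \ref{thmlslr} by passing to the opposite semigroup $S^\text{op}$ (with multiplication $x\cdot_\text{op}y:=yx$), thereby avoiding a verbatim repetition of Nambooripad's argument. The key observation is that the normal category $\ros$ of principal right ideals of $S$ literally coincides with the left category $\mathbb{L}(S^\text{op})$ of principal left ideals of $S^\text{op}$. Indeed, $E(S)=E(S^\text{op})$, and for $e\in E(S)$ the principal left ideal $S^\text{op}e=\{x\cdot_\text{op}e:x\in S\}=eS$. Moreover, a morphism from $S^\text{op}e$ to $S^\text{op}f$ in $\mathbb{L}(S^\text{op})$, which by (\ref{los1}) has the form $x\mapsto x\cdot_\text{op}u$ with $u\in e\cdot_\text{op}S^\text{op}\cdot_\text{op}f=fSe$, acts by $x\mapsto ux$ and so equals the morphism $l(e,u,f)$ of $\ros$. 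First I would verify that this matching also respects composition, inclusions, their splittings, and normal factorisations, so that $\ros=\mathbb{L}(S^\text{op})$ as categories with subobjects; their cone semigroups then agree, giving $\srs=\widehat{\mathbb{L}(S^\text{op})}$.

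Next I would identify the two representations. Since $\gl$ and $\gr$ interchange upon passing to $S^\text{op}$, we have $L^\text{op}_a=R_a$, so the principal cone of $a$ in $\mathbb{L}(S^\text{op})$ defined by (\ref{eqnprinc}) has component at $S^\text{op}e=eS$ equal to $r(e,\,e\cdot_\text{op}a,\,f)=r(e,ae,f)$ with $f\in E(R_a)$; under the identification above this is exactly the morphism $l(e,ae,f)=l^a(eS)$. Hence the map $\bar{\rho}$ furnished by Theorem \ref{thmlslr} for $S^\text{op}$ coincides with $\bar{\lambda}\colon a\mapsto l^a$. Now $\bar{\lambda}$, being a homomorphism $S^\text{op}\to\srs$ by that theorem, is by the very definition of the opposite multiplication the same datum as an anti-homomorphism $S\to\srs$, which establishes the first assertion.

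Finally I would translate the reductivity condition. The regular representation of $S^\text{op}$ sends $a$ to the map $x\mapsto x\cdot_\text{op}a=ax$, which is precisely $\lambda_a$; therefore $S^\text{op}$ is left reductive if and only if $S$ is right reductive. Applying Theorem \ref{thmlslr} to $S^\text{op}$ then gives that $\bar{\lambda}=\bar{\rho}$ embeds $S^\text{op}$ as a subsemigroup of $\srs$ exactly when $S^\text{op}$ is left reductive. But an injective homomorphism $S^\text{op}\hookrightarrow\srs$ is the same as an injective anti-homomorphism $S\to\srs$, equivalently an injective homomorphism $S\to\srso$; taking opposites of the embedded subsemigroup exhibits $S$ as a subsemigroup of $\srso$. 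This yields the stated equivalence.

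The hard part will be the careful bookkeeping in the categorical identification $\ros=\mathbb{L}(S^\text{op})$: one must confirm not merely that the object and morphism sets match, but that all the structure used to form the cone semigroup (composition, the inclusion subcategory, retractions, and canonical factorisations) transports correctly, so that $\srs$ and $\widehat{\mathbb{L}(S^\text{op})}$ genuinely coincide rather than being merely isomorphic. Once this dictionary between $S$ and $S^\text{op}$ is fixed, the correspondence of principal cones $l^a\leftrightarrow\bar{\rho}$-cones and the swap of left/right reductivity are formal consequences of reversing the multiplication.
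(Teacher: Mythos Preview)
Your proposal is correct and follows essentially the same approach as the paper: the paper does not give an explicit proof but simply presents the theorem as ``the dual statement of Theorem~\ref{thmlslr}'', and immediately afterwards records (citing \cite[Remark III.6]{cross}) the identifications $\mathbb{L}(S^\text{op})=\mathbb{R}(S)$ and $\mathbb{R}(S^\text{op})=\mathbb{L}(S)$, which is precisely the dictionary you set up. Your write-up is just a more explicit unpacking of this dualization via $S^\text{op}$, so there is nothing substantively different to compare.
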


From Propositions \ref{proud2} and \ref{proud3}, given a connected category $\cod$, the connection  semigroup $\sncd$ is left reductive. Thus, the opposite semigroup $\sncdo:=(\sncd,\circ)$ where, for any cones $\gamma,\delta\in \sncd$,
\begin{equation}\label{eqnbinop}
	\gamma\circ\delta:=\delta\ast (\gamma(z_\delta))^\circ
\end{equation}
is right reductive and regular. In the sequel, we shall refer to this semigroup $\sncdo$ as the \emph{dual connection semigroup}. 

Now by \cite[Remark III.6]{cross}, we may see that for the  opposite semigroup $S^\text{op}$ of a regular semigroup $S$, we get
\begin{equation}
	\mathbb{R}(S^\text{op}) = \mathbb{L}(S)\: \text{ and }\: \mathbb{L}(S^\text{op})=\mathbb{R}(S).
\end{equation}
Moreover, the following relationships between its regular posets hold:
\begin{equation}
	S^\text{op}/\gr = S/\gl \: \text{ and } \: S^\text{op}/\gl=S/\gr.
\end{equation}
Therefore, $\mathbb{R}(\sncdo)=\mathbb{L}(\sncd)$ and using Proposition \ref{procatiso}, we get $\mathbb{L}(\sncd)$ isomorphic to $\mathcal{C}$. Hence we may state the next proposition.
\begin{pro}\label{procatisodual}
	Let $\cod$ be a connected category. The semigroup $\sncdo$ is a right reductive regular semigroup. The right category $\mathbb{R}(\sncdo)$ is a normal category isomorphic to the category $\mathcal{C}$ and the regular poset $\sncdo/\gl$ is isomorphic to $\mathfrak{D}$.
\end{pro}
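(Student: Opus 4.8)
Proposition \ref{procatisodual} asserts three facts about the dual connection semigroup $\sncdo$, and my plan is to obtain each by transporting the corresponding fact about $\sncd$ across the duality $S\mapsto S^\text{op}$. The strategy is to \emph{not} redo any of the hard analysis of Section \ref{ssecslrs}, but rather to invoke it verbatim for $\sncd$ and then apply the purely formal identities $\mathbb{R}(S^\text{op})=\mathbb{L}(S)$, $\mathbb{L}(S^\text{op})=\mathbb{R}(S)$, $S^\text{op}/\gr=S/\gl$, and $S^\text{op}/\gl=S/\gr$ (already quoted from \cite[Remark III.6]{cross} in the excerpt). The whole proof is therefore a bookkeeping argument that threads these identities through the established results.

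First I would record that $\sncdo$ is right reductive and regular. Regularity is immediate since $(\sncd,\circ)$ has the same underlying set and inverse elements as the regular semigroup $\sncd$ (Proposition \ref{proud2}); the opposite of a regular semigroup is regular because $x$ inverts $a$ in $S$ iff $x$ inverts $a$ in $S^\text{op}$. For right reductivity I would note that $S$ is left reductive iff $S^\text{op}$ is right reductive (the regular representation of $S$ is the anti-regular representation of $S^\text{op}$, so injectivity of one is injectivity of the other); since Proposition \ref{proud3} gives that $\sncd$ is left reductive, its opposite $\sncdo$ is right reductive. This part is genuinely routine.

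Next I would establish the category and poset isomorphisms by reading off the duality identities with $S:=\sncd$, so that $S^\text{op}=\sncdo$. Applying $\mathbb{R}(S^\text{op})=\mathbb{L}(S)$ gives $\mathbb{R}(\sncdo)=\mathbb{L}(\sncd)$, and Proposition \ref{procatiso} already tells us $\mathbb{L}(\sncd)$ is a normal category isomorphic to $\mathcal{C}$ (via the functor $F$ of equation (\ref{eqnF})); hence $\mathbb{R}(\sncdo)\cong\mathcal{C}$ as a normal category. Likewise $S^\text{op}/\gl=S/\gr$ yields $\sncdo/\gl=\sncd/\gr$, and Proposition \ref{procatiso} (equivalently Lemma \ref{lemgrcd1}) identifies $\sncd/\gr$ with the regular poset $\mathfrak{D}$ via the order isomorphism $G$; hence $\sncdo/\gl\cong\mathfrak{D}$. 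These two lines complete the statement.

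The only point demanding care---and thus the main obstacle---is making sure the identity $\mathbb{R}(S^\text{op})=\mathbb{L}(S)$ is applied at the right level of strictness. In \cite{cross} this is literally an equality of categories rather than a mere isomorphism, because the principal right ideals of $S^\text{op}$ \emph{are} the principal left ideals of $S$ with the arrows reversed in their parametrisation; one must check that the subobject structure and the normal-category data carried across by $F$ survive this reversal, so that ``isomorphic as normal categories'' is preserved and not just ``isomorphic as categories.'' I expect this to go through cleanly because $F$ is inclusion preserving and fully faithful with $vF$ an order isomorphism (Lemma \ref{lemf2}), and inclusions are self-dual data in the sense needed here; but it is the one place where I would pause to verify that no structure is lost under the passage to the opposite semigroup before declaring the proof complete.
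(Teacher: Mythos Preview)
Your proposal is correct and follows essentially the same route as the paper: it too deduces regularity and right reductivity of $\sncdo$ from Propositions~\ref{proud2} and~\ref{proud3} by passing to the opposite semigroup, then invokes the identities $\mathbb{R}(S^\text{op})=\mathbb{L}(S)$ and $S^\text{op}/\gl=S/\gr$ together with Proposition~\ref{procatiso} to obtain the isomorphisms with $\mathcal{C}$ and $\mathfrak{D}$. The paper treats these identities as literal equalities of categories and posets (as in \cite[Remark~III.6]{cross}), so your caution about the level of strictness is well placed but ultimately unnecessary here.
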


At this point recall that by  Theorem \ref{thmdualrs},  given a right reductive regular semigroup $S$, we have $S\xhookrightarrow{} \srso$ and so, as in Section \ref{ssecslrs}, we may isolate the $\gl$-classes in $\srso$ of the form $\mathfrak{l}_a$.  Observe that $\mathfrak{l}_a$ is the $\gl$-class in $\srso$ containing the principal cone $l^a$, for $a\in S$. now, define 
$$\mathfrak{L}:= \{L_{l^e} : e \in E(S)\} =\{\mathfrak{l}_e : e\in E(S)\}.$$
Since $ \srso/\gl=\srs/\gr$, we have $\mathfrak{L}\subseteq \srs/\gr$ and  can verify the result below.

\begin{pro}
	Let $S$ be a right reductive regular semigroup.	The normal category $\mathbb{R}(S)$ is connected by the regular poset $\mathfrak{L}$, and so $\rl$ is a connected category. Moreover, the dual connection semigroup $\srlo$ is isomorphic to the semigroup $S$. 
\end{pro}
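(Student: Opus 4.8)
The plan is to reduce the statement to the left reductive case already settled in Propositions~\ref{prolsr} and~\ref{prolsriso}, by passing to the opposite semigroup $S^\text{op}$. First I would observe that the regular representation of $S^\text{op}$ is precisely the anti-regular representation $\lambda$ of $S$; hence the regular representation of $S^\text{op}$ is injective if and only if $\lambda$ is injective, so that $S$ is right reductive exactly when $S^\text{op}$ is a left reductive regular semigroup. This lets me apply the established machinery to $S^\text{op}$ and transport the conclusions back to $S$ using the identities $\mathbb{L}(S^\text{op})=\ros$ and $S^\text{op}/\gr = S/\gl$ recalled above from \cite[Remark III.6]{cross}, together with $E(S^\text{op})=E(S)$.

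For the first assertion, I would apply Proposition~\ref{prolsr} to $S^\text{op}$: the normal category $\mathbb{L}(S^\text{op})$ is connected by its associated regular poset $\mathfrak{R}_{S^\text{op}}=\{R_{r^e} : e\in E(S^\text{op})\}$, whose members are the $\gr$-classes of the principal cones in $\widehat{\mathbb{L}(S^\text{op})}$. The key identification is that $\mathbb{L}(S^\text{op})=\ros$ as categories with subobjects, and under this identification the principal cone $r^e$ of $S^\text{op}$ coincides with the principal cone $l^e$ of $S$, since the regular representation of $S^\text{op}$ is the anti-regular representation of $S$. As $\widehat{\mathbb{L}(S^\text{op})}=\srs$ and $\srso/\gl=\srs/\gr$, the down-set $\mathfrak{R}_{S^\text{op}}$ becomes exactly $\{R_{l^e}:e\in E(S)\}=\mathfrak{L}$. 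Thus $\ros$ is connected by $\mathfrak{L}$, i.e. $\rl$ is a connected category.

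For the isomorphism, I would apply Proposition~\ref{prolsriso} to $S^\text{op}$ to obtain a semigroup isomorphism $\widehat{\mathbb{L}(S^\text{op})_{\mathfrak{R}_{S^\text{op}}}}\cong S^\text{op}$. Under the identifications above, the left-hand side is the connection semigroup $\widehat{\ros_{\mathfrak{L}}}$ carried with the cone multiplication of~(\ref{eqnbin}). Since by definition $\srlo$ is the same underlying set equipped with the opposite product~(\ref{eqnbinop}), taking opposites of both sides yields $\srlo\cong(S^\text{op})^\text{op}=S$, as required; the accompanying identification of the right category and regular poset of this semigroup with $\ros$ and $\mathfrak{L}$ is exactly Proposition~\ref{procatisodual}.

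The main obstacle is bookkeeping with the opposite operation rather than any substantive difficulty. One must verify carefully that the product $\circ$ of~(\ref{eqnbinop}) defining $\srlo$ really is the opposite of the cone product of~(\ref{eqnbin}) on $\widehat{\ros_{\mathfrak{L}}}$, and that the principal-cone identification $r^e=l^e$ respects vertices, the object $S^\text{op}e$ for $S^\text{op}$ corresponding to $eS$ for $S$. Once these compatibilities are pinned down, every step is a direct translation through the duality, so no new estimates or constructions are needed.
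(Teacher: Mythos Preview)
Your proposal is correct and follows the same dualization strategy the paper adopts: the paper does not give a formal proof but simply sets up the anti-homomorphism $\bar{\lambda}$, the identifications $\mathbb{L}(S^\text{op})=\ros$ and $\srso/\gl=\srs/\gr$, and then states that one ``can verify the result'' by mimicking Section~\ref{ssecslrs}. Your version makes the duality explicit by passing to $S^\text{op}$ and invoking Propositions~\ref{prolsr} and~\ref{prolsriso} directly, which is a clean way to avoid rewriting the arguments; the bookkeeping you flag (that $r^e$ in $\mathbb{L}(S^\text{op})$ is $l^e$ in $\ros$, and that~(\ref{eqnbinop}) is the opposite of~(\ref{eqnbin})) is exactly what the paper leaves implicit.
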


The next step is to extend this to a category equivalence on the category of right reductive regular semigroups $\mathbf{RRS}$. Given a semigroup homomorphism $\phi\colon S\to S'$ in $\mathbf{RRB}$, as in Lemma \ref{lemmorC}, we obtain a CC-morphism $m_\phi:=(F_\phi,G_\phi)$ from $\rl$ to $\rlp$. Notice  that $F_\phi$ remains a covariant functor. Hence, we obtain a functor $\mathtt{C}\colon\mathbf{RRS}\to\mathbf{CC}$ as follows:
$$S\mapsto\rl \text{ and }\phi\mapsto m_\phi.$$
Conversely, given a connected category $\cod\in v\mathbf{CC}$, the semigroup $\sncdo$ is a right reductive regular, and as in Proposition \ref{profunctorS}, we find a functor $\mathtt{S}\colon\mathbf{CC}\to \mathbf{RRS}$. Imitating the proofs of Lemmas \ref{lemcatiso1} and \ref{lemcatiso2}, we conclude the required equivalence.
\begin{thm}
	The category $\mathbf{RRS}$ of right reductive regular semigroups is equivalent to the category $\mathbf{CC}$ of connected categories. 	
\end{thm}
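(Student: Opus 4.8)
The plan is to mirror the construction already carried out for the left reductive case, exploiting the duality between $\mathbf{LRS}$ and $\mathbf{RRS}$ rather than reproving everything from scratch. The key observation is that all the hard structural work has been done: Proposition~\ref{procatisodual} supplies the object-level correspondence $\cod\mapsto\sncdo$ together with the isomorphisms $\mathbb{R}(\sncdo)\cong\mathcal{C}$ and $\sncdo/\gl\cong\mathfrak{D}$, while the preceding proposition supplies the reverse assignment $S\mapsto\rl$ with $\srlo\cong S$. Thus the two functors $\mathtt{C}$ and $\mathtt{S}$ described just before the statement are already essentially in place; what remains is to check that they really are functorial and then to produce the two natural isomorphisms $1_{\mathbf{RRS}}\cong\mathtt{C}\mathtt{S}$ and $1_{\mathbf{CC}}\cong\mathtt{S}\mathtt{C}$.

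First I would pin down $\mathtt{C}\colon\mathbf{RRS}\to\mathbf{CC}$. On objects it sends $S$ to $\rl$, which is a connected category by the proposition above. On a homomorphism $\phi\colon S\to S'$ one sets $m_\phi:=(F_\phi,G_\phi)$ exactly as in Lemma~\ref{lemmorC}, but now $F_\phi\colon\mathbb{R}(S)\to\mathbb{R}(S')$ is built from the morphisms $l(e,u,f)\mapsto l(e\phi,u\phi,f\phi)$ and $G_\phi\colon\mathfrak{l}_e\mapsto\mathfrak{l}_{e\phi}$. The subtle point worth flagging is that although $\phi$ is a homomorphism (not an anti-homomorphism) and the representation $\lambda$ is an anti-homomorphism, the induced $F_\phi$ is still a \emph{covariant} inclusion-preserving functor: the two order reversals cancel, as the discussion preceding the theorem explicitly notes. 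One then verifies condition~(\ref{eqnccmor}) for $m_\phi$ by the dual of the computation in Lemma~\ref{lemmorC}, and checks preservation of identities and composites, so that $\mathtt{C}$ is a genuine functor.

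Next I would set up $\mathtt{S}\colon\mathbf{CC}\to\mathbf{RRS}$ by $\cod\mapsto\sncdo$ and, on a CC-morphism $m=(F,G)$, by the same formula~(\ref{eqnhomo}) used in the left reductive case, $\phi_m\colon\epsilon(c,\mathfrak{d})\ast u\mapsto\epsilon(F(c),G(\mathfrak{d}))\ast F(u)$. The one thing that genuinely needs care is that $\phi_m$ is a homomorphism with respect to the \emph{opposite} multiplication $\circ$ of~(\ref{eqnbinop}) rather than the original one; concretely, one must confirm $(\gamma_1\circ\gamma_2)\phi_m=(\gamma_1\phi_m)\circ(\gamma_2\phi_m)$. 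Since $\gamma_1\circ\gamma_2=\gamma_2\ast(\gamma_1(z_{\gamma_2}))^\circ$ in both semigroups and $\phi_m$ is already known from Lemma~\ref{lemphi2} to respect $\ast$ and to commute with $F$ on the relevant epimorphic components, this reduces to the verification already performed, just read through the swapped factor order. Well-definedness of $\phi_m$ transfers verbatim from Lemma~\ref{lemphi1}, since that argument only used the idempotent/isomorphism decomposition of cones, which is unaffected by passing to the opposite semigroup.

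Finally I would assemble the equivalence by imitating Lemmas~\ref{lemcatiso1} and~\ref{lemcatiso2}. For $1_{\mathbf{RRS}}\cong\mathtt{C}\mathtt{S}$ the component at $S$ is the anti-homomorphism $\bar{\lambda}\colon a\mapsto l^a$, which by the proposition above is an isomorphism of $S$ onto $\srlo$; naturality is the commuting square obtained by replacing $\bar{\rho}$ with $\bar{\lambda}$. For $1_{\mathbf{CC}}\cong\mathtt{S}\mathtt{C}$ the component at $\cod$ is the CC-isomorphism $\iota:=(F,G)$ coming from Proposition~\ref{procatisodual}, with naturality against a CC-morphism $m$ checked exactly as before, again leaning on condition~(\ref{eqnccmor}). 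The main obstacle, and really the only place where vigilance is required, is the bookkeeping around variance: keeping straight that $\bar{\lambda}$ is an anti-isomorphism, that $\sncdo$ carries the opposite product, and yet that $F_\phi$ stays covariant, so that the diagrams commute on the nose and one obtains an equivalence rather than a duality. Everything else is a faithful transcription of the left reductive arguments, so I would state that explicitly and omit the routine recomputations.
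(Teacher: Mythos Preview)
Your proposal is correct and follows essentially the same approach as the paper: the paper's own argument is precisely to define $\mathtt{C}$ and $\mathtt{S}$ by dualising the constructions for $\mathbf{LRS}$ (noting, as you do, that $F_\phi$ remains covariant) and then to conclude by ``imitating the proofs of Lemmas~\ref{lemcatiso1} and~\ref{lemcatiso2}.'' If anything, your write-up is more explicit than the paper about the variance bookkeeping surrounding $\bar{\lambda}$ and the opposite multiplication, which is exactly where care is needed.
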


\section{$\gl$-unipotent semigroups}\label{seccxnlus}

The remainder of the paper is essentially dedicated to applications of the results in Section \ref{seccxnlrs} to various classes of left reductive regular semigroups. In this section, we  specialise the construction in the previous section to give an abstract construction of $\gl$-unipotent semigroups using \emph{supported} normal categories. These semigroups  were introduced and studied initially by Venkatesan \cite{venka1972,venka1974,venka1976b,venka1978} where he had called them \emph{right inverse semigroups}. Over the years, various facets of this class of semigroups (and of its dual, the class of $\gr$-unipotent semigroups) have been studied by many people including the second author \cite{Branco2010,Ji2018,Anthony2004,Edwards1979,Edwards1979a,Szendrei1985,Billhardt2021,Branco2009,Anthony2002,Gomes1985,Gomes1986,Gomes1986a}. We begin by recalling some basic properties of $\gl$-unipotent semigroups.  

\begin{dfn}\label{dfnlu}
A regular semigroup is said to be \emph{$\gl$-unipotent} if each $\gl$-class contains a unique idempotent.
\end{dfn}

\begin{pro}[{\cite[Theorem 1]{venka1974}\cite[Corollary 1.2, 1.3]{Edwards1979a}}]\label{prolus}
	Let $S$ be a regular semigroup. The following statements are equivalent:
	\begin{enumerate}
		\item $S$ is an $\gl$-unipotent semigroup;	
		\item $eS \cap fS = efS = feS$, for all $e,f\in E(S)$;	
		\item $efe=fe$, for all $e,f\in E(S)$, i.e., $E(S)$ is a right regular band;
		\item $a'a=a''a$, for all $a\in S$ and $a',a''\in V(a)$;
		\item the unique idempotent in an $\gl$-class $L_a$ containing $a$ is $a'a$, for all  $a'\in V(a)$;
		\item $a'ea=a''ea$, for all $a\in S$ and $a',a''\in V(a)$;
		\item $aa'ea=ea$, for all $a\in S$, $a'\in V(a)$ and $e\in E(S)$.
	\end{enumerate}
\end{pro}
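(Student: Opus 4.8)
The plan is to prove the seven conditions equivalent by a cycle of short implications, organised around two clusters: the ``inverse'' conditions \textup{(1)}, \textup{(4)}, \textup{(5)}, \textup{(6)} and the ``band/ideal'' conditions \textup{(2)}, \textup{(3)}, \textup{(7)}. First I would record the standard preliminary facts that make most steps mechanical: for $a\in S$ and $a'\in V(a)$ the element $a'a$ is idempotent and lies in $L_a$ (dually $aa'\in R_a\cap E(S)$); conversely every idempotent of $L_a$ has the form $a'a$ for some $a'\in V(a)$; for idempotents $p,q$ one has $p\leqslant_{\mathscr{L}} q \iff pq=p$, hence $p\mathscr{L} q \iff pq=p$ and $qp=q$; and an $\mathscr{H}$-class contains at most one idempotent.

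With these in hand, the inverse cluster is almost a reformulation. Condition \textup{(1)} says $L_a$ has a unique idempotent; since the idempotents of $L_a$ are exactly the products $a'a$, this is precisely \textup{(4)}, and \textup{(5)} merely names that unique idempotent as $a'a$, so $\textup{(1)}\Leftrightarrow\textup{(4)}\Leftrightarrow\textup{(5)}$ is immediate. To attach \textup{(6)} I would argue $\textup{(6)}\Rightarrow\textup{(4)}$ by setting $p=a'a$, $q=a''a$ and specialising the idempotent $e$ in \textup{(6)} to $aa'$ and then to $aa''$; the two instances give $qp=p$ and $pq=q$, so $p\mathscr{R} q$, while $p,q\in L_a$ gives $p\mathscr{L} q$, whence $p\mathscr{H} q$ forces $p=q$. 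For the converse I would show that \textup{(4)} and \textup{(7)} together give \textup{(6)} by the chain $a'ea=a'(aa''ea)=(a'a)(a''ea)=(a''a)(a''ea)=a''(aa''ea)=a''ea$.

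The band/ideal cluster is handled by direct idempotent arithmetic. For $\textup{(2)}\Rightarrow\textup{(3)}$, from $efS=feS$ and $fe=fe\cdot e\in feS$ I get $fe\in efS$, say $fe=eft$, whence $efe=e(eft)=eft=fe$; for $\textup{(3)}\Rightarrow\textup{(2)}$ the identities $ef=fef$ and $fe=efe$ give the three inclusions $efS=feS=eS\cap fS$ (the last using $x=ex=fx$ for $x$ in the intersection). The equivalence $\textup{(3)}\Leftrightarrow\textup{(7)}$ follows by writing $g=aa'$ and computing $aa'ea=(geg)a=(eg)a=ea$ under \textup{(3)}, while putting $a=a'=g$ idempotent in \textup{(7)} returns $geg=eg$, i.e.\ \textup{(3)}. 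Finally $\textup{(3)}\Rightarrow\textup{(1)}$ is immediate: if $e\mathscr{L} f$ are idempotents then $ef=e$ and $fe=f$, so $efe=fe$ reads $e=f$.

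The single genuinely non-routine step --- and the main obstacle --- is the bridge from the inverse cluster to the band cluster, that is, showing that $\mathscr{L}$-unipotence implies $E(S)$ is a right regular band, $\textup{(1)}\Rightarrow\textup{(3)}$ (every other cross-link then closes up via the two cycles above). Here the difficulty is that $\mathscr{L}$-unipotence only controls idempotents \emph{within} a single $\mathscr{L}$-class, whereas $efe=fe$ is an $\mathscr{R}$/right-ideal statement; indeed $efe=fe$ is equivalent to $fe\in eS$, which does not come for free. I would attack it by analysing the idempotents of $L_{fe}$: using that $a'a$ is the unique idempotent of $L_a$ and is independent of the chosen inverse, I would produce two expressions for this idempotent (for suitable inverses of $fe$ built from a fixed pre-inverse of $fe$) and force $efe=fe$. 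This is the classical argument of Venkatesan and Edwards, and it is the only place where uniqueness of the idempotent is used in an essential, non-formal way.
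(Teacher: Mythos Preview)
The paper does not prove this proposition at all: it is stated with citations to Venkatesan and Edwards and then used as a black box, so there is no ``paper's own proof'' to compare against. Your plan is therefore being judged on its own merits.

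Your logical skeleton is sound and the routine implications are all correct. The cycle closes: $(1)\Leftrightarrow(4)\Leftrightarrow(5)$ is immediate from the description of the idempotents of $L_a$; $(6)\Rightarrow(4)$ via $e=aa'$ and $e=aa''$ is clean; $(2)\Leftrightarrow(3)$, $(3)\Leftrightarrow(7)$, and $(3)\Rightarrow(1)$ are verified by the short calculations you indicate; and $(4)+(7)\Rightarrow(6)$ then pulls $(6)$ into the equivalence. One small presentational point: in the step $(3)\Rightarrow(7)$ you should make explicit that $a=ga$ (with $g=aa'$) is what turns $gea$ into $(geg)a$, since as written the equality $aa'ea=(geg)a$ looks like a typo.

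You are right that $(1)\Rightarrow(3)$ is the only substantive step, and your strategy---manufacture a second inverse of $fe$ from a given one and exploit~$(4)$---is exactly the classical one. Concretely: for $x\in V(fe)$ one checks $exf\in V(fe)$, whence $(4)$ gives $xfe=(exf)(fe)=exfe$, so the unique idempotent $g:=xfe$ of $L_{fe}$ satisfies $eg=g$ (and trivially $ge=g$), i.e.\ $g\leqslant e$. Combining $fe=(fe)g$ with $g=eg$ and the analogous relation for $ef$ yields $efe=fe$ after a short manipulation (or one first shows $E(S)$ is a band and then uses $\gl$-uniqueness on $fe\gl fef$). Your sketch stops just short of this; since this is the one place a reader cannot fill in mechanically, you should write out the two or three lines that finish it rather than pointing to the literature.
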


By \cite[Theorem 4 (1)]{venka1974}, it is known that an $\gl$-unipotent semigroup is left reductive regular, i.e., the regular representation $a\mapsto\rho_a$ is injective. Since this fact is a cornerstone of this section, we record this formally amongst some other useful results regarding $\gl$-unipotent semigroups and provide a more transparent proof for the left-reductivity. In the process, we also characterise the natural partial order on an  $\gl$-unipotent semigroup. We begin with a useful observation.

\begin{lem}\label{lemlupo}
	Let $e,f $ be idempotents of an $\gl$-unipotent semigroup then $e \lel f$ if and only if $e\leqslant f$.
\end{lem}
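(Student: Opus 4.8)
The plan is to prove both implications, observing that one of them is immediate from the definition of the natural partial order. Since $\leqslant~=~\lel \cap \ler$, the implication $e \leqslant f \implies e \lel f$ holds trivially for any elements, and in particular for idempotents. So the entire content of the lemma lies in the converse $e \lel f \implies e \leqslant f$; as we are already given $e \lel f$, it suffices to establish that $e \ler f$ as well.

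First I would translate the relations between idempotents into multiplicative identities. For idempotents $e,f$, the condition $e \lel f$ (that is, $e \in Sf$) is equivalent to $ef = e$: if $e = sf$ then $ef = sf^2 = sf = e$, and conversely $ef = e$ exhibits $e$ as an element of $Sf$. Dually, $e \ler f$ is equivalent to $fe = e$. Thus the goal reduces to the purely multiplicative claim: if $ef = e$, then $fe = e$.

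The key step --- and the only place where $\gl$-unipotency is used --- is the right regular band identity from Proposition \ref{prolus}(3), namely $efe = fe$ for all $e,f \in E(S)$. Assuming $ef = e$, I would compute $efe = (ef)e = e\cdot e = e$ using idempotency of $e$, while the band identity simultaneously gives $efe = fe$; comparing the two yields $fe = e$, i.e. $e \ler f$. Combined with the hypothesis $e \lel f$, this places $e$ in $\lel \cap \ler$, that is $e \leqslant f$, completing the converse.

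I do not anticipate a genuine obstacle: once the right regular band identity is in hand the argument collapses to a single line. The only point requiring a little care is invoking the correct instance of Proposition \ref{prolus}(3), with the factors ordered as $efe = fe$ rather than $fef = ef$, so that the idempotency simplification $efe = e$ matches up; everything else is routine.
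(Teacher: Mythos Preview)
Your proof is correct. Both you and the paper reduce the nontrivial direction to showing $fe = e$ given $ef = e$, but you invoke different characterisations of $\gl$-unipotency to do it. You use the right regular band identity $efe = fe$ from Proposition~\ref{prolus}(3), so that $fe = efe = (ef)e = e^2 = e$ in one line. The paper instead argues directly from Definition~\ref{dfnlu}: it observes that $fe$ is an idempotent with $fe \gl e$ and $fe \leqslant f$, and then uniqueness of idempotents in $\gl$-classes forces $e = fe$. Your route is marginally slicker since you avoid checking that $e \gl fe$, at the cost of appealing to a derived property rather than the definition; the paper's route is more self-contained. Either way the argument is a one-liner.
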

\begin{proof}
	Suppose  $e \lel f$ in an $\gl$-unipotent semigroup so that $e=ef$. Then we have $e~\gl~fe~\leqslant~f$ in the semigroup. But by Definition \ref{dfnlu}, there is a unique idempotent in every $\gl$-class of an $\gl$-unipotent semigroup and so $e=fe$, i.e., $e\ler f$. Hence we have $e\leqslant f$. The converse is clear.   
\end{proof}
Recall that a regular semigroup whose idempotents form a band is said to be  \emph{orthodox}, and this is the case of any $\gl$-unipotent semigroup. In any orthodox semigroup $S$, for all $a\in S$, $a'\in V(a)$ and $e\in E(S)$, the elements $a'ea$ and $aea'$ are idempotents (see \cite[Proposition 6.2.2]{howie}).

On another hand, in any regular semigroup $S$, the natural partial order \cite{kss1980} is given by, for all $a, b \in S$,
$$ a\leqslant b \iff \exists {e,f\in E(S)},\:  a=be=fb.$$

When $S$ is $\gl$-unipotent, for all $a, b \in S$,
\begin{equation}\label{eqnlunpo}
	a\leqslant b \iff \exists f\in E(S), \:  a=fb.
\end{equation}

\begin{pro} \label{proluslrs}
	Let $S$ be an $\gl$-unipotent semigroup. For every pair of distinct elements $a,b$ in $S$, there exists an idempotent $f$ in $S$ such that $fa\ne fb$. In particular, the semigroup $S$ is left reductive. 	
\end{pro}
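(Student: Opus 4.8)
The plan is to prove the contrapositive of the first assertion: I will show that if $fa=fb$ holds for \emph{every} idempotent $f\in E(S)$, then $a=b$. This negates the statement ``there exists $f\in E(S)$ with $fa\ne fb$'', so establishing it proves the first claim, and the left-reductivity then drops out at once: if $\rho_a=\rho_b$, i.e.\ $xa=xb$ for all $x\in S$, then in particular $fa=fb$ for all idempotents $f$, forcing $a=b$; hence $\rho$ is injective.

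The key idea is to feed the hypothesis two carefully chosen idempotents and read off two comparisons in the natural partial order. I choose inverses $a'\in V(a)$ and $b'\in V(b)$ and set $e:=aa'$ and $g:=bb'$, which are idempotents (as always for inverses in a regular semigroup) satisfying $ea=a$ and $gb=b$. Applying the hypothesis with $f=e$ gives $a=ea=eb=aa'b$; since $aa'\in E(S)$, the description (\ref{eqnlunpo}) of the natural partial order in an $\gl$-unipotent semigroup yields $a\leqslant b$. Symmetrically, applying the hypothesis with $f=g$ gives $b=gb=ga=bb'a$, whence $b\leqslant a$ by (\ref{eqnlunpo}) again. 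Antisymmetry of $\leqslant$ then forces $a=b$, completing the contrapositive; note that in fact only the two idempotents $aa'$ and $bb'$ were needed.

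I do not anticipate a genuine obstacle here, since the whole argument reduces to the order-theoretic characterisation (\ref{eqnlunpo}). The one point requiring care is to invoke (\ref{eqnlunpo}) in \emph{both} directions---using $f=aa'$ to obtain $a\leqslant b$ and $f=bb'$ to obtain $b\leqslant a$---and then to close via antisymmetry of $\leqslant$; trying to extract equality from a single idempotent will not suffice. The mild bookkeeping subtlety is simply to keep straight that the separating idempotents act on the \emph{left} while the regular representation $\rho_a\colon x\mapsto xa$ acts on the \emph{right}, which is precisely why the choices $e=aa'$ and $g=bb'$ (rather than $a'a$ or $b'b$) are the correct ones, as these satisfy $ea=a$ and $gb=b$.
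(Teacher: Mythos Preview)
Your proof is correct and follows essentially the same approach as the paper: both arguments feed the idempotents $aa'$ and $bb'$ into the hypothesis $fa=fb$, invoke the characterisation (\ref{eqnlunpo}) of the natural partial order to obtain $a\leqslant b$ and $b\leqslant a$, and conclude by antisymmetry. The only cosmetic difference is that the paper phrases it as a proof by contradiction while you phrase it as a contrapositive.
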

\begin{proof}
	We prove by contradiction. Suppose $fa=fb$ for every idempotent $f$. Since $aa'$ is an idempotent, we have $a=(aa')a=(aa')b$; similarly we get $b=(bb')b=(bb')a$. Using (\ref{eqnlunpo}), this implies $a\leqslant b$ and $b \leqslant a$; hence $a=b$, a contradiction. This concludes the first part of the proposition.
	
	Given $a\ne b$ in $S$, by the first part of the proposition we have an idempotent $f$ in $S$ such that $f\rho_a\ne f\rho_b$. Hence, the map $\rho\colon a\mapsto \rho_a$ is injective, whence  $S$ is left reductive.   
\end{proof}

Next, given an $\gl$-unipotent semigroup $S$, we shift our attention to the normal category $\los$. The following lemma is a simple consequence of Definition \ref{dfnlu}.
\begin{lem}\label{lemlosmor}
	In the category $\los$, given  morphisms $r(e,u,f)$ and  $r(g,v,h)$, we have $ r(e,u,f)= r(g,v,h)$ if and only if $e=g$, $f=h$ and $v=u$. Also, if   $ r(e,e,f)$ is an inclusion in $\los$,  the corresponding {\em unique} retraction is $ r(f,e,e)$.
\end{lem}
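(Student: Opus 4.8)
The plan is to obtain both assertions from the general description of morphisms of $\los$ recalled in Section~\ref{ssecnc}, sharpened by the defining property of $\gl$-unipotence (Definition~\ref{dfnlu}) together with Lemma~\ref{lemlupo}. For the equality criterion, recall that in an arbitrary regular semigroup $r(e,u,f)=r(g,v,h)$ holds exactly when $e\gl g$, $f\gl h$ and $v=gu$; the sufficiency of $e=g$, $f=h$, $v=u$ is immediate. For the converse I would argue as follows: from $r(e,u,f)=r(g,v,h)$ we get $e\gl g$ with $e,g\in E(S)$, and since each $\gl$-class of an $\gl$-unipotent semigroup contains a unique idempotent, this forces $e=g$; likewise $f\gl h$ gives $f=h$. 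Then $v=gu=eu=u$, the last step because $u\in eSf$ yields $eu=u$. This settles the first assertion.

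For the retraction, note first that $r(e,e,f)$ being an inclusion means $Se\subseteq Sf$, i.e.\ $e\lel f$. By Lemma~\ref{lemlupo} this is equivalent to $e\leqslant f$, so $ef=fe=e$; in particular $fe=e$. Section~\ref{ssecnc} supplies the right inverse $r(f,fe,e)$ of the inclusion $r(e,e,f)$, and substituting $fe=e$ rewrites this retraction as $r(f,e,e)$, which gives both existence and the stated form.

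The substantive point is \emph{uniqueness}, which fails for general regular semigroups, and this is where I expect the real work to lie. A retraction is a morphism in $\los(Sf,Se)$, hence of the shape $q=r(f,w,e)$ with $w\in fSe$, subject to $r(e,e,f)\,q=1_{Se}$. Computing the composite gives $r(e,ew,e)=r(e,e,e)$, and by the now-strict equality criterion just proved this forces $ew=e$; since $w\in fSe$ we also have $we=w$. The key computation is then $w^2=(we)w=w(ew)=we=w$, so $w$ is idempotent, while $w=we\in Se$ and $e=ew\in Sw$ give $w\gl e$. As $w$ is an idempotent lying in the $\gl$-class of the idempotent $e$, $\gl$-unipotence forces $w=e$, so $q=r(f,e,e)$ is the only retraction. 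The one step that requires a moment's thought is recognising that the constraints $w\in fSe$ and $ew=e$ already pin $w$ down as an idempotent $\gl$-equivalent to $e$; once that is seen, the uniqueness of idempotents in $\gl$-classes closes the argument.
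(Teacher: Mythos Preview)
Your proof is correct and follows precisely the route the paper intends: the paper does not supply a proof at all, stating only that the lemma ``is a simple consequence of Definition~\ref{dfnlu}'', and your argument is exactly the natural unpacking of that remark. Your uniqueness argument for the retraction---showing that any $w\in fSe$ with $ew=e$ is an idempotent $\gl$-related to $e$, and then invoking $\gl$-unipotence---is the expected one and is sound.
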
  

Recall that in any regular semigroup $S$, the $\gr$-classes of $S$ form a  regular poset $(S/\gr,\sqsubseteq)$ as described in (\ref{eqnsr}). If $S$ is, in addition $\gl$-unipotent, then by Proposition \ref{prolus} (2), we have $eS\cap fS= efS= feS$, and so the regular poset becomes a semilattice $(S/\gr,\wedge)$ where the meet operation is given by
$$R_e\wedge R_f= R_{ef}.$$

The statement  (2) in the Proposition \ref{prolus}  may make one wonder if a regular semigroup $S$ such that its poset $S/\mathscr{R}$ is a semilattice is always an $\gl$-unipotent semigroup (also see (\ref{eqnsr})). This need not be the case as the following simple example shows. Consider a three element semigroup $T$ given by the $\gd$-class picture (on the left) in Figure \ref{figexsem}.
\begin{figure}[h]
	\begin{center}
		\begin{tikzpicture}[scale=.3]
			\foreach \y in {0,2,4} \foreach \x in {0,2} {\draw (0,\y)--(2,\y); \draw(\x,0)--(\x,4);}
			\fill[fill=gray,fill opacity=.25] (0,0) rectangle (2,4);
			
			\node () at (1,3) { $e$};
			\node () at (1,1) { $f$};
			
			\foreach \y in {-3,-1} \foreach \x in {0,2} {\draw (0,\y)--(2,\y); \draw(\x,-3)--(\x,-1);}
			\fill[fill=gray,fill opacity=.25] (0,-3) rectangle (2,-1);
			\node () at (1,-2) { $0$};
			
			\node () at (10,2) { $R_e$};
			\node () at (14,2) { $R_f$};
			\node at (12,-2) {$R_0$};
			
			\draw  (10,1.3) to  (11.8,-1.3) ;
			
			\draw  (14,1.3) to  (12.2,-1.3) ;
			
		\end{tikzpicture}
	\end{center}
	\caption{A regular semigroup $T$ such that its poset $T/\mathscr{R}$ is a semilattice but $T$ is not $\gl$-unipotent.}
	\label{figexsem}	
\end{figure}	

This semigroup  $T$ is clearly not $\gl$-unipotent (as $e$ and $f$ are distinct $\gl$-related idempotents) although the regular poset $T/\gr$  forms a three element semilattice (given on the right side of Figure \ref{figexsem}). Observe that $eT\cap fT=0T=\{0\}$.

Notice that the (dual) semilattice of $\gl$-classes which appears in the context of the left regular bands has been referred to as the \emph{support semilattice} of the semigroup \cite{margolis2021, brown2000}. This underlying semilattice plays a crucial role in the structure of these semigroups. We shall be discussing the case of right and left regular bands in Section \ref{seccxnrrb}.  With this terminology in mind, we proceed to give the construction of an $\gl$-unipotent semigroup by introducing \emph{supported} categories, as specialisations of connected categories.

\begin{dfn}\label{dfnsupc}
	 A connected category $\cod$ is said to be \emph{supported} if each $c\in v\mathcal{C}$ is connected by a \emph{unique} $\mathfrak{d}\in \mathfrak{D}$.	
\end{dfn}

\begin{pro}\label{prosncsup}
	Let $\cod$ be a supported category. Then the connection semigroup $\sncd$ is $\gl$-unipotent.	
\end{pro}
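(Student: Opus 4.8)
The plan is to verify Definition \ref{dfnlu} directly: to show that each $\gl$-class of $\sncd$ contains exactly one idempotent. Since $\sncd$ is a regular semigroup by Proposition \ref{proud2}, every $\gl$-class already contains at least one idempotent, so the entire task reduces to establishing \emph{uniqueness}.

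First I would recall from equation (\ref{eqnesncd}) that every idempotent of $\sncd$ has the form $\epsilon(c,\mathfrak{d})$, where $c\in v\mathcal{C}$ is connected by $\mathfrak{d}\in\mathfrak{D}$, and that this representation is unique (the assignment $\epsilon\mapsto(z_\epsilon,R_\epsilon)$ is injective on idempotents, reflecting the standard fact that an $\gh$-class carries at most one idempotent). Then, by Lemma \ref{lemgrcd} (1), the $\gl$-relation on $\sncd$ is governed entirely by vertices: $\gamma\gl\delta$ if and only if $z_\gamma=z_\delta$.

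Next I would take two idempotents $\epsilon(c_1,\mathfrak{d}_1)$ and $\epsilon(c_2,\mathfrak{d}_2)$ lying in the same $\gl$-class. Their vertices are $c_1$ and $c_2$, so Lemma \ref{lemgrcd} (1) gives $c_1=c_2=:c$. Thus both $\mathfrak{d}_1$ and $\mathfrak{d}_2$ connect the single object $c$. This is exactly where the supported hypothesis enters: by Definition \ref{dfnsupc}, $c$ is connected by a \emph{unique} element of $\mathfrak{D}$, forcing $\mathfrak{d}_1=\mathfrak{d}_2=:\mathfrak{d}$. Finally, appealing to the uniqueness of the representation $\epsilon(c,\mathfrak{d})$ noted above, I conclude $\epsilon(c_1,\mathfrak{d}_1)=\epsilon(c_2,\mathfrak{d}_2)$, so each $\gl$-class contains a unique idempotent and $\sncd$ is $\gl$-unipotent.

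The argument is short and essentially bookkeeping; the only conceptual step is the translation of the supported condition into uniqueness of the connecting $\gr$-class, and I do not anticipate any serious obstacle. The one point requiring care is to ensure that ``same $\gl$-class'' and ``same connecting $\mathfrak{d}$'' together pin the idempotent down completely, which is precisely the content of the uniqueness of the notation $\epsilon(c,\mathfrak{d})$ recorded just after equation (\ref{eqnesncd}).
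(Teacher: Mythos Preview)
Your proposal is correct and follows essentially the same approach as the paper: both argue that regularity gives existence of an idempotent in each $\gl$-class, then use Lemma \ref{lemgrcd}(1) to see that $\gl$-related idempotents share a vertex $c$, and invoke the supported hypothesis (Definition \ref{dfnsupc}) to force uniqueness of the connecting $\mathfrak{d}$, hence of the idempotent $\epsilon(c,\mathfrak{d})$.
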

\begin{proof}
	Given a supported category $\cod$, it is connected and so by Proposition \ref{proud2}, we know that $\sncd$ is a regular semigroup. Now, using Lemmas \ref{lemgrcd} and \ref{lemidgrcd}, given a cone $\gamma\in\sncd$ with vertex $c$, we can see that any idempotent in the $\gl$-class of $\gamma$ is of the form $\epsilon(c,\mathfrak{d})$, for some $\mathfrak{d}\in \mathfrak{D}$ connecting $c$. However, since $\cod$ is a supported category, there is a unique $\mathfrak{d}$ with this property. Hence each $\gl$-class of $\sncd$ contains a unique idempotent $\epsilon(c,\mathfrak{d})$ and by Definition \ref{dfnlu}, the semigroup $\sncd$ is  $\gl$-unipotent. 
\end{proof}

\begin{lem}\label{lemsl}
Given a supported category $\cod$,  the down-set $\mathfrak{D}$ is  a subsemilattice of $\sncd/\gr$.
\end{lem}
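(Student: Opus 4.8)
The plan is to obtain the semilattice structure on $\mathfrak{D}$ directly from the $\gl$-unipotency of the connection semigroup, and then transport it along the order isomorphism $\sncd/\gr \cong \mathfrak{D}$ already available to us. The hypothesis that $\cod$ is \emph{supported} enters only through Proposition \ref{prosncsup}, which is the first thing I would cite: since $\cod$ is supported, the connection semigroup $\sncd$ is $\gl$-unipotent (and, being orthodox, its idempotents form a band).

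Next I would apply to $S:=\sncd$ exactly the observation made in the discussion preceding Definition \ref{dfnsupc}. By Proposition \ref{prolus}(2), for idempotent cones $\epsilon_1,\epsilon_2\in E(\sncd)$ we have $\epsilon_1\sncd\cap\epsilon_2\sncd=\epsilon_1\epsilon_2\sncd=\epsilon_2\epsilon_1\sncd$, so the regular poset $(\sncd/\gr,\sqsubseteq)$ is in fact a meet-semilattice, with meet $R_{\epsilon_1}\wedge R_{\epsilon_2}=R_{\epsilon_1\epsilon_2}$. I would then invoke Lemma \ref{lemgrcd1} (equivalently Proposition \ref{procatiso}), which identifies $(\sncd/\gr,\sqsubseteq)$ with $(\mathfrak{D},\sqsubseteq)$: each $\mathfrak{d}\in\mathfrak{D}$ is realised as the $\gr$-class $R_{\epsilon(c,\mathfrak{d})}$ of an idempotent cone, and conversely. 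Consequently the semilattice structure on $\sncd/\gr$ descends to $\mathfrak{D}$.

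The concrete verification that $\mathfrak{D}$ is closed under this meet is then immediate. Given $\mathfrak{d}_1,\mathfrak{d}_2\in\mathfrak{D}$, I would choose idempotent cones $\epsilon_i=\epsilon(c_i,\mathfrak{d}_i)\in E(\sncd)$ with $R_{\epsilon_i}=\mathfrak{d}_i$; then $\mathfrak{d}_1\wedge\mathfrak{d}_2=R_{\epsilon_1\epsilon_2}$, and since $\sncd$ is a subsemigroup of $\snc$ (Proposition \ref{proud2}) we have $\epsilon_1\epsilon_2\in\sncd$, so $R_{\epsilon_1\epsilon_2}\in\mathfrak{D}$ by the very definition of $\sncd$. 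Thus the meet of two elements of $\mathfrak{D}$, computed in the semilattice $\sncd/\gr$, again lies in $\mathfrak{D}$, which is precisely the assertion that $\mathfrak{D}$ is a subsemilattice of $\sncd/\gr$. I do not anticipate a genuine obstacle here; the only point needing care is bookkeeping the identification of Lemma \ref{lemgrcd1} so that the abstract meet on $\mathfrak{D}$ is consistently described as $R_{\epsilon_1}\wedge R_{\epsilon_2}=R_{\epsilon_1\epsilon_2}$, the closure itself being guaranteed by the fact that $\sncd$ is closed under composition.
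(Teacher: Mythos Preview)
Your proposal is correct and follows essentially the same route as the paper: invoke Proposition \ref{prosncsup} to get that $\sncd$ is $\gl$-unipotent, apply Proposition \ref{prolus}(2) to conclude $\sncd/\gr$ is a meet-semilattice, and then use the order isomorphism $\mathfrak{D}\cong\sncd/\gr$ from Proposition \ref{procatiso} (or Lemma \ref{lemgrcd1}) to transport the semilattice structure to $\mathfrak{D}$. Your additional explicit closure check is harmless but superfluous, since under that isomorphism $\mathfrak{D}$ is not merely a subset but the whole of $\sncd/\gr$, so there is nothing further to verify.
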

\begin{proof}
	Since a supported category $\cod$ is connected, using Proposition \ref{procatiso}, the down-set $\mathfrak{D}$ is order isomorphic to the poset $\sncd/\gr$. But by Proposition \ref{prosncsup},  the semigroup $\sncd$ is $\gl$-unipotent and so using Proposition \ref{prolus} (2), we see that the $\sncd/\gr$ is a semilattice. Hence the down-set $\mathfrak{D}$ of a supported category is  a semilattice.
\end{proof}

\begin{rmk}\label{rmksupmap}
	If $\cod$ is a supported category, by Definition \ref{dfnsupc}, there is a well-defined mapping $\Gamma:v\mathcal{C}\to\mathfrak{D}$. This is a surjection, and in the sequel it will be referred to as the \emph{support map}. Observe that the support  map $\Gamma$ need not be injective in general, but we shall later see that $\Gamma$ is always order preserving (see Proposition \ref{proidlus}). Also note that in contrast to the support map of \cite{margolis2021}, our map $\Gamma$ is not a homomorphism as there is no semigroup structure in the set $v\mathcal{C}$. 
\end{rmk}

Given a supported category $\cod$, we shall say that the category $\mathcal{C}$ is \emph{supported by the semilattice} $\mathfrak{D}$.    We now record the following specialisations of Lemmas \ref{lemgrcd} and  \ref{lemidgrcd} in the case of supported categories. 
\begin{pro}\label{proidlus}
	Given a supported category $\cod$, let $\epsilon_1=\epsilon(c_1,\mathfrak{d}_1)$ and $\epsilon_2=\epsilon(c_2,\mathfrak{d}_2)$ be idempotents in the semigroup $\sncd$, then:
	\begin{enumerate}
		\item $\epsilon_1=\epsilon_2\iff c_1= c_2$, and there is a bijection between the sets $E(\sncd)$ and $v\mathcal{C}$;
		\item $\epsilon_1\ler\epsilon_2\iff \mathfrak{d}_1=\mathfrak{d}_1\wedge \mathfrak{d}_2$.
	\end{enumerate} 
	In particular, the support map $\Gamma\colon v\mathcal{C}\to \mathfrak{D}$ defined by $ c\mapsto R_{\epsilon(c,\mathfrak{d})}$ is an order preserving surjection.
\end{pro}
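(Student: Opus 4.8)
The plan is to prove each of the three claims in turn, drawing on the already-established characterisations in Lemmas \ref{lemgrcd}, \ref{lemgrcd1} and \ref{lemidgrcd}, together with the defining property of a supported category (Definition \ref{dfnsupc}) and the semilattice structure furnished by Lemma \ref{lemsl}. First I would prove claim (1). For the forward direction, if $\epsilon_1=\epsilon_2$ then in particular their vertices agree, so $c_1=c_2$ by Lemma \ref{lemidgrcd} (2) (or directly since the vertex of an idempotent cone is uniquely determined). For the converse, suppose $c_1=c_2=:c$; then both $\epsilon_1=\epsilon(c,\mathfrak{d}_1)$ and $\epsilon_2=\epsilon(c,\mathfrak{d}_2)$ are idempotent cones connecting the \emph{same} object $c$, and the supported hypothesis forces $\mathfrak{d}_1=\mathfrak{d}_2$, whence $\epsilon_1=\epsilon_2$ by the uniqueness of the representation $\epsilon(c,\mathfrak{d})$ recorded in (\ref{eqnesncd}). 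The bijection between $E(\sncd)$ and $v\mathcal{C}$ is then immediate: the map $\epsilon(c,\mathfrak{d})\mapsto c$ is well-defined and injective by what we have just shown, and it is surjective because every $c\in v\mathcal{C}$ is connected by some (unique) $\mathfrak{d}$, yielding an idempotent with vertex $c$.

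For claim (2) I would start from Lemma \ref{lemidgrcd} (1), which gives $\epsilon_1\ler\epsilon_2$ if and only if $\mathfrak{d}_1\sqsubseteq\mathfrak{d}_2$ in the poset $\mathfrak{D}$. By Lemma \ref{lemsl} the down-set $\mathfrak{D}$ is a subsemilattice of $\sncd/\gr$, and by Proposition \ref{prolus} (2) the meet there is $R_e\wedge R_f=R_{ef}$, so its order is the semilattice order. It then suffices to translate $\mathfrak{d}_1\sqsubseteq\mathfrak{d}_2$ into meet form: in any semilattice $\mathfrak{d}_1\sqsubseteq\mathfrak{d}_2$ holds exactly when $\mathfrak{d}_1=\mathfrak{d}_1\wedge\mathfrak{d}_2$, which is the stated condition.

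For the final sentence, I would first note that $\Gamma$ is well-defined and a surjection precisely by claim (1) (each $c$ has a unique connecting $\mathfrak{d}$, and every $\mathfrak{d}\in\mathfrak{D}$ connects some object). To see it is order preserving, suppose $c_1\preceq c_2$ in $v\mathcal{C}$. By Lemma \ref{lemidgrcd} (2) this gives $\epsilon_1\lel\epsilon_2$ for the associated idempotents. The task is to deduce $\Gamma(c_1)\sqsubseteq\Gamma(c_2)$, i.e.\ $\mathfrak{d}_1\sqsubseteq\mathfrak{d}_2$; the clean route is to invoke the $\gl$-unipotent identity $efe=fe$ from Proposition \ref{prolus} (3) (equivalently Lemma \ref{lemlupo}), which in an $\gl$-unipotent semigroup turns $\epsilon_1\lel\epsilon_2$ into $\epsilon_1\leqslant\epsilon_2$ and hence $\epsilon_1\ler\epsilon_2$, so that $\mathfrak{d}_1\sqsubseteq\mathfrak{d}_2$ by claim (2). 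The main obstacle I anticipate is exactly this last implication: one must be careful that $c_1\preceq c_2$ controls only the \emph{left} order a priori, and it is the $\gl$-unipotency (the collapse of $\lel$ and $\leqslant$ on idempotents established in Lemma \ref{lemlupo}) that bridges to the $\gr$-order governing $\mathfrak{D}$. I would make sure to cite Lemma \ref{lemlupo} explicitly at that step rather than treating the monotonicity of $\Gamma$ as routine.
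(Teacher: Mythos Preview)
Your proposal is correct and follows essentially the same route as the paper's own proof: part (1) via uniqueness of the connecting $\mathfrak{d}$ in a supported category, part (2) via Lemma~\ref{lemidgrcd} together with the semilattice structure of $\mathfrak{D}$ from Lemma~\ref{lemsl}, and the order-preservation of $\Gamma$ by passing from $\lel$ to $\ler$ on idempotents through Lemma~\ref{lemlupo}. Your explicit flagging of Lemma~\ref{lemlupo} as the crucial bridge is exactly the point the paper relies on as well.
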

\begin{proof}
	(1) follows from the fact that there is a unique idempotent in the semigroup $\sncd$ with a given vertex. 
	To prove  (2), first observe that by Lemma \ref{lemsl}, the down-set $\mathfrak{D}$ is a semilattice and in $\mathfrak{D}$, we have $\mathfrak{d}_1\sqsubseteq \mathfrak{d}_2$ if and only if $\mathfrak{d}_1=\mathfrak{d}_1\wedge \mathfrak{d}_2$. Also,  by Proposition \ref{procatiso}, we know that $\sncd/\gr$ is order isomorphic to $\mathfrak{D}$. So,  $$\epsilon_1\ler\epsilon_2\iff \epsilon_1\sncd\sqsubseteq \epsilon_2\sncd \iff \mathfrak{d}_1\sqsubseteq \mathfrak{d}_2 \iff \mathfrak{d}_1=\mathfrak{d}_1\wedge \mathfrak{d}_2.$$
	To show the last part of this proposition, first observe that the map $\Gamma$ is well-defined by (1) and by definition, it is a surjection. Now if $c_1\preceq c_2$, then by Lemma \ref{lemidgrcd} (1), we get $\epsilon_1 \lel \epsilon_2$. Since $\sncd$ is $\gl$-unipotent, using Lemma \ref{lemlupo}, we have $\epsilon_1\ler \epsilon_2$. So by  Lemma \ref{lemidgrcd} (2), we obtain $\mathfrak{d}_1\sqsubseteq \mathfrak{d}_2$ and therefore $\Gamma$ is order preserving.
\end{proof}

We have seen above how a supported category gives rise to an $\gl$-unipotent semigroup. The proposition below shows the converse, i.e. every $\gl$-unipotent semigroup determines a  supported category.

\begin{pro}\label{proluslr}
	Let $S$ be 	an $\gl$-unipotent semigroup $S$. Then $\lr$ is a supported category. The support map $\Gamma_S\colon v\los\to\mathfrak{R}$ is given by $Se\mapsto\mathfrak{r}_e$.
\end{pro}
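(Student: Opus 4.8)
The plan is to verify directly that the connected category $\lr$ — which \emph{is} connected by Propositions \ref{proluslrs} and \ref{prolsr} — satisfies the defining condition of Definition \ref{dfnsupc}, namely that each object $Se\in v\los$ is connected by a \emph{unique} $\mathfrak{r}\in\mathfrak{R}$. By Definition \ref{dfncncat}, $Se$ is connected by $\mathfrak{r}$ precisely when the $\gr$-class $\mathfrak{r}$ contains some idempotent cone with vertex $Se$, so the whole statement reduces to counting the idempotent cones having a prescribed vertex.

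First I would reduce the idempotent cones under consideration to principal cones. Since $\slr$ is by construction the union of the $\gr$-classes lying in $\mathfrak{R}$, every idempotent cone belonging to some $\mathfrak{r}\in\mathfrak{R}$ is in fact an idempotent of the semigroup $\slr$. As recorded just after Proposition \ref{prolsr}, we have $E(\slr)=\{ r^e : e\in E(S)\}$; hence any such idempotent cone is a principal cone $r^f$ for some $f\in E(S)$, and $r^f$ has vertex $Sf$.

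The crux is then the translation of $\gl$-unipotency into a uniqueness statement about vertices. An idempotent cone with vertex $Se$ is a principal cone $r^f$ with $Sf=Se$, that is, $f\gl e$ in $S$. Since $S$ is $\gl$-unipotent, Definition \ref{dfnlu} forces $f=e$. Therefore $r^e$ is the only idempotent cone with vertex $Se$, and it lies in the single $\gr$-class $\mathfrak{r}_e=R_{r^e}$. Consequently $Se$ is connected by $\mathfrak{r}$ if and only if $\mathfrak{r}=\mathfrak{r}_e$, which is exactly the uniqueness required by Definition \ref{dfnsupc}. This simultaneously shows that the assignment is well defined and that the support map is given by $\Gamma_S\colon Se\mapsto\mathfrak{r}_e$.

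The only point demanding care — rather than a genuine obstacle — is confirming that no idempotent cone outside the principal family can slip into a $\gr$-class of $\mathfrak{R}$ with vertex $Se$; this is handled by the observation that $\slr$ is a union of \emph{full} $\gr$-classes together with the explicit description of $E(\slr)$. Once that reduction is in place, the conclusion is immediate from the defining property of $\gl$-unipotent semigroups, and no computation involving the cone composition is needed.
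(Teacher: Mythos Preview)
Your argument is correct and follows essentially the same route as the paper: both proofs reduce the supportedness of $\lr$ to the observation that, by $\gl$-unipotency, the unique idempotent cone in $\slr$ with vertex $Se$ is $r^e$, so $Se$ is connected only by $\mathfrak{r}_e$. You are slightly more explicit than the paper in justifying the reduction to principal cones via the description $E(\slr)=\{r^e:e\in E(S)\}$, whereas the paper states the uniqueness in one line; conversely, the paper's proof records two extra facts not contained in the statement (that $\mathfrak{R}$ is a meet semilattice and that $\Gamma_S$ is order preserving), but these are not needed for the proposition as stated and follow anyway from Lemma \ref{lemsl} and Proposition \ref{proidlus}.
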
 
\begin{proof}
	Given an $\gl$-unipotent semigroup $S$, by Proposition \ref{proluslrs}, it is left reductive.  As in Section \ref{ssecslrs}, we can show that the category $\los$ is normal and the semigroup $\sls$ is regular. Now,  we define the down-set $\mathfrak{R}\subseteq\sls/\gr$ as:
	$$\mathfrak{R}:= \{\mathfrak{r}_e : e\in E(S) \}.$$
	We know that $S/\gr$ is order isomorphic to $\mathfrak{R}$ via $R_e\mapsto\mathfrak{r}_e:=R_{r^e}$, and in addition it is a meet semilattice when $S$ is $\gl$-unipotent. Hence $(\mathfrak{R},\wedge)$ is a meet semilattice with
	$$\mathfrak{r}_e\wedge \mathfrak{r}_f= \mathfrak{r}_{ef}.$$
	Also, since each $\gl$-class in $S$ contains a unique idempotent, each object $Se$ in $v\los$ is connected by a unique $\mathfrak{r}_e\in \mathfrak{R}$. Further, using Lemma \ref{lemlupo}, we have $e \lel f$ if and only if $e\leqslant f$. So,
	$$Se\preceq Sf\iff e\lel f\iff e\leqslant f  \implies e \ler f \iff R_e\sqsubseteq R_f \iff \mathfrak{r}_e \sqsubseteq \mathfrak{r}_f.$$
	Hence the support map $\Gamma_S\colon Se \mapsto \mathfrak{r}_e$ is an order preserving surjection from $v\los$ to $\mathfrak{R}$.
\end{proof}

Specialising Proposition \ref{prolsriso}, we get:
\begin{pro}\label{prolusiso}
	Given an $\gl$-unipotent semigroup $S$, the connection semigroup $\slr$ is isomorphic to $S$.
\end{pro}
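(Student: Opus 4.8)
The plan is to observe that this proposition is a direct specialisation of Proposition \ref{prolsriso}, so the only real work is to check that the hypotheses of that more general result are met and that the objects match up. The key input is Proposition \ref{proluslrs}, which guarantees that every $\gl$-unipotent semigroup $S$ is left reductive; since $S$ is regular by assumption, $S$ is therefore an object of $\mathbf{LRS}$, and Proposition \ref{prolsriso} applies verbatim.

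First I would recall that, by Proposition \ref{proluslr}, the category $\los$ is connected by the regular poset $\mathfrak{R}=\{\mathfrak{r}_e : e\in E(S)\}$, so that $\lr$ is a (supported, hence in particular) connected category. Crucially, this $\mathfrak{R}$ is defined exactly as in equation (\ref{eqnr}), namely as the image $\bar{\rho}(S/\gr)$; thus the connected category $\lr$ constructed here is precisely the one associated to $S$ when $S$ is viewed merely as a left reductive regular semigroup. Consequently the connection semigroup $\slr$ is the same object in both settings, and there is no ambiguity in the notation.

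With this identification in place, I would simply invoke Proposition \ref{prolsriso}: the map $\bar{\rho}\colon S\to\slr$, $a\mapsto r^a$, is an isomorphism of semigroups. Concretely, $\bar{\rho}$ is injective by left reductivity (Theorem \ref{thmlslr}), and it is surjective because every cone in $\slr$ can, by Lemma \ref{lemud1}, be written as $r^e\ast r(e,x,f)=r^x$ for a suitable idempotent cone $r^e=\epsilon(Se,\mathfrak{r}_e)$ and isomorphism $r(e,x,f)$, which is the image of $x\in S$. Since there is genuinely no new content beyond confirming that the general machinery specialises without modification, there is no real obstacle here; the only point demanding a word of care is the verification that the $\mathfrak{R}$ built from the $\gl$-unipotent structure coincides with the generic connecting poset of (\ref{eqnr}), and this holds by definition.
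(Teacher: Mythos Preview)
Your proposal is correct and matches the paper's approach exactly: the paper simply states that this proposition is obtained by specialising Proposition \ref{prolsriso}, with no additional argument given. Your careful check that the connecting poset $\mathfrak{R}$ coincides with that of equation (\ref{eqnr}) is even slightly more detailed than what the paper records.
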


Further, the discussion in Section \ref{sseccateqlrs} carries over verbatim to the $\gl$-unipotent case. It is clear that $\gl$-unipotent semigroups form a full subcategory of $\mathbf{LRS}$, say $\mathbf{LUS}$ and supported categories form a full subcategory of $\mathbf{CC}$, say $\mathbf{SC}$. We shall refer to the morphisms in the subcategory $\mathbf{SC}$ as \emph{SC-morphisms}, in the sequel.

Repeating the arguments in Section \ref{sseccateqlrs} to obtain Theorem \ref {thmlrscc}, we can show the following.

\begin{thm}\label{thmcatlus}
	The category $\mathbf{LUS}$ of $\gl$-unipotent semigroups is equivalent to the category $\mathbf{SC}$ of supported categories.
\end{thm}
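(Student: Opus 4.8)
The plan is to establish Theorem \ref{thmcatlus} by verifying that the entire machinery developed for Theorem \ref{thmlrscc} restricts cleanly to the full subcategories $\mathbf{LUS}\subseteq\mathbf{LRS}$ and $\mathbf{SC}\subseteq\mathbf{CC}$, so that the functors $\mathtt{C}$ and $\mathtt{S}$ and the two natural isomorphisms all descend without modification. Since $\mathbf{LUS}$ and $\mathbf{SC}$ are declared to be \emph{full} subcategories, there is nothing to check on morphisms beyond the object-level restrictions; the only substantive work is to confirm that each functor sends objects of one subcategory into the other.

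First I would check that $\mathtt{C}$ restricts to a functor $\mathbf{LUS}\to\mathbf{SC}$. Given an $\gl$-unipotent semigroup $S$, Proposition \ref{proluslr} already shows that $\lr=\mathtt{C}(S)$ is a \emph{supported} category (each object $Se$ is connected by the unique $\mathfrak{r}_e$), hence an object of $\mathbf{SC}$. On morphisms, a homomorphism $\phi\colon S\to S'$ in $\mathbf{LUS}$ is the same as a morphism in $\mathbf{LRS}$, and $\mathtt{C}(\phi)=m_\phi=(F_\phi,G_\phi)$ is already a CC-morphism by Lemma \ref{lemmorC}; since $\mathbf{SC}$ is full in $\mathbf{CC}$, $m_\phi$ automatically qualifies as an SC-morphism. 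Dually, I would check that $\mathtt{S}$ restricts to $\mathbf{SC}\to\mathbf{LUS}$: given a supported category $\cod$, Proposition \ref{prosncsup} gives that $\sncd=\mathtt{S}(\cod)$ is $\gl$-unipotent, hence an object of $\mathbf{LUS}$, and fullness again handles the morphisms.

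Next I would transport the two natural isomorphisms. The component $\eta(S)=\bar\rho(S)\colon S\to\slr$ from Lemma \ref{lemcatiso1} is a semigroup isomorphism by Proposition \ref{prolsriso} (equivalently Proposition \ref{prolusiso} in this setting), and it lives entirely inside $\mathbf{LUS}$ since both source and target are $\gl$-unipotent; the naturality square is the very same square verified in Lemma \ref{lemcatiso1}, now read in the subcategory. Likewise, the component $\iota(\cod)=(F,G)$ from Lemma \ref{lemcatiso2} is a CC-isomorphism between supported categories, hence an SC-isomorphism by fullness, and its naturality square is inherited verbatim. Thus $1_{\mathbf{LUS}}\cong\mathtt{C}\mathtt{S}$ and $1_{\mathbf{SC}}\cong\mathtt{S}\mathtt{C}$, giving the claimed equivalence.

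The only genuine content — and hence the one place I would not merely invoke fullness — is the two closure-on-objects claims, and both are already supplied: Proposition \ref{prosncsup} (supported $\Rightarrow$ connection semigroup $\gl$-unipotent) and Proposition \ref{proluslr} (\,$\gl$-unipotent $\Rightarrow$ left category supported\,). I expect no real obstacle, since everything else is a restriction of Theorem \ref{thmlrscc}; the main thing to be careful about is simply to articulate clearly \emph{why} fullness makes the morphism-level verifications automatic, so that the ``repeating the arguments verbatim'' phrasing is justified rather than hand-waved. In writing this up I would state it as: $\mathtt{C}$ and $\mathtt{S}$ restrict to functors between the full subcategories by Propositions \ref{prosncsup} and \ref{proluslr}, and the natural isomorphisms $\eta$ and $\iota$ of Lemmas \ref{lemcatiso1} and \ref{lemcatiso2} restrict because their components are isomorphisms internal to the subcategories, yielding the equivalence $\mathbf{LUS}\simeq\mathbf{SC}$.
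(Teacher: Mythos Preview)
Your proposal is correct and follows exactly the paper's approach: the paper states that Theorem \ref{thmcatlus} is obtained by ``repeating the arguments in Section \ref{sseccateqlrs} to obtain Theorem \ref{thmlrscc},'' and you have simply unpacked what that repetition amounts to, namely that Propositions \ref{prosncsup} and \ref{proluslr} guarantee the functors $\mathtt{S}$ and $\mathtt{C}$ restrict on objects, while fullness of $\mathbf{LUS}\subseteq\mathbf{LRS}$ and $\mathbf{SC}\subseteq\mathbf{CC}$ handles morphisms and the natural isomorphisms automatically.
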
  

The dual result for $\gr$-unipotent semigroups also holds. 
\begin{cor}
The category $\mathbf{RUS}$ of $\gr$-unipotent semigroups is equivalent to the category $\mathbf{SC}$ of supported categories.
\end{cor}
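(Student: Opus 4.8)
The plan is to deduce the corollary from Theorem \ref{thmcatlus} by passing to opposite semigroups, rather than re-running the entire construction. The key observation is that a regular semigroup $S$ is $\gr$-unipotent if and only if its opposite $S^{\text{op}}$ is $\gl$-unipotent. Indeed, the idempotents of $S$ and of $S^{\text{op}}$ coincide as a set (since $e\circ e=ee=e$), while from the relation $S^{\text{op}}/\gl=S/\gr$ recorded above, the $\gr$-classes of $S$ are precisely the $\gl$-classes of $S^{\text{op}}$. Hence $S$ has a unique idempotent in each $\gr$-class exactly when $S^{\text{op}}$ has a unique idempotent in each $\gl$-class, i.e.\ exactly when $S^{\text{op}}$ is $\gl$-unipotent in the sense of Definition \ref{dfnlu}.

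First I would verify that the opposite assignment $S\mapsto S^{\text{op}}$, $\phi\mapsto\phi$, is a covariant isomorphism of categories $\mathbf{RUS}\to\mathbf{LUS}$. A homomorphism $\phi\colon S\to S'$ remains a homomorphism $S^{\text{op}}\to (S')^{\text{op}}$, since $(a\circ b)\phi=(ba)\phi=(b\phi)(a\phi)=(a\phi)\circ(b\phi)$ in $(S')^{\text{op}}$; the assignment fixes morphisms and is its own inverse on objects, so it is an isomorphism of categories. Composing this isomorphism with the equivalence $\mathbf{LUS}\simeq\mathbf{SC}$ of Theorem \ref{thmcatlus} then yields the desired equivalence $\mathbf{RUS}\simeq\mathbf{SC}$.

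Alternatively, and more in keeping with the explicit functors used for $\mathbf{RRS}$, one could dualize the construction directly: for a supported category $\cod$, the dual connection semigroup $\sncdo$ of Proposition \ref{procatisodual} is regular, and dualizing Proposition \ref{prosncsup} shows that it is $\gr$-unipotent; conversely, dualizing Proposition \ref{proluslr} shows that $\rl$ is a supported category whose dual connection semigroup $\srlo$ is isomorphic to $S$. One would then check that the equivalence $\mathbf{RRS}\simeq\mathbf{CC}$ restricts to the full subcategories $\mathbf{RUS}$ and $\mathbf{SC}$. Either way, there is no genuine obstacle here: the entire content is carried by Theorem \ref{thmcatlus} (equivalently, by the $\gr$-unipotent analogues of Propositions \ref{prosncsup} and \ref{proluslr}), and the only thing requiring confirmation is the routine fact that passing to opposites interchanges the two unipotency conditions while preserving homomorphisms.
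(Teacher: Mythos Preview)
Your proposal is correct and aligns with the paper's treatment: the paper simply asserts ``The dual result for $\gr$-unipotent semigroups also holds'' and states the corollary without further argument, implicitly relying on either the opposite-semigroup passage you describe or the restriction of the already-established equivalence $\mathbf{RRS}\simeq\mathbf{CC}$ to the full subcategories $\mathbf{RUS}$ and $\mathbf{SC}$. Both of your routes are exactly the kind of dualization the paper intends, and you have made the details (in particular, that $S\mapsto S^{\text{op}}$ is a covariant isomorphism interchanging the two unipotency conditions) explicit where the paper leaves them tacit.
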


\section{Right regular bands}\label{seccxnrrb}

Now we further specialise the construction in Section \ref{seccxnlus} to describe right (and left) regular bands and, in this case, we obtain an adjunction. Recall that a \emph{band} is a semigroup such that every element is an idempotent. A \emph{right regular band} is a band which satisfies the identity $efe=fe$. Observe that the idempotents of an $\gl$-unipotent semigroup forms a right regular band (see  Proposition \ref{prolus} (3)). Hence, right regular bands are a special class of $\gl$-unipotent semigroups. We begin by the observation that right regular bands form a  full coreflective subcategory $\mathbf{RRB}$ of the category $\mathbf{LUS}$. 
To guide the readers to this end, we recall the following definitions which shall also be needed later in this section.
\begin{dfn}\label{dfnadjoint}
	Let $\mathbf{C}$ and $\mathbf{D}$ be two arbitrary categories. An \emph{adjunction } $\mathbf{C}\to\mathbf{D}$ is a triple $(\mathtt{F}, \mathtt{G}, \eta)$, where $\mathtt{F}\colon \mathbf{C} \to \mathbf{D}$ and $\mathtt{G}\colon\mathbf{D}\to \mathbf{C}$ are functors, and $\eta$ is a natural transformation $1_{\mathbf{C}} \to \mathtt{F}\mathtt{G}$ such that the following condition holds:
	
	\begin{itemize}
		\item 	For every pair of objects $\mathcal{C}\in v\mathbf{C}$ and $\mathcal{D} \in v\mathbf{D}$, and for every morphism $\phi \colon \mathcal{C}\to \mathtt{G}(\mathcal{D})$ in the category $\mathbf{C}$, there exists a unique morphism $\bar{\phi} \colon \mathtt{F}(\mathcal{C}) \to \mathcal{D}$ in the category $\mathbf{D}$ such that the following diagram commutes:
		\begin{equation*}\label{}
			\xymatrixcolsep{5pc}\xymatrixrowsep{4pc}\xymatrix
			{
				\mathcal{C} \ar[rd]^{\phi} \ar[d]^{\eta_\mathcal{C}}\\
				\mathtt{G}(\mathtt{F}(\mathcal{C}))\ar[r]^{\mathtt{G}(\bar{\phi})} &\mathtt{G}(\mathcal{D})		
			}
		\end{equation*}
	\end{itemize}	
	In this case, $\mathtt{F}$ and $\mathtt{G}$ are called \emph{left } and \emph{right adjoints }, respectively, and $\eta$ is the \emph{unit of adjunction}. 
\end{dfn}

We refer the reader to \cite[Proposition 1.3]{gabriel1967} for several equivalent characterisations of the Definition {\ref{dfnadjoint} above. 
	
	\begin{dfn}
	A \emph{coreflective} subcategory is a full subcategory $\mathbf{C}$ of a category $\mathbf{D}$ whose inclusion functor $\mathtt{J}\colon\mathbf{C}\to\mathbf{D}$ has a right adjoint. We shall say that a category $\mathbf{C}$ is \emph{coreflective} in $\mathbf{D}$ if $\mathbf{C}$ is equivalent to a coreflective subcategory of $\mathbf{D}$. 
\end{dfn}

Informally, a coreflective subcategory specifies a distinguished class of subobjects in the ambient category, and the \emph{coreflector functor} selects, for each object, its associated subobject. Now, it is routine to verify the following lemma where the {coreflector} $\mathtt{B}\colon \mathbf{LUS}\to \mathbf{RRB}$ maps an $\gl$-unipotent semigroup $S$ to its right regular band $E(S)$ of idempotents. 

\begin{lem}\label{lemrrb}
	The category $\mathbf{RRB}$ of right regular bands is a coreflective subcategory of the category $\mathbf{LUS}$ of $\gl$-unipotent semigroups.
\end{lem}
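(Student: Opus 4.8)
The plan is to exhibit the coreflector $\mathtt{B}\colon\mathbf{LUS}\to\mathbf{RRB}$, $S\mapsto E(S)$, as a right adjoint of the inclusion functor $\mathtt{J}\colon\mathbf{RRB}\to\mathbf{LUS}$, in the precise sense of Definition \ref{dfnadjoint} applied with $\mathbf{C}=\mathbf{RRB}$, $\mathbf{D}=\mathbf{LUS}$, $\mathtt{F}=\mathtt{J}$ and $\mathtt{G}=\mathtt{B}$. First I would record that $\mathbf{RRB}$ really is a full subcategory of $\mathbf{LUS}$: a right regular band $B$ is a band satisfying $efe=fe$, so that $E(B)=B$ is a right regular band, whence by Proposition \ref{prolus} (3) the semigroup $B$ is $\gl$-unipotent; as both categories carry all semigroup homomorphisms as morphisms, the inclusion is full. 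Next I would check that $\mathtt{B}$ is a well-defined functor: by Proposition \ref{prolus} (3) the set $E(S)$ is a right regular band for every $S\in v\mathbf{LUS}$, and since $S$ is orthodox, $E(S)$ is a subsemigroup of $S$; any homomorphism $\phi\colon S\to S'$ restricts to a band homomorphism $\phi|_{E(S)}\colon E(S)\to E(S')$, which I take as $\mathtt{B}(\phi)$, with functoriality being immediate.

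The crucial observation is that $\mathtt{J}\mathtt{B}=1_{\mathbf{RRB}}$: for a right regular band $B$ we have $\mathtt{B}(\mathtt{J}(B))=E(B)=B$, because every element of a band is idempotent, and likewise on morphisms. Consequently the unit $\eta\colon 1_{\mathbf{RRB}}\to\mathtt{J}\mathtt{B}$ may be taken to be the identity natural transformation, $\eta_B=1_B$, which is trivially natural. It then remains to verify the universal property of Definition \ref{dfnadjoint}: given $B\in v\mathbf{RRB}$, $S\in v\mathbf{LUS}$ and a band homomorphism $\phi\colon B\to\mathtt{B}(S)=E(S)$, I must produce a unique semigroup homomorphism $\bar\phi\colon B\to S$ with $\eta_B\,\mathtt{B}(\bar\phi)=\phi$, i.e.\ with $\mathtt{B}(\bar\phi)=\phi$.

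I would define $\bar\phi$ to be the composite of $\phi$ with the inclusion $E(S)\hookrightarrow S$. The main point to check, and the only place where the hypotheses genuinely enter, is that this $\bar\phi$ is a semigroup homomorphism: this holds precisely because $S$ is orthodox, so that $E(S)$ is a subsemigroup of $S$ and the inclusion $E(S)\hookrightarrow S$ is itself a homomorphism. Then $\mathtt{B}(\bar\phi)=\bar\phi|_{E(B)}=\bar\phi=\phi$, since $E(B)=B$. Uniqueness is forced by the same feature: if $\psi\colon B\to S$ is any homomorphism with $\mathtt{B}(\psi)=\phi$, then, $B$ consisting entirely of idempotents, $\mathtt{B}(\psi)=\psi$, so $\psi=\phi=\bar\phi$. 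This establishes the adjunction $\mathtt{J}\dashv\mathtt{B}$ and hence shows that $\mathbf{RRB}$ is coreflective in $\mathbf{LUS}$. The argument contains no hard step; the only subtleties are keeping the left/right-adjoint bookkeeping of Definition \ref{dfnadjoint} straight and invoking orthodoxy to legitimise the lift $\bar\phi$.
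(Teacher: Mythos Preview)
Your proof is correct and follows exactly the approach the paper leaves to the reader: the paper does not give a proof at all, merely declaring the verification routine and naming the coreflector $\mathtt{B}\colon S\mapsto E(S)$, which is precisely the right adjoint you construct. Your care in invoking orthodoxy to ensure $E(S)$ is a subsemigroup, and in checking the universal property against Definition~\ref{dfnadjoint} with $\eta=1$, supplies the details the paper omits.
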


Combining Lemma \ref{lemrrb} and Theorem \ref{thmcatlus}, one can construct an adjunction between the categories $\mathbf{RRB}$ and $\mathbf{SC}$. But we shall briefly exposit this adjunction in a direct manner as it illustrates how we can build right regular bands from supported categories. Putting together  Proposition \ref{prosncsup} and Proposition \ref{prolus} (3), we are led to the next lemma.
\begin{lem}
	Let $\cod$ be a supported category. Then the set $E(\sncd)$ of all idempotent cones in the connection semigroup $\sncd$
	$$	E(\sncd) :=\{\: \epsilon(c,\mathfrak{d}) : c \text{ is connected by } \mathfrak{d} \: \}$$
	forms a right regular band. 
\end{lem}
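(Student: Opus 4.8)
The plan is to deduce the statement almost immediately from the structural results already at hand, so that no fresh computation with cones is required. First I would invoke Proposition~\ref{prosncsup}, which tells us that for a supported category $\cod$ the connection semigroup $\sncd$ is $\gl$-unipotent. Since the claim concerns precisely the idempotents of this semigroup, the whole lemma reduces to a statement about $E(\sncd)$ viewed inside an $\gl$-unipotent semigroup.

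There are two properties to establish: that $E(\sncd)$ is closed under the cone composition (so that it is a band), and that it satisfies the right regular identity $efe=fe$. For closure I would appeal to the remark recalled in Section~\ref{seccxnlus} that every $\gl$-unipotent semigroup is orthodox; hence $E(\sncd)$ is a band, that is, the product of two idempotent cones is again an idempotent cone. For the identity I would apply Proposition~\ref{prolus}(3), which states that a regular semigroup is $\gl$-unipotent exactly when its idempotents satisfy $efe=fe$. Applying this to $\sncd$, every pair $\epsilon_1=\epsilon(c_1,\mathfrak{d}_1)$ and $\epsilon_2=\epsilon(c_2,\mathfrak{d}_2)$ in $E(\sncd)$ obeys $\epsilon_1\epsilon_2\epsilon_1=\epsilon_2\epsilon_1$. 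Together these two observations show that $E(\sncd)$ is a right regular band.

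There is no genuine obstacle here: the content has been front-loaded into Propositions~\ref{prosncsup} and~\ref{prolus}. If one preferred a self-contained argument avoiding the orthodoxy remark, the alternative would be to verify closure and the identity directly, computing $\epsilon_1\epsilon_2=\epsilon_1\ast(\epsilon_2(c_1))^\circ$ through~(\ref{eqnbin}) and checking idempotency via Lemma~\ref{lemrs}; but this is strictly more laborious and the cited results render it unnecessary. Finally, for clarity I would note that by Proposition~\ref{proidlus}(1) the displayed set $E(\sncd)$ is in bijection with $v\mathcal{C}$ through the support map, so the right regular band structure is carried, object by object, by the vertices of the idempotent cones.
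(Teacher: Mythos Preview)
Your proposal is correct and matches the paper's approach: the paper simply states that the lemma follows by putting together Proposition~\ref{prosncsup} and Proposition~\ref{prolus}(3). Your separate appeal to orthodoxy for closure is harmless but redundant, since Proposition~\ref{prolus}(3) already asserts that $E(S)$ is a right regular band (closure included).
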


Notice that by Proposition \ref{proidlus} (1), the band $E(\sncd)$ is in bijection with the set $v\mathcal{C}$.
Further, given an SC-morphism $m:=(F,G)$ from $\cod$ to $\codp$, as shown in Lemmas \ref{lemphi1} and \ref{lemphi2}, we can prove that $\phi_m\colon E(\cod) \to E(\codp)$ given by:
$$\phi_m\colon \epsilon(c,\mathfrak{d}) \mapsto \epsilon (F(c),G(\mathfrak{d})) $$
is a semigroup homomorphism of right regular bands. Hence we obtain a functor $\mathtt{E}\colon \mathbf{SC} \to \mathbf{RRB}$ as follows:
$$\cod\mapsto E(\sncd)\: \text{ and } \: m \mapsto \phi_m. $$

Conversely starting from a right regular band $S$, since it is also $\gl$-unipotent, we can easily see that (as shown in Section \ref{seccxnlus}) the category 
$\lr$ constitutes a supported category. This correspondence $S\mapsto \lr$ is given by the functor $\mathtt{C}\colon\mathbf{RRB}\to\mathbf{SC}$. The functor $\mathtt{C}$ is precisely the restriction of the functor defined in the Section \ref{sseccateqlrs} to the category $\mathbf{RRB}\subseteq\mathbf{LUS}\subseteq\mathbf{LRS}$.  Further, we will show that the functor $\mathtt{C}$ is a left adjoint to the functor $\mathtt{E}$. 

\begin{thm}
	There is an adjunction from the category $\mathbf{RRB}$ of right regular bands to the category $\mathbf{SC}$ of supported categories. In particular, the category $\mathbf{RRB}$ is coreflective in the category $\mathbf{SC}$. 
\end{thm}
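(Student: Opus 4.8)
The plan is to exhibit the pair $(\mathtt{C},\mathtt{E})$ together with a unit $\eta$ as an adjunction in the precise sense of Definition \ref{dfnadjoint}, taking $\mathtt{F}:=\mathtt{C}$ and $\mathtt{G}:=\mathtt{E}$. The unit will be the map $\bar\rho$ already in play: for a right regular band $S$ set $\eta_S:=\bar\rho(S)\colon S\to \mathtt{C}\mathtt{E}(S)=E(\slr)$, $a\mapsto r^a$. Since $S$ is a band, every $a\in S$ is idempotent, so each $r^a$ is an idempotent cone and hence $\slr=E(\slr)$; Proposition \ref{prolsriso} then makes $\eta_S$ a semigroup isomorphism, and naturality is the square already checked in Lemma \ref{lemcatiso1} restricted to $\mathbf{RRB}$. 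Thus $\eta\colon 1_{\mathbf{RRB}}\to\mathtt{C}\mathtt{E}$ is a natural isomorphism. (Alternatively, one may bypass the explicit computation: Lemma \ref{lemrrb} exhibits $\mathbf{RRB}$ as coreflective in $\mathbf{LUS}$, i.e.\ the inclusion has the right adjoint $\mathtt{B}$, while Theorem \ref{thmcatlus} gives an equivalence $\mathbf{LUS}\simeq\mathbf{SC}$; as $\mathtt{C}$ and $\mathtt{E}$ are obtained by composing the inclusion, resp.\ the coreflector, with this equivalence, composing the two adjunctions yields $\mathtt{C}\dashv\mathtt{E}$ at once. I nonetheless carry out the direct argument, since it is what displays the coreflector concretely.)

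The substantive step is the universal property. Given a supported category $\cod$ and a band homomorphism $\phi\colon S\to\mathtt{E}(\cod)=E(\sncd)$, I must produce a unique SC-morphism $\bar\phi=(F,G)\colon\lr\to\cod$ with $\phi=\eta_S\cdot\mathtt{E}(\bar\phi)$. For each object $Se$ of $\los$ the idempotent cone $\phi(e)$ is uniquely $\epsilon(c_e,\mathfrak{d}_e)$ by supportedness (Proposition \ref{proidlus}(1)); I define $vF(Se):=c_e$ and $G(\mathfrak{r}_e):=\mathfrak{d}_e=R_{\phi(e)}$. For a morphism $r(e,u,f)$ with $u\in eSf$ one has $u\ler e$ and $u\lel f$, whence $\phi(u)\ler\phi(e)$ and $\phi(u)\lel\phi(f)$; by Lemma \ref{lemgrcd} this yields an epimorphism $\phi(u)(c_e)\colon c_e\to c_u$ and an inclusion $j(c_u,c_f)$, and I set $F(r(e,u,f)):=\phi(u)(c_e)\,j(c_u,c_f)$, exactly the canonical-factorisation recipe of Lemma \ref{lemf1}. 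Well-definedness (via Lemma \ref{lemlosmor}), functoriality, inclusion-preservation, and the two clauses of the CC-condition (\ref{eqnccmor}) are then checked verbatim as in the proofs of Lemmas \ref{lemf1}, \ref{lemf2} and \ref{lemmorC}, using that $\phi$ is a homomorphism and $G$ is the induced order map $\mathfrak{R}\to\mathfrak{D}$.

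Commutativity is then immediate: the induced band homomorphism $\mathtt{E}(\bar\phi)$ sends $r^e=\epsilon(Se,\mathfrak{r}_e)$ to $\epsilon(vF(Se),G(\mathfrak{r}_e))=\epsilon(c_e,\mathfrak{d}_e)=\phi(e)$, so $\eta_S\cdot\mathtt{E}(\bar\phi)\colon e\mapsto r^e\mapsto\phi(e)$ agrees with $\phi$ on all of $S=E(S)$. For uniqueness, any competitor $(F',G')$ satisfying the same equation must send $\epsilon(Se,\mathfrak{r}_e)$ to $\phi(e)$, forcing $F'=F$ and $G'=G$ on objects and on $\mathfrak{R}$; the CC-condition (\ref{eqnccmor}) then determines $F'$ on components of idempotent cones, and since every morphism $r(e,u,f)$ factors canonically as $\epsilon(Su,\mathfrak{r}_u)(Se)\,j(Su,Sf)$ through such a component (here $u$ is idempotent as $S$ is a band), $F'=F$ on all of $\los$. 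This establishes the adjunction $(\mathtt{C},\mathtt{E},\eta)$ from $\mathbf{RRB}$ to $\mathbf{SC}$.

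Finally, coreflectivity follows formally. Because the unit $\eta$ is a natural isomorphism, the left adjoint $\mathtt{C}$ is fully faithful, so $\mathbf{RRB}$ is equivalent to its essential image $\mathtt{C}(\mathbf{RRB})$, a full subcategory of $\mathbf{SC}$; under this equivalence the inclusion of the image corresponds to $\mathtt{C}$, which has the right adjoint $\mathtt{E}$, so that inclusion has a right adjoint and the image is a coreflective subcategory. Hence $\mathbf{RRB}$ is coreflective in $\mathbf{SC}$. The main obstacle is the middle step: confirming that the morphism assignment $F(r(e,u,f))=\phi(u)(c_e)\,j(c_u,c_f)$ is a well-defined inclusion-preserving functor satisfying (\ref{eqnccmor}), and that the CC-condition genuinely forces uniqueness on all morphisms of $\los$ rather than merely on objects — everything else is either formal or a transcription of the verifications in Section \ref{sseccateqlrs}.
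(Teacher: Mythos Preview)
Your proof is correct and follows essentially the same strategy as the paper: both take $\eta=\bar\rho$ as the unit (a natural isomorphism since $S=E(S)$ forces $\slr=E(\slr)$), produce from a band homomorphism $\phi\colon S\to E(\sncd)$ the required SC-morphism, verify the triangle, and deduce coreflectivity from full faithfulness of the left adjoint. The only difference is one of packaging: the paper obtains $\bar\phi$ by invoking Lemma \ref{lemmorC} to get a CC-morphism $\lrb\to\mathtt{C}(E(\sncd))$ and then identifying the target with a ``sub'' of $\cod$ via Proposition \ref{procatiso}, whereas you construct $(F,G)$ directly into $\cod$ and verify uniqueness by hand using the factorisation $r(e,u,f)=r^u(Se)\,j(Su,Sf)$ available in a band. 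Your route is more self-contained and makes uniqueness explicit; the paper's is terser but leans on the earlier machinery.
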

\begin{proof}
	To begin with, observe that given a right regular band $B$, it is left reductive and by Proposition \ref{prolsriso}, the connection semigroup $\widehat{\lrb}$ is isomorphic to $B$ via the map $\bar{\rho}(B)$. Moreover, $E(\widehat{\lrb})=\widehat{\lrb}$. Hence the assignment $B\mapsto\bar{\rho}(B)$ is a natural isomorphism from the functor $1_\mathbf{RRB}$ to the functor $\mathtt{C}\mathtt{E}$.
	
	Now, given an object $B\in v\mathbf{RRB}$, by Proposition \ref{proluslr}, we can see that $\mathtt{C}(B)=\lrb$ is a supported category. Let $\cod\in v\mathbf{SC}$ so that $\mathtt{E}({\cod})=E(\sncd)$. Given  a semigroup homomorphism $\phi\colon B\to \mathtt{E}({\cod})$ in $\mathbf{RRB}$, by Lemma \ref{lemmorC} and Proposition \ref{procatiso}, we see that $m_\phi:=(F_\phi,G_\phi)$ is the unique SC-morphism from $\lrb$ to $\codp\subseteq\cod$, where $\codp:=\mathtt{C}(\mathtt{E}(\widehat{\cod}))$. Next, we may routinely verify that the following diagram commutes:
	\begin{equation*}\label{}
		\xymatrixcolsep{5pc}\xymatrixrowsep{4pc}\xymatrix
		{
			B \ar[rd]^{\phi} \ar[d]^{\bar{\rho}(B)}\\
			\mathtt{E}(\mathtt{C}(B))\ar[r]^{\mathtt{E}(m_\phi)} &\mathtt{E}(\cod)		
		}
	\end{equation*}
	Hence $(\mathtt{C},\mathtt{E},\bar{\rho})$ constitutes an adjunction from the category $\mathbf{RRB}$ to $\mathbf{SC}$. The last part of the theorem follows directly from the fact that  the left adjoint $\mathtt{C}$ is fully-faithful (see \cite[Proposition 1.3]{gabriel1967}). To conclude one can check that $\mathbf{RRB}$ is equivalent to the category $\mathtt{C}(\mathbf{RRB})$, and the latter is a coreflective subcategory of $\mathbf{SC}$. 	
\end{proof}

Dually, we have the following corollary for the more `popular' class of left regular bands:

\begin{cor}
The category $\mathbf{LRB}$ of left regular bands is coreflective in the category $\mathbf{SC}$ of supported categories. 
\end{cor}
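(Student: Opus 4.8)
The plan is to mirror the proof of the preceding theorem, replacing each construction by its opposite-semigroup dual, and then to read off coreflectivity exactly as before. As a first step I would record the dual of Lemma~\ref{lemrrb}: $\mathbf{LRB}$ is a full subcategory of the category $\mathbf{RUS}$ of $\gr$-unipotent semigroups, and the assignment sending a $\gr$-unipotent semigroup $S$ to its idempotent band $E(S)$ is a coreflector. This rests on the dual of Proposition~\ref{prolus}~(3), namely that the idempotents of a $\gr$-unipotent semigroup form a \emph{left} regular band.

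The core of the argument is to build the two comparison functors as duals of $\mathtt{C}$ and $\mathtt{E}$. For the right adjoint I would use the \emph{dual} connection semigroup: given a supported category $\cod$, the semigroup $\sncdo$ is $\gr$-unipotent (Proposition~\ref{procatisodual}, specialised to supported categories exactly as in Proposition~\ref{prosncsup}), so its idempotents form a left regular band. Since $E(\sncdo)=E(\sncd)$ as a set, but with the reversed composition~(\ref{eqnbinop}), I would define $\mathtt{E}^{\text{op}}\colon\mathbf{SC}\to\mathbf{LRB}$ by $\cod\mapsto E(\sncdo)$ and, on an SC-morphism $m=(F,G)$, by the same rule $\epsilon(c,\mathfrak{d})\mapsto\epsilon(F(c),G(\mathfrak{d}))$ used for $\mathtt{E}$. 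The verification that this is a well-defined homomorphism of left regular bands is word-for-word that of Lemmas~\ref{lemphi1} and~\ref{lemphi2}, since the two bands share the same underlying set and the defining condition~(\ref{eqnccmor}) is insensitive to the left/right duality. In the other direction I would take $\mathtt{C}^{\text{op}}\colon\mathbf{LRB}\to\mathbf{SC}$ to be the restriction of the functor $S\mapsto\rl$, $\phi\mapsto m_\phi$ (with $F_\phi$ still covariant) to $\mathbf{LRB}\subseteq\mathbf{RUS}$, the right category now playing the role the left category played for $\mathbf{RRB}$.

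With the functors in place, the adjunction is assembled as in the theorem, except that the unit is now supplied by the anti-regular representation. For a left regular band $B$, Theorem~\ref{thmdualrs} together with the proposition dual to Proposition~\ref{prolsriso} (stated at the close of the $\mathbf{RRS}$ discussion) gives that the dual connection semigroup $\widehat{\mathbb{R}(B)_\mathfrak{L}^\text{op}}$ is isomorphic to $B$ via $\bar{\lambda}(B)\colon a\mapsto l^a$, and moreover $E\bigl(\widehat{\mathbb{R}(B)_\mathfrak{L}^\text{op}}\bigr)=\widehat{\mathbb{R}(B)_\mathfrak{L}^\text{op}}$ because $B$ is a band. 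Hence $B\mapsto\bar{\lambda}(B)$ is a natural isomorphism $1_{\mathbf{LRB}}\cong\mathtt{C}^{\text{op}}\mathtt{E}^{\text{op}}$, and the universal property of Definition~\ref{dfnadjoint} is checked by the same commuting-triangle argument, the uniqueness of $m_\phi$ coming once more from~(\ref{eqnccmor}). This exhibits $(\mathtt{C}^{\text{op}},\mathtt{E}^{\text{op}},\bar{\lambda})$ as an adjunction from $\mathbf{LRB}$ to $\mathbf{SC}$ whose left adjoint $\mathtt{C}^{\text{op}}$ is fully faithful; by \cite[Proposition~1.3]{gabriel1967}, $\mathbf{LRB}$ is therefore equivalent to the coreflective subcategory $\mathtt{C}^{\text{op}}(\mathbf{LRB})$ of $\mathbf{SC}$, as required.

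The point that needs genuine care is bookkeeping around the duality: passing to the opposite semigroup reverses the band product (so that $E(\sncdo)$ is \emph{left} regular rather than right regular), and one must confirm that the comparison functors nevertheless remain \emph{covariant}, so that the output is an honest adjunction and not a dual adjunction. The single load-bearing novelty is that the unit is the \emph{anti}-homomorphism $\bar{\lambda}$, which becomes an isomorphism only after passing to the opposite semigroup, as in Theorem~\ref{thmdualrs}; every other ingredient transfers verbatim, because condition~(\ref{eqnccmor}) and the object-level bijection $E(\sncd)\leftrightarrow v\mathcal{C}$ of Proposition~\ref{proidlus}~(1) are blind to the left/right distinction.
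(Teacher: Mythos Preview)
Your proposal is correct and takes essentially the same approach as the paper: the paper states this corollary with no proof beyond the phrase ``Dually, we have the following corollary,'' and your argument spells out precisely the opposite-semigroup dualisation that this phrase is meant to invoke. The care you take with covariance of the functors and with $\bar{\lambda}$ landing in $\srlo$ rather than $\srl$ is exactly the bookkeeping the paper silently relies on.
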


\section{Inverse semigroups}\label{secinvs}
In this section, we look at a class of regular semigroups, which are both left and right reductive, namely inverse semigroups. Inverse semigroups arguably form the most important class of regular semigroups, mainly due to their ability to capture partial symmetry \cite{lawson}.
\begin{dfn}\label{dfninv}
	A regular semigroup is called \emph{inverse} if all of its idempotents commute.
\end{dfn}

In addition to specialising our construction, Theorem \ref{thminvssc} below may be realised as a weaker version of the ESN Theorem. ESN Theorem exposits a \emph{category isomorphism} between the category of inverse semigroups and the category of \emph{inductive groupoids} \cite[Section 4.1]{lawson}.  In our construction, an object does not remain the same when passing from the category of semigroups to the category of categories, in contrast with the situation in ESN Theorem. As mentioned in Section \ref{secintro}, we represent an element in the semigroup  as a cone in the category so that the original semigroup is only \emph{isomorphic} to the semigroup of cones. As a result, our correspondence will never be a category isomorphism, even for the group case.

In the joint work \cite{locinverse}, the first author described a category equivalence between inverse semigroups and inversive categories. That construction used Nambooripad's normal categories and admittedly, the description did not reflect the symmetrical nature of inverse semigroups. In contrast, when we employ supported categories, the symmetry of the semigroups gets manifested by the categories `supporting' themselves; so we dub these \emph{self-supported categories}. 

Before continuing we recall some characterisations of inverse semigroups:
\begin{pro}[{\cite[Theorem II.2.6]{grillet}}]\label{proinv}
 The following are equivalent:
	\begin{enumerate}
		\item $S$ is an inverse semigroup;
		\item every element in $S$ has a unique inverse element;
		\item $(E(S),\leqslant)$ is a semilattice;
		\item there is a unique idempotent in each $\mathscr{L}$-class and each  $\mathscr{R}$-class of $S$.
	\end{enumerate}
\end{pro}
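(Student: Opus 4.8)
The plan is to treat condition (1)---that $S$ is regular with commuting idempotents, as in Definition \ref{dfninv}---as the hub and to establish the cycle $(1)\Rightarrow(4)\Rightarrow(2)\Rightarrow(1)$, reserving the only genuinely computational step for last, and then to close with the easy equivalence $(1)\Leftrightarrow(3)$. Throughout I would use the elementary fact that, for idempotents $e,f$, one has $e\gl f$ iff $ef=e$ and $fe=f$, and dually $e\gr f$ iff $fe=e$ and $ef=f$; both follow directly from the definitions of $\lel$ and $\ler$ recalled in Section \ref{secprel}.

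For $(1)\Rightarrow(4)$: if idempotents commute, then two $\gl$-related (resp. $\gr$-related) idempotents satisfy $e=ef=fe=f$, so each $\gl$-class and each $\gr$-class contains \emph{at most} one idempotent; existence is automatic since $S$ is regular. For $(4)\Rightarrow(2)$, given $x,y\in V(a)$ I would note that $ax\gr a\gr ay$ and $xa\gl a\gl ya$, with all four of $ax,ay,xa,ya$ idempotent; uniqueness then forces $ax=ay$ and $xa=ya$, and substituting into $x=xax$ gives $x=x(ay)=(xa)y=(ya)y=yay=y$, so inverses are unique.

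The crux is $(2)\Rightarrow(1)$: deriving commutativity of idempotents from uniqueness of inverses. Fixing $e,f\in E(S)$ and choosing (by regularity) an inverse $g$ of $ef$, I would verify by direct manipulation that $fge$ is \emph{also} an inverse of $ef$, namely $(ef)(fge)(ef)=ef$ and $(fge)(ef)(fge)=fge$; uniqueness then gives $g=fge$, from which the short computation $g^2=(fge)(fge)=f\bigl(g(ef)g\bigr)e=fge=g$ shows $g$ is idempotent. Since an idempotent is its own unique inverse, $V(g)=\{g\}$, and as the inverse relation is symmetric we have $ef\in V(g)$, whence $ef=g\in E(S)$; the symmetric argument gives $fe\in E(S)$. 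Finally $fe\in V(ef)$ (because both are now idempotent) and, by uniqueness of the inverse of $ef$, we conclude $fe=ef$. This bookkeeping---guessing the right candidate inverse and invoking uniqueness twice---is where the real work lies; everything else is formal.

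It remains to record $(1)\Leftrightarrow(3)$. If idempotents commute then $E(S)$ is closed under multiplication, since $(ef)^2=efef=e^2f^2=ef$, and forms a commutative band, i.e. a meet-semilattice under $\leqslant$ with $e\wedge f=ef$; conversely, a semilattice structure on $(E(S),\leqslant)$ identifies the product of two idempotents with their meet and hence makes the product commutative. Since this is the classical characterisation, one may equally simply appeal to \cite[Theorem II.2.6]{grillet}, which is the route taken in the statement.
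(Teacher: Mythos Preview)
Your argument is correct and follows the standard textbook route; the only slightly soft spot is the direction $(3)\Rightarrow(1)$, where you tacitly assume that ``$(E(S),\leqslant)$ is a semilattice'' means $E(S)$ is a subsemigroup forming a commutative band (so that $ef=e\wedge f$), rather than merely that the poset admits binary meets---but this is indeed the intended reading in Grillet, and you correctly flag the citation as the ultimate authority.

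As for comparison: the paper does not supply its own proof of this proposition at all---it is quoted verbatim as \cite[Theorem II.2.6]{grillet} and used as a black box. So your write-up is not competing with an alternative argument in the paper; it simply unpacks the cited reference, and does so faithfully.
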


By Proposition \ref{proinv} (4), inverse semigroups are $\gl$-unipotent semigroups, which are left-right symmetrical. In fact, they may be regarded as the most `symmetric' class of semigroups after groups. This symmetry is a reflection of the uniqueness of the inverse, which in turn, defines a natural involution on the semigroup given  by $a\mapsto a^{-1}$. This leads to the proposition below.

\begin{pro}
	Let $S$ be an inverse semigroup. Then the left category $\los$ is normal category isomorphic to the right category $\ros$. In particular, the semilattice $(v\los,\preceq)$ is order isomorphic to $(v\ros,\sqsubseteq)$.
\end{pro}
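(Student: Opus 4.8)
The plan is to use the natural involution $a\mapsto a^{-1}$ of an inverse semigroup (Proposition \ref{proinv}(2)) to write down an explicit functor and then verify it is a normal category isomorphism in the sense of Section \ref{ssecnc}. Since, by Proposition \ref{proinv}(4), every $\gl$-class and every $\gr$-class of $S$ contains a \emph{unique} idempotent, the objects of both $\los$ and $\ros$ are indexed bijectively by $E(S)$. I would therefore define $F\colon\los\to\ros$ on objects by $vF\colon Se\mapsto eS$ and on morphisms by $F\colon r(e,u,f)\mapsto l(e,u^{-1},f)$. This makes sense because $u\in eSf$ forces $u^{-1}\in fSe$ (the idempotents $e,f$ are self-inverse and $S^{-1}=S$), so $l(e,u^{-1},f)$ is genuinely a morphism $eS\to fS$ by (\ref{eqnrs}). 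Well-definedness is immediate: in an inverse semigroup a morphism $r(e,u,f)$ determines $e,u,f$ uniquely (the inverse-semigroup specialisation of Lemma \ref{lemlosmor}), and dually for $\ros$.

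First I would check functoriality. The identity $1_{Se}=r(e,e,e)$ maps to $l(e,e^{-1},e)=l(e,e,e)=1_{eS}$. For composition, recall $r(e,u,f)\,r(f,v,h)=r(e,uv,h)$, and since the involution is an anti-automorphism, $(uv)^{-1}=v^{-1}u^{-1}$. Writing morphisms left-to-right, composition in $\ros$ sends $l(e,u^{-1},f)\,l(f,v^{-1},h)$ to $l(e,v^{-1}u^{-1},h)$, which is exactly $F(r(e,uv,h))$. Thus the order-reversal of $a\mapsto a^{-1}$ is precisely absorbed by the opposite conventions governing composition in $\los$ and in $\ros$, and $F$ comes out \emph{covariant}.

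Next I would verify the remaining structural properties. The inclusion $r(e,e,f)$ (which is present exactly when $Se\subseteq Sf$) maps to $l(e,e,f)$, which is precisely the inclusion $eS\to fS$ of $\ros$, so $F$ is inclusion preserving. Faithfulness and fullness both follow from the fact that $u\mapsto u^{-1}$ is a bijection from $eSf$ onto $fSe$, together with the uniqueness of morphism representations in $\los$ and $\ros$; hence $F$ restricts to a bijection on each hom-set. Finally, for the object map to be an order isomorphism I would invoke Lemma \ref{lemlupo} and its dual: as an inverse semigroup is both $\gl$- and $\gr$-unipotent, for idempotents $e,f$ one has $Se\subseteq Sf\iff e\lel f\iff e\leqslant f\iff e\ler f\iff eS\subseteq fS$. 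Hence $vF$ is a bijective order isomorphism, so $F$ is a normal category isomorphism by the definition recalled in Section \ref{ssecnc}; the ``in particular'' assertion is just the restriction of $vF$ to the posets of objects.

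The computations are all routine once the involution is in place; the only step demanding genuine care is the composition check, where one must confirm that the contravariance of $a\mapsto a^{-1}$ is exactly compensated by the dual composition conventions of (\ref{los1}) and (\ref{eqnrs}). I expect this to be the main (though mild) obstacle, as it is here that the left--right symmetry of inverse semigroups is actually used.
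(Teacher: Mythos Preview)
Your proposal is correct and follows essentially the same approach as the paper: the same functor $F(Se)=eS$, $F(r(e,u,f))=l(e,u^{-1},f)$ is defined, and the verifications of well-definedness, covariance (via $(uv)^{-1}=v^{-1}u^{-1}$ matching the dual composition in $\ros$), inclusion preservation, fully-faithfulness, and the order isomorphism on objects proceed just as you outline. The only cosmetic difference is that the paper appeals directly to Proposition~\ref{proinv}(3) for the coincidence of $\lel$, $\ler$ and $\leqslant$ on $E(S)$, whereas you phrase it via Lemma~\ref{lemlupo} and its dual.
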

\begin{proof}
	Define a functor $F\colon \los\to \ros$ by, for any inverse semigroups $S$,
	\begin{equation}
		vF(Se):= eS \text{ and } F(r(e,u,f)):= l(e,u^{-1},f).
	\end{equation}
	Since inverse semigroups have a unique idempotent in every $\gl$-class and in every $\gr$- class, it is easy to see that the map $vF$ is a well-defined bijection. Now, by Proposition \ref{proinv} (3), the quasi-orders $\lel$ and $\ler$ on the idempotents of an inverse semigroups $S$ coincide with the natural partial order $\leqslant$,  and so
	$$Se\preceq Sf \iff e \lel f \iff e \leqslant f \iff e\ler f \iff eS \sqsubseteq fS. $$
	Hence $vF$ is an order isomorphism between the semilattices $v\los$ and $v\ros$. Also, observe that $v\los$ is order isomorphic to the set $E(S)$ of idempotents of $S$.
	
	Given $u\in eSf$ such that $r(e,u,f)$ is a morphism in $\los$ from $Se$ to $Sf$, we can see that $u^{-1} \in f^{-1}Se^{-1}=fSe$ so that $l(e,u^{-1},f)$ is a morphism in $\ros$ from $eS$ to $fS$. Then using Lemma \ref{lemlosmor} and Proposition \ref{proinv}  (2), we  verify that the map $F$ is well defined. Now given two composable morphism $r(e,u,f)$ and $r(g,v,h)$ in the category $\los$ such that $f\gl g$, we know that $f=g$ and  
	$$F(r(e,u,f)r(g,v,h))= F(r(e,uv,h))=l(e,{(uv)}^{-1},h)=l(e,v^{-1}u^{-1},h).$$
	On the other hand,
	$$F(r(e,u,f))F(r(g,v,h))=l(e,u^{-1},f)l(g,v^{-1},h)=l(e,v^{-1}u^{-1},h).$$
	Hence $F$ is a covariant functor from $\los$ to $\ros$. Using Lemma \ref{lemlosmor}, one sees that $F$ is inclusion preserving and fully-faithful. Therefore, $F$ is a normal category isomorphism.
\end{proof}

From Proposition \ref{proluslr}, the support map $\Gamma\colon v\los\to \mathfrak{R}$ of an $\gl$-unipotent semigroup $S$ is given by $Se\mapsto \mathfrak{r}_e$. In the case of inverse semigroups,  the next corollary reflects the left-right symmetry of these semigroups. 

\begin{cor}\label{corinv}
	Let $S$ be an inverse semigroup. The support map $\Gamma\colon v\los\to \mathfrak{R}$ is an order isomorphism.
\end{cor}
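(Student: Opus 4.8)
The plan is to upgrade the order-preserving surjection already supplied by Proposition \ref{proluslr} to an order isomorphism, and to isolate exactly where the inverse hypothesis is needed. Since an inverse semigroup is in particular $\gl$-unipotent (Proposition \ref{proinv} (4) together with Definition \ref{dfnlu}), Proposition \ref{proluslr} tells us that $\lr$ is a supported category and that $\Gamma\colon Se\mapsto\mathfrak{r}_e$ is an order-preserving surjection from $(v\los,\preceq)$ onto $(\mathfrak{R},\sqsubseteq)$. Thus it remains only to prove that $\Gamma$ also reflects the order; a surjective map between posets that both preserves and reflects the order is automatically injective and hence an order isomorphism.

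First I would record the chain of implications used in the proof of Proposition \ref{proluslr}, namely $Se\preceq Sf\iff e\lel f\iff e\leqslant f\implies e\ler f\iff R_e\sqsubseteq R_f\iff\mathfrak{r}_e\sqsubseteq\mathfrak{r}_f$, where the last two equivalences come from (\ref{eqnsr}) and from the order isomorphism $S/\gr\cong\mathfrak{R}$, $R_e\mapsto\mathfrak{r}_e$. The only link in this chain that is not already a biconditional is $e\leqslant f\implies e\ler f$, and this is precisely the step that must be reversed in order to obtain order reflection.

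The key new ingredient is the symmetry of inverse semigroups. For idempotents $e,f$ one has $e\lel f\iff ef=e$ and $e\ler f\iff fe=e$. Since the idempotents of an inverse semigroup commute (Proposition \ref{proinv} (3)), $ef=fe$, whence $ef=e\iff fe=e$ and therefore $e\lel f\iff e\ler f$. This turns the single implication of Proposition \ref{proluslr} into an equivalence, so the entire chain above becomes a chain of biconditionals. This is the main (indeed the only) place where the inverse hypothesis is used beyond $\gl$-unipotency; everything else is bookkeeping.

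Finally I would assemble the biconditional $Se\preceq Sf\iff\mathfrak{r}_e\sqsubseteq\mathfrak{r}_f$ to conclude that $\Gamma$ is order-reflecting. Combined with the order preservation and surjectivity from Proposition \ref{proluslr}, this yields that $\Gamma$ is a bijective order isomorphism: injectivity follows since $\Gamma(Se)=\Gamma(Sf)$ forces both $Se\preceq Sf$ and $Sf\preceq Se$, hence $Se=Sf$. I do not anticipate any genuine obstacle here, as the content is entirely concentrated in the commutativity of idempotents, and the remaining steps merely transcribe the $\gl$-unipotent argument with the one-way implication replaced by an equivalence.
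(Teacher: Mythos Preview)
Your proof is correct. Both your argument and the paper's ultimately rest on the same fact---that in an inverse semigroup the quasi-orders $\lel$ and $\ler$ coincide on idempotents because idempotents commute---but the two proofs package this differently. The paper proves the corollary by composing two order isomorphisms already established: it invokes the preceding proposition (that $vF\colon Se\mapsto eS$ gives a normal category isomorphism $\los\cong\ros$, hence in particular an order isomorphism $v\los\to S/\gr$) and then composes with the isomorphism $R_e\mapsto\mathfrak{r}_e$ from Section~\ref{ssecslrs}. Your approach instead revisits the chain of implications from Proposition~\ref{proluslr} and upgrades the single one-way step $e\leqslant f\implies e\ler f$ to a biconditional directly via $ef=fe$. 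Your route is more self-contained and does not require the full category isomorphism $\los\cong\ros$; the paper's route is terser because it simply cites two existing isomorphisms and composes them.
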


\begin{proof}
	The map $vF$ in the previous proposition may be interpreted as an order isomorphism from $v\los$ to $S/\gr$ given by $Se\mapsto R_e$. As discussed in Section \ref{ssecslrs}, we know that the map $R_e\mapsto \mathfrak{r}_e$ is an order isomorphism from the poset $S/\gr$ to $\mathfrak{R}$. Hence $v\los$ is order isomorphic to $\mathfrak{R}$ via the map $Se\mapsto\mathfrak{r}_e$.
\end{proof}

\begin{rmk}
	
	When $S$ is an inverse semigroup, we see that $v\los$ is order isomorphic to $\mathfrak{R}$. In other words, the normal category $\los$ is supported by a poset which is isomorphic to $v\los$. Or by abuse of terminology, we can just say that the normal category $\los$ is supported by $v\los$, i.e., $\los$ is self-supported.
\end{rmk}

\begin{dfn}
	A supported category is said to be \emph{self-supported} if the support map is an order isomorphism.
\end{dfn}

\begin{rmk}
	In the above definition, we do not need to explicitly specify the supporting semilattice $\mathfrak{D}$ as we know that $\mathfrak{D}$ is the semilattice $\Gamma(v\mathcal{C})$. 	
\end{rmk} 

\begin{pro}
	Given a self-supported category $\mathcal{C}$ with $\mathfrak{D}:=\Gamma(v\mathcal{C})$, the connection semigroup $\sncd$ is  inverse. 
\end{pro}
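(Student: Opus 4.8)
The plan is to reduce the claim to the characterisation of inverse semigroups furnished by Proposition \ref{proinv}(4): a regular semigroup is inverse exactly when each $\gl$-class \emph{and} each $\gr$-class contains a unique idempotent. A self-supported category is, in particular, supported, and hence connected, so Proposition \ref{proud2} already gives that $\sncd$ is regular and Proposition \ref{prosncsup} gives that $\sncd$ is $\gl$-unipotent, i.e.\ each $\gl$-class contains a unique idempotent. Thus the entire work lies in proving that each $\gr$-class contains a unique idempotent; once this is done, Proposition \ref{proinv}(4) closes the argument immediately.

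To set this up I would first recall, from (\ref{eqnesncd}), that every idempotent cone of $\sncd$ has the form $\epsilon(c,\mathfrak{d})$ with $c$ connected by $\mathfrak{d}$, and that by Lemma \ref{lemgrcd1} two such idempotents $\epsilon(c_1,\mathfrak{d}_1)$ and $\epsilon(c_2,\mathfrak{d}_2)$ are $\gr$-related precisely when $\mathfrak{d}_1=\mathfrak{d}_2$. So the question of $\gr$-uniqueness becomes: if two idempotents share the same $\mathfrak{d}\in\mathfrak{D}$, must they be equal?

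The crux is to invoke self-supportedness, which says the support map $\Gamma\colon v\mathcal{C}\to\mathfrak{D}$, $c\mapsto R_{\epsilon(c,\mathfrak{d})}=\mathfrak{d}$, is an order isomorphism and in particular injective. Suppose $\epsilon(c_1,\mathfrak{d})$ and $\epsilon(c_2,\mathfrak{d})$ lie in one $\gr$-class, so they share the common $\mathfrak{d}$. Then $\Gamma(c_1)=\mathfrak{d}=\Gamma(c_2)$, and injectivity of $\Gamma$ forces $c_1=c_2$. Since by Proposition \ref{proidlus}(1) an idempotent of $\sncd$ is determined by its vertex, we conclude $\epsilon(c_1,\mathfrak{d})=\epsilon(c_2,\mathfrak{d})$, giving uniqueness of the idempotent in the $\gr$-class.

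There is no serious obstacle here; the short argument is really about correctly identifying where the stronger hypothesis does its work. In a merely supported (but not self-supported) category, several distinct objects may be connected by the same $\mathfrak{d}$, producing several distinct $\gr$-related idempotents, so the $\gr$-class need not be a singleton; it is exactly the order-isomorphism (equivalently, the injectivity) built into the self-supported condition that excludes this and thereby upgrades $\gl$-unipotency to the full inverse property. Combining the $\gl$- and $\gr$-uniqueness and applying Proposition \ref{proinv}(4) then yields that $\sncd$ is inverse.
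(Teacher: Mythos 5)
Your proof is correct and follows essentially the same route as the paper: reduce to Proposition \ref{proinv}(4), get $\gl$-unipotency from Proposition \ref{prosncsup}, and use injectivity of the support map $\Gamma$ to show that $\gr$-related idempotents $\epsilon(c_1,\mathfrak{d})$ and $\epsilon(c_2,\mathfrak{d})$ must have equal vertices and hence coincide. Your explicit citations of Lemma \ref{lemgrcd1} and Proposition \ref{proidlus}(1) merely spell out steps the paper leaves implicit.
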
 
\begin{proof}
	Since a self-supported category is supported, by Proposition \ref{prosncsup} the connection semigroup $\sncd$ is $\gl$-unipotent. Now let $\epsilon_1=\epsilon(c_1,\mathfrak{d})$ and $\epsilon_2=\epsilon(c_2,\mathfrak{d})$ be $\gr$-related idempotents of $\sncd$. Then the objects $c_1$ and $c_2$ are both connected by $\mathfrak{d}$. As the support map $\Gamma\colon v\mathcal{C}\to \mathfrak{D}$ is injective, we get  $c_1=c_2$. Hence $\epsilon_1=\epsilon_2$ and each $\gr$-class in $\sncd$ contains a unique idempotent. By Proposition \ref{proinv}  (4), the semigroup $\sncd$ is inverse.
\end{proof} 

Using Corollary \ref{corinv} and specialising Proposition \ref{proluslr}, we conclude the next statement.

\begin{pro}
	Let $S$ be an inverse semigroup. Then the category $\los$ is self-supported  such that $\mathfrak{R}:=\Gamma_S(\los)$ is order isomorphic to $v\los$.
\end{pro}

Finally,  Theorem \ref{thmcatlus} allows us to obtain the following equivalence theorem for inverse semigroups which may be realised as a weaker form of ESN Theorem.

\begin{thm}\label{thminvssc}
	The category of inverse semigroups is equivalent to the category of self-supported categories.
\end{thm}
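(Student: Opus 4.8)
The plan is to obtain Theorem~\ref{thminvssc} as a restriction of the already-established equivalence in Theorem~\ref{thmcatlus}, in exactly the same way that the $\gr$-unipotent corollary is obtained from its $\gl$-unipotent counterpart. The key observation is that inverse semigroups form a full subcategory of $\mathbf{LUS}$ (by Proposition~\ref{proinv}~(4), every inverse semigroup is $\gl$-unipotent, and we take all semigroup homomorphisms as morphisms), while self-supported categories form a full subcategory of $\mathbf{SC}$. So it suffices to verify that the two functors $\mathtt{C}$ and $\mathtt{S}$ of Theorem~\ref{thmcatlus} restrict and corestrict to these subcategories, and that the natural isomorphisms $1\cong\mathtt{C}\mathtt{S}$ and $1\cong\mathtt{S}\mathtt{C}$ then restrict automatically.

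First I would check that $\mathtt{C}$ carries an inverse semigroup to a self-supported category: this is precisely the content of the proposition stating that for an inverse semigroup $S$ the category $\los$ is self-supported with $\mathfrak{R}=\Gamma_S(\los)$ order isomorphic to $v\los$. Conversely I would check that $\mathtt{S}$ carries a self-supported category to an inverse semigroup: this is the proposition showing that the connection semigroup $\sncd$ of a self-supported category is inverse (each $\gr$-class contains a unique idempotent, so by Proposition~\ref{proinv}~(4) the semigroup is inverse). Thus both functors respect the subcategories. Since the subcategories are \emph{full}, every SC-morphism between two self-supported categories and every semigroup homomorphism between two inverse semigroups is already a legitimate morphism, so no further checking of morphisms is needed.

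Then I would invoke that $\mathbf{LUS}$ is equivalent to $\mathbf{SC}$ via $\mathtt{C}$ and $\mathtt{S}$, with the components of the natural isomorphisms given by $\bar{\rho}(S)\colon S\to\slr$ (Proposition~\ref{prolusiso}) and by $\iota(\cod)$ (the CC-isomorphism from Lemma~\ref{lemcatiso2}). Restricting $\eta$ to the subcategory of inverse semigroups gives, for each inverse $S$, the isomorphism $\bar{\rho}(S)$ onto $\slr$, which is again inverse; and restricting the other natural isomorphism gives, for each self-supported $\mathcal{C}$, the CC-isomorphism $\iota(\cod)$ onto $\mathbb{L}(\sncd)_\mathfrak{R}$, which is again self-supported. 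Naturality squares are inherited verbatim from those in Lemmas~\ref{lemcatiso1} and~\ref{lemcatiso2}, since the morphisms and the natural transformation components are unchanged. This yields the two required natural isomorphisms and hence the equivalence.

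The only genuine point requiring care — and the step I expect to be the mild obstacle — is confirming that the restricted functors land \emph{back} inside the subcategories rather than merely starting there; that is, that $\mathtt{C}\mathtt{S}$ and $\mathtt{S}\mathtt{C}$ preserve the subclasses so that the natural isomorphisms are morphisms internal to the subcategory. This is handled by the two preservation propositions cited above together with the observation that an isomorphic image of an inverse semigroup is inverse and an isomorphic image of a self-supported category is self-supported. Once these closure facts are in place, the equivalence follows formally, and one records (as remarked before the statement) that this is a weaker, non-isomorphism form of the ESN Theorem because an element of $S$ is represented by a cone rather than retained as itself.
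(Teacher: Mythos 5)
Your proposal is correct and follows essentially the same route as the paper: the paper also deduces Theorem \ref{thminvssc} by specialising Theorem \ref{thmcatlus} to the full subcategories in question, using exactly the two preservation facts you cite (that $\los$ is self-supported when $S$ is inverse, and that $\sncd$ is inverse when $\cod$ is self-supported). If anything, you make explicit the formal restriction-of-functors and restriction-of-natural-isomorphisms argument that the paper leaves implicit.
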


\section{Totally left reductive semigroups and regular monoids}\label{secrm}
In this section we aim to study regular monoids. Observe that they are both left and right reductive. As a precursor to regular monoids, we identify a new class of left reductive regular  semigroups, which we call \emph{totally left reductive}. In the process, we realise a couple of interesting results regarding the semigroup of all cones coming from an arbitrary normal category $\mathcal{C}$ (also see Corollary \ref{cortlclr}). We introduce morphisms between arbitrary normal categories and thereby define the category of normal categories. The discussion in this section tells us that if a regular semigroup is totally left reductive (in particular, if it is a monoid), the cross-connection analysis is expendable and normal categories suffice to describe  such semigroups completely.  

\subsection{Totally left reductive semigroups}\label{sstlrs}

Recall that, given a regular semigroup $S$, we have the  homomorphism $\bar{\rho}\colon S \to \sls$,  $a\mapsto r^a$, and by Theorem \ref{thmlslr}, this is injective when $S$ is left reductive. The question of the surjectivity of $\bar{\rho}$ leads to the following definition.
\begin{dfn}
	A regular semigroup $S$ is said to be \emph{totally left reductive} if $S$ is isomorphic to the semigroup $\sls$ of cones in the category $\los$.
\end{dfn}

It is not difficult to see that regular monoids are totally left reductive semigroups (see Proposition \ref{prorm}), but this class also contains several non-monoids. It has been showed in \cite[Theorem 3.1]{tx}  and  in \cite[Theorem 2]{tlx}, respectively, that singular transformation semigroups and singular linear transformation semigroups are totally left reductive. The semigroups of order preserving mappings on finite chains \cite{sneha2024} and the Clifford inverse semigroups \cite{azpr2023} also fall into this class. All the above mentioned papers were written within the framework of Nambooripad's cross-connection theory. Our next aim is to employ connected categories to look at these classes, and, as the reader may see, our analysis will take us to easier and stronger characterisations of each of these classes. 

Recall from Section \ref{seccxnlrs} that we can treat an arbitrary normal category $\mathcal{C}$ as a connected category $\cod$ by taking $\mathfrak{D}= \snc /\gr$. For the remaining part of the paper, we shall  treat normal categories in this manner without further comment. Also remember by Corollary \ref{cortlclr} that the semigroup $ \snc $ of all cones in a normal category $\mathcal{C}$ is  left reductive such that $\mathbb{L}(\snc)\cong \mathcal{C}$. The next lemma follows by a routine verification.
\begin{lem}\label{lemiso}
	Let $\mathcal{C}$ and $\mathcal{C'}$ be isomorphic normal categories. Then the regular semigroup $\snc$ is isomorphic to the semigroup $\widehat{\mathcal{C'}}$.	
\end{lem}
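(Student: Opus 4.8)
The plan is to transport cones directly along the given isomorphism. Let $\Theta\colon \mathcal{C}\to\mathcal{C'}$ be the inclusion-preserving, fully-faithful functor whose object map $v\Theta$ is a bijective order isomorphism, witnessing $\mathcal{C}\cong\mathcal{C'}$. For a cone $\gamma$ in $\mathcal{C}$, I would define a map $\gamma^\Theta$ on $v\mathcal{C'}$ by setting $\gamma^\Theta(\Theta(c)):=\Theta(\gamma(c))$ for each $c\in v\mathcal{C}$; since $v\Theta$ is a bijection this specifies $\gamma^\Theta(c')$ for every $c'\in v\mathcal{C'}$. The candidate isomorphism is then $\Phi\colon\snc\to\widehat{\mathcal{C'}}$, $\gamma\mapsto\gamma^\Theta$. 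Note that $\Theta$ is in fact an isomorphism of categories (fully-faithful with bijective object map), and its inverse functor is again inclusion preserving since $v\Theta$ is an order isomorphism.

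First I would check that $\gamma^\Theta$ is genuinely a cone with vertex $\Theta(z_\gamma)$. For the compatibility axiom (Definition \ref{dfnnormcone}(1)), given $a'\preceq b'$ in $\mathcal{C'}$, the order isomorphism $v\Theta$ gives $a:=v\Theta^{-1}(a')\preceq b:=v\Theta^{-1}(b')$, and since $\Theta$ is inclusion preserving, $\Theta(j(a,b))=j(a',b')$; applying the functor $\Theta$ to the identity $j(a,b)\gamma(b)=\gamma(a)$ then yields $j(a',b')\gamma^\Theta(b')=\gamma^\Theta(a')$. For axiom (2), if $\gamma(c)$ is an isomorphism then so is $\Theta(\gamma(c))=\gamma^\Theta(\Theta(c))$, as functors preserve isomorphisms. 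Bijectivity of $\Phi$ is then immediate: the analogous transport $\delta\mapsto\delta^{\Theta^{-1}}$ along $\Theta^{-1}$ provides a two-sided inverse.

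The substantial step is that $\Phi$ is a homomorphism, i.e. $(\gamma\cdot\delta)^\Theta=\gamma^\Theta\cdot\delta^\Theta$. The key input is that an inclusion-preserving functor between normal categories preserves normal factorisations (as recalled in the proof of Lemma \ref{lemphi1}), hence preserves retractions, isomorphisms and inclusions, and therefore epimorphic components and images; in particular $\Theta(f^\circ)=(\Theta(f))^\circ$ for every morphism $f$. Using this I would record two facts: that $\Theta$ intertwines the action of (\ref{eqnbin0}), namely $(\gamma\ast f)^\Theta=\gamma^\Theta\ast\Theta(f)$ for an epimorphism $f$ with $\im f=z_f$ (a direct componentwise check, since $\Theta$ is a functor), and that $z_{\gamma^\Theta}=\Theta(z_\gamma)$, so that $\delta^\Theta(z_{\gamma^\Theta})=\Theta(\delta(z_\gamma))$. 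Combining these via (\ref{eqnbin}),
\begin{equation*}
	(\gamma\cdot\delta)^\Theta=\big(\gamma\ast(\delta(z_\gamma))^\circ\big)^\Theta=\gamma^\Theta\ast\Theta\big((\delta(z_\gamma))^\circ\big)=\gamma^\Theta\ast\big(\delta^\Theta(z_{\gamma^\Theta})\big)^\circ=\gamma^\Theta\cdot\delta^\Theta,
\end{equation*}
as required.

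I expect the main obstacle to be bookkeeping rather than conceptual: one must ensure that each ingredient of the composition (\ref{eqnbin})---the vertex $z_\gamma$, the component $\delta(z_\gamma)$, its epimorphic component, and the $\ast$-action---is transported coherently by $\Theta$, which ultimately rests on the single fact that a normal-category isomorphism preserves canonical factorisations. Once that preservation is in hand the verification is routine, and the whole argument specialises immediately to the connected-category setting used elsewhere in the paper.
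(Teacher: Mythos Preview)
Your proposal is correct and is precisely the routine verification the paper alludes to; the paper does not spell out a proof beyond the phrase ``follows by a routine verification'', and your transport-of-cones argument along the normal-category isomorphism, together with the preservation of canonical factorisations, is exactly what that verification amounts to.
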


Further, combining Corollary \ref{cortlclr} and Lemma \ref{lemiso}, we conclude that the semigroup  $\snc$  is isomorphic to the semigroup $\widehat{\mathbb{L}(\snc)}$ of cones in the left category $\mathbb{L}(\snc)$. Hence, we get the next statement.

\begin{pro}\label{pronctlrs}
	Let $\mathcal{C}$ be a normal category. The semigroup $\snc$ of all cones in $\mathcal{C}$ is a totally left reductive  semigroup.
\end{pro}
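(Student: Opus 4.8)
The plan is to observe that the statement is exactly the conclusion reached by chaining the two results that immediately precede it, namely Corollary \ref{cortlclr} and Lemma \ref{lemiso}. Recall that a regular semigroup $S$ is totally left reductive precisely when $S$ is isomorphic to the cone semigroup $\widehat{\mathbb{L}(S)}$ of its own left category. So, taking $S:=\snc$, I must produce an isomorphism between $\snc$ and $\widehat{\mathbb{L}(\snc)}$.

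First I would invoke Lemma \ref{lemrs} to recall that $\snc$ is a genuine regular semigroup, so that forming its left category $\mathbb{L}(\snc)$ makes sense and the notion of total left reductivity is applicable. Next I would apply Corollary \ref{cortlclr} to the normal category $\mathcal{C}$: this tells us that $\mathbb{L}(\snc)$ is itself a normal category and that it is isomorphic to $\mathcal{C}$ (via the functor of Lemma \ref{lemf2} specialised to $\mathfrak{D}=\snc/\gr$, under which $\sncd=\snc$). Here it is worth confirming that this isomorphism is of the strong type required in Section \ref{ssecnc}, that is, an inclusion preserving, fully faithful functor whose object map is an order isomorphism; this is exactly what the cited results deliver.

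With the normal category isomorphism $\mathbb{L}(\snc)\cong\mathcal{C}$ in hand, I would then apply Lemma \ref{lemiso} to the pair $\mathcal{C}$ and $\mathbb{L}(\snc)$. Since isomorphic normal categories have isomorphic cone semigroups, this yields
$$\snc\cong\widehat{\mathbb{L}(\snc)}.$$
By the very definition of a totally left reductive semigroup, this is precisely the assertion of the proposition.

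The argument carries no real obstacle, since both ingredients are already established; the only point demanding a little care is to check that the isomorphism furnished by Corollary \ref{cortlclr} genuinely meets the hypotheses of Lemma \ref{lemiso} (rather than being a mere equivalence of categories), and that the normal category structure carried by $\mathbb{L}(\snc)$ is the one it inherits as a left category. Both of these are routine given the earlier development, so the proposition follows at once by composing the two cited results.
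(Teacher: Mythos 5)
Your proposal is correct and takes essentially the same route as the paper: the paper likewise deduces the proposition by combining Corollary \ref{cortlclr} (which gives that $\snc$ is left reductive with $\mathbb{L}(\snc)\cong\mathcal{C}$ as normal categories) with Lemma \ref{lemiso}, yielding $\snc\cong\widehat{\mathbb{L}(\snc)}$ and hence total left reductivity by definition. Your added care about the isomorphism being a normal category isomorphism (rather than a mere equivalence) is a sensible sanity check but does not change the argument.
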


Summarising the above discussion: a normal category $\mathcal{C}$ can be realised as a connected category $\mathcal{C}_{\snc/\gr}$ such that the connection semigroup $\snc=\widehat{\mathcal{C}_{\snc/\gr}}$ is  totally left reductive,  and the left category $\mathbb{L}(\snc)$ is isomorphic to $\mathcal{C}$. Conversely, given a totally left reductive semigroup $S$, the category $\los$ is normal, and the semigroup $\sls$ is isomorphic to $S$. Thus, the regular poset $\sls/\gr$ is isomorphic to $S/\gr$, whence the normal category $\los$ may be realised as the connected category $\los_{S/\gr}$ and the connection semigroup $\widehat{\los_{S/\gr}}=\sls$ is isomorphic to $S$.

Now, normal categories (with CC-morphisms) form a full subcategory of the category $\mathbf{CC}$, and totally left reductive semigroups (with regular semigroup homomorphisms) form a full subcategory of the category $\mathbf{LRS}$. Therefore, specialising Theorem \ref{thmlrscc}, we obtain:

\begin{thm}\label{thmnccat}
	The category of normal categories is equivalent to the category of totally left reductive semigroups.
\end{thm}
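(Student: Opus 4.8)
The plan is to deduce Theorem \ref{thmnccat} not by constructing new functors but by \emph{restricting} the category equivalence of Theorem \ref{thmlrscc} to appropriate full subcategories. Write $\mathbf{TLR}$ for the full subcategory of $\mathbf{LRS}$ whose objects are the totally left reductive semigroups, and $\mathbf{NC}$ for the full subcategory of $\mathbf{CC}$ whose objects are the normal categories $\mathcal{C}$, each realised as the connected category $\mathcal{C}_{\snc/\gr}$ by taking the full down-set $\mathfrak{D}=\snc/\gr$ (as described immediately before Corollary \ref{cortlclr}). Since both are \emph{full} subcategories (all CC-morphisms between the relevant connected categories, respectively all semigroup homomorphisms between the relevant semigroups, are retained), the only genuine work is to check that the functors $\mathtt{C}$ and $\mathtt{S}$ of Propositions \ref{profunctorC} and \ref{profunctorS} carry one subcategory into the other at the level of objects.

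First I would verify that $\mathtt{S}$ restricts to a functor $\mathbf{NC}\to\mathbf{TLR}$. Given a normal category $\mathcal{C}$ viewed as $\mathcal{C}_{\snc/\gr}$, we have $\mathtt{S}(\mathcal{C}_{\snc/\gr})=\sncd=\snc$, and Proposition \ref{pronctlrs} asserts precisely that $\snc$ is totally left reductive; hence $\mathtt{S}$ sends objects of $\mathbf{NC}$ to objects of $\mathbf{TLR}$. Next I would verify that $\mathtt{C}$ restricts to a functor $\mathbf{TLR}\to\mathbf{NC}$. Here the decisive point is the definition of total left reductivity: if $S$ is totally left reductive then $\bar{\rho}\colon S\to\sls$ is an isomorphism, so the induced map $S/\gr\to\sls/\gr$ is a bijection and therefore $\mathfrak{R}=\bar{\rho}(S/\gr)=\sls/\gr$ is the \emph{full} down-set. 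Consequently $\mathtt{C}(S)=\lr$ coincides with the normal category $\los$ treated with its entire poset of $\gr$-classes, i.e.\ an object of $\mathbf{NC}$.

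With these two containments in hand, the conclusion follows from the standard fact that an equivalence restricts to full subcategories that are closed, on objects and up to isomorphism, under the two functors. Concretely, since $\mathbf{NC}$ and $\mathbf{TLR}$ are full, the restrictions $\mathtt{C}|_{\mathbf{TLR}}$ and $\mathtt{S}|_{\mathbf{NC}}$ remain fully faithful, and the components of the natural isomorphisms $\eta$ and $\iota$ of Lemmas \ref{lemcatiso1} and \ref{lemcatiso2} at objects of these subcategories are themselves morphisms of the subcategories (fullness again, together with the observation that the composites $\mathtt{C}\mathtt{S}$ and $\mathtt{S}\mathtt{C}$ preserve the relevant object classes; for instance $\slr\cong S$ is again totally left reductive). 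Thus $\eta$ and $\iota$ restrict to natural isomorphisms $1_{\mathbf{TLR}}\cong\mathtt{C}\mathtt{S}$ and $1_{\mathbf{NC}}\cong\mathtt{S}\mathtt{C}$, yielding the desired equivalence $\mathbf{TLR}\simeq\mathbf{NC}$.

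The main obstacle is the object-level verification that $\mathtt{C}$ lands in $\mathbf{NC}$: one must argue carefully that total left reductivity forces the connecting down-set $\mathfrak{R}$ to be all of $\sls/\gr$, so that the connected category produced is genuinely a normal category with the maximal down-set rather than a proper connected category. Once this and Proposition \ref{pronctlrs} are secured, everything else is formal, and in particular no new functors, morphisms, or natural transformations need to be built from scratch.
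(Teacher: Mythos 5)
Your proposal is correct and takes essentially the same route as the paper: the paper likewise realises each normal category $\mathcal{C}$ as the connected category $\mathcal{C}_{\snc/\gr}$ with the full down-set, uses Proposition \ref{pronctlrs} for one object-level containment and the isomorphism $S\cong\sls$ (so that $\mathfrak{R}$ is the whole of $\sls/\gr$) for the other, observes that both classes form full subcategories, and then obtains the result by ``specialising'' Theorem \ref{thmlrscc}. Your write-up simply makes explicit the standard restriction-of-an-equivalence argument that the paper leaves implicit.
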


\subsection{Regular monoids}\label{ssecrm}

Now we look at the most important class of totally left reductive semigroups, namely that of regular monoids. Although the construction we present in this special case does not seem very insightful regarding the ideal structure of the monoids (as a monoid itself forms a two sided ideal, and  consequently, the left and the right actions on this ideal determine the entire actions), our analysis does provide a category equivalence of regular monoids with small categories which  we believe may prove to be quite useful.

To begin with, let $S$ be a regular monoid with identity $1$. The left ideal category $\los$ of  $S$ has a largest object, namely $S1=S$. Hence we have the corollary to Theorem \ref{thmlslr} below. 

\begin{pro}[{\cite[Corollary III.17]{cross}}]\label{prorm}
Let $S$ be a regular monoid. The normal category $\los$ has a largest object, and $S$ is isomorphic to the semigroup $\sls$ of all cones in $\los$.
\end{pro}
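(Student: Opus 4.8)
The plan is to combine the general embedding of Theorem \ref{thmlslr} with a surjectivity argument that exploits the largest object. First I would observe that the claim about the largest object is immediate: since $S$ is a monoid, $1\in E(S)$ and $S1=S$ is an object of $\los$, while every object $Se$ satisfies $Se\subseteq S1$; thus $S1$ is the largest object. Moreover, a monoid is trivially left reductive (taking $x=1$ in the definition forces $a=b$ whenever $\rho_a=\rho_b$), so Theorem \ref{thmlslr} already supplies an injective homomorphism $\bar{\rho}\colon S\to\sls$, $a\mapsto r^a$. It therefore remains only to prove that $\bar{\rho}$ is surjective, that is, that every cone in $\los$ is principal.

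Next I would record the key structural consequence of having a largest object: \emph{a cone is completely determined by its component at $S1$}. Indeed, for any object $Se$ we have $Se\preceq S1$, so Definition \ref{dfnnormcone}(1) gives $\gamma(Se)=j(Se,S1)\gamma(S1)$. Hence two cones with the same vertex and the same component at $S1$ coincide. Now take an arbitrary cone $\gamma$ with vertex $Sf$, where $f\in E(S)$. By the description of morphisms in (\ref{los1}), its top component can be written as $\gamma(S1)=r(1,a,f)$ for some $a\in 1Sf=Sf$.

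The substance of the argument is to identify $a$ as the element whose principal cone is $\gamma$, and for this I would show $a\gl f$ (equivalently $f\in E(L_a)$). By Definition \ref{dfnnormcone}(2) there is an object $Se'$ such that $\gamma(Se')\colon Se'\to Sf$ is an isomorphism. Using the determination principle and composing morphisms via (\ref{los1}), we get $\gamma(Se')=j(Se',S1)\gamma(S1)=r(e',e',1)\,r(1,a,f)=r(e',e'a,f)$. Since this morphism is an isomorphism it is in particular surjective onto $Sf$, so its image $S(e'a)$ equals $Sf$; together with $S(e'a)\subseteq Sa\subseteq Sf$ this squeezes to $Sa=Sf$, i.e. $a\gl f$. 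Consequently $f\in E(L_a)$, so the principal cone $r^a$ of (\ref{eqnprinc}) has vertex $Sf$ and satisfies $r^a(S1)=r(1,1\cdot a,f)=r(1,a,f)=\gamma(S1)$. As $\gamma$ and $r^a$ are cones with the same vertex $Sf$ agreeing at the largest object, the determination principle yields $\gamma=r^a=\bar{\rho}(a)$. Thus $\bar{\rho}$ is onto and hence an isomorphism.

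I expect the only delicate point to be the verification $a\gl f$: one must take the vertex generator $f$ to be idempotent, handle the morphism equalities in $\los$ (recalling that $r(e,u,f)=r(g,v,h)$ only under the $\gl$-conventions of Section \ref{ssecnc}), and justify that an isomorphism in $\los$ forces equality of the relevant principal left ideals. Everything else — the largest object, left reductivity, and the ``determined by the top component'' principle — is routine given Theorem \ref{thmlslr} and Definition \ref{dfnnormcone}.
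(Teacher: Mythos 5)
Your proof is correct, and it follows essentially the same route as the paper, which states the result as an immediate consequence of the largest object $S1=S$ together with Theorem~\ref{thmlslr}, deferring the surjectivity of $\bar{\rho}$ to the cited Corollary~III.17 of Nambooripad; your argument is precisely the standard proof of that corollary. Indeed, your ``determined by the component at the largest object'' principle is the same device the paper itself uses in Lemma~\ref{lemfpn2} for $\fpn$ and in the lemma showing that a bounded above normal category yields a monoid, so your writeup simply supplies the details the paper leaves to the citation.
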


The above proposition leads us to identify certain special normal categories.
\begin{dfn}
A normal category $\mathcal{C}$ is said to be \emph{bounded above} if there exists an object $\mathfrak{d}\in v\mathcal{C}$ such that $c\preceq d$, for every $c\in v\mathcal{C}$.
\end{dfn}

\begin{lem}
Let $\mathcal{C}$ be a normal category which is bounded above. Then the semigroup $\snc$ of all cones in $\mathcal{C}$ is a regular monoid.
\end{lem}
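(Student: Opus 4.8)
The plan is to exhibit a two-sided identity for the regular semigroup $\snc$; its regularity is already guaranteed by Lemma \ref{lemrs}, so the only remaining task is to produce an identity cone, after which $\snc$ is a regular monoid. Since $\mathcal{C}$ is bounded above, fix an object $d\in v\mathcal{C}$ with $c\preceq d$ for every $c\in v\mathcal{C}$. The natural candidate for the identity is the cone $\mathbf{1}$ with vertex $d$ defined by $\mathbf{1}(c):=j(c,d)$ for each $c\in v\mathcal{C}$. First I would check that $\mathbf{1}$ is indeed a cone: whenever $a\preceq b$ we have $j(a,b)\mathbf{1}(b)=j(a,b)j(b,d)=j(a,d)=\mathbf{1}(a)$, since the inclusions form a strict preorder, so Definition \ref{dfnnormcone}(1) holds; and $\mathbf{1}(d)=j(d,d)=1_d$ is an isomorphism, giving Definition \ref{dfnnormcone}(2). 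By Lemma \ref{lemrs}, $\mathbf{1}$ is in fact idempotent.

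The crucial step, and the one where the bounded-above hypothesis does the real work, is the observation that \emph{for every cone $\gamma\in\snc$ the component $\gamma(d)$ is an epimorphism with $\im\gamma(d)=z_\gamma$}. Indeed, by Definition \ref{dfnnormcone}(2) there is an object $c_0$ with $\gamma(c_0)\colon c_0\to z_\gamma$ an isomorphism, and since $c_0\preceq d$ the cone condition gives $j(c_0,d)\gamma(d)=\gamma(c_0)$. As $\gamma(c_0)$ is an isomorphism it is in particular an epimorphism, and being the composite $j(c_0,d)\gamma(d)$ it forces its right factor $\gamma(d)$ to be an epimorphism as well. Consequently $\im\gamma(d)=z_\gamma$, and by Lemma \ref{lemepi}(2) we have $(\gamma(d))^\circ=\gamma(d)$.

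With this in hand I would verify both identity laws directly. For the left law, since $z_{\mathbf{1}}=d$, equation (\ref{eqnbin}) together with the previous paragraph gives $\mathbf{1}\cdot\gamma=\mathbf{1}\ast(\gamma(d))^\circ=\mathbf{1}\ast\gamma(d)$; evaluating the component at an arbitrary $c$ and applying the cone condition yields $(\mathbf{1}\ast\gamma(d))(c)=\mathbf{1}(c)\,\gamma(d)=j(c,d)\gamma(d)=\gamma(c)$, so $\mathbf{1}\cdot\gamma=\gamma$. For the right law, $\mathbf{1}(z_\gamma)=j(z_\gamma,d)$ is an inclusion, so by Lemma \ref{lemepi}(5) its epimorphic component satisfies $(\mathbf{1}(z_\gamma))^\circ=1_{z_\gamma}$; hence $\gamma\cdot\mathbf{1}=\gamma\ast(\mathbf{1}(z_\gamma))^\circ=\gamma\ast 1_{z_\gamma}=\gamma$. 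Thus $\mathbf{1}$ is a two-sided identity, and $\snc$ is a regular monoid.

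The main obstacle is the epimorphism claim of the second paragraph; everything else is bookkeeping with equations (\ref{eqnbin0}) and (\ref{eqnbin}) and the parts of Lemma \ref{lemepi}. I expect that isolating exactly where \emph{bounded above} is used — namely to route the isomorphism component $\gamma(c_0)$ through the greatest object $d$ so that $\gamma(d)$ becomes a full epimorphism — is the conceptual heart of the argument, and it is precisely the failure of this routing in a normal category without a greatest object that prevents $\snc$ from possessing an identity in general.
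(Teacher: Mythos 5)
Your proof is correct and follows essentially the same route as the paper: the paper takes the idempotent cone $\epsilon_k$ at the largest object $k$ (via axiom (NC~4)), shows its components are the inclusions $j(c,k)$, and then performs exactly your two computations $\epsilon_k\gamma=\gamma$ and $\gamma\epsilon_k=\gamma$ using equation~(\ref{eqnbin}) and Lemma~\ref{lemepi}. The only differences are cosmetic — you build the inclusion cone directly and check the cone axioms rather than invoking (NC~4), and you supply the justification that $\gamma(d)$ is an epimorphism, a fact the paper merely observes.
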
 

\begin{proof}
By Lemma \ref{lemrs}, we know that $\snc$ is a regular semigroup. Since $\mathcal{C}$ is bounded above, it has a largest object, say $k$. Let $\epsilon_k$ be an  idempotent cone in $\mathcal{C}$ with vertex $k$. Now, for $c\in v\mathcal{C}$, we have $c\preceq k$, and  given an arbitrary cone $\gamma\in \snc$,  by Definition \ref{dfnnormcone} (1) we get $\gamma(c)=j(c,k)\gamma(k)$. In particular, $\epsilon_k(c)=j(c,k)\epsilon_k(k)=j(c,k)1_k=j(c,k)$. Observe that $\gamma(k)$ will always be an epimorphism. Then using equation (\ref{eqnbin}), we see that
$$\gamma\:\epsilon_k =\gamma\ast (\epsilon_k(z_\gamma))^\circ=\gamma\ast (j(z_\gamma,k))^\circ=\gamma\ast 1_{z_\gamma}=\gamma.$$ 
Also for an arbitrary $c\in v\mathcal{C}$,
$$\epsilon_k \gamma (c)= \epsilon_k(c) (\gamma(k))^\circ= j(c,k) \gamma(k)=\gamma(c).$$
Hence $\epsilon_k \gamma =\gamma= \gamma \epsilon_k$, for any $\gamma\in \snc$. Thus the semigroup $\snc$ is a monoid with identity $\epsilon_k$.
\end{proof}

Since regular monoids form a full subcategory of the category of totally left reductive semigroups, and bounded above categories form a full subcategory of normal categories, specialising Theorem \ref{thmnccat}, we get:
\begin{thm}\label{thmbanc}
The category of regular monoids is equivalent to the category of bounded above normal categories (with CC-morphisms).
\end{thm}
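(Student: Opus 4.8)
The plan is to obtain the statement by \emph{restricting} the equivalence of Theorem \ref{thmnccat} to the appropriate full subcategories, rather than reconstructing functors and natural isomorphisms from scratch. Recall that Theorem \ref{thmnccat} supplies mutually quasi-inverse functors, namely the restrictions of $\mathtt{C}$ and $\mathtt{S}$ from Propositions \ref{profunctorC} and \ref{profunctorS}, between the category of normal categories and the category of totally left reductive semigroups, together with the natural isomorphisms furnished by Lemmas \ref{lemcatiso1} and \ref{lemcatiso2}. Since regular monoids form a full subcategory of the totally left reductive semigroups, and bounded above normal categories form a full subcategory of the normal categories, it suffices to check that $\mathtt{C}$ and $\mathtt{S}$ carry one distinguished subcategory into the other at the level of objects; the standard principle that an equivalence restricts to full subcategories closed under its functors then finishes the argument. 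Write $\mathbf{RM}$ for the category of regular monoids and $\mathbf{BN}$ for the category of bounded above normal categories.

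First I would verify the two object-level correspondences. If $S$ is a regular monoid, then by Proposition \ref{prorm} the left category $\los$ has a largest object $S1=S$ and is therefore bounded above, so $\mathtt{C}$ sends an object of $\mathbf{RM}$ to an object of $\mathbf{BN}$. Conversely, if $\mathcal{C}$ is a bounded above normal category, then by the lemma immediately preceding this theorem the semigroup $\snc$ is a regular monoid (with identity the idempotent cone $\epsilon_k$ at the largest object $k$), so $\mathtt{S}$ sends an object of $\mathbf{BN}$ to an object of $\mathbf{RM}$. These are exactly the facts needed to see that the restrictions $\mathtt{C}|\colon\mathbf{RM}\to\mathbf{BN}$ and $\mathtt{S}|\colon\mathbf{BN}\to\mathbf{RM}$ are well defined on objects. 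Fullness and faithfulness transfer for free: both subcategories are full, and the restricted functors act on morphisms exactly as the original $\mathtt{C}$ and $\mathtt{S}$, which are already fully faithful by Theorem \ref{thmnccat}.

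It then remains to restrict the two natural isomorphisms. For $S\in v\mathbf{RM}$, the component $\bar{\rho}(S)\colon S\to\mathtt{S}\mathtt{C}(S)$ of the natural isomorphism of Lemma \ref{lemcatiso1} is an isomorphism of semigroups; since $\mathtt{C}(S)$ is bounded above and $\mathtt{S}$ of a bounded above category is a regular monoid, the target $\mathtt{S}\mathtt{C}(S)$ again lies in $\mathbf{RM}$, and fullness of $\mathbf{RM}$ makes $\bar{\rho}(S)$ an isomorphism inside $\mathbf{RM}$. Hence this natural isomorphism restricts to $1_{\mathbf{RM}}\cong\mathtt{C}\mathtt{S}$, and the identical argument applied to the CC-isomorphisms $\iota(\cod)$ of Lemma \ref{lemcatiso2} restricts the other one to $1_{\mathbf{BN}}\cong\mathtt{S}\mathtt{C}$; the commuting naturality squares of those lemmas remain valid verbatim, as the morphisms between monoids (resp.\ between bounded above categories) are unchanged. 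Combining these gives the equivalence $\mathbf{RM}\simeq\mathbf{BN}$. I do not anticipate a genuine obstacle, since every ingredient is already established; the only point requiring care is the bookkeeping just described, namely confirming that the restricted unit and counit land within the full subcategories so that they legitimately qualify as natural isomorphisms there.
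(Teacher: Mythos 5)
Your proposal is correct and follows essentially the same route as the paper: the paper likewise obtains Theorem \ref{thmbanc} by specialising the equivalence of Theorem \ref{thmnccat} to the full subcategories of regular monoids and bounded above normal categories, with Proposition \ref{prorm} and the preceding lemma supplying exactly the two object-level correspondences you check. Your additional bookkeeping about restricting the natural isomorphisms is implicit in the paper's one-line argument but is the same underlying idea.
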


\subsection{Transformation semigroups}\label{ssectrans}

The full transformation monoid $\tn$ is  regular  and it contains the symmetric group $\mathscr{S}_n$ as its subgroup of units. From Section \ref{ssfpn},  the full powerset category $\fpn_\Pi$ of all subsets of $\ns$ is  connected and normal with largest object $\ns$,  hence $\fpn_\Pi$ is bounded above. We denote  $\fpn_\Pi$ simply  by $\fpn$. 

In the light of Theorem \ref{thmbanc}, the next proposition leads to a full description of the regular monoids $\tn$, in terms of categories.

\begin{pro}\label{proltn}
The normal category $\mathbb{L}(\tn)$ is isomorphic to the full powerset category $\fpn$, as bounded above categories.
\end{pro}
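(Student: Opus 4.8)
The plan is to avoid building an isomorphism $\mathbb{L}(\tn)\to\fpn$ by hand and instead to assemble it from results already established. The crucial observation is that $\fpn$ is itself a normal category (Example \ref{exfpn2}) and its cone semigroup has already been identified: Lemma \ref{lemfpn2} gives $\sfpn\cong\tn$. So rather than comparing $\mathbb{L}(\tn)$ and $\fpn$ directly, I would route the comparison through the semigroup $\sfpn$, exploiting the fact that the passage from a normal category to its left category and back is, up to isomorphism, the identity.

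Concretely, I would proceed in two steps. First, apply Corollary \ref{cortlclr} to the normal category $\mathcal{C}=\fpn$; this yields, with no extra work, a normal category isomorphism $\mathbb{L}(\sfpn)\cong\fpn$. Second, I would transport this across the semigroup isomorphism $\sfpn\cong\tn$ of Lemma \ref{lemfpn2}. The assignment $S\mapsto\mathbb{L}(S)$ is functorial: by Lemma \ref{lemmorC} any homomorphism $\phi\colon S\to S'$ in $\mathbf{LRS}$ induces an inclusion preserving functor $F_\phi\colon\mathbb{L}(S)\to\mathbb{L}(S')$ (this is the object part of the functor $\mathtt{C}$ of Proposition \ref{profunctorC}). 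Applying this to the isomorphism $\sfpn\cong\tn$ and to its inverse produces a pair of mutually inverse inclusion preserving functors, and since $vF_\phi$ inherits bijectivity and order preservation from $\phi$, these constitute a normal category isomorphism $\mathbb{L}(\tn)\cong\mathbb{L}(\sfpn)$. Here I use that both $\tn$ and $\sfpn$ are regular monoids, hence objects of $\mathbf{LRS}$. Composing, $\mathbb{L}(\tn)\cong\mathbb{L}(\sfpn)\cong\fpn$. Finally I would check that this respects the bounded-above structure: $\tn$ is a monoid, so by Proposition \ref{prorm} the category $\mathbb{L}(\tn)$ has a largest object, while $\ns$ is the largest object of $\fpn$; since a normal category isomorphism has object map a bijective order isomorphism (as in the definition of isomorphic normal categories recalled in Section \ref{ssecnc}), it automatically carries largest object to largest object, so the isomorphism is one of bounded above categories.

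Because the argument is a composition of already-proved facts, there is no genuine obstacle; the only point demanding care is the functoriality step, namely confirming that an isomorphism of semigroups induces an isomorphism of the associated left categories, which is routine from Lemma \ref{lemmorC}. For readers wanting something more concrete, I would note the alternative of writing the functor down explicitly by $\tn e\mapsto\ns e$ on objects and $r(e,u,f)\mapsto u|_{\ns e}$ on morphisms, and verifying directly that it is inclusion preserving, fully-faithful, and an order isomorphism on objects; this uses the description of Green's relations on $\tn$ from Lemma \ref{lemgrtn} (so that $\tn e=\tn e'\iff\ns e=\ns e'$ makes $v\Phi$ well-defined and injective, and every nonempty subset arises as an image of an idempotent), but it is notationally heavier and one must be attentive to the treatment of the empty set, whereas the abstract route sidesteps all such bookkeeping.
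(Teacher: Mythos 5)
Your route to the underlying normal category isomorphism is sound and genuinely different from the paper's. The paper writes the functor down explicitly, $vF(\tn\alpha):=\ns\alpha$ and $F(r(\alpha,\theta,\beta)):=\theta_{|{\ns\alpha}}$, and verifies directly that it is a normal category isomorphism; your abstract composite $\mathbb{L}(\tn)\cong\mathbb{L}(\sfpn)\cong\fpn$ via Lemma \ref{lemfpn2}, Corollary \ref{cortlclr} and the functoriality of $S\mapsto\mathbb{L}(S)$ (Lemma \ref{lemmorC}, Proposition \ref{profunctorC}) reaches the same conclusion with essentially no new computation. (Your ``concrete alternative'' at the end is in fact precisely the paper's own functor.) What the abstract route buys is economy; what the explicit functor buys is a concrete description that the paper then reuses for $\sts$ and for the biordered set remarks. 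Note also that the abstract route does not really sidestep the empty-set issue: it is merely relocated into Example \ref{exfpn2} and Lemma \ref{lemfpn2}, both of which implicitly require $\emptyset\notin v\fpn$ (no cone can have vertex $\emptyset$).

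There is, however, a genuine gap in your treatment of the clause ``as bounded above categories''. In this paper that phrase means isomorphic in the category of bounded above normal categories \emph{with CC-morphisms} (cf. Theorem \ref{thmbanc}), i.e., via a pair $(F,G)$ in which $G$ is an order isomorphism of the connecting posets and the compatibility condition (\ref{eqnccmor}) holds. Accordingly, the paper's proof also constructs $G\colon\widehat{\mathbb{L}(\tn)}/\gr\to\Pi$, $R_\gamma\mapsto\pi_{\gamma(\tn)}$, shows it is an order isomorphism using Lemma \ref{lemgrtn} (2), and checks (\ref{eqnccmor}). Your substitute, that the object map carries the largest object to the largest object, is automatic for any order isomorphism of object posets but says nothing about the partition poset $\Pi$ or about (\ref{eqnccmor}); and by Remark \ref{rmkccmor2} the compatibility condition is exactly the part that cannot be waved away. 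The good news is that you discarded precisely the data needed to close the gap: Lemma \ref{lemmorC} gives the full pair $(F_\phi,G_\phi)$, not just $F_\phi$, so the semigroup isomorphism $\tn\cong\sfpn$ and its inverse yield a CC-isomorphism $\mathbb{L}(\tn)_{\mathfrak{R}}\cong\mathbb{L}(\sfpn)_{\mathfrak{R}'}$; Proposition \ref{procatiso} together with Lemma \ref{lemcatiso2} upgrades Corollary \ref{cortlclr} to a CC-isomorphism $\mathbb{L}(\sfpn)_{\mathfrak{R}'}\cong\fpn_{\sfpn/\gr}$; and Lemma \ref{lemG} identifies $\sfpn/\gr$ with $\Pi$. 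Composing these pairs in $\mathbf{CC}$ gives the required isomorphism of bounded above categories; with that substitution for your final step, your argument is complete.
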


\begin{proof}
From Lemma \ref{lemgrtn} (1)  the left ideals in the monoid $\tn$ are determined by images. Hence, given  $\alpha,\beta\in E(\tn)$ and $\theta\in \alpha\tn\beta$ , define a functor $F\colon \mathbb{L}(\tn)\to \fpn$ as:
$$vF(\tn \alpha):= \ns\alpha \text{ and } F(r(\alpha,\theta,\beta)):= \theta_{|{\ns\alpha}}.$$
It may be routinely verified that $F$ is a normal category isomorphism from $\mathbb{L}(\tn)$ to $\fpn$. 

Notice that the largest object in $\mathbb{L}(\tn)$ is $\tn$ and so, we can define a map $G\colon\widehat{\mathbb{L}(\tn)}/\gr\to \Pi$ as $R_\gamma \mapsto \pi_{\gamma(\tn)}$. Since $\widehat{\mathbb{L}(\tn)}$ is isomorphic to $\tn$ and using Lemma \ref{lemgrtn}  (2), we may verify that $G$ is an order isomorphism. Further, we may prove that the pair $(F,G)$  also satisfies the condition (\ref{eqnccmor}), and hence the category $\mathbb{L}(\tn)$ is isomorphic to $\fpn$ as bounded above categories.
\end{proof}	

The above proposition tells us that the full transformation monoid $\tn$ is equivalent to the bounded above category $\fpn$. Hence, any question regarding the monoid $\tn$ may be translated to an equivalent question regarding the connected category $\fpn_\Pi$. In particular,  we can obtain the exact same descriptions of the biordered set and the sandwich sets of $\tn$ in terms of subsets and partitions (see \cite[Section 5.1]{var}) using the connected category description rather than cross-connections. Observe that the cross-connection description of $\tn$ previously known, \cite{tx}, involved the rather cumbersome category of partitions\cite{tx2}, but our approach bypasses this by just using the poset $\Pi$.

Having settled the full monoid  case $\tn$, we move onto one of its most important regular subsemigroups, namely that of singular transformations  $\sts$ . See \cite{eastsing2010} for a good overview regarding this semigroup. The ideal structure of $\sts$  was studied in detail inside the cross-connection theory in \cite{tx,tx1,tx2}. Naturally, our next objective is to use our theory of connected categories to realise the semigroup $\sts$ as a normal category. 

It is easy to see that the set of proper subsets of the set $\ns$ forms a small category $\pn$ with mappings as morphisms. We shall refer to $\pn$ as the {\it{singular powerset category}}. Observe that $\pn$ is a full subcategory of the full powerset category $\fpn$ (see Section \ref{ssfpn}). 

At this point, we call upon some known results.

\begin{lem}[{\cite[Theorem 3.1]{tx}\cite[Theorem 3]{tx1}}]
Let $T:=\sts$ be the singular transformation semigroup.  The category $\pn$ is normal and isomorphic to the left category $\mathbb{L}(T)$. The semigroup $\widehat{\pn}$ of all cones in the category $\pn$ is isomorphic to $T$, and so $T$ is totally left reductive.	
\end{lem}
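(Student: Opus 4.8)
The plan is to parallel the treatment of $\tn$ in Proposition \ref{proltn}, replacing the top object $\ns$ by the family of maximal (corank one) subsets. Since $T:=\sts$ is the ideal of non-bijective maps, it is a regular subsemigroup of $\tn$, so its left category $\mathbb{L}(T)$ is normal. As the left ideals of $T$ are again governed by images (Lemma \ref{lemgrtn} (1)), and the image of a singular map is precisely a \emph{proper} subset of $\ns$, I would first define $F\colon\mathbb{L}(T)\to\pn$ exactly as in Proposition \ref{proltn}, by $vF(T\alpha):=\ns\alpha$ and $F(r(\alpha,\theta,\beta)):=\theta_{|\ns\alpha}$, now landing in $\pn$ because each $\ns\alpha\subsetneq\ns$; every proper subset arises as the image of a singular idempotent, so $vF$ is a bijection onto $v\pn$. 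The verification that $F$ is an inclusion preserving, fully faithful functor is routine and identical to that of Proposition \ref{proltn}. This establishes $\pn\cong\mathbb{L}(T)$ and, since $\mathbb{L}(T)$ is normal (Theorem \ref{thmnls}), that $\pn$ is itself normal.

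For the remaining isomorphism $\widehat{\pn}\cong T$, I would construct a bijective homomorphism $\phi\colon\widehat{\pn}\to\sts$ in the spirit of Lemma \ref{lemfpn2}. The obstacle is that $\pn$, unlike $\fpn$, has no largest object, so a cone cannot be read off from a single top component. I would circumvent this by gluing along the maximal subsets $\ns\setminus\{y\}$: given a cone $\gamma$ with (necessarily proper) vertex $Z$, Definition \ref{dfnnormcone} (1) forces $\gamma(A)$ and $\gamma(B)$ to agree on $A\cap B$ for any two maximal $A,B$, so $\alpha_\gamma(x):=\gamma(A)(x)$ (for any maximal $A\ni x$) is a well-defined self-map of $\ns$ with image contained in $Z$; setting $\phi(\gamma):=\alpha_\gamma$ yields a singular transformation. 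Since a cone in $\pn$ is determined by its components on the maximal subsets (every object sits below a maximal one), $\phi$ is injective, and the identity $(\gamma_1\gamma_2)\phi=(\gamma_1\phi)(\gamma_2\phi)$ follows by the same computation as in Lemma \ref{lemfpn2}, carried out on each maximal subset and using Lemma \ref{lemepi} (3).

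Surjectivity is where the cone axiom Definition \ref{dfnnormcone} (2) does the work: given $\alpha\in\sts$ with image $Z$, the assignment $A\mapsto\alpha_{|A}$ (corestricted to $Z$) is compatible with inclusions, and choosing $C$ to be a cross-section of the kernel partition $\pi_\alpha$ gives a proper subset on which $\alpha_{|C}\colon C\to Z$ is a bijection, supplying the required isomorphism component; this cone maps to $\alpha$ under $\phi$. Hence $\widehat{\pn}\cong\sts=T$. Finally, combining $\pn\cong\mathbb{L}(T)$ with Lemma \ref{lemiso} gives $\widehat{\mathbb{L}(T)}\cong\widehat{\pn}\cong T$, which is exactly the assertion that $T$ is totally left reductive. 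The main difficulty throughout is the absence of a maximum object in $\pn$; once the gluing construction replaces the role played by $\ns$ in the $\tn$ case, every step descends from the already-established $\fpn$ results.
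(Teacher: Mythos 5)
Your proposal is correct, but it is worth noting that the paper itself does not prove this lemma at all: it is imported wholesale, with citations to \cite[Theorem 3.1]{tx} and \cite[Theorem 3]{tx1}, where the result was originally established inside Nambooripad's cross-connection framework. What you have produced is therefore genuinely different content: a self-contained proof within the present paper's own machinery. Your part (a), the isomorphism $F\colon\mathbb{L}(T)\to\pn$ via images, is the natural restriction of Proposition \ref{proltn} and, combined with Theorem \ref{thmnls}, correctly yields normality of $\pn$ for free. Your part (b) is the real contribution: since $\pn$ lacks a top object, the single-component argument of Lemma \ref{lemfpn2} is unavailable, and your gluing of cone components along the corank-one subsets $\ns\setminus\{y\}$ (using Definition \ref{dfnnormcone} (1) applied to $A\cap B$ for well-definedness, and Lemma \ref{lemepi} (3) for the homomorphism identity) is a legitimate substitute; the closing appeal to Lemma \ref{lemiso} then gives total left reductivity exactly as the definition demands. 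What this buys, beyond independence from \cite{tx,tx1}, is a demonstration of the paper's own thesis that connected-category methods can replace the heavier cross-connection apparatus used in those references.

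Two small points should be tightened. First, in the injectivity of $\phi$ you implicitly need to recover the \emph{vertex} of $\gamma$ from the map $\alpha_\gamma$, since the components $\gamma(A)$ carry their codomain: this follows because by Definition \ref{dfnnormcone} (2) some component $\gamma(C)$ is an isomorphism onto $Z$, and $C$ lies below a maximal subset $A$, so $\gamma(A)$ is surjective onto $Z$ and hence $Z=\im\alpha_\gamma$; one line, but it is needed. Second, your statement that every proper subset is the image of a singular idempotent (and indeed the normality of $\pn$ itself, via axiom (NC 4)) requires the standing convention that objects of $\pn$ are \emph{nonempty} proper subsets; the paper shares this implicit convention in $\fpn$ as well, so this is a matter of bookkeeping rather than a flaw in your argument.
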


Recall also that, since $\sts$ is a regular subsemigroup of $\tn$, the Green's relations in $\sts$ are inherited from $\tn$. Using Lemma \ref{lemgrtn} and the fact that $\sts$ is totally left reductive, we see that the poset of right ideals of the semigroup $\widehat{\mathbb{L}(\sts)}$ may be characterised using non-identity partitions of $\ns$.

Next, observe that the non-identity partitions of the set $\ns$ form a down-set, say $\pin$, of the poset $(\Pi,\supseteq)$. Also, given an element $\alpha\in \tn$, we have a principal cone $r^\alpha$ in the normal category $\mathbb{L}(\sts)$. This discussion allows us to  verify that the map  $R_{r^\alpha}\mapsto \pi_\alpha$ gives an order isomorphism from the poset of $\gr$-classes in the semigroup $\widehat{\mathbb{L}(\sts)}$ to the poset $\pin$. In fact, emulating the proof of Proposition \ref{proltn}, we get a description of  the semigroup of singular transformations  as a normal category. 

\begin{pro}
The normal category $\mathbb{L}(\sts)$ of the singular transformation semigroup is isomorphic to the singular powerset category $\pn$.  
\end{pro}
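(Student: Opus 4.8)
The plan is to emulate the proof of Proposition \ref{proltn} almost verbatim, restricting everything from all subsets of $\ns$ to the \emph{proper} subsets. Since $\sts$ is a regular subsemigroup of $\tn$, its Green's relations are inherited, so by Lemma \ref{lemgrtn} (1) the left ideals of $\sts$ are again determined by images; and as every $\alpha\in\sts$ is singular, $\ns\alpha$ is a proper subset of $\ns$. First I would define a functor $F\colon \mathbb{L}(\sts)\to \pn$ by
$$vF(\sts\alpha):=\ns\alpha \quad\text{and}\quad F(r(\alpha,\theta,\beta)):=\theta_{|\ns\alpha},$$
for idempotents $\alpha,\beta$ of $\sts$ and $\theta\in\alpha\sts\beta$, exactly as in the monoid case. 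The only new point to check here is that $vF$ is a bijection onto $v\pn$: well-definedness (images of singular maps are proper subsets) is immediate, and for surjectivity one notes that every nonempty proper subset $A\subsetneq\ns$ is the image of a non-bijective idempotent retraction of $\ns$ onto $A$, so $A=\ns\alpha$ for some idempotent $\alpha\in\sts$.

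Next I would verify that $F$ is a normal category isomorphism. The verifications that $F$ preserves identities and composition, is inclusion preserving, and is fully faithful are identical to those in Proposition \ref{proltn}, since restricting the ambient category to proper subsets changes none of the relevant computations; in particular the canonical-factorisation argument used there goes through unchanged. This already recovers the normal-category content of the cited Lemma, but now within the connected-category framework.

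To obtain the isomorphism as \emph{connected} categories (which is what emulating Proposition \ref{proltn} really delivers), I would then supply the accompanying poset map. By the discussion preceding the statement, the non-identity partitions of $\ns$ form the down-set $\pin$ of $(\Pi,\supseteq)$, and the assignment $R_{r^\alpha}\mapsto\pi_\alpha$ is an order isomorphism from $\widehat{\mathbb{L}(\sts)}/\gr$ onto $\pin$; call this map $G$. Using Lemma \ref{lemgrtn} (2) together with the fact that $\sts$ is totally left reductive (so $\widehat{\mathbb{L}(\sts)}\cong\sts$), one checks that $G$ is well-defined and bijective. Finally I would verify that the pair $(F,G)$ satisfies condition (\ref{eqnccmor}), precisely as in Proposition \ref{proltn}: for an idempotent cone $\epsilon(c,\mathfrak{d})$ its components are restrictions of a fixed retraction, and $F$ transports these restrictions to the corresponding components of $\epsilon(F(c),G(\mathfrak{d}))$. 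This makes $(F,G)$ a CC-isomorphism, giving $\mathbb{L}(\sts)\cong\pn$.

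The main obstacle --- really the only place where the argument genuinely differs from the $\tn$ case --- is the bookkeeping around properness: one must confirm that passing from $\tn$ to $\sts$ deletes exactly the top object $\ns$ of $\fpn$ (so that $\pn$ is no longer bounded above) and exactly the identity partition from $\Pi$ (so that the connecting poset is $\pin$ rather than $\Pi$), and that these two deletions are matched by $F$ and $G$ respectively. Everything else is a transcription of the monoid argument.
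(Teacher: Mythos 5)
Your proposal is correct and takes essentially the same route as the paper: the paper proves this proposition precisely by ``emulating the proof of Proposition~\ref{proltn}'', i.e.\ defining $F$ on images and restrictions ($vF(\sts\alpha)=\ns\alpha$, $F(r(\alpha,\theta,\beta))=\theta_{|\ns\alpha}$), invoking the inherited Green's relations and the total left reductivity of $\sts$, and pairing $F$ with the order isomorphism $R_{r^\alpha}\mapsto\pi_\alpha$ onto the down-set $\pin$ of non-identity partitions. Your additional bookkeeping about properness --- surjectivity of $vF$ via idempotent retractions onto proper subsets, and the matched removal of the top object $\ns$ from $\fpn$ and of the identity partition from $\Pi$ --- is exactly the verification the paper leaves as routine.
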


\subsection{Linear transformation semigroups}\label{ssecltv}
Continuing our list of applications, we move onto a brief discussion on the linear transformation semigroups, which are analogous to transformation semigroups. Given a finite dimensional vector space $V$ over a field $K$, the linear transformations on $V$ form a regular monoid $\tv$.  The group $\glv$ of invertible linear transformations on $V$ forms the subgroup of units of $\tv$. It is worth mentioning that the semigroup $M_n(K)$ of $n\times n$ matrices over $K$ may be realised as a special case of the semigroup $\tv$. 

In \cite{tlx}, the cross-connections of $\tv$, its singular part $\stv$ and the variants of $\tv$, were discussed. We refer the readers to \cite{tlx} for a detailed discussion on the normal categories involved. The right ideal structure of $\tv$ was described using the annihilator category in \cite{tlx}. However, employing our approach of connected categories, we can describe the right structure of $\tv$ with just the poset of null spaces of $V$.

Given a finite dimensional vector space $V$,  the subspaces of $V$ form a small category $\sv$ with linear transformations as morphisms. This category  $\sv$ has a largest object and the Green's relations $\gl$ and $\gr$  in $\tv$ are determined by subspaces and null spaces, respectively \cite[Section 2.2]{clif}. Let $\mathfrak{N}$ denote the poset of null spaces of $V$ under reverse inclusion. Imitating the proofs in the case of transformation semigroups, we obtain the next results.

\begin{pro} Let $V$ be a finite dimensional vector space  over a field $K$. The category
$\sv$ is  normal and bounded above. In particular, the poset of $\gr$-classes in the semigroup $\widehat{\sv}$ is isomorphic to the poset $\mathfrak{N}$, and so $\sv_\mathfrak{N}$ is a connected category. The normal category $\mathbb{L}(\tv)$ is isomorphic to $\sv$, as bounded above categories. 	
\end{pro}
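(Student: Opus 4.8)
The plan is to follow, essentially verbatim, the template already established for the full transformation monoid in Proposition \ref{proltn}, replacing sets by subspaces, mappings by linear transformations, images of maps by subspaces, and the kernel-partition $\pi_\alpha$ by the null space of a linear map. First I would check that $\sv$ is normal and bounded above, mirroring Examples \ref{exfpn0}, \ref{exfpn1} and \ref{exfpn2}. Taking the inclusions $i(U,W)$ to be the subspace inclusions for $U\subseteq W$, these form a strict preorder isomorphic to the subspace lattice of $V$, so (NC1) holds. For (NC2), every inclusion $i(U,W)$ splits because any subspace $U$ of $W$ has a complement in $W$, yielding a linear projection $W\to U$ as a retraction. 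For (NC3), the usual rank--nullity factorisation of a linear map $f\colon U\to W$ as a projection onto a complement of its kernel, an isomorphism onto $\im f$, and the inclusion $i(\im f,W)$ is a normal factorisation. Since every subspace lies in $V$, the object $V$ is largest, so $\sv$ is bounded above; fixing for each $U$ a projection $\alpha\colon V\to U$ with $\alpha|_U=1_U$ and setting $\epsilon(S):=\alpha|_S$ gives a cone with vertex $U$ and $\epsilon(U)=1_U$, establishing (NC4).

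Next I would prove the linear analogue of Lemma \ref{lemfpn2}: because $\sv$ has the largest object $V$, each cone $\gamma$ is determined by its component $\gamma(V)$, and $\gamma\mapsto \gamma(V)\,i(z_\gamma,V)$ is a semigroup isomorphism $\widehat{\sv}\cong\tv$, the argument being the exact linear translation of that lemma. I would then invoke the classical description of Green's relations on $\tv$ from \cite[Section 2.2]{clif}, namely that $\alpha\gr\beta$ iff $\alpha$ and $\beta$ have the same null space and $\alpha\tv\subseteq\beta\tv$ iff their null spaces are related by reverse inclusion. This lets me define $G\colon\widehat{\sv}/\gr\to\mathfrak{N}$ by $R_\gamma\mapsto\ker\gamma(V)$ and verify it is an order isomorphism, exactly as in Lemma \ref{lemG}, which gives the asserted isomorphism $\widehat{\sv}/\gr\cong\mathfrak{N}$. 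Connectedness of $\sv_\mathfrak{N}$ then follows by mirroring Proposition \ref{profpn}: given a subspace $U$, choose an idempotent projection $\alpha\in\tv$ with image $U$, form the idempotent cone $\epsilon(S):=\alpha|_S$ with vertex $U$, and note its $\gr$-class corresponds to $\ker\alpha\in\mathfrak{N}$, so every object is connected by some null space.

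Finally, for $\mathbb{L}(\tv)\cong\sv$ as bounded above (connected) categories, I would define $F\colon\mathbb{L}(\tv)\to\sv$ by $vF(\tv\alpha):=V\alpha$ and $F(r(\alpha,\theta,\beta)):=\theta|_{V\alpha}$, using that left ideals of $\tv$ are governed by images, and check that $F$ is a normal category isomorphism and that the pair $(F,G)$ satisfies the CC-compatibility condition (\ref{eqnccmor}), precisely as in Proposition \ref{proltn}. The one point that demands genuine care, and which I expect to be the main obstacle, is keeping the two dual orientations straight: the $\gl$-structure (the objects of $\sv$) is controlled by images under ordinary subspace inclusion, while the $\gr$-structure is controlled by null spaces under \emph{reverse} inclusion, and both must be tracked consistently through the left-to-right composition convention when verifying that $G$ is order preserving and that $(F,G)$ respects (\ref{eqnccmor}). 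Finite-dimensionality of $V$ is used to guarantee clean complements and the clean isomorphism $\widehat{\sv}\cong\tv$ without pathology, and I would flag this explicitly, just as the paper does for the finite-set case; everything else is a routine transcription of the transformation-monoid arguments.
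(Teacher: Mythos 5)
Your proposal is correct and is essentially identical to the paper's approach: the paper offers no separate argument for this proposition, saying only that it is obtained by ``imitating the proofs in the case of transformation semigroups,'' and your write-up is precisely that imitation --- Examples \ref{exfpn0}--\ref{exfpn2}, Lemmas \ref{lemfpn2} and \ref{lemG}, and Propositions \ref{profpn} and \ref{proltn} transcribed with subsets replaced by subspaces, retractions obtained from complements, kernel partitions replaced by null spaces, and $\mathfrak{N}$ ordered by reverse inclusion. Your explicit care about the two orientations (images under inclusion for the $\gl$-side, null spaces under reverse inclusion for the $\gr$-side) is exactly the point the paper's compressed presentation leaves to the reader.
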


Next, given the singular linear transformation semigroup  $\stv$, we can see that the proper subspaces of $V$ form a category. This category shall be called the \emph{singular subspace category} $\psv$. Clearly $\psv$ is a full subcategory of $\sv$. Using \cite[Theorem 2]{tlx} and the discussion of the semigroup of singular transformations,  we can also deal with the singular linear case.

\begin{pro}
Let $T:=\stv $ be the singular linear transformation semigroup. The category $\psv$ is  normal  and isomorphic to $\mathbb{L}(T)$, as normal categories. Moreover, the semigroup $\widehat{\psv}$ is isomorphic to the semigroup $T$.  
\end{pro}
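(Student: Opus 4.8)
The plan is to reproduce, in the linear setting, the three-step argument used just above for the singular transformation semigroup, importing the key ingredients from \cite[Theorem 2]{tlx}. Write $T:=\stv$. By \cite[Section 2.2]{clif}, in the monoid $\tv$ --- and hence in its regular subsemigroup $T$ --- the relation $\gl$ is governed by images, so that $\alpha\gl\beta$ if and only if $\im\alpha=\im\beta$. Since a singular linear transformation has a proper image, the objects $T\alpha$ of $\mathbb{L}(T)$ (with $\alpha\in E(T)$) are in bijection with the proper subspaces of $V$, that is, with $v\psv$. This suggests defining a functor $F\colon\mathbb{L}(T)\to\psv$ exactly as in Proposition \ref{proltn}: for idempotents $\alpha,\beta\in E(T)$ and $\theta\in\alpha T\beta$, set
$$ vF(T\alpha):=\im\alpha \quad\text{ and }\quad F(r(\alpha,\theta,\beta)):=\theta_{|\im\alpha}. $$

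First I would check that $\psv$ is normal. Since $\psv$ is the full subcategory of the full subspace category $\sv$ spanned by the proper subspaces, and $\sv$ was shown above to be normal, the only thing to verify is that the normal structure does not escape to the whole space $V$. This is immediate: the image of any linear map between proper subspaces is contained in its (proper) codomain, and a coimage is a subspace of its (proper) domain, so every normal factorisation in $\sv$ of a morphism of $\psv$ already lies in $\psv$; inclusions between proper subspaces split within $\psv$; and for each proper subspace $W$ the idempotent cone of $\sv$ with vertex $W$ and identity component at $W$ restricts to an idempotent cone of $\psv$, giving (NC 4). Hence $\psv$ is a normal category. The verification that $F$ is a normal category isomorphism is then formally identical to that of Proposition \ref{proltn}: $vF$ is an order isomorphism onto $(v\psv,\subseteq)$ because $T\alpha\subseteq T\beta\iff\im\alpha\subseteq\im\beta$; $F$ is inclusion preserving; and functoriality together with fully-faithfulness follow from the fact that restriction of linear maps is compatible with composition and that a morphism of $\psv$ is determined by its underlying linear map. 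This establishes the first two assertions.

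For the last assertion I would invoke total left reductivity. By \cite[Theorem 2]{tlx}, together with the discussion of the singular transformation case above, the semigroup $T=\stv$ is totally left reductive, i.e. $\widehat{\mathbb{L}(T)}$ is isomorphic to $T$. Combining the normal category isomorphism $F\colon\mathbb{L}(T)\cong\psv$ with Lemma \ref{lemiso} yields $\widehat{\psv}\cong\widehat{\mathbb{L}(T)}$, and hence $\widehat{\psv}\cong T$, as required. Note that, unlike the full linear case, $\psv$ is \emph{not} bounded above (there is no largest proper subspace), consistent with $T$ being a non-monoid.

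The main obstacle --- really the only point needing care --- is confirming that $\psv$ genuinely inherits normality from $\sv$, in particular (NC 4): one must be sure that every proper subspace is the vertex of an idempotent cone whose components stay among proper subspaces. As noted, this works precisely because images and coimages of linear maps between proper subspaces remain proper, so the ambient cones of $\sv$ with proper vertex restrict cleanly. The companion delicate point, the total left reductivity of $\stv$ (equivalently the surjectivity of $\bar\rho$), is not re-proved here but imported from \cite[Theorem 2]{tlx}, exactly as the singular transformation case imported its counterpart from \cite{tx,tx1}.
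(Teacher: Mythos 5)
Your proposal is correct and follows essentially the same route as the paper: the paper's own ``proof'' consists precisely of citing \cite[Theorem 2]{tlx} together with the preceding discussion of the singular transformation case and Proposition \ref{proltn}, which is exactly what you do (defining $F$ by restriction to images, checking $\psv$ inherits normality from $\sv$, and importing total left reductivity of $\stv$ from \cite{tlx} to conclude $\widehat{\psv}\cong T$ via Lemma \ref{lemiso}). If anything, you supply more explicit detail than the paper does, and your verifications (properness of images/coimages, splitting of inclusions, restriction of idempotent cones for (NC~4)) are sound.
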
  

\subsection{Symmetric inverse monoids}
We conclude the paper with a quick discussion on arguably what is the most important inverse semigroup: the monoid of all the partial bijections on an arbitrary set $X$, denoted $\ix$. We shall realise $\ix$ as a self-supported category $\x$ which is also bounded above.

To begin with, recall that the Green's relations $\gl$ and $\gr$ in the semigroup $\ix$ are determined by the images and domains of the partial mappings, respectively \cite[Exercise 5.11.2]{howie}. Let $\x$ be the category of all subsets of $X$ with partial bijections as morphisms. We can easily verify that $\x$ is  normal and the set $X$ is the largest object in $\x$, whence $\x$ is bounded above. Now, define a functor $F\colon\mathbb{L}(\ix) \to \x$ as follows: for idempotents $\alpha,\beta\in \ix$ and $\theta\in \alpha\ix\beta$,
$$vF(\ix\alpha):= X\alpha\text{ and }F(r(\alpha,\theta,\beta)):=\theta.$$
We may check that $F$ is a normal category isomorphism. Hence the semigroup $\widehat{\x}$ of cones in $\x$ is isomorphic to the semigroup $\ix$. Moreover, since the $\gr$-classes of the semigroup $\ix$ are determined by the domains, the poset of $\gr$-classes of the semigroup $\widehat{\x}$ is order isomorphic to $v\x$. Hence the category $\x$ is self-supported too. We can verify that the isomorphism $F$ may be extended to an isomorphism of self-supported categories. We collect the above discussion in the final theorem, which describes how the symmetric inverse monoid may be realised as a category, just as in the ESN Theorem. 

\begin{thm}
Let $X$ be a set. The category $\x$ is  normal and  bounded above. The semigroup $\widehat{\x}$ of cones in $\x$ is isomorphic to the semigroup $\ix$. The normal category $\mathbb{L}(\ix)$ and the category $\x$ are isomorphic as self-supported categories.    
\end{thm}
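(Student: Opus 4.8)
The plan is to prove the final theorem by assembling pieces that are essentially already established for the general constructions, now verified concretely for the category $\x$. The theorem makes three assertions: that $\x$ is normal and bounded above, that $\widehat{\x}\cong\ix$, and that $\mathbb{L}(\ix)$ and $\x$ are isomorphic as self-supported categories. I would tackle them in that order, since each builds on the previous.

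First I would verify that $\x$ is a normal category. The objects are the subsets of $X$ and the morphisms are partial bijections, with set inclusions as the subobject preorder $\mathcal{P}$. One checks Definition~\ref{dfnnormc}: $(\x,\mathcal{P})$ is a category with subobjects (inclusions between subsets form a strict preorder, are monomorphisms, and the factorisation property (3) holds since a composite of partial bijections landing in a subset stays a partial bijection); every inclusion $i(A,B)$ splits, a retraction being the partial bijection $B\to A$ that is the identity on $A$ and undefined elsewhere; every morphism admits a normal factorisation $quj$ exactly as in Example~\ref{exfpn1}, except now the coimage is literally the domain of the partial bijection so the factorisation is cleaner; and for each object $A$ there is a cone $\mu$ with $\mu(A)=1_A$, obtained by restricting a fixed partial bijection with domain $A$. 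That $X$ is the largest object is immediate, so $\x$ is bounded above.

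Next I would establish the isomorphism $\widehat{\x}\cong\ix$ via the functor $F\colon\mathbb{L}(\ix)\to\x$ defined by $vF(\ix\alpha):=X\alpha$ and $F(r(\alpha,\theta,\beta)):=\theta$. Using that $\gl$ and $\gr$ in $\ix$ are governed by images and domains respectively \cite[Exercise 5.11.2]{howie}, $vF$ is a well-defined bijection onto $v\x$; checking that $F$ is well-defined on morphisms, functorial, inclusion-preserving and fully-faithful is routine (this parallels Proposition~\ref{proltn}). By Corollary~\ref{cortlclr} and Lemma~\ref{lemiso} we then get $\widehat{\x}\cong\widehat{\mathbb{L}(\ix)}\cong\ix$, the last isomorphism because $\ix$ is a regular monoid and hence totally left reductive by Proposition~\ref{prorm}. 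Alternatively one argues directly, as in Lemma~\ref{lemfpn2}, that a cone in $\x$ is determined by its component at $X$, which is an arbitrary partial bijection of $X$, giving the isomorphism on the nose.

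Finally, for the self-supported claim I would invoke Definition of self-supported categories together with the fact that $\ix$ is inverse. Since the $\gr$-classes of $\ix$ are determined by domains, the poset $\widehat{\x}/\gr$ is order isomorphic to $v\x$, so the support map $\Gamma$ is an order isomorphism and $\x$ is self-supported. To promote the normal category isomorphism $F$ to an isomorphism of self-supported categories, I would exhibit the accompanying order isomorphism $G$ on the $\gr$-posets (sending $R_{r^\alpha}$ to the domain of $\alpha$, equivalently to the corresponding object of $v\x$) and check that the pair $(F,G)$ satisfies condition~(\ref{eqnccmor}), exactly as in the proof of Proposition~\ref{proltn}. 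The main obstacle, and the only place needing genuine care, is this last verification of~(\ref{eqnccmor}): one must confirm that $F$ carries the idempotent cone $\epsilon(\ix\alpha,\mathfrak{r}_\alpha)$ and all its components to the corresponding idempotent cone in $\x$, which amounts to checking that $F$ commutes with the restriction maps defining the idempotent cones $r^\alpha$; everything else is bookkeeping that follows the transformation-monoid template already carried out in the paper.
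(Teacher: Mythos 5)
Your proposal is correct and follows essentially the same route as the paper: verify that $\x$ is normal with largest object $X$, define the functor $F\colon\mathbb{L}(\ix)\to\x$ by $vF(\ix\alpha):=X\alpha$ and $F(r(\alpha,\theta,\beta)):=\theta$, deduce $\widehat{\x}\cong\widehat{\mathbb{L}(\ix)}\cong\ix$ from Lemma \ref{lemiso} and the fact that a regular monoid is totally left reductive (Proposition \ref{prorm}), and then use the domain description of the $\gr$-classes of $\ix$ to see that the support map is an order isomorphism and that $(F,G)$ satisfies (\ref{eqnccmor}). Your added alternative (a cone in $\x$ is determined by its component at $X$, as in Lemma \ref{lemfpn2}) is a valid shortcut but is also just the paper's own transformation-monoid template, so there is nothing substantively different to flag.
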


Observe that the isomorphisms of $\x$ form an inductive groupoid $\mathcal{G}(\x)$ (see \cite[Section 3.4]{locinverse}), and ESN Theorem asserts that the inverse semigroup $\ix$ `is', in essence, the inductive groupoid $\mathcal{G}(\x)$.

\section*{Acknowledgement}
The authors wish to express their sincere gratitude to the anonymous referee for their exceptionally meticulous and thoughtful reading of the manuscript. The referee’s extensive comments and suggestions greatly improved the clarity of the exposition and helped to tighten several arguments and formulations throughout. Their careful engagement with the work is deeply appreciated.
\bibliography{azeefref}
\bibliographystyle{plain}

\end{document}